\begin{document}

\title[The total surgery obstruction]{The total surgery obstruction revisited}

\author{Philipp K\"uhl, Tibor Macko and Adam Mole}

\maketitle

\begin{center}
\today
\end{center}

\begin{abstract}
The total surgery obstruction of a finite $n$-dimensional Poincar\'e complex $X$ is an element $s(X)$ of a certain abelian group $\SS_n (X)$ with the property that for $n \geq 5$ we have $s(X) = 0$ if and only if $X$ is homotopy equivalent to a closed $n$-dimensional topological manifold. The definitions of $\SS_n (X)$ and $s(X)$ and the property are due to Ranicki in a combination of results of two books and several papers. In this paper we present these definitions and a detailed proof of the main result so that they are in one place and we also add some of the details not explicitly written down in the original sources.
\end{abstract}

\addtocontents{toc}{\protect\setcounter{tocdepth}{1}}

\tableofcontents

\section{Introduction} \label{sec:intro}


An important problem in the topology of manifolds is deciding
whether there is an $n$-dimensional closed topological manifold in
the homotopy type of a given $n$-dimensional finite Poincar\'e
complex $X$.


Recall that the ``classical surgery theory'' alias
``Browder-Novikov-Sullivan-Wall-Kirby-Siebenmann theory'' provides
a method to decide this question in the form of a two-stage
obstruction theory, when $n \geq 5$. A result of Spivak provides us
with the Spivak normal fibration (SNF) $\nu_X \co X \ra \BSG$, which
is a spherical fibration, stably unique in some sense. If $X$ is
homotopy equivalent to a closed manifold then $\nu_X$ reduces to a
stable topological block bundle, say $\bar \nu_X \co X \ra \BSTOP$.
The existence of such a reduction is the first obstruction. In terms
of classifying spaces, the composition
\begin{equation} \label{eqn:first-obstruction}
H \circ \nu_X \co X \ra \BSG \ra \textup{B}(\GTOP)
\end{equation}
has to be homotopic to a constant map. Any reduction $\bar \nu_X$
determines a degree one normal map $(f,b) \co M \ra X$ from some
$n$-dimensional closed topological manifold $M$ to $X$ with a
surgery obstruction, which we call the quadratic signature of
$(f,b)$ and denote
\begin{equation} \label{eqn:second-obstruction}
\qsign_{\ZZ[\pi_1 (X)]} (f,b) \in L_n (\ZZ \pi_1 (X)).
\end{equation}
The complex $X$ is homotopy equivalent to a closed manifold if and
only if there exists a reduction for which $\qsign_{\ZZ[\pi_1 (X)]}
(f,b) = 0$.


The ``algebraic theory of surgery'' of Ranicki replaces the above
theory with a single obstruction, namely the total surgery
obstruction
\begin{equation} \label{eqn:tso}
s (X) \in \SS_n (X)
\end{equation}
where $\SS_n (X)$ is the $n$-dimensional structure group of $X$ in
the sense of the algebraic theory of surgery, which is a certain
abelian group associated to $X$. It is the aim of this paper to
discuss the definitions of $\SS_n (X)$ and $s(X)$ and explain how
they replace the classical theory. 


The advantage of the algebraic theory is two-fold. On the one hand
it is a single obstruction theory which by itself can be more
convenient. On the other hand it turns out that the group $\SS_n
(X)$ has an $L$-theoretic definition, in fact it is isomorphic to a
homotopy group of the homotopy fiber of a certain assembly map in
$L$-theory. Hence the alternative approach allows us to solve our problem by entirely $L$-theoretic methods, for example by showing that the assembly map induces an isomorphism on homotopy groups and so $\SS_n (X) = 0$. This possibility is in contrast with the classical surgery theory, where the first obstruction (\ref{eqn:first-obstruction}) is not $L$-theoretic in nature.

However, in practice, often slightly different assembly maps turn out to be more accessible for studying, as is the case for example in the recent papers \cite{Bartels-Lueck(2009)}, \cite{Bartels-Lueck-Weinberger(2009)}. Then the theory needs to be modified to accommodate in addition the integer valued Quinn resolution obstruction. In the concluding section \ref{sec:conclusion} we offer more comments on this generalization and applications as well as examples.


The ingredients in the theory surrounding the total surgery obstruction are:

\begin{itemize}
 \item The algebraic theory of surgery of Ranicki from \cite{Ranicki-I-(1980)}, \cite{Ranicki-II-(1980)}. This comprises  various sorts of $L$-groups of chain complexes over various
 additive categories with chain duality and with various notions of
 Poincar\'e duality. The sorts of $L$-groups are ``symmetric'',
 ``quadratic'' and ``normal''. The last notion is also due to Weiss in \cite{Weiss-I(1985)}, \cite{Weiss-II(1985)}.
 \item The classical surgery theory in the topological category from \cite{Browder(1971)}, \cite{Wall(1999)}. The
 algebraic theory is not independent of the classical theory, in the
 sense that the proof that the algebraic theory answers our problem
 uses the classical theory. 
 \item Topological transversality in all dimensions and codimensions
 as provided by Kirby-Siebenmann \cite{Kirby-Siebenmann(1977)} and
 Freedman-Quinn \cite{Freedman-Quinn(1990)}.
 \item The surgery obstruction isomorphism, \cite[Essay V, Theorem C.1]{Kirby-Siebenmann(1977)}:
\[
 \qsign_\ZZ \co \pi_n (\G/\TOP) \xra{\cong} L_n (\ZZ) \; \textup{for} \; n\geq 1.
\]
 \item Geometric normal spaces and geometric normal transversality,
 both of which were invented by Quinn. However, the whole theory as
 announced in \cite{Quinn(1972)} is not needed. It is replaced by
 the algebraic normal $L$-groups from the first item.
\end{itemize}


\subsection{The basics of the algebraic theory of surgery}
\label{subsec:basics-of-alg-sur} Mishchenko and Ranicki defined for a ring $R$ with involution the
symmetric and quadratic $L$-groups $L^n (R)$ and $L_n (R)$
respectively, as cobordism groups of chain complexes over $R$ with
symmetric and quadratic Poincar\'e structure respectively. The quadratic $L$-groups are isomorphic to the surgery obstruction groups of Wall \cite{Wall(1999)}.

Let $W$ be the standard $\ZZ[\ZZ_2]$-resolution of $\ZZ$. An
$n$-dimensional \emph{symmetric structure} on a chain complex $C$ is
an $n$-dimensional cycle
\[
 \varphi \in \Ws{C} := \Hom_{\ZZ[\ZZ_2]} (W,C \otimes_R C) \cong \Hom_{\ZZ[\ZZ_2]} (W,\Hom_R (C^{-\ast},C)).
\]
It can be written out in components $\varphi = (\varphi_i \co
C^{n-\ast} \ra C_{\ast+i})_{i \in \NN}$. If $\varphi_0 \co
C^{n-\ast} \ra C$ is a chain homotopy equivalence, then the
structure is called {\it Poincar\'e}. Given an $n$-dimensional cycle $x
\in C(X)$, there is a symmetric structure $\varphi (x)$ on $C(\tilde
X)$ over $\ZZ[\pi_1 (X)]$ with $\varphi (x)_0 =  -\cap x \co
C(\tilde X)^{n-\ast} \ra C(\tilde X)$ given by an equivariant
version of the familiar Alexander-Whitney diagonal approximation
construction. If $X$ is a Poincar\'e complex with the fundamental
class $[X]$, then we obtain the \emph{symmetric signature} of $X$,
\begin{equation}
 \ssign_{\ZZ[\pi_1 (X)]} (X) = [(C(\tilde X),\varphi ([X]))] \in L^n (\ZZ[\pi_1 (X)]).
\end{equation}
An $n$-dimensional \emph{quadratic structure} on a chain complex $C$
is an $n$-dimensional cycle
\[
 \psi \in \Wq{C} := W \otimes_{\ZZ[\ZZ_2]} (C \otimes_R C) \cong W \otimes_{\ZZ[\ZZ_2]} (\Hom_R (C^{-\ast},C)).
\]
There is a symmetrization map $1+T \co \Wq{C} \ra \Ws{C}$ which allows us to see quadratic structures as refinements of symmetric structures. A quadratic structure is called {\it Poincar\'e} if its symmetrization is Poincar\'e. Such a quadratic structure is more subtle to obtain from a geometric situation. As explained in Construction \ref{constrn:quadconstrn}, given an $n$-dimensional cycle $x \in C(X)$ and a stable map $F \co \Sigma^p X_+ \ra \Sigma^p M_+$ there is a quadratic structure $\psi (x)$ over $\ZZ [\pi_1 (X)]$ on $C (\tilde M)$. A degree one normal map $(f,b) \co M \ra X$ between $n$-dimensional Poincar\'e complexes induces a map of Thom spaces $\Th (b) \co \Th (\nu_M) \ra \Th (\nu_X)$ which in turn, using $S$-duality,\footnote{see section \ref{sec:normal-cplxs} for more details if needed} produces a stable map $F \co \Sigma^p X_+ \ra \Sigma^p M_+$ for some $p$. The quadratic construction $\psi$ produces from the fundamental class $[X]$ a quadratic structure on $C(\tilde M)$. Considering the Umkehr map $f^{!} \co C(\tilde X) \ra \Sigma^{-p} C (\Sigma^p \tilde{X}_+) \ra \Sigma^{-p} C(\Sigma^p \tilde{M}_+) \ra C(\tilde{M})$ and the inclusion into the algebraic mapping cone $e \co C(\tilde M) \ra \sC (f^!)$ we obtain an $n$-dimensional quadratic Poincar\'e complex called the \emph{quadratic signature} of $(f,b)$
\begin{equation} \label{eqn:quad-sign-deg-one-normal-map}
 \qsign_{\ZZ[\pi_1 (X)]} (f,b) = [(C(f^{!}),e_{\%} \psi ([X]))] \in L_n (\ZZ[\pi_1 (X)])
\end{equation}
If $(f,b)$ is a degree one normal map with $M$ an $n$-dimensional
manifold, then the quadratic signature
(\ref{eqn:quad-sign-deg-one-normal-map}) coincides with the
classical surgery obstruction.

\subsection{The structure group $\SS_n (X)$} A generalization of the
theory in \ref{subsec:basics-of-alg-sur} is obtained by replacing
the ring $R$ with an algebraic bordism category $\Lambda$. Such a
category contains an underlying additive category with chain duality
$\AA$. The category $\Lambda$ specifies a subcategory of the
category of structured chain complexes\footnote{meaning chain
complexes with a symmetric or quadratic structure} in $\AA$ and a
type of Poincar\'e duality. We obtain cobordism groups of such chain
complexes $L^n (\Lambda)$ and $L_n (\Lambda)$ and also spectra
$\bL^\bullet (\Lambda)$ and $\bL_\bullet (\Lambda)$ whose homotopy
groups are these $L$-groups.

The notion of an additive category with chain duality allows us to
consider structured chain complexes over a simplicial complex $X$,
with $\pi = \pi_1 (X)$. Informally one can think of such a
structured chain complex over $X$ as a compatible collection of
structured chain complexes over $\ZZ$ indexed by simplices of $X$.
``Forgetting'' the indexing ``assembles'' such a complex over $X$ to
a complex over $\ZZ$ and an equivariant version of this process
yields a complex over $\ZZ [\pi]$. The algebraic bordism categories
allow us to consider various types of Poincar\'e duality for structured
complexes over $X$. There is the \emph{local Poincar\'e duality}
where it is required that the structure over each simplex is
Poincar\'e, with the category of all such complexes denoted
$\Lambda(\ZZ)_\ast (X)$. It turns out that\footnote{here $\bL_\bullet (\ZZ)$ is short for $\bL_\bullet (\Lambda (\ZZ)_\ast (\textup{pt.}))$}
\begin{equation}
 L_n (\Lambda(\ZZ)_\ast (X)) \cong H_n (X ; \bL_\bullet (\ZZ)) \qquad L^n (\Lambda(\ZZ)_\ast (X)) \cong H_n (X ; \bL^\bullet (\ZZ)).
\end{equation}
Then there is the \emph{global Poincar\'e duality} where only the
assembled structure is required to be Poincar\'e, with the category
of all such complexes denoted $\Lambda(\ZZ[\pi])$ for the purposes
of this introduction\footnote{The notation here is justified by
Proposition \ref{prop:algebraic-pi-pi-theorem} which says that the
$L$-theory of this category is indeed isomorphic to the $L$-theory
of the group ring.}. The assembly gives a functor $A \co
\Lambda(\ZZ)_\ast (X) \ra \Lambda(\ZZ [\pi])$ which induces on the
$L$-groups the assembly maps 
\begin{equation} \label{eqn:assembly-map}
A \co H_n (X;\bL^\bullet (\ZZ)) \ra L^n (\ZZ[\pi]) \qquad A \co H_n (X;\bL_\bullet (\ZZ)) \ra L_n (\ZZ[\pi])
\end{equation}

In a familiar situation such chain complexes arise as follows. A
triangulated $n$-dimensional manifold $X$ has a dual cell
decomposition, where for each simplex $\sigma \in X$ the dual cell
$(D(\sigma),\del D(\sigma))$ is an $(n-|\sigma|)$-dimensional
submanifold with boundary. The collection of chain complexes
$C(D(\sigma),\del D(\sigma))$ together with corresponding symmetric
structures provides the symmetric signature of $X$ over $X$, which
is a locally Poincar\'e symmetric complex over $X$
\begin{equation} \label{eqn:sym-sign-over-X}
 \ssign_X (X) \in H_n (X;\bL^\bullet (\ZZ)) \quad A (\ssign_X (X)) = \ssign_{\ZZ [\pi]} (X) \in L^n (\ZZ [\pi]).
\end{equation}
A degree one normal map $(f,b) \co M \ra X$ from an $n$-dimensional
manifold $M$ to a triangulated $n$-dimensional manifold $X$ can be
made transverse to the dual cells of $X$. Denoting $M(\sigma) = f^{-1} (D(\sigma))$ this yields a collection
of degree one normal maps of manifolds with boundary
$(f(\sigma),b(\sigma)) \co (M(\sigma),\del (M(\sigma)) \ra
(D(\sigma),\del D(\sigma))$. The collection of chain complexes $\sC
(f(\sigma)^{!})$ with the corresponding quadratic structures
provides the quadratic signature of $(f,b)$ over $X$ which is a
locally Poincar\'e quadratic complex over $X$
\begin{equation}
 \qsign_X (f,b) \in H_n (X;\bL_\bullet (\ZZ)) \quad A (\qsign_X (f,b)) = \qsign_{\ZZ [\pi]} (f,b) \in L_n (\ZZ [\pi]).
\end{equation}

The $L$-groups relevant for our geometric problem are modifications
of the above concepts obtained by using certain connective
versions.\footnote{This is a technical point addressed in section
\ref{sec:conclusion}}

The cobordism group of $n$-dimensional quadratic $1$-connective
complexes that are locally Poincar\'e turns out to be isomorphic to
the homology group  $H_\ast (X,\bL_\bullet \langle 1 \rangle)$,
where the symbol $\bL_\bullet \langle 1 \rangle$ denotes the
$1$-connective quadratic $L$-theory spectrum. The cobordism group of
$n$-dimensional quadratic $1$-connective complexes that are globally
Poincar\'e turns out to be isomorphic to $L_n (\ZZ[\pi])$. The
assembly functor induces an assembly map analogous to
(\ref{eqn:assembly-map}).

The structure group $\SS_n (X)$ is the cobordism group of
$(n-1)$-dimensional quadratic chain complexes over $X$ that are
locally Poincar\'e, locally $1$-connective and globally
contractible. All these groups fit into the algebraic surgery exact
sequence:
\begin{equation} \label{eqn:ses}
\cdots \ra H_n (X,\bL_\bullet \langle 1 \rangle) \xra{A} L_n
(\ZZ[\pi_1 (X)]) \xra{\del} \SS_n (X) \xra{I} H_{n-1} (X;\bL_\bullet
\langle 1 \rangle) \ra \cdots
\end{equation}
The map $I$ is induced by the inclusion of categories and the map
$\del$ will be described below.

\subsection{The total surgery obstruction $s(X) \in \SS_n (X)$} We
need to explain how to associate to $X$ an $(n-1)$-dimensional
quadratic chain complex over $X$ that is locally Poincar\'e, locally
$1$-connective and globally contractible.

Being Poincar\'e is by definition a global condition. What local
structure does a Poincar\'e complex have? The answer is the
structure of a normal complex.

An $n$-dimensional normal complex $(Y,\nu,\rho)$ consists of a space
$Y$, a $k$-dimensional spherical fibration $\nu \co Y \ra \BSG (k)$
and a map $\rho \co S^{n+k} \ra \Th (\nu)$. There is also a notion
of a normal pair, a normal cobordism, and normal cobordism groups. A
Poincar\'e complex $X$ embedded into a large euclidean space has a
regular neighborhood. Its boundary produces a model for the SNF
$\nu_X$ and collapsing the boundary gives a model for the Thom space
$\Th (\nu_X)$ with the collapse map $\rho_X$. In a general normal
complex the underlying space $Y$ does not have to be Poincar\'e.
Nevertheless it has a preferred homology class $h(\rho) \cap u(\nu)
= [Y] \in C_n (Y)$, where $u (\nu) \in C^k (\Th (\nu))$ is some
choice of the Thom class and $h$ denotes the Hurewicz homomorphism.
The class $[Y]$ produces a preferred equivalence class of symmetric
structures on $C(Y)$.

There exists a notion of an $n$-dimensional normal algebraic complex
$(C,\theta)$ over any additive category with chain duality $\AA$. At
this stage we only say that the normal structure $\theta$ contains a
symmetric structure, and should be seen as a certain refinement of
that symmetric structure.\footnote{The details are presented in
section \ref{sec:normal-cplxs}} Again one can consider normal
complexes in an algebraic bordism category $\Lambda$, specifying an
interesting subcategory and the type of Poincar\'e duality on the
underlying symmetric structure. The cobordism groups are denoted
$NL^n (\Lambda)$ and there are also associated spectra $\bNL^\bullet
(\Lambda)$. For a ring $R$ we have the cobordism groups $NL^n (R)$
of $n$-dimensional normal complexes over $R$ with no Poincar\'e
duality in this case! A geometric normal complex $(Y,\nu,\rho)$
gives rise to a normal algebraic complex, called the normal
signature
\begin{equation} \label{eqn:norm-sign}
 \nsign_{\ZZ [\pi_1 (Y)]} (Y) \in NL^n (\ZZ[\pi_1 (Y)]).
\end{equation}
whose symmetric substructure is the one associated to its
fundamental class $[Y]$.

So how is a Poincar\'e complex $X$ locally normal? For a simplex
$\sigma \in X$ consider the dual cell $(D(\sigma),\del D(\sigma))$,
which is a pair of spaces, not necessarily Poincar\'e. The SNF
$\nu_X$ can be restricted to $(D(\sigma),\del D(\sigma))$, remaining
a spherical fibration, say $(\nu_X (\sigma),\nu_X (\del \sigma))$. A
certain trick\footnote{Presented in section
\ref{sec:normal-signatures-over-X}} is needed to obtain a map
\[
\rho(\sigma) \co (D^{n+k-|\sigma|},S^{n-1+k-|\sigma|}) \ra (\Th
(\nu_X (\sigma)),\Th (\nu_X (\del \sigma)))
\]
providing us with a normal complex ``with boundary''. The collection
of these gives rise to a compatible collection of normal algebraic
complexes over $\ZZ$ and we obtain an $n$-dimensional normal
algebraic complex over $X$ whose symmetric substructure is globally
Poincar\'e. As such it can be viewed as a normal complex in two
distinct algebraic bordism categories.

There is the category $\widehat \Lambda (\ZZ) \langle 1 / 2 \rangle
(X)$, which is a $1/2$-connective version of all normal complexes
over $X$ with no Poincar\'e duality. We obtain the $1/2$-connective
normal signature of $X$ over $X$\footnote{The connectivity condition
turns out to be fulfilled, see section \ref{sec:conn-versions} for
explanation if needed}
\begin{equation}  \label{eqn:norm-sign-over-X}
 \nsign_X (X) \in NL^n (\widehat \Lambda (\ZZ) \langle 1/2 \rangle (X)) \cong  H_n (X, \bNL^\bullet \langle 1/2 \rangle).
\end{equation}
Similarly as before the assembly of (\ref{eqn:norm-sign-over-X})
becomes (\ref{eqn:norm-sign}).

Then there is the category $\Lambda (\ZZ) \langle 1 / 2 \rangle
(X)$, which is a $1/2$-connective version of all normal complexes
over $X$ with global Poincar\'e duality. The cobordism group $NL^n
(\Lambda (\ZZ) \langle 1 / 2 \rangle (X))$ is called the
$1/2$-connective visible symmetric group and denoted $VL^n (X)$. We
obtain the visible signature of $X$ over $X$
\begin{equation} \label{eqn:visible-signature}
 \vsign_X (X) \in VL^n (X).
\end{equation}
The forgetful functor $\Lambda (\ZZ) \langle 1 / 2 \rangle (X) \ra
\widehat \Lambda(\ZZ) \langle 1 / 2 \rangle (X)$ induces a map on
$NL$-groups which sends (\ref{eqn:visible-signature}) to
(\ref{eqn:norm-sign-over-X}).

But we are after an $(n-1)$-dimensional quadratic complex. To obtain
it we need in addition the concept of a boundary of a structured
chain complex. Consider an  $n$-dimensional symmetric complex
$(C,\varphi)$ over any additive category with chain duality $\AA$.
Its boundary $(\del C ,\del \varphi)$ is an $(n-1)$-dimensional
symmetric complex in $\AA$ whose underlying chain complex is defined
as $\del C = \Sigma^{-1} \sC (\varphi_0)$. The $(n-1)$-dimensional
symmetric structure $\del \varphi$ is inherited from $\varphi$. It
becomes Poincar\'e in $\AA$, meaning $\sC ((\del \varphi)_0)$ is
contractible in $\AA$, by a formal argument.\footnote{The choice of
terminology is explained below Definition \ref{defn:symbdy}} The
boundary $(\del C,\del \varphi)$ can be viewed as measuring how the
complex $(C,\varphi)$ itself is Poincar\'e in $\AA$. It is shown in
Proposition \ref{lem:normal-gives-quadratic-boundary} that an
$n$-dimensional symmetric complex $(C,\varphi)$ which is a part of a
normal complex $(C,\theta)$ comes with a quadratic refinement $\del
\psi$ of the symmetric structure $\del \varphi$ on the
boundary.\footnote{The normal structure on $C$ provides a second
stable symmetric structure on $\del C$ in addition to $\del \varphi$
and the two structures stably coincide. Such a situation yields a
quadratic structure.}

From this description it follows that the boundary produces the
following two maps:
\begin{equation}
\del \co L_n (\ZZ[\pi]) \ra \SS_n (X) \quad \textup{and} \quad \del
\co VL^n (X) \ra \SS_n (X).
\end{equation}

The total surgery obstruction $s(X)$ is defined as the
$(n-1)$-dimensional quadratic complex over $X$, obtained as the
boundary of the visible signature
\begin{equation}
 s (X) = \del \; \vsign_X (X) \in \SS_n (X).
\end{equation}
It is locally Poincar\'e, because by the above discussion any
boundary of a complex over $X$ is locally Poincar\'e. It is also
globally contractible since $X$ is Poincar\'e and hence the boundary
of the assembled structure is contractible. The connectivity
assumption is also fulfilled.

\


\begin{mainthm} \textup{\cite[Theorem 17.4]{Ranicki(1992)}}  \label{main-thm}

Let $X$ be a finite Poincar\'e complex of formal dimension $n \geq
5$. Then $X$ is homotopy equivalent to a closed $n$-dimensional
topological manifold if and only if
\[
 0 = s(X) \in  \SS_n (X).
\]
\end{mainthm}

The proof is based on:

\begin{maintechthm} \quad \label{thm:main-technical-thm}

Let $X$ be a finite Poincar\'e complex of formal dimension $n \geq
5$ and denote by $t(X) = I(s(X)) \in H_{n-1} (X,\bL_\bullet \langle
1 \rangle)$. Then we have
\begin{enumerate}
\item[(I)] $t(X) = 0$ if and only if there exists a topological block bundle reduction of the SNF $\nu_X \co X \ra \BSG$.
\item[(II)] If $t(X) = 0$ then we have
\begin{align*}
\partial^{-1} s(X) = \{ & - \qsign_{\ZZ[\pi_1 (X)]} (f,b) \in L_n (\ZZ[\pi_1 (X)])
\; | \\
& (f,b) : M \ra X \; \textrm{degree one normal map}, \; M \;
\textrm{manifold} \}.
\end{align*}
\end{enumerate}
\end{maintechthm}

\begin{proof}[Proof of \mainth \;assuming \maintechth]

\

If $X$ is homotopy equivalent to a manifold then $t(X) = 0$ and by
(II) the set  $\partial^{-1} s(X)$ contains $0$, hence $s(X) = 0$.

If $s(X)=0$ then $t(X)=0$ and hence by (I) the SNF of $X$ has a
topological block bundle reduction. Also $\partial^{-1} s(X)$ must
contain $0$ and hence by (II) there exists a degree one normal map
with the target $X$ and with the surgery obstruction $0$.
\end{proof}

\begin{rem}
The condition (I) might be puzzling for the following reason. As
recalled earlier, the classical surgery gives an obstruction to the
reduction of the SNF in the group $[X,\textup{B}(\G/\TOP)] = H^1
(X;\G/\TOP)$. It is important to note that here the
$\Omega^\infty$-space structure used on $\G/\TOP$ corresponds to the
Whitney sum and hence not the one that is compatible with the
well-known homotopy equivalence $\G/\TOP \simeq \bL \langle 1
\rangle_0$. On the other hand $t(X) \in H_{n-1} (X; \bL_\bullet
\langle 1 \rangle)$. We note that the claim of (I) is NOT that the
two groups are isomorphic, it merely says that one obstruction is
zero if and only if the other is zero.
\end{rem}


\subsection{Informal discussion of the proof of \maintechth}

\

Part (I): The crucial result is the relation between the quadratic,
symmetric, and normal $L$-groups of a ring $R$ via the long exact
sequence
\begin{equation} \label{eqn:quad-sym-norm-L-groups}
   \xymatrix{
      \ldots         \ar[r]             &
      L_n (R)        \ar[r]^{1+T}       &
      L^n (R)        \ar[r]^J           &
      NL^n (R)       \ar[r]^{\partial}     &
      L_{n-1} (R)     \ar[r]             &
      \ldots
    }
\end{equation}
Here the maps $1+T$ and $\del$ were already discussed. The map $J$
exists because a symmetric Poincar\'e structure on a chain complex
yields a preferred normal structure, reflecting the observation that
a Poincar\'e complex has the SNF and hence gives a geometric normal
complex. Using suitable connective versions there is a related
homotopy fibration sequence of spectra (here the implicit ring is
$\ZZ$)
\begin{equation} \label{fib-seq:quad-sym-norm-long}
\bL_\bullet \langle 1 \rangle \ra \bL^\bullet \langle 0 \rangle \ra
\bNL^\bullet \langle 1/2 \rangle \ra \Sigma \bL_\bullet \langle 1
\rangle.
\end{equation}
The exactness of (\ref{eqn:quad-sym-norm-L-groups}) and fibration
property of (\ref{fib-seq:quad-sym-norm-long}) are not easily
observed. It is a result of \cite{Weiss-I(1985),Weiss-II(1985)} for
which we offer some explanation in section \ref{sec:normal-cplxs}.
The sequence (\ref{fib-seq:quad-sym-norm-long}) induces a long exact
sequence in homology
\begin{equation} \label{les:hlgy-of-L-thy-spectra}
\cdots \ra H_n (X ; \bL^\bullet \langle 0 \rangle) \xra{J} H_n (X;
\bNL^\bullet \langle 1/2 \rangle) \xra{\del} H_{n-1} (X ;
\bL_\bullet \langle 1 \rangle) \ra \cdots
\end{equation}

Another tool is the $S$-duality from stable homotopy theory, which
gives
\begin{equation} \label{eqn:S-duality}
H_n (X ; \bE) \cong H^k (\Th (\nu_X) ; \bE) \quad \textup{with} \;
\bE = \bL_\bullet \langle 1 \rangle, \bL^\bullet \langle 0 \rangle,
\; \textup{or} \; \bNL^\bullet \langle 1/2 \rangle.
\end{equation}
and transforms the exact sequence (\ref{les:hlgy-of-L-thy-spectra}) into an exact sequence in cohomology of the Thom space $\Th (\nu_X)$. 

The proof of the theorem is organized within the following
commutative braid:

\vspace{1cm}

\[
\xymatrix@C=0.5em{ H_n (X;\bL^\bullet \langle 0 \rangle) \ar[dr]
\ar@/^2.5pc/[rr] & & H_n (X;\bNL^\bullet \langle 1/2 \rangle)
\ar[dr] \ar@/^2.5pc/[rr] & &
L_{n-1}\ (\ZZ[\pi]) \\
& VL^n (X) \ar[ur] \ar[dr] & & H_{n-1} (X;\bL_\bullet \langle 1 \rangle) \ar[ur] \ar[dr] \\
L_n (\ZZ[\pi]) \ar[ur] \ar@/_2.5pc/[rr] & & \SS_{n} (X) \ar[ur]
\ar@/_2.5pc/[rr] & & H_{n-1}\ (X;\bL^\bullet \langle 0 \rangle) }
\]

\vspace{1cm}

We observe that
\begin{equation} \label{eqn:reduction-obstruction}
  t (X) = \del \; \nsign_X (X) \in H_{n-1} (X;\bL_\bullet \langle 1 \rangle)
\end{equation}
with the normal signature over $X$ from
(\ref{eqn:norm-sign-over-X}).

Assuming the above, the proof proceeds as follows. If $\nu_X$ has a
reduction, then it has an associated degree one normal map $(f,b)
\co M \ra X$ which can be made transverse to the dual cells of $X$.
For each $\sigma$ the preimage $(M(\sigma),\del M(\sigma))$ of the
dual cell $(D(\sigma),\del D(\sigma))$ is an
$(n-|\sigma|)$-dimensional submanifold with boundary and
generalizing (\ref{eqn:sym-sign-over-X}) we obtain
\begin{equation} \label{eqn:sym-sign-of-mfd-over-X}
 \ssign_X (M) \in L^n (\Lambda(\ZZ) \langle 0 \rangle_\ast (X)) \cong H_n (X ; \bL^\bullet \langle 0 \rangle)
\end{equation}
The mapping cylinder of the degree one normal map $(f,b)$ becomes a
normal cobordism between $M$ and $X$ and, as such, it produces a
normal algebraic cobordism between $J (\ssign_X (M))$ and $\nsign_X
(X)$. In other words the symmetric signature $\ssign_X (M)$ is a lift of the normal signature $\nsign_X (X)$ from (\ref{eqn:norm-sign-over-X}) and it follows from the exact sequence
(\ref{les:hlgy-of-L-thy-spectra}) that $t(X)$ vanishes.

The crucial concept used in the proof of the other direction is that
of an orientation of a spherical fibration with respect to a ring
spectrum, such as $\bL^\bullet \langle 0 \rangle$ and $\bNL^\bullet
\langle 1/2 \rangle$. For such a ring spectrum an $\bE$-orientation
of the SNF $\nu_X$ is an element in $H^k (\Th (\nu_X) ; \bE)$ with a
certain property. By the $S$-duality (\ref{eqn:S-duality}) it
corresponds to a homology class in $H_n (X, \bE)$. It turns out that
the SNF $\nu_X$ has a certain canonical $\bNL^\bullet \langle 1/2
\rangle$-orientation which corresponds to the normal signature
(\ref{eqn:norm-sign-over-X}) in this way. Similarly if there is a
reduction of $\nu_X$ with a degree one normal map $(f,b) \co M \ra
X$, then it gives an $\bL^\bullet \langle 0 \rangle$-orientation of
$\nu_X$ which corresponds to the symmetric signature
(\ref{eqn:sym-sign-of-mfd-over-X}) of $M$ over $X$.

Theorem \ref{thm:lifts-vs-orientations} says that a spherical
fibration has a topological block bundle reduction if and only if
its canonical $\bNL^\bullet \langle 1/2 \rangle$-orientation has an
$\bL^\bullet \langle 0 \rangle$-lift. The~proof is by analyzing
classifying spaces for spherical fibrations with orientations, a
certain diagram (Proposition
\ref{prop:canonical-L-orientations-on-class-spaces}) is shown to be
a homotopy pullback. Here is used the fact that the surgery
obstruction map $\pi_n (\G/\TOP) \ra L_n (\ZZ)$ is an isomorphism
for $n > 1$.

Part (II): To show the inclusion of the right hand side one needs to study the quadratic signatures over $X$
of degree one normal maps $(f,b) \co M \ra X$ with $M$ an $n$-dimensional closed manifold and $X$ an $n$-dimensional Poincar\'e complex. That means studying the local structure of such maps which boils down to studying quadratic signatures of degree one normal maps $(g,c) \co N \ra Y$ where $Y$ is only a normal complex. In this case one obtains a non-Poincar\'e quadratic complex whose boundary can be related to the quadratic boundary of the normal complex $Y$ as shown in Proposition \ref{prop:degree-one-normal-map-mfd-to-normal-cplx}. Passing to complexes over $X$ one obtains a quadratic complex over $X$, still denoted $\qsign_X (f,b)$ although it is not locally Poincar\'e, whose boundary is described in Proposition \ref{prop:degree-one-normal-map-mfd-to-poincare-over-X} establishing the required inclusion. 

To study the other inclusion a choice is made of a degree one normal map $(f_0,b_0) \co M_0 \ra X$. Recall that all degree one normal maps with the target $X$ are organized in the cobordism set of the normal invariants $\sN (X)$. One considers the surgery obstruction map relative to $(f_0,b_0)$  
\begin{equation} \label{eqn:surgery-obstruction-map}
\qsign_{\ZZ [\pi_1 (X)]} (-,-) - \qsign_{\ZZ [\pi_1 (X)]} (f_0,b_0) \co \sN (X) \ra L_n (\ZZ [\pi_1 (X)]).
\end{equation}
The signature $\qsign_X$ over $X$ relative to $(f_0,b_0)$  produces a map from the normal invariants $\sN (X)$ to the homology group $H_n (X ; \bL_\bullet \langle 1 \rangle)$. The main technical result is now Proposition \ref{prop:identification} which states that this map provides us with an identification of (\ref{eqn:surgery-obstruction-map}) with the assembly map (\ref{eqn:assembly-map}) for $X$. In particular it says that $\qsign_X$ relative to $(f_0,b_0)$ produces a bijection. Via the standard identification $\sN (X) \cong [X;\G/\TOP]$ and the bijection $[X,\G/\TOP] \cong H^0 (X;\bL_\bullet \langle 1 \rangle)$ (using the Kirby-Siebenmann isomorphism again) this boils down to identifying $\qsign_X$ with the Poincar\'e duality with respect to the spectrum $\bL_\bullet \langle 1 \rangle$. Here, similarly as in part (I), a relationship between the signatures and orientations with respect to the $L$-theory spectra plays a prominent role (Proposition \ref{prop:S-duals-of-orientations-are-signatures-relative-case-non-mfd} and Lemma \ref{lem:refined-orientations-vs-cup-product}).

\subsection*{The purpose of the paper}
As the title suggests this article revisits the existing theory which was developed over decades by Andrew Ranicki, with contributions also due to Michael Weiss. On one hand it is meant as a guide to the theory. We decided to write such a guide when we were learning the theory. It turned out that results of various sources needed to be combined and we felt that it might be a good idea to have them in one place. The sources are \cite{Ranicki(1979),Ranicki(1981),Levitt-Ranicki(1987),Ranicki(1992)}, and also \cite{Weiss-I(1985),Weiss-II(1985)}. On the other hand, we found certain statements which were correct, but without proofs, which we were able to supply. These are:
\begin{itemize}

\item The fact that the quadratic boundary of a certain (normal, Poincar\'e) geometric pair associated to a degree one normal map from a manifold to a Poincar\'e space agrees with the surgery obstruction of that map is proved in our Example \ref{expl:normal-symm-poincare-pair-gives-quadratic}. The claim was stated in \cite[page 622]{Ranicki(1981)} without proof. The proposition preceding the claim suggests the main idea of the proof, but we felt that writing it down is needed.

\item The construction of the normal signature $\nsign_X (X)$ in section 11 for an $n$-dimensional geometric Poincar\'e complex $X$. This was claimed to exist in \cite[Example 9.12]{Ranicki(1992)} (see also \cite[Errata for page 103]{Errata-Ranicki(1992)}), for $X$ any $n$-dimensional geometric normal complex. We provide details of this construction when $X$ is Poincar\'e, which is enough for our purposes.


\item In the proof of Theorem \ref{thm:lifts-vs-orientations} a certain map has to be identified with the surgery obstruction map. The identification was claimed in \cite[page 291]{Ranicki(1979)} without details. Theorem \ref{thm:lifts-vs-orientations} is also essentially equivalent to \cite[Proposition 16.1]{Ranicki(1992)}, which has a sketch proof and is referenced back to \cite{Ranicki(1979)} for further details.

\item The relation between the quadratic complex associated to a degree one normal map from a manifold to a normal complex and the quadratic boundary of the normal complex itself as described in Proposition \ref{prop:degree-one-normal-map-mfd-to-normal-cplx}. We also provide the proof of Proposition \ref{prop:degree-one-normal-map-mfd-with-boundary-to-normal-pair} which is a relative version of Proposition \ref{prop:degree-one-normal-map-mfd-to-normal-cplx} and it is also an ingredient in the proof of Proposition \ref{prop:degree-one-normal-map-mfd-to-poincare-over-X} which gives information about the quadratic signature over $X$ of a degree one normal map from a manifold to a Poincar\'e complex $X$. Proposition \ref{prop:degree-one-normal-map-mfd-to-normal-cplx} was stated as \cite[Proposition 7.3.4]{Ranicki(1981)}, but only contained a sketch proof. Proposition \ref{prop:degree-one-normal-map-mfd-to-poincare-over-X} is used in the proof of \cite[Theorem 17.4]{Ranicki(1992)}.
\end{itemize}

Over time we have also heard from several other mathematicians in the area the need for such clarifications. We believe that with this paper we provide an answer to these questions and that the proof of the main theorem as presented here is complete. We also hope that our all-in-one-package paper makes the presentation of the whole theory surrounding the total surgery obstruction more accessible. We would be grateful for comments from an interested reader should there still be unclear parts.

It should be noted however, that we do not bring new technology to the proof, nor do we state any new theorems. Our supplying of the proofs as listed above is in the spirit of the two main sources \cite{Ranicki(1979)} and \cite{Ranicki(1992)}.

\subsection*{Structure}
The reader will recognize that our table of contents closely follows part I and the first two sections of part II of the book \cite{Ranicki(1992)}. We find most of the book a very good and readable source. So in the background parts of this paper we confine ourselves to survey-like treatment. In the parts where we felt the need for clarification, in particular the proof of the main theorem, we provide the details.

The reader of this article should be familiar with the classical surgery theory and at least basics of the algebraic surgery theory. Sections \ref{sec:algebraic-cplxs} to \ref{sec:surgery-sequences} contain a summary of the results from part I of \cite{Ranicki(1992)} which are needed to explain the theory around the main problem, sometimes equipped with informal comments. The reader familiar with these results can skip those sections and start reading section \ref{sec:normal-signatures-over-X}, where the proof of the main theorem really begins. In case the reader is familiar with everything except normal complexes, he may consult in addition section \ref{sec:normal-cplxs}.

\subsection*{Literature}
Besides the above mentioned sources some background can be found in \cite{Ranicki-I-(1980)}, \cite{Ranicki-II-(1980)}, \cite{Ranicki-foundations-(2001)}, \cite{Ranicki-structure-set-(2001)}.

\subsection*{Note.} Parts of this work will be used in the PhD thesis of Philipp K\"uhl.

\subsection*{Acknowledgments.}
We would like to thank Andrew Ranicki for stimulating lectures on algebraic surgery in Fall 2008, for answering numerous questions, and for generous support. We would also like to thank Ian Hambleton for inspiring conversations and Frank Connolly, Diarmuid Crowley, Qayum Khan, Wolfgang L\"uck and Martin Olbermann for comments and suggestions on the first draft of this paper.


\section{Algebraic complexes} \label{sec:algebraic-cplxs}


In this section we briefly recall the basic concepts of algebraic surgery. The details can be found in \cite{Ranicki-I-(1980),Ranicki-II-(1980)} and \cite[chapter 1]{Ranicki(1992)}.

Throughout the paper $\AA$ denotes an additive category and $\BB(\AA)$ denotes the category of bounded chain complexes in $\AA$. The total complex of a double chain complex can be used to extend a contravariant functor $T \co \AA \ra \BB (\AA)$ to a contravariant functor $T \co \BB(\AA) \ra \BB (\AA)$ as explained in detail in
\cite[page 26]{Ranicki(1992)}.

\begin{defn} \label{defn:chain-duality}
A \emph{chain duality} on an additive category $\AA$ is a pair
$(T,e)$ where
    \begin{itemize}
        \item $T$ is a contravariant functor $T:\AA\ra\BB(\AA)$
        \item $e$ is a natural transformation
              $e:T^{2}\ra(\id:\AA\ra\BB(\AA))$ such that
            \begin{itemize}
                \item $e_{M}:T^{2}(M)\ra M$ is a chain equivalence.
                \item $e_{T(M)}\circ T(e_{M}) = \id$.
            \end{itemize}
    \end{itemize}
\end{defn}

The extension $T \co \BB(\AA) \ra \BB (\AA)$ mentioned before the
definition defines the dual $T(C)$ for a chain complex $C \in \BB
(\AA)$. A chain duality $T \colon \AA \rightarrow \BB(\AA)$ can be
used to define a tensor product of two objects $M$, $N$ in $\AA$
over $\AA$ as
\begin{equation} \label{tensor-product}
M \otimes_\AA N = \Hom_\AA (T(M),N),
\end{equation}
which is a priori just a chain complex of abelian groups. This definition generalizes for chain complexes
$C$ and $D$ in $\BB(\AA)$:
\[
C\otimes_{\AA}D \coloneqq \Hom_{\AA}(T(C),D).
\]

\begin{expl} \label{expl:R-duality}
Let $R$ be a ring with involution $r\mapsto\bar r$, for example for $R = \ZZ[\pi]$, the group ring of a group $\pi$, we have involution given by $\bar g=g^{-1}$ for $g\in\pi$.

The category $\AA(R)$ of finitely generated free left $R$-modules possesses a chain duality by $T(M) = \Hom_R (M,R)$. The involution can be used to turn an a~priori right
$R$-module $T(M)$ into a left $R$-module. The dual $T(C)$ of a bounded chain complex $C$ over $R$ is $\Hom_R(C,R)$.
\end{expl}

Chain duality is important because it enables us to define various concepts of Poincar\'e duality as we will see. Although the chain dual $T(M)$ in the above example is concentrated in dimension $0$, this is not necessarily the case in general. In section \ref{sec:cat-over-cplxs} we will see examples where this generality is important.

\begin{notation}\label{defn:W}
Let $W$ and $\widehat{W}$  be the canonical free $\ZZ[\ZZ_2]$-resolution and the free periodic $\ZZ[\ZZ_2]$-resolution of $\ZZ$ respectively:
\[
 \xymatrix{
      W := &
      \ldots   \ar[r]^-{1+T} &
      \Ztwo    \ar[r]^-{1-T} &
      \Ztwo    \ar[r]      &
      0 \quad & \quad
 }
\]
\[
 \xymatrix{
      \widehat{W} := &
      \ldots   \ar[r]^{1+T} &
      \Ztwo    \ar[r]^{1-T} &
      \Ztwo    \ar[r]^{1+T} &
      \Ztwo    \ar[r]^-{1-T} &
      \ldots
    }
\]
\end{notation}

The chain duality $T$ can be used to define an involution $T_{C,C}$ on $C\otimes_{\AA}C$ which makes it into a $\ZZ[\ZZ_{2}]$-module chain complex, see \cite[page 29]{Ranicki(1992)}.

\begin{defn} We have the following chain complexes of abelian
groups:
\begin{align*}
\Wq C & \coloneqq W\otimes_{\ZZ[\ZZ_{2}]}(C\otimes_{\AA}C) \\
\Ws C & \coloneqq \Hom_{\ZZ[\ZZ_{2}]}(W,C\otimes_{\AA}C) \\
\Wh C & \coloneqq \Hom_{\ZZ[\ZZ_{2}]}(\widehat{W},C\otimes_{\AA}C)
\end{align*}
\end{defn}

\begin{notation}\label{defn:fstar}
Let $f \colon C \rightarrow D$ be a chain map in $\BB(\AA)$. Then
the map of $\ZZ[\ZZ_2]$-chain complexes $f \otimes f \colon C
\otimes_\AA C \rightarrow D \otimes_\AA D$ induces chain maps
\[
f_{\%} \colon \Wq{C} \rightarrow \Wq{D} \quad f^{\%} \colon \Ws{C}
\rightarrow \Ws{D} \quad \widehat{f}^{\%} \colon \Wh{C} \rightarrow
\Wh{D}
\]
\end{notation}

\begin{defn} \label{defn:structures-on-chain-complexes}
Let $C$ be a chain complex in $\BB(\AA)$. An $n$-dimensional
\emph{symmetric structure} on $C$ is an $n$-dimensional cycle
$\varphi \in \Ws C _n$. An $n$-dimensional \emph{quadratic
structure} on $C$ is an $n$-dimensional cycle $\psi \in \Wq C _n$.
An $n$-dimensional \emph{hyperquadratic structure} on $C$ is an
$n$-dimensional cycle $\theta \in \Wh C _n$.
\end{defn}

Note that the dimension $n$ refers only to the degree of the element $\varphi$, $\psi$, or $\theta$ and does not mean that the chain complex $C$ has to be concentrated between degrees $0$ and $n$. 

\begin{notation} \label{notn:suspension}
On chain complexes we use the operations of {\it suspension} defined by $(\Sigma C)_n = C_{n-1}$ and {\it desuspension} defined by $(\Sigma^{-1} C)_n = C_{n+1}$. If $X$ is a well-based topological space we can consider the reduced suspension $\Sigma X$. For the singular chain complexes $C(X)$ and $C(\Sigma X)$ and there is a natural chain homotopy equivalence which we denote $\Sigma \co C(X) \ra \Sigma^{-1} C (\Sigma X)$, see \cite[section 1]{Ranicki-I-(1980)} if needed. Sometimes we use the same symbol for the associated map of degree one of chain complexes $\Sigma \co C(X) \ra C(\Sigma X)$. 
\end{notation}

\begin{remark}
The structures on a chain complex $C$ from Definition \ref{defn:structures-on-chain-complexes} can also be described in terms of theirs components. Abbreviating $C^{m-\ast} = \Sigma^m TC$, an element $\varphi \in \Ws C _n$ is a collection of maps $\{ \varphi_s \colon C^{n+s-\ast} \rightarrow C | s \in \mathbb{N} \}$, an element $\psi \in \Wq C _n$ is a collection of maps $\{ \psi_s \colon C^{n-s-\ast} \rightarrow C | s \in \mathbb{N} \}$, and an element $\theta \in \Wh C _n$ is a collection of maps $\{ \theta_s \colon C^{n+s-\ast} \rightarrow C | s \in \ZZ \}$, all of them satisfying certain identities, see \cite[page 30]{Ranicki(1992)}. In the symmetric case these identities describe each $\varphi_s$ as a chain homotopy between $\varphi_{s-1}$ and $T\varphi_{s-1}$.
\end{remark}

\begin{defn} \label{defn:Q-grps}
For a $C \in \BB(\AA)$ the \emph{$Q$-groups} of $C$ are defined by
\[
\Qq n C = H_n (\Wq C) \qquad \Qs n C = H_n (\Ws C) \qquad \Qh n C =
H_n (\Wh C)
\]
\end{defn}

\begin{prop} \textup{\cite[Proposition 1.2]{Ranicki-I-(1980)}} \label{propn:LESQ}
For a chain complex  $C \in \BB(\AA)$ we have a long exact sequence of $Q$-groups
  \[
    \xymatrix{
      \ldots          \ar[r]             &
      \Qq{n}{C}       \ar[r]^-{1+T}      &
      \Qs{n}{C}       \ar[r]^-J          &
      \Qh{n}{C}       \ar[r]^-H          &
      \Qq{n-1}{C}     \ar[r]             &
      \ldots
    }
  \]
\end{prop}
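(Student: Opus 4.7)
The plan is to realize the sequence as the long exact sequence in homology associated with a short exact sequence of chain complexes. The input is the canonical short exact sequence of $\ZZ[\ZZ_2]$-chain complexes
\[
0 \longrightarrow \widehat{W}_{<0} \longrightarrow \widehat{W} \xra{\pi} W \longrightarrow 0,
\]
where $\pi$ is the projection onto the non-negative degree part; this is a chain map because the negative subcomplex $\widehat{W}_{<0}$ is closed under the differential of $\widehat{W}$. Since every term of $\widehat{W}$ is a free (hence projective) $\ZZ[\ZZ_2]$-module, the sequence is degree-wise split, and applying $\Hom_{\ZZ[\ZZ_2]}(-, C \otimes_\AA C)$ yields a short exact sequence of chain complexes of abelian groups
\[
0 \longrightarrow \Ws{C} \xra{J} \Wh{C} \longrightarrow \Hom_{\ZZ[\ZZ_2]}(\widehat{W}_{<0}, C \otimes_\AA C) \longrightarrow 0.
\]

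Next I would identify the cokernel with $\Sigma \Wq{C}$. The chain complex $\widehat{W}_{<0}$ is, after a shift by one degree, chain isomorphic to the $\ZZ$-linear dual of $W$ with the $\ZZ[\ZZ_2]$-action twisted by the involution; informally, each generator in negative degree of $\widehat{W}$ is naturally paired with a generator in non-negative degree of $W$. Combined with the standard adjunction $\Hom_{\ZZ[\ZZ_2]}(W^\vee, D) \cong W \otimes_{\ZZ[\ZZ_2]} D$, available because $W$ is free, this identifies the cokernel with $\Sigma \Wq{C}$. The short exact sequence becomes
\[
0 \longrightarrow \Ws{C} \xra{J} \Wh{C} \longrightarrow \Sigma \Wq{C} \longrightarrow 0,
\]
whose long exact sequence in homology produces exactly the six-term exact sequence of the proposition, with $J$ and the quotient-induced $H$ appearing as displayed.

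The final task is to identify the connecting homomorphism $\del \co \Qq{n-1}{C} \to \Qs{n-1}{C}$ with the symmetrization map $1+T$. This is the standard snake-lemma computation: lift a representing cycle from $\Sigma \Wq{C}$ through the quotient to an element of $\Wh{C}$, apply the differential to land in $\Ws{C}$, and read off the resulting class. Tracing this through the explicit cokernel identification in the previous step one sees that $\del$ is precisely $1+T$. The principal technical obstacle in carrying out the plan is the identification of $\widehat{W}_{<0}$ with the dualized shift of $W$, together with the sign- and degree-bookkeeping needed to match the connecting homomorphism with $1+T$; once set up, both are direct calculations with the explicit $\ZZ[\ZZ_2]$-generators.
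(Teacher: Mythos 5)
Your proposal is correct and follows essentially the same route the paper does: the paper's proof consists precisely of exhibiting the short exact sequence $0 \to \Ws{C} \xra{J} \Wh{C} \to \Sigma \Wq{C} \to 0$ and taking its long exact homology sequence, identifying the connecting map with $1+T$. You simply fill in the details the paper leaves implicit — deriving that sequence by applying $\Hom_{\ZZ[\ZZ_2]}(-, C\otimes_\AA C)$ to $0 \to \widehat{W}_{<0} \to \widehat{W} \to W \to 0$ (using freeness for the degreewise splitting), identifying the cokernel with $\Sigma\Wq{C}$ via the $\ZZ[\ZZ_2]$-duality between $\widehat{W}_{<0}$ and $W$, and running the snake lemma — which is exactly how the cited \cite[Proposition 1.2]{Ranicki-I-(1980)} proceeds.
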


The sequence is induced from the short exact sequence of chain complexes
\[
\xymatrix{ 0 \ar[r] & \Ws{C} \ar[r] & \Wh{C} \ar[r] & \Sigma \Wq{C}
\ar[r] & 0 }
\]
The connecting map
\begin{equation} \label{eqn:symmetrization-map}
1+T \co \Wq C \ra \Ws C \qquad ((1+T)\psi)_s = \begin{cases} (1+T)\psi_0 & \textup{if} \; s = 0 \\ 0 & \textup{if} \; s \geq 1 \end{cases}
\end{equation}
is called the \emph{symmetrization map}.

\begin{definition}\label{defn:nSAPC-nQAPC}
An $n$-dimensional \emph{symmetric algebraic complex}
(SAC) in $\AA$ is a pair $(C,\varphi)$ where $C \in \BB(\AA)$ and
$\varphi$ is an $n$-dimensional symmetric structure on $C$. It is called \emph{Poincar\'{e}} (SAPC) if
$\varphi_0$ is a chain homotopy equivalence.

An $n$-dimensional \emph{quadratic algebraic complex}
(QAC) in $\AA$ is a pair $(C,\psi)$ where $C \in \BB(\AA)$ and
$\psi$ is an $n$-dimensional quadratic structure on $C$. It is called \emph{Poincar\'{e}} (QAPC) if
$((1+T) \cdot \psi)_0$ is a chain homotopy equivalence.
\end{definition}

An analogous notion for hyperquadratic complexes is not defined. The
following construction helps to understand the exact sequence of
Proposition \ref{propn:LESQ}.

\begin{definition}\label{defn:suspension}
Let~$C$ be a chain complex $\BB(\AA)$. The \emph{suspension} maps
\[
S \colon \Ws{C} \rightarrow \susp^{-1} (\Ws{\susp C}) \qquad S
\colon \Wh{C} \rightarrow \susp^{-1} (\Wh{\susp C})
\]
are defined by
\[
  (S(\varphi))_k := \varphi_{k-1} \qquad (S(\theta))_k := \theta_{k-1}
\]
\end{definition}

\begin{prop}  \label{prop:sups-Q-groups}
The hyperquadratic $Q$-groups are the stabilization of the symmetric
$Q$-groups:
\[
  \Qh{n}{C} = \underset{k \rightarrow \infty}{\textup{colim}} \,
  \Qs{n+k}{ \susp^k C }.
\]
Moreover, the suspension induces an isomorphism on hyperquadratic
$Q$-groups:
\[
 S \co \Qh{n}{C} \xra{\cong} \Qh{n+1}{\Sigma C}.
\]
\end{prop}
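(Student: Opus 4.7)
The plan is to first establish the suspension isomorphism on hyperquadratic $Q$-groups, and then deduce the colimit description using the long exact sequence of Proposition \ref{propn:LESQ}.

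For the suspension isomorphism, I would show directly that $S \co \Wh{C} \ra \Sigma^{-1}\Wh{\Sigma C}$ is a chain isomorphism. At the level of components, a hyperquadratic structure $\theta$ consists of maps $\{\theta_s\}_{s \in \ZZ}$, and the formula $(S\theta)_k = \theta_{k-1}$ just relabels indices. Since the indexing set $\ZZ$ is two-sided, this shift is bijective on components, in sharp contrast to the symmetric case where $(S\varphi)_0$ is forced to vanish because $\varphi_{-1}$ does not exist. Compatibility with the differential and $\ZZ[\ZZ_2]$-action uses the natural identification $\Sigma C \otimes_\AA \Sigma C \cong \Sigma^2 (C \otimes_\AA C)$ (with a sign-twisted involution) combined with the periodicity of $\widehat{W}$. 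Taking $H_n$ then gives $S \co \Qh{n}{C} \xra{\cong} \Qh{n+1}{\Sigma C}$.

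For the colimit statement, consider the commutative ladder of long exact sequences from Proposition \ref{propn:LESQ} applied to $C, \Sigma C, \Sigma^2 C, \ldots$, linked by the suspension maps. The first step makes the hyperquadratic columns of the ladder into isomorphisms at every stage, so it suffices to show that the surrounding quadratic $Q$-groups vanish for sufficiently large $k$. This exploits the boundedness of $C \in \BB(\AA)$: a quadratic structure of dimension $n+k$ on $\Sigma^k C$ has components in $(\Sigma^k C \otimes_\AA \Sigma^k C)_{n+k-s} \cong (C \otimes_\AA C)_{n-k-s}$, which vanishes for all $s \geq 0$ once $k$ exceeds the (finite) range of degrees in which $C \otimes_\AA C$ is nonzero. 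The same argument kills $\Qq{n+k-1}{\Sigma^k C}$. The long exact sequence then forces $J \co \Qs{n+k}{\Sigma^k C} \xra{\cong} \Qh{n+k}{\Sigma^k C}$ for such $k$, and combining with the suspension isomorphism on the hyperquadratic side yields $\textup{colim}_k \Qs{n+k}{\Sigma^k C} \cong \Qh{n}{C}$.

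The main obstacle is the sign bookkeeping in the first step: the Koszul rule introduces a sign twist in the involution on $\Sigma C \otimes_\AA \Sigma C$ relative to $\Sigma^2(C \otimes_\AA C)$, and this twist must be matched with the periodicity shift of $\widehat{W}$ (which interchanges the roles of $1+T$ and $1-T$) to ensure that $S$ genuinely intertwines the $\ZZ[\ZZ_2]$-equivariant structures and is therefore a chain isomorphism of the Hom complexes, not merely a bijection of underlying graded modules.
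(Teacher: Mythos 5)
The paper gives no proof of this proposition; it defers to \cite[section 1]{Ranicki-I-(1980)}, so there is no in-text argument to compare against. Your two-step plan---prove the suspension isomorphism on $\Qh{}$ first, then use boundedness of $C$ to kill the flanking $\Qq{}$-terms in the long exact sequence of Proposition \ref{propn:LESQ} once $k$ is large---is correct and is essentially the ``write out the formulas'' argument the cited source runs. One economy worth noting: Definition \ref{defn:suspension} already asserts that $S \co \Wh{C} \ra \Sigma^{-1}\Wh{\Sigma C}$ is a chain map, so the Koszul-sign matching you flag as the principal obstacle is already absorbed there; what remains is only to observe that $S$ is bijective on the underlying graded objects---the index set $\ZZ$ is invariant under the shift $s \mapsto s-1$, and the degree count $\theta_s \in (C\otimes_\AA C)_{n+s}$ lands exactly where $(S\theta)_{s+1} \in (\Sigma C \otimes_\AA \Sigma C)_{(n+1)+(s+1)} \cong (C\otimes_\AA C)_{n+s}$ must live---and a bijective chain map is automatically a chain isomorphism (which is precisely the point that fails in the symmetric case, where $(S\varphi)_0$ is forced to vanish). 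Your vanishing argument $\Wq{\Sigma^k C}_{m} = \bigoplus_{s\geq 0} W_s \otimes_{\ZZ[\ZZ_2]} (C\otimes_\AA C)_{m-s-2k} = 0$ for $m \in \{n+k, n+k-1\}$ and $k \gg 0$ is correct. To fully close the colimit step you should say explicitly that $J \circ S = S \circ J$ (immediate from the component formulas: $J$ extends a symmetric structure by zero in negative filtration, $S$ shifts indices), so the ladder of long exact sequences you invoke genuinely commutes and the tower $\Qs{n+k}{\Sigma^k C}$ stabilizes to $\Qh{n}{C}$.
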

The proposition is proved in \cite[section 1]{Ranicki-I-(1980)}. It
follows that a symmetric structure has a quadratic refinement if and
only if its suspension $S^k$ is zero in $\Qs{n+k}{\Sigma^k C}$ for some $k$.
This can be improved in a sense that a preferred quadratic refinement can be chosen if a preferred path of the suspension $S^k$ to $0$ is chosen in $\Sigma^{-k} \Ws{\Sigma^k C}$.

\begin{rem} \label{rem:Qh-is-cohomology}
There exists the operation of a direct sum on the structured chain complexes \cite[section 1]{Ranicki-I-(1980)}. We remark that the quadratic and symmetric $Q$-groups do not respect this operation, but the hyperquadratic $Q$-groups do. In fact the assignments $C \mapsto \Qh{n}{C}$ constitute a generalized cohomology theory on the category of chain complexes in $\AA$, see \cite[Theorem 1.1]{Weiss-I(1985)}.
\end{rem}

Now we proceed to explain how the above structures arise from
geometric examples.

\begin{construction}\label{constrn:symmetric construction}
\cite[Proposition 1.1,1.2]{Ranicki-II-(1980)} Let~$X$ be a topological space with the singular chain complex~$C(X)$. The Alexander-Whitney diagonal approximation gives a chain map
\[
  \varphi \colon C(X) \rightarrow \Ws{C(X)},
\]
called the \emph{symmetric construction on $X$}, such that for every $n$-dimensional cycle $[X] \in C(X)$, the component $\varphi([X])_0 \co C^{n-\ast} (X) \ra C(X)$ is the cap product with the cycle $[X]$.

There exists an equivariant version as follows. Let~$\tilde{X}$ be the universal cover of~$X$. The singular chain complex~$C(\tilde{X})$ is a chain complex over~$\Zpi$. The symmetric construction $\varphi_{\tilde{X}}$ on $\tilde{X}$ produces a chain map of $\Zpi$-modules. Applying $\ZZ \otimes_{\Zpi}$ we obtain a chain map of chain complexes of abelian groups
\[
  \varphi \colon C(X) \rightarrow \Ws{C(\tilde{X})} = \Hom_{\ZZ[\ZZ_2]} (W,C(\tilde{X}) \otimes_{\ZZ[\pi_1 (X)]}  C(\tilde{X})),
\]
still called the \emph{symmetric construction of $X$}, and such that for every cycle $[X]\in C(X)$, the component $\varphi([X])_0 \co C^{n-\ast} (\tilde{X}) \ra C(\tilde{X})$ is the cap product with the cycle $[X]$, but now we obtain a map of $\Zpi$-module chain complexes. There is also a version of it for pointed spaces where one works with reduced chain complexes $\tilde C (\tilde X)$.

If $X$ is an $n$-dimensional geometric Poincar\'{e} complex with the fundamental class~$[X]$, then $\varphi ([X])_0$ is the Poincar\'{e} duality chain equivalence. In this case we obtain an $n$-dimensional SAPC over $\ZZ[\pi_1 (X)]$
\[
(C(\tilde X),\varphi ([X])).
\]
\end{construction}

The symmetric construction is functorial with respect to maps of topological spaces and natural with respect to the suspension of chain complexes, as shown in \cite[Proposition 1.1, 1.2]{Ranicki-II-(1980)}. However, if we have a chain map $C(X) \ra C(Y)$ not necessarily induced by a map of spaces, it might not commute with the symmetric constructions of $X$ and $Y$. This is one motivation for the quadratic construction below.

\begin{construction}\label{constrn:quadconstrn}
\cite[Proposition 1.5]{Ranicki-II-(1980)} Let~$X,Y$ be pointed spaces and let $F \colon \Sigma^k X \rightarrow \Sigma^k Y$ be a map. Denote 
\[
f \colon C(X) \overset{\Sigma}{\rightarrow} \susp^{-k} C( \Sigma^k X)
\overset{F}{\rightarrow} \susp^{-k} C( \Sigma^k Y) \overset{\Sigma^{-1}}{\rightarrow} C(Y).
\]
where $\Sigma^{-1}$ is some homotopy inverse of $\Sigma$ from Notation \ref{notn:suspension}. The following diagram does not necessarily commute, since~$f$ does not come from a geometric map
\[
    \xymatrix{
      C (X)  \ar[r]^-{\varphi}  \ar[d]^{f_{\ast}}   &
      \Ws{C(X)}                  \ar[d]^{f^{\%}}     \\
      C (Y)  \ar[r]^-{\varphi}                     &
      \Ws{C(Y)}
    }
\]
There is a chain map, called the \emph{quadratic construction on
$F$},
\[
  \psi \colon C (X) \rightarrow \Wq{C(Y)} \quad \textup{such that}
  \quad (1+T) \cdot \psi \equiv f^{\%} \varphi - \varphi f_{\ast}.
\]
To show that such a map exists we look at the difference~$f^{\%}
\varphi - \varphi f_{\ast}$, and use Proposition
\ref{propn:LESQ} to obtain the commutative diagram
  \[
    \xymatrix@C=1.25cm{
      &  &  H_n(X)
                \ar@{-->}[dl]_{\Psi_F}
                \ar[d]|{f^{\%} \varphi - \varphi f_{\ast}}
                \ar[dr]^{\equiv 0}
      \\
      \ldots              \ar[r]  &
      \Qq{n}{C(Y)}        \ar[r]  &
      \Qs{n}{C(Y)}        \ar[r]  &
      \Qh{n}{C(Y)}        \ar[r]  &
      \ldots
    }
  \]
The map~$H_n(X) \rightarrow \Qh{n}{C(Y)}$ is the stabilization of the map~$f^{\%} \varphi - \varphi f_{\ast}$. But when we stabilize~$f$ we recover the map~$F \colon C(\Sigma^kX) \rightarrow C(\Sigma^kY)$, up to a preferred chain homotopy. This map comes from a geometric map, and so, by the naturality of the symmetric construction, the map~$H_n(X) \rightarrow \Qh{n}{C(Y)}$ is zero.

Then exactness tells us there is a lift. However, we are allowed to look on the chain level, and we observe that there is a preferred null-homotopy of the difference $S^k (f^{\%} \varphi - \varphi f_{\ast}) \simeq F^{\%} \varphi - \varphi F_{\ast}$ in the chain complex $\Sigma^{-k} \Ws{C(\Sigma^k Y)}$. By the remark following Proposition \ref{prop:sups-Q-groups} we obtain a preferred lift. This is describes the map~$\psi$, the full details can be found in \cite{Ranicki-II-(1980)}.

Similarly as in the symmetric construction there is an equivariant version, also called \emph{quadratic construction on $F$},
\[
  \psi \colon C (X) \rightarrow \Wq{C(\tilde{Y})} \quad \textup{such that}
  \quad (1+T) \cdot \psi \equiv f^{\%} \varphi - \varphi f_{\ast}.
\]
\end{construction}

\begin{construction} \label{con:quad-construction-on-degree-one-normal-map}
Let~$M$, $X$ be geometric Poincar\'{e} complexes with a degree one normal map~$(f,b) \colon M \rightarrow X$. Using $\pi_1(X)$-equivariant $S$-duality
(see section \ref{sec:normal-cplxs}) we obtain a stable equivariant map~$F \colon \Sigma^k \tilde{X}_+ \rightarrow \Sigma^k \tilde{M}_+$ for some~$k \in \mathbb{N}$.
Consider the Umkehr map
\[
f^{!} \colon C(\tilde X) \rightarrow \susp^{-k} C( \Sigma^k \tilde X_+)
\overset{F}{\rightarrow} \susp^{-k} C( \Sigma^k \tilde M_+) \rightarrow C(\tilde M)
\]
and its mapping cone $\sC (f^{!})$ with the inclusion map $e \co C(\tilde M) \ra \sC (f^{!})$. We obtain an $n$-dimensional QAPC over $\Zpi$
\[
(\sC(f^{!}),e_\% \psi ([X])).
\]
\end{construction}

An example of a hyperquadratic structure on a chain complex coming from geometry is relegated to section \ref{sec:normal-cplxs}. Now we present the relative versions of the above concepts.

\begin{definition}\label{defn:sympair}
An ($n$+1)-dimensional \emph{symmetric algebraic pair} over $\AA$ is
a chain map $f \colon C \rightarrow D$ in $\BB(\AA)$ together with
an $(n+1)$-dimensional cycle $(\delta \varphi,\varphi) \in \sC
(f^{\%})$. An ($n$+1)-dimensional \emph{quadratic algebraic pair}
over $\AA$ is a chain map $f \colon C \rightarrow D$ in $\BB(\AA)$
together with an $(n+1)$-dimensional cycle $(\delta \psi,\psi) \in
\sC (f_{\%})$.
\end{definition}

Notice that an $(n+1)$-dimensional symmetric pair contains an
$n$-dimensional symmetric complex $(C,\varphi)$ and similarly an
$(n+1)$-dimensional quadratic pair contains an $n$-dimensional
quadratic complex $(C,\psi)$. The cycle condition translates into the relation between $\delta \varphi$
and $\varphi$ via the equation $d(\delta\varphi) = (-1)^n f^{\%}
(\varphi)$. It is also helpful to define the evaluation map
\[
\textup{ev} \co \sC (f^{\%}) \ra \Hom_\AA (D^{n+1-\ast},\sC(f))
\quad \textup{ev} (\delta \varphi,\varphi) = \pairmap \colon
D^{n+1-\ast} \rightarrow \sC(f)
\]
and likewise in the quadratic case.

\begin{definition}\label{defn:SAPP-and-QAPP}
An $(n+1)$-dimensional \emph{symmetric algebraic \emph{Poincar\'{e}}
pair} (SAPP) in $\AA$ is a symmetric pair~$(f \colon C \rightarrow D
, (\delta\varphi,\varphi))$ such that
  \[
    \pairmap \colon D^{n+1-\ast} \rightarrow \sC (f)
  \]
is a chain equivalence.

An $(n+1)$-dimensional \emph{quadratic algebraic \emph{Poincar\'{e}}
pair} (QAPP) in $\AA$ is a quadratic pair~$(f \colon C \rightarrow D
, (\delta\psi,\psi))$ such that
  \[
    (1+T) \cdot \pairmapquad \colon D^{n+1-\ast} \rightarrow \sC (f)
  \]
is a chain equivalence.
\end{definition}


\begin{construction} \label{con:rel-sym} Let $(X,Y)$ be a pair of
topological spaces, and denote the inclusion $i \co Y \ra X$. By the
naturality of the symmetric construction we obtain a chain map
\[
 \varphi \co C(X,Y) \ra \sC (i^{\%})
\]
which is called the \emph{relative symmetric construction}.

If $(X,Y)$ is an $(n+1)$-dimensional Poincar\'e pair with the
fundamental class $[X] \in C_{n+1} (X,Y)$ then the evaluation
\[
\textup{ev} \circ \varphi ([X]) \co C^{n+1-\ast} (X) \ra
C(X,Y)
\]
is a chain homotopy equivalence. There also exists an equivariant version.
\end{construction}

\begin{construction} \label{con:rel-quad-htpy}
Let $(X,A)$ and $(Y,B)$ be pairs of pointed topological spaces and
let
\[
 \xymatrix{
 \Sigma^k A \ar[r]^{\del F} \ar[d]_{i} & \Sigma^k B \ar[d]^{j} \\
 \Sigma^k X \ar[r]_{F} & \Sigma^k Y
}
\]
be a commutative diagram. Let $\del f$ and $f$ be maps defined
analogous to the map $f$ in Construction \ref{constrn:quadconstrn}.
There is a chain map, the \emph{relative quadratic construction},
\[
 \psi \co C (X,A) \ra \sC (j_{\%})
\]
such that $(1+T) \cdot \psi = (f,\del f)^{\%} \varphi - \varphi (f,\del f)_\ast$. Again, there is also an equivariant version.
\end{construction}

\begin{construction} \label{con:rel-quad-mfds}
Let $((f,b),\del (f,b)) \co (M,N) \ra (X,Y)$ be a degree one normal
map of manifolds with boundary. Here we do not assume that the
restriction of $\del f$ on the boundary $N$ is a homotopy
equivalence. The $S$-duality yields in this case the 
commutative diagrams
\[
\xymatrix{
 T(\nu_M)/T(\nu_N) \ar[r] \ar[d] & T(\nu_X)/T(\nu_Y) \ar[d] &
 \leadsto & \Sigma^k Y_+ \ar[r]^{\del F} \ar[d]_{i} & \Sigma^k N_+
 \ar[d]^{j} \\
 \Sigma T(\nu_N) \ar[r] & \Sigma T(\nu_Y) & \leadsto &  \Sigma^k
 X_+ \ar[r]_{F} & \Sigma^k M_+
}
\]
We have two Umkehr maps $\del f^{!}$ and $f^{!}$ and a commutative
square
\[
 \xymatrix{
 C(N) \ar[r]^{\del e} \ar[d]_{j} & \sC(\del f^{!}) \ar[d]^{k} \\
 C(M) \ar[r]_{e} & \sC (f^{!})
}
\]
We obtain an $(n+1)$-dimensional QAPP
\[
\big( k \co \sC (\del f^{!}) \ra \sC (f^{!}),(e,\del e)_{\%} \psi ([X]) \big).
\]
\end{construction}

The notion of a pair allows us to define the notion of a cobordism
of structured chain complexes.

\begin{definition}\label{defn:symcobord}

A \emph{cobordism} of $n$-dimensional SAPCs~$(C,\varphi),(C',\varphi')$ in $\AA$ is
an~$(n+1)$-dimensional SAPP in $\AA$
\[
    ((f \, f') \colon C \oplus C' \rightarrow E ,
      (\delta\varphi , \varphi \oplus -\varphi'))
\]

A \emph{cobordism} of $n$-dimensional QAPCs~$(C,\psi),(C',\psi')$ in $\AA$ is
an~$(n+1)$-dimensional QAPP in $\AA$
\[
    ((f \, f') \colon C \oplus C' \rightarrow E ,
      (\delta\psi , \psi \oplus -\psi'))
\]
\end{definition}

There is a notion of a {\it union} of two adjoining cobordisms in $\AA$ is defined in \cite[section 3]{Ranicki-I-(1980)}. Using it one obtains transitivity for the cobordisms and hence an equivalence relation.

Geometrically, one obtains a symmetric cobordism from a geometric Poincar\'e triad and a quadratic cobordism from a degree one normal map of geometric Poincar\'e triads.

Recall the well-known fact that using Morse theory any geometric
cobordism can be decomposed into elementary cobordisms which are in
turn obtained via surgery. Although it has slightly different properties there exists an analogous notion of
algebraic surgery which we now recall. For simplicity we will only discuss it in the symmetric case, although there is an analogous
notion for quadratic complexes.

\begin{construction} \cite[Definition 1.12]{Ranicki(1992)}
Let $(C,\varphi)$ be an $n$-dimensional symmetric complex. The
\emph{data} for an algebraic surgery on $(C,\varphi)$ is an
~$(n+1)$-dimensional symmetric pair~$(f \colon C \rightarrow D ,
(\delta\varphi,\varphi))$ . The \emph{effect} of the algebraic
surgery on $(C,\varphi)$ using $(f \colon C \rightarrow D ,
(\delta\varphi,\varphi))$ is the $n$-dimensional symmetric complex
$(C',\varphi')$ defined by
\[
  C' = \susp ^{-1} \cone{\smallpairmap}, \quad \varphi' =
  \Sigma^{-1} (e')^{\%} (\delta \varphi / \varphi)
\]
Here the map $e'$ is defined by the diagram
\[
  \xymatrix@R=0,4cm{
    C  \ar[r]_{f}  &  D  \ar[rd]    &                     &  \susp C'     \\
               &                    &  \sC (f)  \ar[ur]_{e'}  &                \\
    C' \ar[r]  &  D^{n+1-\ast}   \ar[ur]_{\smallpairmap}
  }
\]
The symmetric structure on the pair $f \co C \ra D$ defines a
symmetric structure $\delta \varphi/\varphi$ on $\sC (f)$ by the
formula as in \cite[Proposition 1.15]{Ranicki(1992)}. It is pushed
forward by $e'$ to an $(n+1)$-cycle of~$\Ws{\susp C'}$ which turns
out to have a preferred desuspension and so we obtain an~$n$-cycle~$\varphi '$.
\end{construction}

A geometric analogue is obtained from a cobordism $W$ between closed
manifolds $M$ and $M'$. Then we have a diagram
\[
  \xymatrix@R=0.4cm{
    C(M)  \ar[r]  &  C(W,M')  \ar[rd]-<35pt,-5pt>    &                \\
                  &                                  &  C(W,M \cup M')  \\
    C(M') \ar[r]  &  C(W,M)   \ar[ur]-<35pt,5pt>
  }
\]
where the chain complexes~$C(W,M^\prime)$ and~$C(W,M)$ are Poincar\'e dual.

\begin{definition}\label{defn:symbdy}
Let~$(C,\varphi)$ be an $n$-dimensional SAC. The \emph{boundary}
of~$(C,\varphi)$ is the $(n-1)$-dimensional SAC obtained from surgery on the
symmetric pair $( 0 \rightarrow C , ( \varphi , 0 ) )$. The boundary
is denoted $\partial (C,\varphi) = (\partial C, \partial \varphi)$,
with $\del C = \Sigma^{-1} \sC (\varphi_0)$ and $\del \varphi = S^{-1}
e^{\%} (\varphi)$, where $e \co C \ra \sC (\varphi_0)$.
\end{definition}

Here the geometric analogue arises from considering an
$n$-dimensional manifold with boundary, say $(N,\del N)$. Consider
the chain complex $C(N,\del N)$ and its suspended dual $C^{n-\ast}
(N,\del N)$. There is a symmetric structure on $C(N,\del N)$, which
is not Poincar\'e. However, there is the Poincar\'e duality
$C^{n-\ast} (N,\del N) \simeq C(N)$. Thus the mapping cone of the
duality map $C^{n-\ast} (N,\del N) \ra C(N,\del N)$ becomes homotopy
equivalent to the mapping cone of the map $C(N) \ra C(N,\del N)$
which is $\Sigma C(\del N)$.

\begin{remark}
Notice that an $n$-dimensional SAC is Poincar\'{e} if and only if its boundary
is contractible.
\end{remark}

We also have the following proposition which is proven in \cite{Ranicki-I-(1980)} by writing out the formulas.

\begin{proposition}\textup{\cite[Proposition 4.1]{Ranicki-I-(1980)}} \label{prop:homotopy-type-of-boundary}
Algebraic surgery preserves the homotopy type of the boundary
of~$(C,\varphi)$. In particular we have that
\[
  (C,\varphi) \text{ is Poincar\'{e}} \Leftrightarrow
  (C',\varphi') \text{ is Poincar\'{e}}.
\]
\end{proposition}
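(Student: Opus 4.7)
The goal is to produce a homotopy equivalence $\partial(C,\varphi) \simeq \partial(C',\varphi')$ of $(n-1)$-dimensional symmetric complexes, from which the ``in particular'' clause follows immediately via the remark that an SAC is Poincar\'e iff its boundary is contractible.

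My plan is to proceed by explicit chain-level construction, following the strategy suggested by the geometric analogy: algebraic surgery, like its geometric counterpart, should only modify the complex in its ``interior,'' so the dualizing data on the boundary is unaffected. First I would unfold the given formulas: $\sC(f)_k = D_k \oplus C_{k-1}$, the map $\pairmap \co D^{n+1-\ast}\to\sC(f)$ has components $(\delta\varphi_0,\; \varphi_0 f^*)$, and hence $C' = \Sigma^{-1}\sC(\pairmap)$ admits a three-term description with chain groups built out of $D^{n+1-\ast}$, $D$, and $C$. Dualizing, $(C')^{n-\ast}$ has an analogous three-term description, and the chain map $\varphi'_0 \co (C')^{n-\ast}\to C'$ obtained from $\varphi' = \Sigma^{-1}(e')^{\%}(\delta\varphi/\varphi)$ unpacks into an explicit matrix whose ``diagonal'' block is $\varphi_0$ and whose off-diagonal blocks involve $f$, $f^*$, $\delta\varphi_0$, $\delta\varphi_1$.

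Second, I would assemble these into a 3$\times$3 diagram of mapping cones. The key observation is that inside $\sC(\varphi'_0)$ there is a natural subcomplex corresponding to the ``new Poincar\'e piece'' $D \leftrightarrow D^{n+1-\ast}$ paired through the $\pairmap$ construction: because $\pairmap$ is the attaching map that makes the surgered piece Poincar\'e on the nose, this subcomplex is acyclic, so dividing it out identifies $\sC(\varphi'_0)$ up to chain equivalence with $\sC(\varphi_0)$. Desuspending gives the desired equivalence $\partial C' \simeq \partial C$. To upgrade this to an equivalence of symmetric complexes, I would verify that the explicit equivalence intertwines $\partial\varphi$ and $\partial\varphi'$ by tracking the action of $e^{\%}$ and $(e')^{\%}$ on the respective $W$-component expansions.

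The routine part is the existence of the underlying chain equivalence; the main obstacle will be the symmetric-structure bookkeeping: one has to unwind the formula for $\delta\varphi/\varphi$ on the cone (Ranicki's formula from \cite[Proposition 1.15]{Ranicki(1992)}), push it forward along $e'$, desuspend once to land in $\varphi'$, and then chase a further $e$-pushforward at the boundary level, verifying that all higher components $\psi_s$ match up to chain homotopy modulo the acyclic cancellation subcomplex. This is essentially a careful verification inside the chain complex $\Ws{\partial C}$ that the two resulting cycles differ by a boundary, which I would carry out in the standard bar-resolution description of the $Q$-groups.
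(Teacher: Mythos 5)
The paper itself does not prove this proposition; it cites \cite[Proposition 4.1]{Ranicki-I-(1980)} with the comment that the proof there is ``by writing out the formulas,'' and your plan to unfold $C'_r\cong D_{r+1}\oplus C_r\oplus D^{n+1-r}$, locate identity blocks in $\varphi'_0$, cancel, and then chase the symmetric structure is indeed the shape of that computation.

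However, your middle step fails as literally stated: the $D$-summands inside $\sC(\varphi'_0)$ do \emph{not} form a subcomplex. The differential on $C' = \Sigma^{-1}\sC(\pairmap)$ already carries the off-diagonal components $f \co C\to D$ and $\varphi_0 f^{\ast} \co D^{n+1-\ast}\to C$, so the $D$-pieces are not closed under $d$ and there is no quotient complex to form. What you actually need is the chain-level cancellation (``Gaussian elimination'') lemma: the identity entries of $\varphi'_0$ linking the $D$-pieces of $C'$ with those of $(C')^{n-\ast}$ exhibit, after a change of basis, a contractible direct \emph{summand} of $\sC(\varphi'_0)$ whose complement is $\sC(\varphi_0)$ with a Schur-complement correction to the differential; the corrections then get absorbed by a further chain homotopy. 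Relatedly, the justification ``because $\pairmap$ makes the surgered piece Poincar\'e on the nose'' should be dropped: the cancellation is a purely formal feature of the cone construction and has nothing to do with any Poincar\'e condition on the surgery data $(f \co C \to D,(\delta\varphi,\varphi))$, which --- as the paper notes just below the construction --- is typically \emph{not} a Poincar\'e pair. (Small slip at the end: the higher components should read $\partial\varphi_s$, not $\psi_s$.)
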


An algebraic surgery on $(C,\varphi)$ using $(f \co C \ra D,(\delta \varphi,\varphi))$ gives rise to a symmetric pair $(f \; f'
\co C \oplus C' \ra D',(\delta \varphi',\varphi \oplus \varphi))$
with $D' = \sC (\varphi_0 f^\ast)$. If $(C,\varphi)$ is Poincar\'e
then, as noted above, $(C',\varphi')$ is also Poincar\'e, and in
addition the pair is a cobordism. We remark that the data for
algebraic surgery might not be a Poincar\'e pair, in fact this is a
typical case, since if it is a Poincar\'e pair, then it already
defines a null-cobordism of $(C,\varphi)$ and hence $C'$ is contractible.

The relationship between the algebraic cobordism and algebraic
surgery turns out to be as follows:

\begin{prop} \textup{\cite[Proposition 4.1]{Ranicki-I-(1980)}}
  The equivalence relation generated by surgery and homotopy equivalence is the same as the equivalence relation given by cobordism.
\end{prop}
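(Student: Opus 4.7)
The plan is to verify the two inclusions of equivalence relations separately.

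\textbf{Step 1: Surgery and homotopy equivalence each give cobordisms.} A homotopy equivalence of symmetric complexes produces a trivial ``cylinder'' cobordism built on the chain equivalence, whose higher components of the symmetric structure are read off from the chain homotopy witnessing $h^{\%}(\varphi') \simeq \varphi$. For surgery, the paragraph immediately preceding the statement already exhibits the symmetric pair $((f\ f') \colon C \oplus C' \to D', (\delta\varphi', \varphi \oplus -\varphi'))$ with $D' = \sC(\varphi_0 f^{\ast})$. I would check that this pair is Poincar\'e by identifying its evaluation map with a mapping cone of chain equivalences built out of $\varphi_0$ and $\pairmap$, using that $(C,\varphi)$ is Poincar\'e by hypothesis and that the effect of surgery on a Poincar\'e complex is again Poincar\'e by Proposition \ref{prop:homotopy-type-of-boundary}. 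Hence an elementary surgery yields a cobordism.

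\textbf{Step 2: A cobordism can be realized by one surgery and a homotopy equivalence.} Suppose $(C,\varphi)$ and $(C',\varphi')$ are cobordant via the SAPP
\[
  \bigl((f\ f') \colon C \oplus C' \to E, (\delta\varphi, \varphi \oplus -\varphi')\bigr).
\]
The plan is to perform algebraic surgery on $(C,\varphi)$ using the data $(f \colon C \to E, (\delta\varphi, \varphi))$, obtaining a complex $(C'', \varphi'')$ with $C'' = \Sigma^{-1} \sC\!\left(\pairmap \colon E^{n+1-\ast} \to \sC(f)\right)$, and then to identify this with $(C',\varphi')$ up to homotopy equivalence. The crucial input is the Poincar\'e property of the cobordism: the evaluation map $E^{n+1-\ast} \to \sC(f\ f')$ is a chain equivalence. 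Via the splitting $\sC(f\ f') \simeq \sC(f) \oplus \sC(f')[\text{up to shift}]$ obtained from the direct sum decomposition of the source, one reads off that the cone of $\pairmap$ is chain equivalent to $\Sigma C'$, so that $C'' \simeq C'$. I would then track the symmetric structure through this identification: the formula $\varphi'' = \Sigma^{-1} (e')^{\%}(\delta\varphi/\varphi)$ matches $\varphi'$ after the chain equivalence $C'' \simeq C'$ because $\delta\varphi/\varphi$ is precisely the ``relative'' structure on $\sC(f)$ whose restriction along $f'$ recovers $\varphi'$, as encoded in the cycle condition on $(\delta\varphi, \varphi \oplus -\varphi')$.

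\textbf{Main obstacle.} The verification in Step 2 that $\varphi''$ matches $\varphi'$ under the chain equivalence $C'' \simeq C'$, and not merely that the underlying chain complexes agree, is the real content. It requires manipulating the structure $\delta\varphi/\varphi$ on $\sC(f)$ from \cite[Proposition 1.15]{Ranicki(1992)}, pushing it forward along $e'$, then desuspending; and on the other side tracing through how $\varphi'$ sits inside $\varphi \oplus -\varphi'$ inside $(f\ f')^{\%}$. This is a diagram chase in $\Ws{-}$ combined with the cycle identities for the pair, and while it is formal, it is the only non-routine calculation in the proof. Once this is done, transitivity of the generated relations shows that the two equivalence relations on symmetric Poincar\'e complexes coincide, as required; the quadratic case follows by the same argument with $\Wq{-}$ in place of $\Ws{-}$.
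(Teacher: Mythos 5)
Step 1 is fine and matches the preamble to the statement in the paper. The problem is in Step 2: the data you propose to use for the algebraic surgery on $(C,\varphi)$, namely $(f \co C \ra E, (\delta\varphi,\varphi))$, is \emph{not} a valid $(n+1)$-dimensional symmetric pair, because the cycle condition fails. A pair requires $d(\delta\varphi) = (-1)^n f^{\%}(\varphi)$, but the only relation the cobordism gives you is $d(\delta\varphi) = (-1)^n \big( f^{\%}(\varphi) - (f')^{\%}(\varphi') \big)$. The discrepancy is the term $(-1)^n (f')^{\%}(\varphi')$, which is not zero in general since $\varphi'$ is a genuine Poincar\'e structure and $f'$ need not kill it. Consequently the proposed evaluation $E^{n+1-\ast} \ra \sC(f)$ is not a chain map, the formula $C'' = \Sigma^{-1}\sC(\cdots)$ is undefined, and the subsequent identification with $C'$ and the chase of symmetric structures cannot even get started.

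The repair is to change the target of the surgery data: replace $E$ by $D := \sC(f')$ and use the composite $g \co C \ra E \ra D$. Writing $q \co E \ra \sC(f')$ for the structure map, the offending term is killed up to a boundary because $qf' \co C' \ra \sC(f')$ is canonically chain-nullhomotopic, so $(qf')^{\%}(\varphi')$ is exact and $q^{\%}(\delta\varphi)$ can be corrected by a chain to obtain an honest $(n+1)$-cycle in $\sC(g^{\%})$. With this valid data, the Poincar\'e property of the cobordism (the evaluation $E^{n+1-\ast} \ra \sC(f\; f')$ being a chain equivalence) together with the canonical identification $\sC(f\; f') \simeq \sC(g)$ gives the homotopy equivalence from the effect of the surgery to $(C',\varphi')$. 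The remainder of your plan for Step 2 — pushing $\delta\varphi/\varphi$ through the surgery formula and matching it against $\varphi'$ — is the right idea, but it can only be carried out once the target of the pair and the cycle condition are fixed as above.
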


\begin{definition}\label{defn:Lgps} \cite[Proposition 3.2]{Ranicki-I-(1980)}\

The \emph{symmetric $L$-groups} of an additive category with chain duality $\AA$ are
  \[
    L^n (\AA)  :=  \{ \text{cobordism classes of } n \text{-dimensional SAPCs in } \AA \}
  \]

The \emph{quadratic $L$-groups} an additive category with chain duality $\AA$ are
  \[
    L_n (\AA)  :=  \{ \text{cobordism classes of } n \text{-dimensional QAPCs in } \AA \}
  \]
The group operation is the direct sum of the structured chain complexes in both cases. The inverse of a SAPC $(C,\varphi)$ is given by $(C,-\varphi)$, and the inverse of a QAPC $(C,\psi)$ is given by $(C,-\psi)$.
\end{definition}

\begin{rem}
It is proven in \cite[sections 5,6,7]{Ranicki-I-(1980)} for $\AA = \AA (R)$, where $R$ is a ring with involution, that the groups $L_n (\AA(R))$ are isomorphic to the surgery obstruction groups $L_n (R)$ of Wall. Both symmetric and quadratic groups $L^n (\AA)$ and $L_n (\AA)$ are $4$-periodic for any $\AA$ \cite[Proposition 1.10]{Ranicki(1992)}.
\end{rem}

\begin{definition} \label{defn:sym-sign}
Let $X$ be an $n$-dimensional Poincar\'e complex. The cobordism class of the $n$-dimensional SAPC obtained from any choice of the fundamental class $[X] \in C_n (X)$ in Construction \ref{constrn:symmetric construction} does not depend on the choice of $[X]$ and hence defines an element
\[
 \ssign_{\Zpi} (X) = [(C(\tilde X),\varphi ([X]))] \in L^n (\Zpi).
\]
called the \emph{symmetric signature} of $X$.\footnote{The notation is somewhat premature, the symbol $\bL^\bullet$ denotes the symmetric $L$-spectrum and will be defined later in section \ref{sec:spectra}. Likewise in the quadratic case.} If $X$ is an oriented $n$-dimensional topological manifold, then the symmetric signature only depends on the oriented cobordism class of $X$, and so it provides us with a homomorphism\footnote{If $X$ is not a manifold we can still say that the symmetric signature only depends on the oriented cobordism class of $X$ in the Poincar\'e cobordism group $\Omega^{P}_n$, but we will not need this point of view later}
\[
 \ssign_{\Zpi} \co \Omega^{\STOP}_n (K(\pi_1 (X),1)) \ra L^n (\ZZ[\pi_1 (X)]).
\]
\end{definition}

\begin{definition} \label{defn:quad-sign}
Let $(f,b) \co M \ra X$ be a degree one normal map of Poincar\'e complexes. The cobordism class of the $n$-dimensional QAPC obtained from any choice of the fundamental class $[X] \in C_n (X)$ in Construction \ref{con:quad-construction-on-degree-one-normal-map} does not depend on the choice of $[X]$ and hence defines an element
\[
\qsign_{\Zpi} (f,b) = [(\sC(f^{!}),e_\% \psi ([X]))] \in L_n (\Zpi)
\]
called the \emph{quadratic signature} of the degree one normal map $(f,b)$. If $M$ is an $n$-dimensional oriented manifold then the quadratic signature only depends on the normal cobordism class of $(f,b)$ in the set of normal invariants $\sN (X)$ and provides us with a function\footnote{Recall that for $X$ an $n$-dimensional GPC the set of normal invariants if it is non-empty is a group with respect to the group structure given by the Whitney sum. The quadratic signature is NOT a homomorphism with respect to this group structure.}
\[
 \qsign_{\ZZ[\pi_1(X)]} \co \sN(X) \ra L_n (\ZZ[\pi_1 (X)]).
\]
\end{definition}

\begin{rem} \label{rem:symmetrization-of-surgery-obstruction} \cite[Proposition 2.2]{Ranicki-II-(1980)}
The symmetrization map (\ref{eqn:symmetrization-map}) carries over to the $L$-groups as
\[
 (1+T) \co L_n (\AA) \ra L^n (\AA)
\]
and for $\pi = \pi_1 (X)$ we have
\[ 
(1+T) \; \qsign_{\ZZ[\pi]} (f,b) = \ssign_{\ZZ[\pi]} (M) - \ssign_{\ZZ[\pi]} (X).
\]
\end{rem}

\begin{rem} \label{rem:quadratic-signature-is-surgery-obstruction}
If $M$ is a closed $n$-dimensional topological manifold then the quadratic signature from Definition \ref{defn:quad-sign} coincides with the classical surgery obstruction by the result of \cite[Proposition 7.1]{Ranicki-II-(1980)}.
\end{rem}

\begin{rem}
Notice that we did not define a hyperquadratic version of the $L$-groups. In fact, hyperquadratic structures are useful when we
have a fixed chain complex $C$ and we study the relationship between
the symmetric and quadratic structures on $C$ via the sequence in
Proposition \ref{propn:LESQ}. When comparing the symmetric and
quadratic $L$-groups, hence cobordism groups of complexes equipped
with a symmetric and quadratic structure a new concept of an
algebraic normal complex is needed. It is discussed in the next
section.
\end{rem}



\section{Normal complexes} \label{sec:normal-cplxs}


A geometric normal complex is a notion generalizing a geometric
Poincar\'e complex. It is motivated by the observation that although
a Poincar\'e complex is not necessarily locally Poincar\'e, it is
locally normal. On the other hand a manifold is also locally
Poincar\'e. Hence the question whether a Poincar\'e complex can be
modified within the homotopy type so that the locally normal
structure becomes Poincar\'e is central to our main problem.

In this section we will recall the definition of an algebraic normal
complex. In addition we recall that cobordism groups of algebraic
normal complexes, the so-called $NL$-groups, which measure the
difference between the symmetric and quadratic $L$-groups. Another
viewpoint on that same fact is that the quadratic $L$-groups measure
the difference between the symmetric $L$-groups and the $NL$-groups.
This will be crucially used in the proof of the Main Technical
Theorem.

The material from this section comes from \cite[section
2]{Ranicki(1992)}, \cite{Weiss-I(1985),Weiss-II(1985)} and
\cite[sections 7.3 and 7.4]{Ranicki(1981)}.

\begin{definition}\label{defn:GNC}
An $n$-dimensional \emph{geometric normal complex} (GNC) is a triple
$(X,\nu,\rho)$ consisting of a space~$X$ with a $k$-dimensional
oriented spherical fibration~$\nu$ and a map $\rho \colon S^{n+k}
\rightarrow \thom{\nu}$ to the Thom space of $\nu$.

The \emph{fundamental class} of~$(X,\nu,\rho)$ is the
$n$-dimensional homology class in $H_n (X)$ represented by the cycle
$[X] \in C_n (X)$ given by the formula $[X] := u(\nu) \cap h(\rho)$
where~$h$ is the Hurewicz homomorphism, and $u (\nu) \in C^k
(\thom{\nu})$ is some choice of the Thom class of $\nu$.
\end{definition}

Note that the dimension of a GNC is the dimension of the source
sphere of the map $\rho$ minus the dimension of the spherical
fibration. It does not necessarily have anything to do with a
geometric dimension of $X$. Also the cap product with the
fundamental class does not necessarily induce an isomorphism between
cohomology and homology of $X$.

\begin{example}\label{expl:nGPCtonGNC}
Let~$X$ be an $n$-dimensional geometric Poincar\'{e} complex (GPC)
with the fundamental class~$[X]$ in the sense of Poincar\'e duality.
Then, for $k$ large enough, the space~$X$ has the Spivak normal
fibration (SNF)~$\nu_X \colon X \rightarrow \BSG(k)$, which has the
property that there is a map~$\rho_X \colon S^{n+k} \rightarrow
\thom{\nu_X}$ such that
\[
  [X] = u(\nu_{X}) \cap h( \rho_X ) \in H_n (X).
\]
Thus we get an $n$-dimensional geometric normal complex~$(X,
\nu_X,\rho_X)$ with the fundamental class equal to the fundamental
class in the sense of Poincar\'e duality.
\end{example}

Some properties of normal complexes can be stated in terms of the
$S$-duality from stable homotopy theory. For pointed spaces $X$, $Y$
the symbol $[X,Y]$ denotes the abelian  group of stable homotopy
classes of stable pointed maps from $X$ to $Y$. Here, for
simplicity, we confine ourselves to a non-equivariant $S$-duality.
An equivariant version, which is indeed needed for our purposes is
presented in detail in \cite[section 3]{Ranicki-I-(1980)}.

\begin{definition}\label{defn:Sduality}
  Let~$X,Y$ be pointed spaces. A map~$\alpha \colon S^N \rightarrow X
  \wedge Y$ is an $N$-dimensional \emph{S-duality map} if the slant
  product maps
  \[
    \alpha_{\ast} ([S^N]) \backslash \underbar{ }
    \colon \tilde{C}(X)^{N-\ast} \rightarrow
    \tilde{C}(Y) \quad \textup{and} \quad \alpha_{\ast} ([S^N]) \backslash
    \underbar{ } \colon \tilde{C}(Y)^{N-\ast}
    \rightarrow \tilde{C}(X)
  \]
  are chain equivalences. We say the spaces~$X,Y$ are \emph{S-dual}.
\end{definition}

\begin{expl} \label{expl:SNF}
Let $X$ be an $n$-dimensional GPC with the $k$-dimensional SNF
$\nu_X \co X \ra \BSG (k)$. Then $\thom{\nu_X}$ is an
$(n+k)$-dimensional $S$-dual to $X_+$.
\end{expl}

\begin{prop} \label{S-duality-property}
The $S$-duality satisfies:
\begin{enumerate}
\item For every finite CW-complex $X$ there exists an $N$-dimensional S-dual, which we denote $X^\ast$, for some large $N \geq 1$.
\item If $X^\ast$ is an $N$-dimensional $S$-dual of $X$ then $\Sigma X^\ast$ is an $(N+1)$-dimensional $S$-dual of $X$.
\item For any space $Z$ we have isomorphisms
\begin{align*}
S \co [X,Z] \cong [S^N,Z \wedge Y] & \quad \gamma \mapsto S(\gamma)
=
(\gamma \wedge \id_Y) \circ \alpha, \\
S \co [Y,Z] \cong [S^N,X \wedge Z] & \quad \gamma \mapsto S(\gamma)
= (\id_X \wedge \gamma) \circ \alpha.
\end{align*}
\item A map $f \co X \ra Y$ induces a map $f^\ast \co Y^\ast \ra X^\ast$ for $N$ large enough via the isomorphism
    \[
    [X,Y] \cong [S^N,Y \wedge X^\ast] \cong [Y^\ast,X^\ast].
    \]
\item If $X \ra Y \ra Z$ is a cofibration sequence then $Z^\ast \ra Y^\ast \ra X^\ast$ is a cofibration sequence for $N$ large enough.
\end{enumerate}
\end{prop}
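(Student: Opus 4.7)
The plan is to derive all five properties from classical Spanier-Whitehead duality. For (1), I would embed the finite CW-complex $X$ in a high-dimensional sphere $S^N$ and take $X^\ast$ to be a suitable mapping-cone model of the complement $S^N \setminus X$; the duality map $\alpha \co S^N \ra X_+ \wedge X^\ast$ is produced by a Pontryagin-Thom style collapse using a regular neighborhood of $X$. That $\alpha$ satisfies the chain-level condition of Definition \ref{defn:Sduality} is Alexander duality translated into the slant-product description of cap products.

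Part (2) is formal: smashing $\alpha \co S^N \ra X \wedge Y$ with $\id_{S^1}$ yields a map $S^{N+1} \ra X \wedge \Sigma Y$, and since the slant product shifts degrees compatibly with suspension on singular chain complexes (see Notation \ref{notn:suspension}), the chain-equivalence condition is preserved.

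Part (3) is the heart of the proposition. The maps $S(\gamma)$ are defined by the displayed formulas. To see they are bijections, I would argue that $\alpha$ represents a morphism in the Spanier-Whitehead category for which the assignment $\gamma \mapsto (\gamma \wedge \id_Y) \circ \alpha$ induces, after Hurewicz and the evident adjunctions, a homomorphism on mapping groups whose effect on homology is precisely the slant product with $\alpha_\ast([S^N])$. By the definition of $S$-duality this slant product is a chain equivalence, and a stable map between suspension spectra of finite CW-complexes that induces isomorphisms on all homology groups is a stable equivalence by the stable Whitehead theorem. Hence the induced map is an isomorphism. The second formula is proved symmetrically by interchanging the roles of $X$ and $Y$ in $\alpha$.

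Parts (4) and (5) follow formally from (3). For (4), composing the two isomorphisms of (3) gives $[X,Y] \cong [S^N, Y \wedge X^\ast] \cong [Y^\ast, X^\ast]$, and the image of $f$ is by definition $f^\ast$. For (5), a cofibration sequence $X \ra Y \ra Z$ induces long exact sequences of the functors $[\,\cdot\,,W]$ for every $W$; applying the duality isomorphisms of (3) termwise and invoking the five lemma shows that $Z^\ast \ra Y^\ast \ra X^\ast$ induces long exact sequences on all $[\,\cdot\,,W]$, hence is a cofibration sequence of finite spectra by standard Spanier-Whitehead arguments. The main obstacle is the stable Whitehead step in (3); one typically packages this cleanly by working in the Spanier-Whitehead category from the outset, and an equivariant refinement, needed later in the paper, requires the treatment of \cite[section 3]{Ranicki-I-(1980)}.
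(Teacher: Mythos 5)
The paper does not give a proof of this proposition; it lists these as standard facts about $S$-duality, remarks that a functorial treatment requires ``a certain amount of technicalities,'' and refers the reader to \cite{Adams(1974)}. Your sketch is a correct reconstruction of the classical arguments: embedding and Alexander duality for (1), smashing with $S^1$ for (2), and formal consequences of (3) via long exact sequences, the five lemma and representability for (4) and (5).

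The one place that needs sharpening is your formulation of (3). You write that the assignment $\gamma \mapsto (\gamma \wedge \id_Y) \circ \alpha$ induces ``a homomorphism on mapping groups whose effect on homology is precisely the slant product.'' A map between stable mapping groups does not itself act on homology, so the stable Whitehead theorem is not yet applicable in the form you have stated it. The standard way to make this precise is to observe that $\alpha$ determines, by adjunction, a single stable map from $Y$ to an $N$-fold desuspension of the Spanier--Whitehead functional dual of $X$, and that it is \emph{this map of spectra} which induces on reduced homology exactly the slant product with $\alpha_\ast([S^N])$. Stable Whitehead then makes it a stable equivalence, and the displayed bijections $[X,Z]\cong[S^N,Z\wedge Y]$ for all $Z$ follow by composing with that equivalence, using finiteness of $X$ so that mapping out of $X$ commutes with the relevant colimits. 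You already flag this as the main obstacle and suggest working in the Spanier--Whitehead category from the start; that is indeed the cleanest route and is essentially how \cite{Adams(1974)} proceeds.
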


The $S$-dual is also unique in some sense. In fact the assignment $X
\mapsto X^\ast$ can be made into a functor in an appropriate stable
homotopy category. As this requires a certain amount of
technicalities and we do not really need it, we skip this aspect.
The reader can find the details for example in \cite{Adams(1974)}.

Now we present a generalization of Example \ref{expl:SNF}.

\begin{construction}  \label{con:S-duality-and-Thom-is-Poincare}
Let~$(X,\nu,\rho)$ be an $n$-dimensional GNC. Let~$V$ be the mapping
cylinder of the projection map of $\nu$ with~$\del V$ being the
total space of the spherical fibration $\nu$. Then we have the
generalized diagonal map
\[
  \tilde{\Delta} \colon
  \thom{\nu} \simeq \frac{V}{\partial V}
    \overset{\Delta}{\longrightarrow}
  \frac{V \times V} {V \times \partial V}
    \simeq
  \thom{\nu}  \wedge  X_+
\]
where~$\Delta$ is the actual diagonal map. Consider the composite
\[
  S^{n+k}
    \overset{\rho}{\longrightarrow}
  \thom{\nu}
    \overset{\tilde{\Delta}}{\longrightarrow}
  \thom{\nu} \wedge X_+.
\]
By Proposition \ref{S-duality-property} part (1) we have an
$S$-duality map $S^N \ra \Th (\nu) \wedge \Th (\nu)^\ast$ for $N$
large enough. Setting $p = N-(n+k)$ we obtain from part (3) the
one-to-one correspondence:
\begin{align*}
 S^{-1} \co [S^{n+k},\thom{\nu} \wedge X_+] & \cong [\thom{\nu}^\ast,\Sigma^p X_+]  \\
\tilde{\Delta} \circ \rho & \mapsto \Gamma_X := S^{-1}
(\tilde{\Delta} \circ \rho).
\end{align*}
Moreover, we obtain the following homotopy commutative diagram in
which $\gamma_X$ is the chain map induced by $\Gamma_X$:
\begin{equation} \label{dgrm:Thom-S-dual-Poincare}
\begin{split}
    \xymatrix@R=10pt@C=1cm{
    C(X)^{n-\ast} \ar[d]_{- \cap [X]} \ar[r]^-{-\cup u (\nu)}
    &
        \tilde{C}(\thom{\nu})^{n+k-\ast} \ar[r]^-{\textup{S-dual}}
    &
        C(\thom{\nu}^\ast)_{p + \ast} \ar[d]^{\gamma_X}
    \\
    C(X) = \tilde{C}(X_+) \ar[rr]_{\Sigma^p}
    & &
    \tilde{C} (\Sigma^p X_+)_{p + \ast} }
\end{split}
\end{equation}
If $X$ is Poincar\'e then $p$ can be chosen to be $0$ and the maps
$\Gamma_X$ and $\gamma_X$ the identity. Hence the Poincar\'e duality
is seen as the composition of the Thom isomorphism for the SNF and
the $S$-duality.
\end{construction}

Now we turn to algebraic normal complexes. As a first step we
discuss the following notion which is an algebraic analogue of a
spherical fibration.

\begin{definition}\label{defn:chain bundle}
Let~$C$ be a chain complex over an additive category with chain duality ~$\AA$. A \emph{chain bundle}
over~$C$ is a $0$-dimensional cycle~$\gamma$ in~$\Wh{TC}$.
\end{definition}

 \begin{construction}\label{constrn:hypquadconstrn}
Let~$X$ be a finite CW-complex and let~$\nu \co X \ra \BSG (k)$ be a
$k$-dimensional spherical fibration over~$X$. The Thom space
$\thom{\nu}$ is also a finite CW-complex and hence has an
$N$-dimensional S-dual $\thom{\nu}^\ast$ for some $N$. The
\emph{hyperquadratic construction} is the chain map given by the
following composition:
\[
  \xymatrix@R=0.1cm@!C=1.5cm{
        *+[l]{\gamma_\nu \co \tilde{C}^k(\thom{\nu})} \ar[r]^-{\text{S-duality}}
     &  *+[r]{\tilde{C}_{N-k}(\thom{\nu}^\ast)}
     \\
     \ar[r]^-{\varphi_{\thom{\nu}^\ast}}
     &   *+[r]{\Ws{\tilde{C}(\thom{\nu}^\ast)}_{N-k}}
     \\
     \ar[r]^-{\text{S-duality}}
     &   *+[r]{\Ws{\tilde{C}(\thom{\nu})^{N-\ast}}_{N-k}}
     \\
     \ar[r]^-{\text{Thom}}
     &   *+[r]{\Ws{C(X)^{N-k-\ast}}_{N-k}}
     \\
     \ar[r]^-{J}
     &   *+[r]{\Wh{C(X)^{N-k-\ast}}_{N-k}}
     \\
     \ar[r]^-{S^{-(N-k)}}
     &  *+[r]{\Wh{C(X)^{-\ast}}_0}
  }
\]
Given a choice of the Thom class $u (\nu) \in
\tilde{C}^k(\thom{\nu})$, the cycle $\gamma_\nu (u(\nu))$ becomes a
chain bundle over $C(X)$. An equivariant version produces a chain
bundle over $\Zpi$:
\[
 (C(\tilde X),\gamma_\nu (u(\nu)))
\]
\end{construction}

Now we can define an algebraic analogue of a geometric normal
complex.

\begin{definition}\label{defn:nNAC}
  An \emph{$n$-dimensional normal algebraic complex} (NAC) in $\AA$ is a pair $(C,\theta)$ where $\theta$ is a triple~$(\varphi,\gamma,\chi)$ such that
  \begin{itemize}
    \item $(C,\varphi)$ is an $n$-dimensional SAC
    \item $\gamma \in (\Wh{TC})_{0}$ is a chain bundle over~$C$
    \item $\chi \in (\Wh{C})_{n+1}$ satisfies
              $d \chi = J (\varphi) - \varphizp (S^n \gamma)$.
  \end{itemize}
\end{definition}

As we indicate below in the geometric example the third condition is
a consequence of the homotopy commutativity of the diagram
(\ref{dgrm:Thom-S-dual-Poincare}) and as such can be seen as a
generalization of the equation in Example \ref{expl:nGPCtonGNC}.
Notice that there is no requirement on $\varphi_0$ being a chain
equivalence, that means normal complexes are in no sense Poincar\'e.
Now we indicate the \emph{normal construction} which to an
$n$-dimensional GNC functorially associates an $n$-dimensional NAC.
The full details are somewhat complicated, the reader can find them
in \cite{Weiss-I(1985),Weiss-II(1985)}.

\begin{construction} \label{con:normal-construction}
Let~$(X,\nu,\rho)$ be an $n$-dimensional GNC with a choice of the
Thom class $u(\nu) \in \tilde C (\Th (\nu))$ whose associated
fundamental class is denoted~$[X]$. We would like to associate to it
an $n$-dimensional NAC over $\ZZ[\pi_1 X]$. We start with
\begin{itemize}
    \item $C = C(\tilde X)$
    \item $\varphi = \varphi ([X])$
    \item $\gamma = \gamma_{\nu} (u(\nu))$
\end{itemize}
Now we will only show that an element~$\chi$ with required
properties exists. In other words we show that $J (\varphi) =
\varphizp (S^n \gamma)$ in $\Qh{n}{C(X)}$. Consider in our case the
symmetric construction, the hyperquadratic construction and the
diagram (\ref{dgrm:Thom-S-dual-Poincare}). We obtain the following
commutative diagram:

{\footnotesize
\begin{center}
    \xymatrix@C=0.45cm{
    &
        H_n (X) \ar[r]^(0.45){\varphi} \ar[d]^{\Sigma^p}
    &
        Q^n (C(X)) \ar[r]^{J} \ar[d]^{\Sigma^p}
    &
        \Qh{n}{C(X)}
        \ar[d]_{\Sigma^p}^{\cong}
    &
        \Qh{n}{C^{n-\ast} (X)}
        \ar[l]_{\widehat\varphi_0^{\%}}
        \ar[d]_{\Sigma^p}^{\cong}
    \\
        H^k (\Th(\nu))
        \ar[ur]^{- \cap h (\rho)} \ar[r] \ar[dr]_{\labelstyle S-\text{dual}}
    &
        H_{n+p} (\Sigma^p X)
        \ar[r]^(0.45){\varphi}
    &
        Q^{n+p} (\tilde{C}(\Sigma^p_+ X)) \ar[r]^{J}
    &
        \Qh{n+p}{\tilde{C}(\Sigma^p_+ X)}
    &
        \Qh{n+p}{C^{n+p-\ast} (X)}
        \ar[l]_*!/u2pt/{\labelstyle \widehat{S^p\varphi}_0^{\%}}
    \\
    &
        H_{n+p} (\Th(\nu)^\ast)
        \ar[r]^(0.45){\varphi} \ar[u]_{\gamma_X}
    &
        Q^{n+p} (\tilde{C}(\Th(\nu)^\ast)) \ar[r]^{J} \ar[u]_{\gamma_X^{\%}}
    &
        \Qh{n+p}{\tilde{C}(\Th(\nu)^\ast)}
        \ar[r]^-*!/u3pt/{\labelstyle S-\text{dual}}
        \ar[u]^{\widehat\gamma_X^{\%}}
    &
        \Qh{n+p}{\tilde{C}^{N-\ast} (\Th(\nu))}
        \ar[u]_{\text{Thom}} \\
    }
\end{center}
}

The commutativity of the upper left part follows from the basic
properties of the symmetric construction. The commutativity of the
lower left part follows from the existence of the map $\Gamma_X$ and
naturality of the symmetric construction. The commutativity of the
right part follows from the commutativity of the diagram
(\ref{dgrm:Thom-S-dual-Poincare}).

As mentioned above, the construction can be made sufficiently
functorial, that means there is a preferred choice of $\chi$. We
obtain an $n$-dimensional NAC over $\Zpi$
\[
 (C(\tilde X),\theta (u(\nu))).
\]
\end{construction}


As in the previous section, we also need to discuss the relative
versions.

\begin{definition}\label{defn:GNP}
An $(n+1)$-dimensional \emph{geometric normal pair} (GNP) is a
triple $((X,Y),\nu,\rho)$ consisting of a pair of spaces~$(X,Y)$
with a $k$-dimensional spherical fibration~$\nu \co X \ra \BSG (k)$
and a map $\rho \colon (D^{n+1+k},S^{n+k}) \rightarrow
(\thom{\nu},\thom{\nu|_Y})$.

The \emph{fundamental class} of the normal pair~$((X,Y),\nu,\rho)$
is the $(n+1)$-dimensional homology class represented by the cycle
$[X,Y] \in C_n (X,Y)$ given by the formula $[X,Y] := u(\nu) \cap
h(\rho)$ where~$h$ is the Hurewicz homomorphism, and $u(\nu) \in
\tilde{C}^k (\thom{\nu})$ is some choice of the Thom class of $\nu$.

A \emph{geometric normal cobordism} between two $n$-dimensional GNCs
$(X,\nu,\rho)$ and $(X',\nu',\rho')$ is an $(n+1)$-dimensional
normal pair $((Z,X \sqcup X'),\nu'',\rho'')$ which restricts
accordingly over $X$ and $X'$.

The \emph{normal cobordism group} $\Omega^N_n (K)$ is defined as the abelian group of normal cobordism classes of $n$-dimensional GNCs with a reference map $r \co X \ra K$ and with the group operation given by the disjoint union operation.
\end{definition}

Notice that in the above setting, the triple $(Y,\nu|_Y,\rho|_{S^{n+k}})$ is an
$n$-dimensional GNC. The relative algebraic analogues come next.

\begin{defn}
A \emph{map of chain bundles} $(f,b) \co (C,\gamma) \ra
(C',\gamma')$ in $\AA$ is a map $f \co C \ra C'$ of chain complexes
in $\BB(\AA)$ together with a chain $b \in \Wh{TC}_1$ such that
\[
d(b) = \widehat f^{\%} (\gamma') - \gamma \in \Wh{TC}_0
\]
\end{defn}

\begin{defn}
An $(n+1)$-dimensional \emph{normal pair} $(f \co C \ra D,(\delta
\theta,\theta))$ in $\AA$ is an $(n+1)$-dimensional symmetric pair
$(f \co C \ra D,(\delta \varphi,\varphi))$ together with a map of
chain bundles $(f,b) \co (C,\gamma) \ra (D,\delta \gamma)$ and
chains $\chi \in \Wh C_{n+1}$ and $\delta \chi \in \Wh D_{n+2}$ such
that
\begin{align*}
J (\varphi) - \varphizp (S^n \gamma) & = d \chi \in \Wh{C}_n \\
J (\delta \varphi) - \widehat{\delta \varphi_0}^{\%} (S^{n+1} \delta
\gamma) + \widehat{f}^{\%} (\chi - \widehat{\varphi}^{\%}_0 (S^n b))
& = d ( \delta \chi) \in \Wh{D}_{n+1}
\end{align*}
where we abbreviate $(\delta \theta,\theta)$ for $((\delta
\varphi,\delta \gamma,\delta \chi),(\varphi,\gamma,\chi))$.
\end{defn}

Again notice that in the above setting $(C,\theta)$ is an $n$-dimensional NAC.

\begin{defn}
A \emph{normal cobordism} between normal complexes $(C,\theta)$ and
$(C',\theta')$ is a normal pair $((f \; f') \co C \oplus C' \ra
D,(\delta \theta,\theta \oplus -\theta'))$.
\end{defn}

The direct sum operation is defined analogously to the direct sum for the symmetric and quadratic complexes. Also there is a notion of a union of adjoining normal cobordisms and we obtain an equivalence relation. Again notice that a cobordism of normal complexes is in no sense a Poincar\'e pair.

There exists a relative normal construction. It associates to an
$(n+1)$-dimensional geometric normal pair an $(n+1)$-dimensional
algebraic normal pair in a functorial way. An $(n+1)$-dimensional
geometric normal cobordism induces an $(n+1)$-dimensional algebraic
normal cobordism in this way. These constructions are quite
complicated and therefore we again refer at this place to
\cite[section 7]{Weiss-II(1985)}.


Now we are ready to define the $NL$-groups, alias normal $L$-groups.

\begin{definition}\label{defn:NLgps} The \emph{normal $L$-groups} of an additive category with chain duality $\AA$ are
  \[
    NL^n (\AA)  :=  \{ \text{normal cobordism classes of } n \text{-dimensional NACs in } \AA \}.
  \]
\end{definition}

\begin{definition} \label{defn:normal-signature}
Let $(X,\nu,\rho)$ be an $n$-dimensional GNC. The cobordism class of
the $n$-dimensional NAC obtained from any choice of the Thom class
$u(\nu) \in \tilde{C}^k (\Th (\nu))$ in Construction
\ref{con:normal-construction} does not depend on the choice of
$u(\nu)$ and hence defines an element
\[
\nsign_{\Zpi} (X) = [(C(\tilde X),\theta (u(\nu)))] \in NL^n (\ZZ[\pi_1
(X)])
\]
called the \emph{normal signature} of $(X,\nu,\rho)$.

In fact the element $\nsign_{\ZZ[\pi_1 (X)]} (X)$ only depends on the normal cobordism class of $(X,\nu,\rho)$ and hence we obtain a homomorphism
\[
 \nsign_{\ZZ[\pi_1 (X)]} \co \Omega^N_n (K(\pi_1 (X),1) \ra NL^n (\ZZ[\pi_1 (X)]).
\]
\end{definition}

See also Remark \ref{rem:notation-NL-versus-L-hat} for a note on the notation.

Now we discuss the relation between the groups $L_n (\AA)$, $L^n
(\AA)$ and $NL^n (\AA)$. The details can be found in \cite[section
2]{Ranicki(1992)} and \cite{Weiss-II(1985),Weiss-II(1985)}. Here we
confine ourselves to the main ideas. We start with a lemma.

\begin{lemma} \textup{\cite[Proposition 2.6 (i)]{Ranicki(1992)}} \label{lem:normal-gives-quadratic-boundary}
Let~$(C,\varphi)$ be an $n$-dimensional SAC. Then~$(C,\varphi)$ can
be extended to a normal complex~$(C,\varphi,\gamma,\chi)$ if and
only if the boundary~$(\partial C, \partial \varphi)$ has a
quadratic refinement.
\end{lemma}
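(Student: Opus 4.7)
The plan is to translate both conditions into statements about hyperquadratic $Q$-groups and to exploit the cofibration that defines the boundary.

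First, I would reformulate both sides using Proposition \ref{propn:LESQ}. By exactness of
\[
Q_{n-1}(\partial C) \xra{1+T} Q^{n-1}(\partial C) \xra{J} \widehat{Q}^{n-1}(\partial C),
\]
the symmetric structure $\partial \varphi$ admits a quadratic refinement if and only if $J(\partial \varphi) = 0 \in \widehat{Q}^{n-1}(\partial C)$. On the other side, a normal extension of $(C,\varphi)$ is exactly a chain bundle $\gamma \in (\Wh{TC})_0$ together with a chain $\chi$ witnessing $J(\varphi) = \widehat{\varphi}_0^{\%}(S^n\gamma)$ in $\widehat{Q}^n(C)$. So the normal extension exists if and only if $J(\varphi)$ lies in the image of the composite
\[
\widehat{Q}^0(TC) \xra{S^n} \widehat{Q}^n(C^{n-\ast}) \xra{\widehat{\varphi}_0^{\%}} \widehat{Q}^n(C),
\]
where the first map is the suspension isomorphism of Proposition \ref{prop:sups-Q-groups}.

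Next, I would invoke the cofibration $C^{n-\ast} \xra{\varphi_0} C \xra{e} \sC(\varphi_0) = \Sigma \partial C$ used to define $\partial C$. Since the hyperquadratic $Q$-groups form a generalized cohomology theory on chain complexes (Remark \ref{rem:Qh-is-cohomology}), this cofibration induces a long exact sequence
\[
\cdots \ra \widehat{Q}^{n-1}(\partial C) \xra{\delta} \widehat{Q}^n(C) \xra{\widehat{\varphi}_0^{\%}} \widehat{Q}^n(C^{n-\ast}) \ra \widehat{Q}^n(\partial C) \ra \cdots,
\]
after using the suspension isomorphism $\widehat{Q}^n(\Sigma \partial C) \cong \widehat{Q}^{n-1}(\partial C)$. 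Exactness at $\widehat{Q}^n(C)$ says that $J(\varphi)$ is in the image of $\widehat{\varphi}_0^{\%}$ precisely when it is killed by the next connecting map.

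The third step, which I expect to be the technical heart, is the identification of this connecting map on $J(\varphi)$ with $J(\partial \varphi)$. Unwinding the definitions $\partial \varphi = S^{-1} e^{\%}(\varphi)$ and $\partial C = \Sigma^{-1} \sC(\varphi_0)$, both elements arise from pulling $\varphi$ across the same cofibration, once by applying $e^{\%}$ and then suspending down, once by taking the boundary in cohomology. The commuting square
\[
\xymatrix{
Q^n(C) \ar[r]^{e^{\%}} \ar[d]^{J} & Q^n(\Sigma\partial C) \ar[d]^{J} \ar[r]^-{S^{-1}} & Q^{n-1}(\partial C) \ar[d]^{J} \\
\widehat{Q}^n(C) \ar[r]^{e^{\%}} & \widehat{Q}^n(\Sigma\partial C) \ar[r]^-{S^{-1}}_{\cong} & \widehat{Q}^{n-1}(\partial C)
}
\]
together with naturality of $J$ then gives the diagonal identity that the connecting map in the cofibration sequence above sends $J(\varphi)$ to $J(\partial \varphi)$. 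This should follow directly from the explicit chain-level formulas, but care is needed because the connecting map of the cohomology theory is built from the homotopy $\Sigma^{-1}\sC(\varphi_0) \simeq \text{fib}(\widehat{\varphi}_0^{\%})$ in an appropriate stable category of chain complexes.

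Once the identification $\delta J(\varphi) = J(\partial \varphi)$ is established, the lemma follows from exactness: $J(\varphi)$ lies in the image of $\widehat{\varphi}_0^{\%}$ iff $\delta J(\varphi) = 0$ iff $J(\partial \varphi) = 0$. For the forward direction one may also produce the quadratic refinement directly by projecting the given $\chi$ down to $\partial C$; for the reverse direction the chain bundle $\gamma$ and the chain $\chi$ are extracted from the exactness by choosing a preimage of $J(\varphi)$ under $\widehat{\varphi}_0^{\%}$ and a witness for the equality, which is where the suspension isomorphism of Proposition \ref{prop:sups-Q-groups} ensures every element of $\widehat{Q}^n(C^{n-\ast})$ really comes from a chain bundle over $C$.
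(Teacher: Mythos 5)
This is essentially the paper's proof: you translate both conditions into hyperquadratic $Q$-group statements, run the $Q$-group sequence of Proposition~\ref{propn:LESQ} for $\partial C$ alongside the hyperquadratic long exact sequence of the cofibration $C^{n-*} \xra{\varphi_0} C \xra{e} \sC(\varphi_0)$, establish the key identity $\widehat{e}^{\%}(J(\varphi)) = S(J(\partial\varphi))$ via naturality of $J$ and the suspension, and conclude by exactness. The only slip is that the arrows in your displayed long exact sequence are reversed --- $\widehat{\varphi}_0^{\%}$ is induced by $\varphi_0 \colon C^{n-*} \ra C$ and hence maps $\widehat{Q}^n(C^{n-*})$ into $\widehat{Q}^n(C)$, exactly as you wrote when reformulating the normal-extension condition --- but this does not affect the substance of the argument.
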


\begin{proof}
Consider the following long exact sequences
  \[
    \xymatrix@R=0,5cm{
      \ldots                    \ar[r]               &
      \Qq{n-1}{\partial C}      \ar[r]^-{1+T}        &
      \Qs{n-1}{\partial C}      \ar[r]^-{J}          &
      \Qh{n-1}{\partial C}      \ar[r]  \ar[d]_{S}^{\cong}   &
      \ldots
      \\
      \ldots                    \ar[r]               &
      \Qh{n}{C^{n-\ast}}         \ar[r]^-{\widehat{\varphi_0}^{\%}}    &
      \Qh{n}{C}                 \ar[r]^-{\widehat{e}^{\%}}               &
      \Qh{n}{\cone{(\varphi_0)}}     \ar[r]               &
      \ldots
    }
  \]
We have~$\partial \varphi = S^{-1} (e^{\%} (\varphi)) \in
\Qs{n-1}{\partial C}$. A diagram chase (using a slightly larger
diagram than the one above) gives the equation
\[
 \widehat{e}^{\%} (J(\varphi)) = S ( J (\del \varphi)) \in \Qh{n}{\cone{(\varphi_0)}}.
\]
It follows that $\del \varphi$ has a preimage in $\Qq{n-1}{\partial
C}$, that means a quadratic refinement, if and only if $J (\varphi)$
has a preimage in $\Qh{n}{C^{n-\ast}} \cong \Qh{0}{C^{-\ast}}$, that
means a chain bundle whose suspension maps to $J(\varphi)$ via
$(\widehat{\varphi_0})^{\%}$, in other words there is a normal
structure refining $\varphi$.
\end{proof}

The lemma can be improved so that one obtains a one-to-one
correspondence between the normal structures extending $(C,\varphi)$
and quadratic refinements of $(\del C,\del \varphi)$, the details
are to be found in \cite[sections 4,5]{Weiss-II(1985)}.

\begin{construction} \cite[Definition 2.9]{Ranicki(1992)}
The map
\[
 \del \co NL^n (\AA) \ra L_{n-1} (\AA) \quad \del
 (C,\varphi,\gamma,\chi) = (\del C,\del \psi)
\]
is defined so that $\del \psi$ is the quadratic refinement of $\del
\varphi$ described in Lemma
\ref{lem:normal-gives-quadratic-boundary}.
\end{construction}

\begin{lemma} \textup{\cite[Proposition 2.6 (ii)]{Ranicki(1992)}} \label{lem:symmetric-poincare-means-unique-normal}
There is a one-to-one correspondence between the homotopy classes of
$n$-dimensional SAPCs and the homotopy classes $n$-dimensional NACs
such that $\varphi_0$ is a chain homotopy equivalence.
\end{lemma}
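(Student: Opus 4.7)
The plan is to view this as a direct refinement of Lemma \ref{lem:normal-gives-quadratic-boundary} in the case when the symmetric structure is Poincaré. Concretely, I want to show that the forgetful assignment $(C,\varphi,\gamma,\chi) \mapsto (C,\varphi)$ from NACs with $\varphi_0$ a chain equivalence to SAPCs becomes a bijection on homotopy classes by proving that both existence and essential uniqueness of $(\gamma,\chi)$ extending a Poincaré $(C,\varphi)$ hold.

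For \emph{existence}, note that if $(C,\varphi)$ is an $n$-dimensional SAPC then $\varphi_0 \co C^{n-\ast} \to C$ is a chain equivalence, so the boundary $\partial C = \Sigma^{-1}\sC(\varphi_0)$ is contractible. Thus $Q_{n-1}(\partial C) = Q^{n-1}(\partial C) = \Qh{\ast}{\partial C} = 0$, and in particular $\partial \varphi = 0 \in Q^{n-1}(\partial C)$ admits the trivial quadratic refinement $\partial \psi = 0$. By Lemma \ref{lem:normal-gives-quadratic-boundary} this produces some normal refinement $(\varphi,\gamma,\chi)$ of the given symmetric structure.

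For \emph{uniqueness up to homotopy}, I would run the long exact sequence used in the proof of Lemma \ref{lem:normal-gives-quadratic-boundary} with the contractibility of $\partial C$ inserted. The sequence
\[
\Qh{n-1}{\partial C} \to \Qh{n}{C^{n-\ast}} \xra{\widehat{\varphi_0}^{\%}} \Qh{n}{C} \to \Qh{n}{\partial C}
\]
becomes $0 \to \Qh{n}{C^{n-\ast}} \xra{\cong} \Qh{n}{C} \to 0$, so the chain-bundle class $[\gamma]$ with $(\widehat{\varphi_0})^{\%}(S^n\gamma) = J(\varphi)$ is determined uniquely in $\Qh{0}{C^{-\ast}}$. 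At the chain level this means the space of choices of $\gamma$ is connected, and for any two such choices the equation $d\chi = J(\varphi) - \widehat{\varphi_0}^\%(S^n\gamma)$ has a witnessing null-homotopy whose ambiguity is likewise killed by the vanishing of the relevant $Q$-groups of $\partial C$. Packaging this, via the strengthened correspondence between normal refinements of $(C,\varphi)$ and quadratic refinements of $(\partial C,\partial\varphi)$ (the improvement of Lemma \ref{lem:normal-gives-quadratic-boundary} attributed to \cite{Weiss-II(1985)}), the fiber over $(C,\varphi)$ of the forgetful map is the set of quadratic refinements of the zero structure on a contractible complex, which is a single homotopy class.

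The main technical point, and the step I expect to be the real bookkeeping, is to match the two notions of ``homotopy class'' on the two sides: a homotopy of SAPCs must be promoted to a homotopy of NACs. This requires checking that homotopies of $\varphi$ lift through the whole packet $(\varphi,\gamma,\chi)$, which is where the full one-to-one correspondence of Weiss between normal extensions and quadratic refinements of the boundary (rather than merely its existence part stated in Lemma \ref{lem:normal-gives-quadratic-boundary}) is needed, applied parametrically along a homotopy.
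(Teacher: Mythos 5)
Your argument is correct in outline, but it takes a noticeably more roundabout path than the paper's own sketch. The paper's proof simply observes that when $\varphi_0$ is a chain homotopy equivalence, so is $\widehat{\varphi_0}^{\%}$, and then \emph{directly constructs} the chain bundle as $\gamma = S^{-n}(\widehat{\varphi_0}^{\%})^{-1}J(\varphi)$ and takes $\chi$ to be the chain witnessing the composite $(\widehat{\varphi_0}^{\%})\circ(\widehat{\varphi_0}^{\%})^{-1}\simeq 1$. You instead route everything through the boundary criterion of Lemma~\ref{lem:normal-gives-quadratic-boundary}: since $\partial C$ is contractible, all of its $Q$-groups vanish, so $\partial\varphi$ trivially admits a (unique) quadratic refinement, whence a normal extension exists and is essentially unique by the Weiss one-to-one correspondence. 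The two arguments are driven by the same engine — the invertibility of $\widehat{\varphi_0}^{\%}$, which for you appears as the isomorphism in the long exact hyperquadratic $Q$-group sequence with the $\partial C$-terms vanishing — but the paper's version is more economical because it avoids passing through Lemma~\ref{lem:normal-gives-quadratic-boundary} and produces the data $(\gamma,\chi)$ explicitly, whereas yours establishes nonemptiness and essential uniqueness of the fiber by abstract vanishing. Your version has the advantage of packaging uniqueness at the same time, which the paper's sketch leaves implicit; but note that your appeal to the improved correspondence ``applied parametrically along a homotopy'' is exactly the part that neither your argument nor the paper spells out, and both ultimately offload it onto the Weiss references. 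So the proposal is sound, just heavier machinery than strictly necessary for the existence direction and equally incomplete on the genuine coherence issue.
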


\begin{proof}
Let $(C,\varphi)$ be an $n$-dimensional SAPC in $\AA$ so that $\varphi_0 \co \Sigma^n TC \ra C$ is a chain homotopy equivalence. One can associate a normal structure to $(C,\varphi)$ as follows. The chain bundle $\gamma \in \Wh{TC}_0$ is the image of $\varphi \in \Ws{C}_n$ under
\[
 \Ws{C}_n \xra{J} \Wh{C}_n \xra{(\widehat{\varphi_0}^{\%})^{-1}} \Wh{\Sigma^n TC}_n \xra{\;S^{-n}\;} \Wh{TC}_0.
\]
The chain $\chi \in \Wh{C}_{n+1}$ comes from the chain homotopy $(\widehat{\varphi_0}^{\%}) \circ (\widehat{\varphi_0}^{\%})^{-1} \simeq 1$. 
\end{proof}

\begin{construction} \cite[Proposition 2.6 (ii)]{Ranicki(1992)}
The map
\[
 J \co L^n (\AA) \ra NL^n (\AA) \quad J(C,\varphi) = (C,\varphi,\gamma,\chi)
\]
is constructed using the above Lemma
\ref{lem:symmetric-poincare-means-unique-normal}.
\end{construction}

The maps we just described in fact fit into a long exact sequence.

\begin{proposition} \textup{\cite[Definition 2.10, Proposition
2.8]{Ranicki(1992)}} \textup{\cite[Example 6.7]{Weiss-II(1985)}}
\label{propn:LESL} Let~$\AA$ be an additive category with chain
duality. Then there is a long exact sequence
  \[
    \xymatrix{
      \ldots         \ar[r]             &
      L_n (\AA)        \ar[r]^{1+T}       &
      L^n (\AA)        \ar[r]^J           &
      NL^n (\AA)       \ar[r]^{\partial}     &
      L_{n-1} (\AA)     \ar[r]             &
      \ldots
    }
  \]
\end{proposition}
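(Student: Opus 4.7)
My plan is to verify exactness at each of the three positions $L^n(\AA)$, $NL^n(\AA)$, and $L_{n-1}(\AA)$ separately. At the chain-complex level the underlying algebraic exactness is already available in the long exact sequence of $Q$-groups from Proposition~\ref{propn:LESQ}, and my task is to lift that chain-level exactness to the cobordism level via the boundary and algebraic surgery machinery developed above. For each position the vanishing of the composite will be witnessed by an explicit cobordism, while the reverse inclusion will require constructing a cobordism out of the chain-level data supplied by Proposition~\ref{propn:LESQ} and its enhancements.

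For exactness at $L^n(\AA)$: given a QAPC $(C,\psi)$, I would observe that its symmetrization $(C,(1+T)\psi)$ is Poincar\'e with contractible boundary, so the unique normal refinement attached by $J$ via Lemma~\ref{lem:symmetric-poincare-means-unique-normal} admits a nullcobordism obtained from the one-to-one correspondence between normal structures on a SAC and quadratic refinements of its boundary (the enhancement of Lemma~\ref{lem:normal-gives-quadratic-boundary}); this yields $J\circ(1+T)=0$. Conversely, given a SAPC $(C,\varphi)$ for which $J(C,\varphi)$ bounds a normal pair, I would use the chain bundle $\delta\gamma$ and consistency chain $\delta\chi$ on the bounding pair, together with the exact sequence of Proposition~\ref{propn:LESQ} applied over the cobordism, to extract a quadratic refinement of the boundary and assemble a QAPC whose symmetrization is symmetrically cobordant to $(C,\varphi)$.

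For exactness at $NL^n(\AA)$: the composite $\partial\circ J$ vanishes because the boundary $\partial C=\Sigma^{-1}\sC(\varphi_0)$ of a SAPC is contractible. In the converse direction, a QAPP $(g\colon\partial C\to D,(\delta\psi,\partial\psi))$ nullcobounding the quadratic boundary of a NAC $(C,\theta)$ supplies data for algebraic surgery on the underlying SAC $(C,\varphi)$, yielding a SAPC whose image under $J$ is normally cobordant to $(C,\theta)$. Similarly at $L_{n-1}(\AA)$: the composite $(1+T)\circ\partial$ vanishes since $(\partial C,\partial\varphi)$ always bounds as a SAPC through the algebraic Thom construction attached to $(C,\varphi)$; and if $(1+T)\psi$ bounds via a SAPP $(f\colon C'\to D,(\delta\varphi,(1+T)\psi))$, then I would assemble a NAC on $D$ whose chain bundle $\delta\gamma$ is chosen compatibly with $\delta\varphi$ using Proposition~\ref{propn:LESQ} at the $Q$-group level, and whose quadratic boundary recovers $(C',\psi)$.

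The hard part will be the reverse inclusions at $NL^n(\AA)$ and $L_{n-1}(\AA)$: Proposition~\ref{propn:LESQ} supplies only homotopy-level equations in the $Q$-groups, whereas a genuine normal cobordism requires producing all three ingredients $(\varphi,\gamma,\chi)$ of a normal structure coherently over the cobordism, maintaining the identity $J(\varphi)-\widehat{\varphi_0}^{\%}(S^n\gamma)=d\chi$ throughout. Building the chain bundle extension compatibly with both the symmetric datum and the algebraic surgery data requires the explicit chain-level bookkeeping carried out in \cite{Weiss-II(1985)}, which I would invoke at the crucial points rather than reproduce from scratch.
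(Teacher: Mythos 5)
Your plan takes a genuinely different route. The paper invokes the general relative-$L$-group machinery of \cite[chapter 2]{Ranicki(1981)}: applied to $J\colon L^n(\AA)\to NL^n(\AA)$, that framework automatically produces a long exact sequence whose relative term is the cobordism group of $n$-dimensional (normal, symmetric Poincar\'e) pairs, and the whole content of the proof is concentrated in Lemma~\ref{lem:normal-sym-pair-gives-quad-poincare-cplx}, which identifies that relative term with $L_{n-1}(\AA)$ by algebraic surgery. Exactness is free; one never has to build a single cobordism witnessing a kernel/image containment. Your direct verification distributes the same underlying ingredients --- algebraic surgery, the boundary and algebraic Thom constructions, and the correspondence between chain-bundle extensions and quadratic refinements of the boundary --- across three separate exactness checks, each of which effectively re-proves a special case of Lemma~\ref{lem:normal-sym-pair-gives-quad-poincare-cplx}. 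That is workable, but less economical, and it scatters the Weiss-level chain bookkeeping over more places than the paper's argument does.

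One step you should tighten: your argument that $J\circ(1+T)=0$ leans on the bijection of Lemma~\ref{lem:normal-gives-quadratic-boundary} between normal extensions of $(C,\varphi)$ and quadratic refinements of $(\partial C,\partial\varphi)$, but that bijection classifies structures up to \emph{homotopy}, not up to cobordism, and so by itself it does not produce a null-cobordism of $J(C,(1+T)\psi)$. The piece of data you actually need is the full quadratic structure $\psi$ (not just its boundary) together with the chain-level exactness $\Qq{n}{C}\to\Qs{n}{C}\to\Qh{n}{C}$ from Proposition~\ref{propn:LESQ}: this forces $J((1+T)\psi)=0$ in $\Qh{n}{C}$, hence the chain bundle $\gamma=(\widehat{\varphi_0}^{\%})^{-1}J((1+T)\psi)$ of $J(C,(1+T)\psi)$ bounds, and a bounding chain $b$ supplies a normal null-cobordism $(C\to 0,(0,\theta))$. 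The paper's framework never has to carry out such a check because exactness is automatic there.
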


\begin{proof}[Sketch of proof]
In \cite[chapter 2]{Ranicki(1981)} Ranicki defines the concept of a
triad of structured chain complexes and shows that one can define a
cobordism group of pairs of structured chain complexes, where the
structure on the boundary is some refinement of the structure
inherited from the pair. Such cobordism groups then fit into a
corresponding long exact sequence. The whole setup is analogous to
the definition of relative cobordism groups for a pair of spaces and
the associated long exact sequence.

In our special case we consider the map $J \co L^n (\AA) \ra NL^n
(\AA)$. So the $n$-th relative group is the cobordism group of
$n$-dimensional (normal, symmetric Poincar\'e) pairs, that means we
have a normal pair $f \co C \ra D$ such that the symmetric structure
on $C$ is Poincar\'e. This together with the following lemma
establish the proposition.
\end{proof}

\begin{lemma} \label{lem:normal-sym-pair-gives-quad-poincare-cplx}
\textup{\cite[Proposition 2.8 (ii)]{Ranicki(1992)}} Let~$\AA$ be an
additive category with chain duality. There is a one-to-one
correspondence between the cobordism classes of $n$-dimensional (normal, symmetric
Poincar\'e) pairs in $\AA$ and the cobordism classes of $(n-1)$-dimensional QAPCs in $\AA$.
\end{lemma}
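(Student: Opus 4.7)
The plan is to construct explicit, mutually inverse assignments between $n$-dimensional (normal, symmetric Poincar\'e) pairs and $(n-1)$-dimensional QAPCs, and then to verify that these descend to a bijection on cobordism classes.

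For the forward direction, given an $n$-dimensional (normal, SP) pair $(f \co C \ra D, (\delta\theta, \theta))$, I would first form the algebraic Thom complex $(\sC(f), \delta\varphi/\varphi)$, which carries an $n$-dimensional symmetric structure as in \cite[Proposition 1.15]{Ranicki(1992)}. The chain bundle and hyperquadratic data $(\delta\gamma, \gamma, b, \delta\chi, \chi)$ attached to the pair should assemble into an $n$-dimensional normal structure on $\sC(f)$, producing a closed $n$-dimensional NAC. Applying the boundary construction from Lemma \ref{lem:normal-gives-quadratic-boundary} then yields the desired $(n-1)$-dimensional QAPC supported on $\partial \sC(f)$. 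The hypothesis that $(C,\varphi)$ is SP ensures $\partial C \simeq 0$, so all non-triviality in $\partial \sC(f)$ comes from the $D$-end of the pair.

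For the reverse direction, given an $(n-1)$-dimensional QAPC $(E, \psi)$, I would take the trivial pair $(E \ra 0, (0, (1+T)\psi))$. The boundary complex $(E, (1+T)\psi)$ is an $(n-1)$-dimensional SAPC because $(1+T)\psi_0$ is a chain equivalence, and Lemma \ref{lem:symmetric-poincare-means-unique-normal} endows it with a canonical normal structure. Together with the trivial structure on $0$ and the compatibility data supplied by the quadratic refinement $\psi$, this gives a full normal pair structure, producing an $n$-dimensional (normal, SP) pair. To show the two assignments are mutually inverse on cobordism classes, I would use that the Thom complex of the trivial pair $E \ra 0$ is $\Sigma E$ up to chain equivalence, whose $\partial$-image recovers $(E,\psi)$; in the opposite direction, I would build an explicit cobordism between an arbitrary pair and the trivial pair associated to its QAPC, using the original Thom pair as connecting data.

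The hard part will be the compatibility with cobordism in the forward direction: one must check that a normal cobordism of (normal, SP) pairs yields a quadratic cobordism between the associated QAPCs. This amounts to a relative extension of the Q-group diagram chase of Lemma \ref{lem:normal-gives-quadratic-boundary}, now involving the Q-groups of $C$, $D$, and $\sC(f)$ simultaneously in a single commutative diagram, and it requires tracking how the chain-bundle map $b$ together with the hyperquadratic cochains $(\delta\chi, \chi)$ determine the quadratic refinement on $\partial \sC(f)$ up to preferred chain homotopy. Once this is in place, the functoriality of the Thom construction and of the boundary map, both established in \cite{Weiss-II(1985)}, reduce cobordism compatibility to a formal consequence of this expanded Q-group analysis.
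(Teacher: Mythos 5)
Your forward map is not the paper's, and it does not invert your own reverse map. The paper takes the underlying symmetric pair $(f\co C\ra D,(\delta\varphi,\varphi))$ and performs \emph{algebraic surgery} on the $(n-1)$-dimensional SAPC $(C,\varphi)$, producing a new $(n-1)$-dimensional SAPC $(C',\varphi')$; the normal data attached to the pair is then used (by a relative extension of the $Q$-group argument of Lemma~\ref{lem:normal-gives-quadratic-boundary}) to supply the quadratic refinement $\partial\psi'$ of $\varphi'$. You instead pass to the Thom complex $(\sC(f),\delta\varphi/\varphi)$, endow it with a normal structure, and take its quadratic boundary $\partial\sC(f)$. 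These are genuinely different constructions.

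Here is why yours cannot be the inverse. Your forward map factors through $NL^n(\AA)$: the Thom construction turns the (normal, symmetric Poincar\'e) pair into an $n$-dimensional NAC, and you then apply $\partial\co NL^n(\AA)\ra L_{n-1}(\AA)$. By exactness of the sequence in Proposition~\ref{propn:LESL}, the composite $NL^n(\AA)\xra{\partial}L_{n-1}(\AA)\xra{1+T}L^{n-1}(\AA)$ vanishes, so your forward map always lands in $\ker(1+T)\subseteq L_{n-1}(\AA)$. But your reverse map sends an $(n-1)$-dimensional QAPC $(E,\psi)$ to the trivial pair $(E\ra 0,(0,(1+T)\psi))$, and the correspondence is supposed to send that pair back to $(E,\psi)$, whose symmetrization $(1+T)\psi$ is in general nonzero in $L^{n-1}(\AA)$. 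Concretely, on the trivial pair $\sC(f)=\Sigma E$ and $(0/\varphi)_0=0$ (with $D=0$ both $\delta\varphi_0$ and $\varphi_0 f^\ast$ vanish, and $\varphi_{-1}=0$), so $\partial(\Sigma E)\simeq E\oplus E^{n-1-\ast}$ carries a hyperbolic, null-cobordant structure; you do \emph{not} recover $(E,\psi)$, contrary to your claim that the "$\partial$-image recovers $(E,\psi)$". This also refutes the heuristic "all non-triviality in $\partial\sC(f)$ comes from the $D$-end of the pair": a copy of $C$ persists in $\partial\sC(f)$ even when $D=0$. The fix is to replace the Thom-then-boundary step by algebraic surgery on $(C,\varphi)$ with data the symmetric pair, exactly as the paper sketches; your reverse map and your outline of the cobordism-compatibility argument via an expanded $Q$-group diagram chase are then in line with the paper.
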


\begin{proof}[Sketch of proof]
Let $(f \co C \ra D,(\delta \theta,\theta))$ be an $n$-dimensional
(normal, symmetric Poincar\'e) pair in $\AA$. In particular we have
an $n$-dimensional symmetric pair $(f \co C \ra D,(\delta
\varphi,\varphi))$, which we can use as data for an algebraic
surgery on the $(n-1)$-dimensional SAPC $(C,\varphi)$. The effect
$(C',\varphi')$ is again an $(n-1)$-dimensional SAPC. It turns out
to have a quadratic refinement, by a generalization of the proof of
Lemma \ref{lem:normal-gives-quadratic-boundary} (the lemma is a
special case when $f \co 0 \ra C$). The assignment $(f \co C \ra
D,(\delta \theta,\theta)) \mapsto (C',\del \psi')$ turns out to
induce a one-to-one correspondence on cobordism classes.
\end{proof}

\begin{rem} \label{rem:notation-NL-versus-L-hat}
Proposition \ref{propn:LESL} provides us with an isomorphism between the groups $NL^n (\AA)$ and the groups $\widehat{L}^n (\AA)$ defined in \cite{Ranicki(1981)} and used in \cite{Ranicki(1979)}.
\end{rem}


\subsection{The quadratic boundary of a GNC}
\label{subsec:quadratic-boundary-of-gnc}


In this subsection we study in more detail the passage from a GNC to
the boundary of its associated NAC. This means that from an
$n$-dimensional GNC we pass to an $(n-1)$-dimensional QAPC. The
construction was described in \cite[section 7.4]{Ranicki(1981)} even
before the invention of NAC in \cite{Weiss-I(1985),Weiss-II(1985)}.
It will be useful for geometric applications in later sections.

Before we start we need more basic technology. First we describe the
spectral quadratic construction:

\begin{construction}\label{con:SpectralQuadraticConstruction}
Let~$F \co X \lra \Sigma^{p} Y$ be a map between pointed spaces (a
map of this shape is called a {\it semi-stable map}) inducing the
chain map
\[
 f \co  \tilde{C}(X)_{p+\ast} \lra \tilde{C}(\Sigma^p Y)_{p+\ast} \simeq \tilde{C}(Y)
\]
The \emph{spectral quadratic construction} on~$F$ is a chain map
\[
\Psi \co \tilde{C}(X)_{p+\ast} \lra \Wq{\sC (f)}
\]
such that
\[
 (1+T) \circ \Psi \equiv e^\% \circ \varphi \circ f
\]
where $\varphi \co C(Y) \ra \Ws{C(Y)}$ is the symmetric construction
on $Y$ and $e \co C(Y) \ra \sC (f)$ is the inclusion map. The
existence of $\Psi$ can be read off the following commutative
diagram in which the lower horizontal sequence is exact by Remark \ref{rem:Qh-is-cohomology} and the right vertical sequence is exact by Proposition \ref{propn:LESQ}
\[
 \xymatrix{
  & \tilde{H}_{n+p} (X)  \ar[dl]_{\cong}  \ar[dr]_-{f}  \ar@{-->}[drr]^{\Psi}
  & 
  & 
  \\ 
    \tilde{H}_{n+p} (X)      \ar[d]_{\varphi_{X}}     \ar[r]^-{F}
  & \tilde{H}_{n+p} (\Sigma^{p} Y)  \ar[d]_{\varphi_{\Sigma^{p} Y}}
  & \tilde{H}_n (Y)     \ar[l]_-{\cong}  \ar[d]^{\varphi_Y}
  & \Qq{n}{\cone(f)}                  \ar[d]^{1+T}
  \\ 
    \Qs{n+p}{\tilde{C}(X)}      \ar[dr]^J  \ar[r]^-{F^\%}
  & \Qs{n+p}{\tilde{C}(\Sigma^{p} Y)}  \ar[dr]^J
  & \Qs{n}{\tilde{C}(Y)}  \ar[l]_-{S^{p}} \ar[d]^J   \ar[r]^-{e^\%}
  & \Qs{n}{\cone(f)}                     \ar[d]^J
  \\ 
  & \Qh{n}{\Sigma^{-p} \tilde{C}(X)}   \ar[r]^{\widehat{f}^\%}
  & \Qh{n}{\tilde{C}(Y)}         \ar[r]^-{\widehat{e}^\%}
  & \Qh{n}{\cone(f)}
 }
\]
The spectral quadratic construction $\Psi$ on $F$ has the property
that if $X = \Sigma^p X_0$ for some $X_0$ then it coincides with the
quadratic construction on $F$ as presented in Construction
\ref{constrn:quadconstrn} composed with $e_{\%}$.
\end{construction}

Recall that we have already encountered the semi-stable map
$\Gamma_Y$ coming from an $n$-GNC $(Y,\nu_Y,\rho_Y)$ in Construction
\ref{con:S-duality-and-Thom-is-Poincare}. The spectral quadratic
construction on $\Gamma_Y$ is identified below.

\begin{construction}
\label{con:normal-con-via-spectral-quadratic-con} See
\cite[Proposition 7.4.1]{Ranicki(1981)} and \cite[Theorem
7.1]{Weiss-II(1985)}. Let~$\Gamma_Y \co \Th(\nu_Y)^\ast \ra \Sigma^p
Y_+$ be the semi-stable map obtained in Construction
\ref{con:S-duality-and-Thom-is-Poincare} and let $\gamma_Y \co
C(\Th(\nu_Y)^\ast)_{\ast+p} \ra C(Y)$ denote the induced map. Recall
diagram (\ref{dgrm:Thom-S-dual-Poincare}) in Construction
\ref{con:S-duality-and-Thom-is-Poincare} which identifies
\[
\sC (\varphi_0) \simeq \sC (\gamma_Y)
\]
via the Thom isomorphism and $S$-duality. The spectral quadratic
construction on the map $\Gamma_Y$ produces a quadratic structure
\[
\Psi (u(\nu_Y)^\ast) \in W_{\%} \sC (\gamma_Y)_n
\]
where $u(\nu_Y)^\ast$ denotes the $S$-dual of the Thom class of
$\nu_Y$. We also have
\[
  (1+T) \circ \Psi ( u (\nu_{Y})^\ast) \equiv e^{\%} (\varphi ([Y])) \stackrel{\text{def}}{=} S(\del\varphi([Y]))
\]
From the cofibration sequence of chain complexes (with $C' = \del
C(Y)$):
\[
\Sigma \Wq{C'} \xra{\left(\begin{smallmatrix} 1+T  \\ S
\end{smallmatrix}\right)} \Sigma \Ws{C'} \oplus \Wq{\Sigma C'} \xra{S
- (1+T)} \Ws{\Sigma C'}
\]
we see that there exists a~$\psi (Y) \in (\Wq{\del C(Y)})_{n-1}$,
unique up to equivalence, such that~$(1+T)\psi (Y)\simeq
\del\varphi([Y])$. Hence we obtain an $(n-1)$-dimensional QAPC over
$\ZZ$ giving an element
\begin{equation*}
[(\del C(Y),\psi(Y))] \in L_{n-1} (\ZZ).
\end{equation*}

Recall from Construction \ref{con:normal-construction} that for any
geometric normal complex~$(Y,\nu_Y,\rho_Y)$ there is defined an
$n$-dimensional NAC $\nsign (Y)$ over $\ZZ$, which, as such, has a
quadratic boundary
\[
\del \nsign (Y) = [(C',\psi')] \in L_{n-1} (\ZZ)
\]
defined via Lemma \ref{lem:normal-gives-quadratic-boundary}.
Inspecting the definitions we see that $C' \simeq \del C(Y)$ and
further inspection of commutative diagrams defining the respective
quadratic structures shows that $\psi (Y)$ and $\psi'$ are
equivalent.
\end{construction}

\begin{example}
\label{expl:normal-symm-poincare-pair-gives-quadratic} See
\cite[Remark 2.16]{Ranicki(1992)}, \cite[Proposition
7.4.1]{Ranicki(1981)} and \cite[Theorem 7.1]{Weiss-II(1985)}. Recall
from the sketch proof of Lemma
\ref{lem:normal-sym-pair-gives-quad-poincare-cplx} that there is an
equivalence between cobordism classes of $n$-dimensional algebraic
(normal,~symmetric Poincar\'e) pairs and cobordism classes of
$(n-1)$-dimensional QAPCs, and that, in the special case that the
boundary in the pair we start with is $0$ the construction giving
the equivalence specializes to the construction of the quadratic
boundary of a normal complex.

In Construction \ref{con:normal-con-via-spectral-quadratic-con} it
is shown how the spectral quadratic construction can be used to
construct the quadratic boundary when we have a geometric normal
complex as input. In this example it is shown how the equivalence of
Lemma \ref{lem:normal-sym-pair-gives-quad-poincare-cplx} can be
realized using the spectral quadratic construction when we have a
degree one normal map of Poincar\'e complexes as input. In that case
the mapping cylinder of the map gives a normal pair, with Poincar\'e
boundary. Furthermore, it is shown that the quadratic complex
obtained in this way coincides with the surgery obstruction
associated to the degree one normal map. This is crucially used in
the proof of part (i) of the Main technical theorem (see proof of
Theorem \ref{thm:lifts-vs-orientations}).

Let $(f,b) \co M \ra X$ be a degree one normal map of $n$-GPC.
Denote by $\nu_M$, $\nu_X$ the respective SNFs. We form the
$(n+1)$-dimensional geometric (normal,~Poincar\'e) pair
\begin{equation*}
\big( (W,M \sqcup X), (\nu_W,\nu_{M \sqcup X}), (\rho_W,\rho_{M
\sqcup X}) \big)
\end{equation*}
with $W = \cyl (f)$. The symbol $\nu_W$ denotes the $k$-spherical
fibration over $W$ induced by $b$ and
\[
(\rho_W,\rho_{M \sqcup X}) \co (D^{n+1+k},S^{n+k}) \ra (\Th (\nu_W),
\Th (\nu_M \sqcup \nu_X))
\]
is the map induced by $\rho_M$ and $\rho_X$. Denote $j \co M \sqcup
X \hookrightarrow W$, $j_M \co M \hookrightarrow W$, and $j_X \co X
\hookrightarrow W$ the inclusions, by $\pr_X \co W \ra X$ the
projection which is also a homotopy inverse to $j_X$ and observe
that $f = \pr_X \circ j_M$.

Now we describe the passage
\begin{equation}
\label{eqn:norm-poincare-sign-of-deg-one-map-is-surgery-obstruction}
\textup{Lemma } \ref{lem:normal-sym-pair-gives-quad-poincare-cplx}
\co (\nsign (W),\ssign (M) - \ssign (X)) \mapsto [(C',\psi')].
\end{equation}

According to the proof of Lemma
\ref{lem:normal-sym-pair-gives-quad-poincare-cplx} the underlying
chain complex $C'$ is obtained by algebraic surgery on the
$(n+1)$-dimensional symmetric pair
\[
(j_\ast \co C(M) \oplus C(X) \ra C(W),(\delta \varphi,\varphi)).
\]
This is just the desuspension of the mapping cone of the 'want to
be' Poincar\'e duality map
\begin{equation*}
C' = S^{-1} \sC \big( C^{n+1-\ast} (W) \xra{\smallpairmapforw}
C(W,M\sqcup X) \big)
\end{equation*}

If we want to use the spectral quadratic construction we need a
semi-stable map inducing the map in the above display. Consider the
map
\[
S^N \xra{\rho_W/\rho_{M\sqcup X}} \Th (\nu_W) / \Th (\nu_{M\sqcup
X}) \xra{\Delta} \Sigma^p (W/(M \sqcup X)) \wedge \Th (\nu_W)
\]
which has an $S$-dual
\[
\Gamma_W \co \Th (\nu_W)^\ast \ra \Sigma^p (W/(M \sqcup X))
\]
which in turn induces a map of chain complexes
\[
\gamma_W \co C_{\ast+p}(\Th (\nu_W)^\ast) \ra C_\ast (W/(M \sqcup
X))
\]
The map $\gamma_W$ coincides with the map $\smallpairmapforw$ under
Thom isomorphism and $S$-duality (by a relative version of Diagram
\ref{dgrm:Thom-S-dual-Poincare}, see also section
\ref{sec:proof-part-2}).

The spectral quadratic construction on $\Gamma_W$
\[
\Psi \co C_{n+1+p} (\Th (\nu_W)^\ast) \ra \Wq{\sC (\gamma_W)}_{n+1}
\]
produces from the dual of the Thom class $u(\nu_W)^\ast \in
C_{n+1+p} (\Th (\nu_W)^\ast)$ an $(n+1)$-dimensional quadratic
structure on $\sC (\gamma_W)$ which has a desuspension unique up to
equivalence and that is our desired $\psi'$ such that
\[
\Psi (u(\nu_W)^\ast) = S (\psi').
\]

The construction just described comes from \cite[Proposition
7.4.1]{Ranicki(1981)}. By \cite[Proof of Theorem
7.1]{Weiss-II(1985)} we obtain that
(\ref{eqn:norm-poincare-sign-of-deg-one-map-is-surgery-obstruction})
holds.

Now recall from Definition \ref{defn:quad-sign} that we have an
another way of assigning $n$-dimensional quadratic Poincar\'e
complex to $(f,b)$, namely the surgery obstruction $\qsign (f,b) \in
L_n (\ZZ)$.

We claim that
\begin{equation*}
[(C',\psi')] = \qsign (f,b) \in L_n (\ZZ)
\end{equation*}

The following commutative diagram identifies $C' \simeq \sC (f^!)$:
\begin{equation} \label{dgrm:want-to-be-duality-in-pair-vs-umkehr}
\begin{split}
 \xymatrix{
 C^{n+1-\ast} (W) \ar[r]^{\smallpairmapforw} \ar[d]^{j_M^\ast} &
 C(W,M\sqcup X) \ar[d]_{\simeq} \\
 C^{n+1-\ast} (M) \ar[r]^{\varphi_0|_M}_{\simeq} & C(\Sigma M) \\
 C^{n+1-\ast} (X) \ar[u]_{S(f^\ast)}
 \ar@/^5pc/[uu]^{\pr_X^\ast}_{\simeq}
 \ar[r]^{\varphi_0|_X}_{\simeq} & C (\Sigma X) \ar[u]_{S(f^!)}
 }
\end{split}
\end{equation}

To identify the quadratic structures recall first that the spectral
quadratic construction $\Psi$ on a semi-stable map $F$ is the same
as the quadratic construction $\psi$ composed with $e_{\%}$ if the
semi-stable map $F \co X \ra \Sigma^p Y$ is in fact a stable map $F
= \Sigma^p X_0 = X \ra \Sigma^p Y$. Furthermore the homotopy
equivalence $j_X$ and the $S$-duality are used to show that Diagram
\ref{dgrm:want-to-be-duality-in-pair-vs-umkehr} is induced by the
commutative diagram of maps of spaces as follows:
\begin{equation}
\begin{split}
\xymatrix{ \Th(\nu_W)^\ast \ar[r]^(0.4){\Gamma_W}
\ar[d]^{T(j_M)^\ast} &
\Sigma^p (W/(M \sqcup X)) \ar[d]^{\simeq} \\
\Th(\nu_M)^\ast \ar[r]^{\gamma_M}_{\simeq} & \Sigma^{p+1} M_+ \\
\Th(\nu_X)^\ast \ar[u]_{T(b)^\ast}
\ar@/^5pc/[uu]^{T(\pr_X)^\ast}_{\simeq}
 \ar[r]^{\gamma_X}_{\simeq} & \Sigma^{p+1} X_+ \ar[u]_{F}
}
\end{split}
\end{equation}
which identifies $F$ and $\Gamma_W$.

The Thom class $u(\nu_W)$ restricts to $u(\nu_X)$ and hence the
duals $u(\nu_W)^\ast$ and $u(\nu_X)^\ast = \Sigma [X]$ are also
identified. The uniqueness of desuspensions gives the identification
of the equivalence classes of the quadratic structures
\[
e_{\%} \psi ([X]) \sim \psi'.
\]
\end{example}

\section{Algebraic bordism categories and exact sequences}
\label{sec:alg-bord-cat}

In previous sections we recalled the notions of certain structured
chain complexes over an additive category with chain duality $\AA$
and corresponding $L$-groups. In this section we review a
generalization where the category we work with is an algebraic
bordism category. This eventually allows us to vary $\AA$ and we also obtain
certain localization sequences.

\begin{defn}
An algebraic bordism category $\Lambda =(\AA,\BB,\CC,(T,e))$
consists of an additive category with chain duality $(\AA,(T,e))$, a
full subcategory $\BB \subseteq \BB(\AA)$ and another full
subcategory $\CC \subseteq \BB$ closed under taking cones.
\end{defn}

\begin{defn}
Let $\Lambda=(\AA,\BB,\CC,(T,e))$ be an algebraic bordism category.

An \emph{$n$-dimensional symmetric algebraic complex in $\Lambda$}
is a pair $(C,\varphi)$ where $C\in\BB$ and $\varphi\in
(W^{\%}C)_{n}$ is an $n$-cycle such that $\partial C =
\dsc(\varphi_{0}:\susp^{n}TC\ra C) \in\CC$.

An \emph{$(n+1)$-dimensional symmetric algebraic pair in $\Lambda$}
is a pair\\ $(f:C\ra D, (\delta\varphi,\varphi))$ in $\Lambda$ where
$f:C\ra D$ is a chain map with $C,D\in \BB$, the pair
$(\delta\varphi,\varphi)\in \sC(f^{\%})$ is an $(n+1)$-cycle and
$\sC(\delta\varphi_{0},\varphi_{0}f^{*})\in\CC$.

A \emph{cobordism} between two $n$-dimensional symmetric algebraic
complexes $(C,\varphi)$ and $(C',\varphi')$ in $\Lambda$ is an
$(n+1)$-dimensional symmetric pair $(C\oplus C' \ra
D,(\delta\psi,\varphi\oplus-\varphi')$ in $\Lambda$.
\end{defn}

So informally these are complexes, pairs and cobordisms of chain
complexes which are in $\BB$ and which are Poincar\'e modulo $\CC$.
There are analogous definitions in quadratic and normal case. The $L$-groups are generalized to this setting as follows.

\begin{defn} \label{defn:L-groups-over-Lambda}
The symmetric, quadratic, and normal \emph{L-groups}
\[
  L^{n}(\Lambda), \quad L_{n}(\Lambda) \quad \textup{and} \quad NL^{n}(\Lambda)
\]
are defined as the cobordism groups of $n$-dimensional symmetric,
quadratic, and normal algebraic complexes in $\Lambda$ respectively.
\end{defn}

\begin{expl} \label{expl:alg-bord-cat-ring}
Let $R$ be a ring with involution. By
\[
\Lambda(R)=(\AA(R),\BB(R),\CC(R),(T,e))
\]
is denoted the algebraic bordism category with
\begin{itemize}
\item[$\AA(R)$] the category of $R$-modules from Example \ref{expl:R-duality},
\item[$\BB(R)$] the bounded chain complexes in $\AA(R)$,
\item[$\CC(R)$] the contractible chain complexes of $\BB(R)$.
\end{itemize} 
We also consider the algebraic bordism category
\[
\widehat \Lambda(R)=(\AA(R),\BB(R),\BB(R),(T,e)).
\]
The $L$-groups of section \ref{sec:algebraic-cplxs} and the $L$-groups of Definition \ref{defn:L-groups-over-Lambda} are related by
\[
 L^n (R) \cong L^n (\Lambda (R)) \quad \textup{and} \quad L_n (R) \cong L_n (\Lambda (R)).
\]
For the $NL$-groups of section \ref{sec:normal-cplxs} we have:
\[
 NL^n (R) \cong NL^n (\widehat \Lambda (R)) \quad \textup{and} \quad L^n (R) \cong NL^n (\Lambda (R)).
\]
The second isomorphism is due to Lemma \ref{lem:symmetric-poincare-means-unique-normal}.
\end{expl}

The notion of a functor of algebraic bordism categories
    \[
    F:\Lambda=(\AA,\BB,\CC)\ra \Lambda'=(\AA',\BB',\CC')
    \]
is defined in \cite[Definition 3.7]{Ranicki(1992)}. Any such functor
induces a map of $L$-groups.

\begin{prop} \textup{\cite[Prop. 3.8]{Ranicki(1992)}} \label{prop:relativ-L-groups}
For a functor $F:\Lambda\ra \Lambda'$ of algebraic bordism
categories there are relative L-groups $L_{n}(F)$, $L^{n}(F)$ and
$NL^{n}(F)$ which fit into the long exact sequences
\[\les[4]{L_{\n}(\Lambda)}{L_{\n}(\Lambda')}{L_{\n}(F)},\]
\[\les[4]{L^{\n}(\Lambda)}{L^{\n}(\Lambda')}{L^{\n}(F)},\]
\[\les[4]{NL^{\n}(\Lambda)}{NL^{\n}(\Lambda')}{NL^{\n}(F)}.\]
\end{prop}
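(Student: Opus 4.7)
The plan is to imitate the construction of relative bordism groups for a map of spaces. For each flavor (symmetric, quadratic, normal) I would define the relative group as the cobordism group of certain ``pairs under $F$'' and then verify exactness of a Puppe-type sequence via the gluing techniques already developed in Section \ref{sec:algebraic-cplxs}.

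Concretely, in the symmetric case define $L^n(F)$ as the set of cobordism classes of quadruples
\[
\big((C,\varphi),\; g\co F(C)\to D,\; (\delta\varphi, F\varphi)\big)
\]
where $(C,\varphi)$ is an $(n-1)$-dimensional symmetric complex in $\Lambda$ (so $\partial C \in \CC$) and $(g,(\delta\varphi,F\varphi))$ is an $n$-dimensional symmetric pair in $\Lambda'$ which is Poincar\'e modulo $\CC'$. Relative cobordisms are triads whose ``$\Lambda$-face'' lies in $\Lambda$ and whose remaining faces lie in $\Lambda'$, glued along the functor $F$. The quadratic $L_n(F)$ and normal $NL^n(F)$ are defined the same way after substituting the appropriate structures, with direct sum of quadruples giving the abelian group operation.

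With these definitions the three maps are manifest: $F\co L^n(\Lambda)\to L^n(\Lambda')$ is the functorial map; $L^n(\Lambda')\to L^n(F)$ sends $(D,\delta\varphi)$ to the relative quadruple with zero $\Lambda$-part $((0,0),\,0\to D,(\delta\varphi,0))$; and the boundary $\partial\co L^n(F)\to L^{n-1}(\Lambda)$ forgets the $\Lambda'$-data and records $(C,\varphi)$. Vanishing of consecutive composites is immediate from the definitions. Exactness is then established by the union-of-adjoining-cobordisms operation of \cite[Section 3]{Ranicki-I-(1980)}: at $L^{n-1}(\Lambda)$, the vanishing of $F_\ast[C,\varphi]$ produces a null-bordism in $\Lambda'$ which is already the $\Lambda'$-part of a relative quadruple with boundary $(C,\varphi)$; at $L^n(F)$ one glues a relative cobordism along its $\Lambda$-face to an auxiliary cobordism in $\Lambda$ pushed into $\Lambda'$ via $F$ to produce an absolute cobordism in $\Lambda'$; exactness at $L^n(\Lambda')$ is dual.

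The quadratic and normal versions proceed identically once the relevant pairs and gluings are substituted; the only extra care in the normal case is that the gluing operation from Section \ref{sec:normal-cplxs} must preserve the chain-bundle and hyperquadratic data, which is standard. The main obstacle will be the careful bookkeeping required to check that all gluings sit in the correct subcategories: one simultaneously tracks the Poincar\'e-modulo-$\CC$ condition on the $\Lambda$-part and the Poincar\'e-modulo-$\CC'$ condition on the $\Lambda'$-part of any relative cobordism, and must invoke the hypothesis that a functor of algebraic bordism categories sends $\CC$ into $\CC'$. Beyond this compatibility check, the proof is a mechanical translation of the non-relative cobordism arguments already developed.
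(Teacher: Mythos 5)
Your proposal takes essentially the same approach the paper does: the paper defines $L_n(F)$ exactly as your quadruples (an $(n-1)$-dimensional complex in $\Lambda$ together with an $n$-dimensional pair in $\Lambda'$ under $F$), and then cites the triad technology of \cite[chapter 2]{Ranicki(1981)} for the cobordism relation and exactness. Your sketch of the exactness arguments via union-of-adjoining-cobordisms is the correct standard argument and is consistent with what the cited reference carries out.
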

These exact sequences are produced by the technology of \cite[chapter 2]{Ranicki(1981)} already mentioned in the previous section. An element in $L_{n}(F)$ is an $(n-1)$-dimensional quadratic complex $(C,\psi)$ in $\Lambda$ together with an $n$-dimensional quadratic
pair $(F(C)\ra D, (\delta\psi,F(\psi)))$ in $\Lambda'$. There is a notion of a cobordism of such pairs and the group $L_n (F)$ is
defined as such a cobordism group. Analogously in the symmetric and normal case.

The following proposition improves the above statement in the sense that the relative terms are given as cobordism groups of complexes rather than pairs.

\begin{prop} \textup{\cite[Prop. 3.9]{Ranicki(1992)}} \label{prop:inclusion-les}
Let $\AA$ be an additive category with chain duality and let
$\DD\subset\CC\subset\BB\subset \BB (\AA)$ be subcategories closed
under taking cones. The relative symmetric L-groups for the
inclusion $F:(\AA,\BB,\DD)\ra(\AA,\BB,\CC)$ are given by
    \begin{itemize}
        \item[(i)] $L^{n}(F) \cong L^{n-1}(\AA,\CC,\DD)$
    \end{itemize}
    and in the quadratic and normal case by
    \begin{itemize}
        \item[(ii)] $L_{n}(F) \cong L_{n-1}(\AA,\CC,\DD) \cong NL^{n}(F).$
    \end{itemize}
\end{prop}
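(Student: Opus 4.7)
The plan is to construct explicit mutually inverse maps between the two sides in each case, using algebraic surgery as the main tool. The key observation is that algebraic surgery on a symmetric complex, using a symmetric pair as data, produces a new complex whose underlying chain complex is a desuspension of the mapping cone of the ``would-be'' duality map of the pair. Hence pairing the Poincar\'e-modulo-$\CC$ condition of the pair with Proposition \ref{prop:homotopy-type-of-boundary} will let us switch between ``large complex, small pair boundary'' and ``small complex, Poincar\'e boundary''.

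For part (i), given a representative $((C,\varphi),(f\co C\ra D,(\delta\varphi,\varphi)))$ of an element of $L^n(F)$, I would perform algebraic surgery on $(C,\varphi)$ using the pair, obtaining $(C',\varphi')$ with
\[
C' \;=\; \Sigma^{-1}\sC\bigl(\smallpairmap\co D^{n-\ast}\ra \sC(f)\bigr).
\]
Since the pair is Poincar\'e modulo $\CC$ the cone of $\smallpairmap$ lies in $\CC$, hence $C'\in\CC$ because $\CC$ is closed under cones. By Proposition \ref{prop:homotopy-type-of-boundary} we have $\partial C'\simeq \partial C\in\DD$, so $(C',\varphi')$ is an $(n-1)$-dimensional SAC in $(\AA,\CC,\DD)$; this defines $\alpha\co L^n(F)\ra L^{n-1}(\AA,\CC,\DD)$. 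For the inverse $\beta$, send $(C',\varphi')\in L^{n-1}(\AA,\CC,\DD)$ to the pair data with $D=0$, $f=0$, $\delta\varphi=0$; the Poincar\'e-modulo-$\CC$ cone becomes $\Sigma C'\in\CC$, and $(C',\varphi')$ itself already lies in $\Lambda=(\AA,\BB,\DD)$ since $\CC\subset\BB$. Then $\alpha\circ\beta=\id$ is immediate: surgery along the trivial pair recovers $(C',\varphi')$. Conversely, $\beta\circ\alpha=\id$ follows because algebraic surgery on $(C,\varphi)$ using $(f,(\delta\varphi,\varphi))$ produces a canonical symmetric cobordism from $(C,\varphi)$ to $(C',\varphi')$ in $\Lambda$, which, glued to the original pair and to the trivial pair on $(C',\varphi')$, assembles into a cobordism of pairs witnessing triviality in $L^n(F)$.

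Part (ii) in the quadratic case is identical, replacing $\Ws{-}$ by $\Wq{-}$ and the duality map $\smallpairmap$ by $(1+T)\cdot\smallpairmapquad$ throughout. For the isomorphism $NL^n(F)\cong L_{n-1}(\AA,\CC,\DD)$, I would generalize the construction in the sketch proof of Lemma \ref{lem:normal-sym-pair-gives-quad-poincare-cplx}: an $n$-dimensional normal pair in $\Lambda'$ whose symmetric substructure is Poincar\'e modulo $\CC$ provides data for algebraic surgery on its underlying SAC, and the normal data refines the symmetric structure on the effect of this surgery to a quadratic structure, by exactly the argument used in the proof of Lemma \ref{lem:normal-gives-quadratic-boundary} (which is the case $f\co 0\ra C$). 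The Poincar\'e-modulo-$\CC$ hypothesis places the underlying chain complex in $\CC$, while Proposition \ref{prop:homotopy-type-of-boundary} guarantees the boundary stays in $\DD$. The inverse assigns to a quadratic complex in $(\AA,\CC,\DD)$ the normal pair with $D=0$, using that a quadratic structure on a chain complex in $\CC$ induces a canonical normal structure (by the same one-to-one correspondence described in Lemma \ref{lem:symmetric-poincare-means-unique-normal} and the subsequent construction of $J$).

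The main obstacle is not the bijection on objects but the verification that the cobordism relations match. Concretely, one needs to show that a cobordism of pair-data in $L^n(F)$ (which is itself a triad of chain complexes with structure) maps under $\alpha$ to an honest cobordism of SACs in $(\AA,\CC,\DD)$, and that every cobordism in $L^{n-1}(\AA,\CC,\DD)$ is realized by a cobordism of pair-data. Both directions rely on the glueing (``union'') of adjoining algebraic cobordisms from \cite[section 3]{Ranicki-I-(1980)}, together with a careful bookkeeping check that the subcategory conditions (membership in $\CC$ and in $\DD$, respectively) are preserved under the glueing and under iterated surgeries. This is where the assumption that $\CC$ and $\DD$ are closed under taking cones is used crucially at every step.
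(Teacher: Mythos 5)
Your approach is essentially the same as the paper's: perform algebraic surgery on the underlying $(n-1)$-dimensional complex using the pair as data, verify membership in $\CC$ from the Poincar\'e-modulo-$\CC$ hypothesis and membership-of-boundary in $\DD$ from Proposition \ref{prop:homotopy-type-of-boundary}, and invert by sending a complex to the trivial pair over it. The paper writes the quadratic case explicitly and says ``similarly'' for the normal case; you flesh out the symmetric case, add the observation that $\alpha\circ\beta=\id$ on the nose (the effect of surgery along $(C'\ra 0,(0,\varphi'))$ really is $(C',\varphi')$), and sketch $\beta\circ\alpha$ via the canonical cobordism that surgery produces. That is a modest but genuine gain in explicitness over the paper's comments.

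One citation is off. For the inverse in the $NL^n(F)$ case you invoke Lemma \ref{lem:symmetric-poincare-means-unique-normal} and the ensuing construction of $J$ to equip a quadratic complex $(C,\psi)$ in $(\AA,\CC,\DD)$ with a normal structure. That lemma applies to symmetric \emph{Poincar\'e} complexes, i.e.\ $\varphi_0$ a genuine chain equivalence, whereas here $(1+T)\psi$ is Poincar\'e only modulo $\DD$. The mechanism you actually need is Lemma \ref{lem:normal-gives-quadratic-boundary} (and the bijective refinement stated immediately after its proof): the quadratic structure $\psi$ yields the quadratic refinement $\partial\psi$ of $\partial((1+T)\psi)$, and that is precisely the data needed to extend $(C,(1+T)\psi)$ to a normal complex. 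With that substitution your argument goes through as written.
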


Part (ii) of the proposition allows us to produce interesting
relations between the long exact sequences for various inclusions
combining the quadratic $L_n$-groups and the normal $NL^n$-groups.
In the following commutative braid we have $4$ such sequences.
Sequence (1) is given by the inclusion $(\AA,\BB,\DD) \ra
(\AA,\BB,\CC)$ in the quadratic theory, sequence (2) by the
inclusion $(\AA,\BB,\DD) \ra (\AA,\BB,\CC)$, sequence (3) by the
inclusion $(\AA,\BB,\CC) \ra (\AA,\BB,\BB)$, and sequence (4) by the
inclusion $(\AA,\BB,\DD) \ra (\AA,\BB,\BB)$, all last three in the
normal theory:

\newcommand{\braideightlabel}[8]{\xymatrix@C=0.65cm{
    & & & & \\
    {#1}\ar@/^2.5pc/_{(4)}[rr]\ar^{(2)}[rd] & & {#2}\ar@/^2.5pc/[rr]\ar[rd] & & #3 \\
    & {#4}\ar[ru]\ar[rd] & & {#5}\ar[rd]\ar[ru] & \\
    {#6}\ar@/_2.5pc/^{(1)}[rr]\ar_{(3)}[ru] & & {#7}\ar@/_2.5pc/[rr]\ar[ru] & & #8 \\
 & & & &
}}
{\footnotesize
\[
\braideightlabel{NL^{n}(\AA,\BB,\DD)}{NL^{n}(\AA,\BB,\BB)}{L_{n-1}(\AA,\BB,\CC)}{NL^{n}(\AA,\BB,\CC)}{L_{n-1}(\AA,\BB,\DD)}{L_{n}(\AA,\BB,\CC)}{L_{n-1}(\AA,\CC,\DD)}{NL^{n-1}(\AA,\BB,\DD)}
\]
}

\begin{proof}[Comments on the proof Proposition \ref{prop:inclusion-les}]
Recall that an element in $L_{n}(F)$ is an $(n-1)$-dimensional
quadratic complex $(C,\psi)$ in $(\AA,\BB,\DD)$ together with an
$n$-dimensional quadratic pair $(C \ra D, (\delta\psi,\psi))$
in $(\AA,\BB,\CC)$. The isomorphism $L_{n}(F)\cong
L_{n-1}(\AA,\CC,\DD)$ is given by
\[
\left( (C,\psi),C \ra D, (\delta\psi,\psi)\right)\mapsto (C',\psi')
\]
where $(C',\psi')$ is the effect of algebraic surgery on $(C,\psi)$
using as data the pair $(C\ra D, (\delta\psi,\psi))$. We have $C' \in \CC$
since $C \ra D$ is Poincar\'e modulo $\CC$. Furthermore, the
observation that $(C',\psi')$ is Poincar\'e modulo $\DD$ follows
from the assumption that $(C,\psi)$ is Poincar\'e modulo $\DD$ and
from Proposition \ref{prop:homotopy-type-of-boundary} which says that the homotopy type of the boundary is
preserved by algebraic surgery.

The inverse map is given by
\[
 (C,\psi)\mapsto \left( (C,\psi), C\ra 0, (0,\psi)\right).
\]

Similarly for $NL^{n}(F)\cong L_{n-1}(\CC,\DD)$. Consider $\left((C,\theta), C\ra D, (\delta \theta,\theta)\right)\in NL^{n}(F)$ and perform
algebraic surgery on $(C,\theta)$ with data $(C\ra D, (\delta \theta,\theta))$. We obtain an $(n-1)$-dimensional symmetric complex in $\CC$ which
is Poincar\'e modulo $\DD$. Using \cite[2.8(ii)]{Ranicki(1992)} we see that the symmetric structure has a quadratic refinement.
\end{proof}

\begin{expl}
Let $R$ be a ring with involution and consider the inclusion of the algebraic bordism categories $\Lambda (R) \ra \widehat \Lambda (R)$ from Example \ref{expl:alg-bord-cat-ring}. Then the long exact sequence of the  associated $NL$-groups (sequence (3) in the diagram above) becomes the long exact sequence of Proposition \ref{propn:LESL}, thanks to Lemma \ref{lem:symmetric-poincare-means-unique-normal}.
\end{expl}


\section{Categories over complexes}  \label{sec:cat-over-cplxs}


In this section we recall the setup for studying local Poincar\'e duality over a locally finite simplicial complex $K$. For a simplex $\sigma \in K$ we will use the notion of a dual cell $D(\sigma,K)$ which is a certain subcomplex of the barycentric subdivision $K'$, see \cite[Remark 4.10]{Ranicki(1992)} for the definition if needed.\footnote{Note that in general the dual cell $D(\sigma,K)$ is not a ``cell'' in the sense that it is not homeomorphic to $D^l$ for any $l$. Nevertheless the terminology is used in \cite{Ranicki(1992)} and we keep it.}

Observe first that there are two types of such a local duality for a
triangulated $n$-manifold $K$.
\begin{enumerate}
\item Each simplex $\sigma$ of $K$ is a $|\sigma|$-dimensional manifold with boundary and so there is a duality between $C_{*}(\sigma,\partial\sigma)$ and $C^{|\sigma|-*}(\sigma)$
\end{enumerate}
\begin{minipage}{\linewidth-4.5cm}
	\begin{enumerate}
		\item[(2)]
Each dual cell $D(\sigma,K)$ is an $(n-|\sigma|)$-dimensional manifold with boundary and so there is a duality between the chain complexes $C_{\ast}(D(\sigma, K),\partial D(\sigma, K))$ and $C^{n-|\sigma|-\ast}(D(\sigma, K))$
\end{enumerate} 
\end{minipage}
    \begin{minipage}{3.5cm}
    \begin{flushright}
    \dualpicture
    \end{flushright}
\end{minipage}\\[0.2cm]

This observation leads to two notions of additive categories with
chain duality over $K$.

\begin{defn} Let $\AA$ be an additive category with chain duality and $K$ as above.
The additive categories of $K$-based objects $\AA^{*}(K)$ and
$\AA_{*}(K)$ are defined by
\begin{enumerate}
\item[] $\text{Obj}(\AA^{*}(K))=\text{Obj}(\AA_{*}(K))=\{\sum_{\sigma\in K}M_{\sigma}\;|\; M_{\sigma}\in\AA\}$,\\[0.3em]
\item $\text{Mor}(\AA^{*}(K)) =$\\ \indent $\{ \sum\limits_{\sigma\geq\tau}f_{\tau,\sigma}:\sum\limits_{\sigma\in K}M_{\sigma}\ra\sum\limits_{\tau\in K}N_{\tau}\;|\;(f_{\tau,\sigma}:M_{\sigma}\ra N_{\tau})\in\text{Mor}(\AA)\}$\\[0.2em]
\item $\text{Mor}(\AA_{*}(K)) =$ \\ \indent $\{ \sum\limits_{\sigma\leq\tau}f_{\tau,\sigma}:\sum\limits_{\sigma\in K}M_{\sigma}\ra\sum\limits_{\tau\in K}N_{\tau}\;|\; (f_{\tau,\sigma}:M_{\sigma}\ra N_{\tau})\in\text{Mor}(\AA)\}$
\end{enumerate}
\end{defn}

A chain complex $(C, d)$ over $\AA^{*}(K)$, respectively $\AA_\ast
(K)$, consists of chain complexes $(C(\sigma),d(\sigma))$ for each
$\sigma \in K$ and additional boundary maps
$d(\sigma,\tau) \co C(\sigma)_\ast \ra C(\tau)_{\ast-1}$ for each
$\tau \leq \sigma$, respectively $\sigma \leq \tau$.

\begin{example} \label{expl:chain-cplxs-over-K}
The simplicial chain complex $C = \Delta (K)$ is a chain complex in
$\BB (\AA^\ast (K))$, by defining $C(\sigma)\coloneqq \Delta (\sigma,\del
\sigma) = S^{|\sigma|} \ZZ$.

The simplicial chain complex $C = \Delta (K')$ of the barycentric
subdivision $K'$ is a chain complex in $\BB (\AA_\ast (K))$ by $C
(\sigma) = \Delta (D(\sigma),\del D(\sigma))$.
\end{example}

The picture depicts the simple case of the simplicial chain complex
$\Delta_{*}(\Delta^{1})$ as a chain complex in
$\AA(\ZZ)^{*}(\Delta^{1})$:

\begin{center}
	\[
\xymatrix@R=0.3cm{
  *=0{\bullet}\save[]+<0cm,0.3cm>*{_{\sigma_{0}}}\restore\ar@{-}^-{\tau}[rr]
    & &
    *=0{\bullet}\save[]+<0cm,0.3cm>*{_{\sigma_{1}}}\restore
    \\
    C(\sigma_{0})=\Delta_{*}(\sigma_{0},\partial\sigma_{0})
    &
    C(\tau)=\Delta_{*}(\tau,\partial\tau)
    &
    C(\sigma_{1})=\Delta_{*}(\sigma_{1},\partial\sigma_{1})
    \\
		\save[]+<-1.9cm,0cm>*{C_{2}:}\restore
    0\ar[dd]
    &
    0\ar[ddl]\ar[dd]\ar[ddr]
    &
    0\ar[dd]
    \\
		&&&
		\\
		\save[]+<-1.9cm,0cm>*{C_{1}:}\restore
    0\ar[dd]
    &
    \ZZ\ar[ldd]_{\partial_{0}}\ar[rdd]^-{\partial_{1}}\ar[dd]
    &
    0\ar[dd]
    \\
		&&&
		\\
		\save[]+<-1.9cm,0cm>*{C_{0}:}\restore
    \ZZ
    &
    0
    &
    \ZZ
}
\]
\end{center}

Now we recall the extension of the chain duality from $\AA$ to the
two new categories.

\begin{defn}
 \[
T^{*}:\AA^{*}(K)\ra\BB(\AA^{*}(K)),\; T^{*}(\sum_{\sigma\in K}
M_{\sigma}))_{r}(\tau) =
(T(\bigoplus_{\tau\geq\tilde{\tau}}M_{\tilde{\tau}}))_{r-|\tau|}.
\]
\[
T_{*}:\AA_{*}(K)\ra\BB(\AA_{*}(K)),\; T_{*}(\sum_{\sigma\in K}
M_{\sigma}))_{r}(\tau) =
(T(\bigoplus_{\tau\leq\tilde{\tau}}M_{\tilde{\tau}}))_{r+|\tau|}.
\]
\end{defn}

\begin{example} \label{expl:duality-in-chain-cplxs-over-K}
The dual $T^\ast (C)$ of the simplicial chain complex $C = \Delta
(K)$ is a chain complex in $\BB (\AA^\ast (K))$ given by $(T^\ast
C)(\sigma) = \Delta^{|\sigma|-\ast} (\sigma)$.

The dual $T_\ast C$ of the simplicial chain complex $C = \Delta (K')$
of the barycentric subdivision $K'$ is a chain complex in $\BB
(\AA_\ast (K))$ given by $(T_\ast C)(\sigma) =
\Delta^{-|\sigma|-\ast} (D(\sigma))$.
\end{example}

In the example when $K$ is a triangulated manifold recall that the
chain duality functor $T^{*}$ on $\AA^{*}(K)$ is supposed to encode
the local Poincar\'e duality of all simplices of $K$. But the
dimensions of these local Poincar\'e dualities vary with the
dimension of the simplices and we have to deal with the boundaries.
So the dimension shift in the above formula comes from the varying
dimensions and the direct sum comes from ``dealing with the
boundary''. In the example $C = \Delta_* (\Delta^1)$ we obtain the
following picture

{\footnotesize
\[
\xymatrix@C=0.5cm@R=0.3cm{
            {\Delta_{*}(\sigma_{0},\partial\sigma_{0})}
            &
            {\Delta_{*}(\tau,\partial\tau)}
            &
            {\Delta_{*}(\sigma_{1},\partial\sigma_{1})}
            &
            {\Delta^{*}(\sigma_{0})}
            &
            {\Delta^{*}(\tau)}
            &
            {\Delta^{*}(\sigma_{1})}
\\
        \save[]+<-0.7cm,0cm>*{C_{1}:}\restore
            0\ar[dd]
            &
            \ZZ\ar[ldd]_{\partial_{0}}\ar[rdd]^-{\partial_{1}}\ar[dd]
            &
            0\ar[dd]\save[]+<0.6cm,0cm>*{}\ar@{<--}^{\varphi_{0}}[r]+<-0.6cm,0cm>\restore
            & &
            (\ZZ\oplus\ZZ)^{*}\ar[dd]^(0.6)*!/l2pt/{{\partial_{0}^{*}}\choose{\partial_{1}^{*}}}
            \ar[ddl]_{i^{*}_{0}}\ar[ddr]^{i^{*}_{1}}
            &
            \save[]+<1.3cm,0cm>*{:(T^{*}C)_{1}}\restore
        \\
					&&&&&
				\\
        \save[]+<-0.7cm,0cm>*{C_{0}:}\restore
            \ZZ
            &
            0
            &
            \ZZ
            \save[]+<0.6cm,0cm>*{}\ar@{<--}^{\varphi_{0}}[r]+<-0.6cm,0cm>\restore
            &
            \ZZ^{*}\ar[dd]
            &
            \ZZ^{*}\ar[ddr]\ar[ddl]
            &
            \ZZ^{*}\ar[dd]
            \save[]+<1.3cm,0cm>*{:(T^{*}C)_{0}}\restore
                \\
								&&&&&
							\\
        &&&0&&0\save[]+<1.4cm,0cm>*{:(T^{*}C)_{-1}}\restore
        \\
    }
\]
}
In $\AA_{*}(K)$ the role of simplices is replaced by the dual cells
and so the formulas are changed accordingly.

The additive categories with chain duality $\AA^\ast (K)$ and $\AA_\ast (K)$ can be made into algebraic bordism categories in various ways yielding chain complexes with various types of Poincar\'e duality. Now we introduce the local duality, in the next section we will have the global duality.

\begin{prop} \label{defn:Lambda-star-categories}
Let $\Lambda=(\AA,\BB,\CC)$ be an algebraic bordism category and $K$
a locally finite simplicial complex. Then the triples
\[
\Lambda^{*}(K) = (\AA^\ast (K),\BB^\ast (K),\CC^\ast (K)) \qquad
\Lambda_\ast (K) = (\AA_\ast (K),\BB_\ast (K),\CC_\ast (K))
\]
where $\BB^{*} (K)$, $\BB_\ast (K)$, $(\CC^\ast (K)$, $\CC_\ast
(K))$ are the full subcategories of $\BB (\AA^\ast (K))$,
respectively $\BB (\AA_\ast (K))$, consisting of the chain complexes
$C$ such that $C(\sigma) \in \BB$ $(C(\sigma) \in \CC)$ for all
$\sigma \in K$ are algebraic bordism categories.
\end{prop}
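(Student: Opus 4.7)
The plan is to verify the three defining properties of an algebraic bordism category for each of the triples $\Lambda^*(K)$ and $\Lambda_*(K)$, treating the two cases in parallel since the arguments are entirely analogous after swapping $\leq$ and $\geq$ in the indexing conventions. The three properties to check are: (a) that $(\AA^*(K),(T^*,e^*))$ and $(\AA_*(K),(T_*,e_*))$ are additive categories with chain duality; (b) that $\BB^*(K)\subseteq\BB(\AA^*(K))$ and $\BB_*(K)\subseteq\BB(\AA_*(K))$ are full subcategories; and (c) that $\CC^*(K)\subseteq\BB^*(K)$ and $\CC_*(K)\subseteq\BB_*(K)$ are full subcategories closed under the formation of algebraic mapping cones.

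First I would handle property (a). The contravariant functors $T^*$ and $T_*$ are already given by the formulas preceding the proposition. What remains is to construct a natural transformation $e^*\colon (T^*)^2\Rightarrow\id$ (and $e_*$ analogously) from the given $e\colon T^2\Rightarrow\id$ on $\AA$, and to check that its components are chain equivalences and that $e^*_{T^*(M)}\circ T^*(e^*_M)=\id$. The key observation is that iterating $T^*$ unfolds into a double sum over pairs $\tilde\tau\geq\tau'\geq\tau$, which telescopes by combinatorics of the poset of simplices of $K$ so that after applying $e$ levelwise one recovers $M$ up to a chain equivalence of the required degree. This computation is exactly the content of \cite[Proposition 5.1]{Ranicki(1992)}, which I would quote rather than redo; the analogous statement for $T_*$ uses the dual poset of dual cells.

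Property (b) is immediate from the definitions: by construction $\BB^*(K)$ and $\BB_*(K)$ are defined as the full subcategories of $\BB(\AA^*(K))$ and $\BB(\AA_*(K))$ consisting of those chain complexes whose simplex-indexed pieces $C(\sigma)$ lie in $\BB$, and the same for $\CC^*(K)$ and $\CC_*(K)$ with $\CC$ in place of $\BB$. In particular the inclusions $\CC^*(K)\subseteq\BB^*(K)\subseteq\BB(\AA^*(K))$ follow from the corresponding inclusions $\CC\subseteq\BB\subseteq\BB(\AA)$ in the algebraic bordism category $\Lambda$.

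Finally, for property (c), given a chain map $f\colon C\to D$ in $\CC^*(K)$, the algebraic mapping cone $\sC(f)$ is computed componentwise: for each simplex $\sigma$ one has $\sC(f)(\sigma)=\sC(f(\sigma)\colon C(\sigma)\to D(\sigma))$, together with induced boundary components indexed by the relevant simplex relations. Since $\CC$ is closed under taking cones in $\BB(\AA)$ by assumption on $\Lambda$, each $\sC(f)(\sigma)$ lies in $\CC$, hence $\sC(f)\in\CC^*(K)$; the same argument works for $\CC_*(K)$. The main obstacle is property (a); once the chain duality on $\AA^*(K)$ and $\AA_*(K)$ is in place, properties (b) and (c) reduce to bookkeeping.
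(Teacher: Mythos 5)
Your decomposition into (a) chain duality on the $K$-based categories, (b) fullness of the subcategories, and (c) closure of $\CC^*(K)$ and $\CC_*(K)$ under mapping cones is exactly the structure of the verification, and the paper in fact does no more than cite \cite[Proposition~5.1]{Ranicki(1992)} for it; your sketch is a faithful expansion of that reference, with the weight correctly placed on (a) and with (b), (c) reduced to componentwise bookkeeping via $\sC(f)(\sigma)=\sC(f(\sigma,\sigma))$. One small slip: in $\AA^*(K)$ the dual $T^*M$ at $\tau$ sums over $\tilde\tau\leq\tau$, so iterating $T^*$ produces a sum over chains $\tilde\tau\leq\tau'\leq\tau$, not $\tilde\tau\geq\tau'\geq\tau$ as you wrote; since you defer to Ranicki for that computation this does not affect the argument, but the inequalities should be reversed (and dually for $T_*$).
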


See \cite[Proposition 5.1]{Ranicki(1992)} for the proof. We remark that other useful algebraic bordism categories associated to $\Lambda$ and $K$ will be defined in Definitions \ref{defn:Lambda-K-category} and \ref{defn:Lambda-hat-category}.

\begin{prop}
Let $\Lambda=(\AA,\BB,\CC)$ be an algebraic bordism category and let
$f \co J \ra K$ be a simplicial map. Then $f$ induces
contravariantly (covariantly) a covariant functor of algebraic
bordism categories
\[
f^\ast \co \Lambda^\ast (K) \ra \Lambda^\ast (J) \qquad (f_\ast \co
\Lambda_\ast (J) \ra \Lambda_\ast (K)).
\]
\end{prop}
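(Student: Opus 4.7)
The plan is to construct $f^\ast$ and $f_\ast$ on the underlying additive categories, extend them degreewise to the bounded chain complex categories, and then verify the two extra structures that make up an algebraic bordism category: compatibility with the chain duality $T$ and preservation of the subcategories $\BB$ and $\CC$.

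First I would write down the constructions on objects and morphisms. For $M = \sum_{\sigma \in K} M_\sigma \in \AA^\ast(K)$, set $(f^\ast M)(\tilde\sigma) \coloneqq M(f(\tilde\sigma))$ for $\tilde\sigma \in J$. Given a morphism $g = \sum_{\sigma \geq \tau} g_{\tau,\sigma}$ in $\AA^\ast(K)$, define $(f^\ast g)_{\tilde\tau,\tilde\sigma} \coloneqq g_{f(\tilde\tau),f(\tilde\sigma)}$ whenever $\tilde\sigma \geq \tilde\tau$ in $J$; this is well-defined because simpliciality of $f$ ensures $f(\tilde\sigma)\geq f(\tilde\tau)$ in $K$. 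Dually, for $N = \sum_{\tilde\sigma\in J} N_{\tilde\sigma} \in \AA_\ast(J)$ put $(f_\ast N)(\tau) \coloneqq \bigoplus_{\tilde\sigma \in f^{-1}(\tau)} N(\tilde\sigma)$, and push morphisms forward by reindexing along $f$, which respects the $\leq$-condition since $\tilde\sigma \leq \tilde\tau$ implies $f(\tilde\sigma) \leq f(\tilde\tau)$. Functoriality in the simplicial direction (contravariant for $f^\ast$, covariant for $f_\ast$), together with preservation of composition and identities, is then a routine index-chase, and the extension to $\BB(\AA^\ast(K))$ and $\BB(\AA_\ast(J))$ is degreewise.

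The main obstacle is verifying compatibility with the chain duality in the sense of \cite[Definition 3.7]{Ranicki(1992)}, which asks for a natural chain-homotopy equivalence relating $T \circ f^\ast$ to $f^\ast \circ T$ (and similarly for $f_\ast$). From the formulas
\[
(T^\ast(f^\ast M))_r(\tilde\tau) = \bigl(T\bigl(\textstyle\bigoplus_{\tilde\tau' \geq \tilde\tau} M_{f(\tilde\tau')}\bigr)\bigr)_{r-|\tilde\tau|},
\qquad
(f^\ast(T^\ast M))_r(\tilde\tau) = \bigl(T\bigl(\textstyle\bigoplus_{\tau' \geq f(\tilde\tau)} M_{\tau'}\bigr)\bigr)_{r-|f(\tilde\tau)|},
\]
the two sides differ in two ways: the indexing set differs by the map $\tilde\tau' \mapsto f(\tilde\tau')$, and the dimension shift differs by $|\tilde\tau|-|f(\tilde\tau)| \geq 0$. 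The map $\tilde\tau' \mapsto f(\tilde\tau')$ induces a canonical additive map on direct sums, and the dimension defect is accounted for by a natural shift; putting these together yields the required natural transformation of chain dualities. The parallel argument handles $f_\ast$, where the roles of source and target of the summation map are interchanged.

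Finally, preservation of the subcategories is immediate: being in $\BB^\ast(K), \CC^\ast(K), \BB_\ast(J), \CC_\ast(J)$ is a pointwise condition on the simplex-indexed summands, and $\BB$ and $\CC$ are closed under finite direct sums in $\BB(\AA)$, so $f^\ast M(\tilde\sigma) = M(f(\tilde\sigma))$ lies in $\BB$ (resp.\ $\CC$) whenever $M$ does, and likewise $(f_\ast N)(\tau) = \bigoplus_{\tilde\sigma \in f^{-1}(\tau)} N(\tilde\sigma)$ inherits the membership in $\BB$ (resp.\ $\CC$). Combined with the chain-duality compatibility above, this completes the verification that $f^\ast$ and $f_\ast$ are functors of algebraic bordism categories.
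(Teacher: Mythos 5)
Your construction of $f^\ast$ and $f_\ast$ on objects and morphisms, and your treatment of the subcategory conditions, are on the right track, and you correctly identify the chain-duality compatibility as the substantive point. But your resolution of that point is not a proof: it is a description of the obstruction followed by the assertion that it goes away.

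Two concrete issues. First, a small one: the paper defines $T^\ast$ by a sum over \emph{faces} ($\tau\geq\tilde\tau$, i.e.\ $\tilde\tau\leq\tau$), but both of your displayed formulas sum over $\tilde\tau'\geq\tilde\tau$ and $\tau'\geq f(\tilde\tau)$, which is the coface condition; this should be corrected, though it does not change the shape of the difficulty. Second, and more seriously: when $f$ collapses simplices, $|\tilde\tau|>|f(\tilde\tau)|$, and the two sides of your comparison are chain complexes sitting in genuinely different degrees, namely
\[
T^\ast_J(f^\ast M)_r(\tilde\tau)=T\Bigl(\bigoplus_{\tilde\tau'\leq\tilde\tau}M(f(\tilde\tau'))\Bigr)_{r-|\tilde\tau|}
\quad\text{versus}\quad
f^\ast(T^\ast_K M)_r(\tilde\tau)=T\Bigl(\bigoplus_{\tau'\leq f(\tilde\tau)}M(\tau')\Bigr)_{r-|f(\tilde\tau)|}.
\]
Saying the "dimension defect is accounted for by a natural shift" does not correspond to any construction: shifting a chain complex produces a different chain complex, not a natural transformation to the unshifted one. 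In the same vein, the map on index sets $\tilde\tau'\mapsto f(\tilde\tau')$ is surjective but not injective, so after applying the contravariant $T$ you get a map in a specific direction; you need to check (and state) that this is the direction required by Ranicki's Definition 3.7, namely a natural transformation $G\colon T_{\AA'}\circ F\Rightarrow F\circ T_{\AA}$, and you need to reconcile that with the degree mismatch at the same time. As written, these two defects are simply named, not removed.

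The fix is either to restrict to dimension-preserving $f$ (inclusions of subcomplexes, face maps $\partial_i\colon\Delta^{k-1}\to\Delta^k$ -- which in fact cover all uses in this paper) where $|\tilde\tau|=|f(\tilde\tau)|$ and the index-set bijection is exact so that $T^\ast_J f^\ast\cong f^\ast T^\ast_K$ strictly, or to spell out, for general simplicial $f$, the explicit chain map realizing $G$ and check it satisfies the coherence required by the definition. The analogous discussion for $f_\ast$, where $(T_\ast f_\ast N)_r(\tau)$ and $(f_\ast T_\ast N)_r(\tau)$ carry the shifts $r+|\tau|$ and $r+|\tilde\sigma|$ respectively with $|\tilde\sigma|\geq|\tau|$, has the same gap and needs the same repair.
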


See \cite[Proposition 5.6]{Ranicki(1992)}. A consequence is that we
obtain induced maps on the $L$-groups as well, which we do not
write down explicitly at this stage.




Now we present constructions over the category $\AA(\ZZ)^\ast (K)$ analogous to the symmetric and quadratic construction in section \ref{sec:algebraic-cplxs}. Examples of chain complexes over $\AA(\ZZ)^{\ast} (K)$ were already presented in Examples \ref{expl:chain-cplxs-over-K} and \ref{expl:duality-in-chain-cplxs-over-K}. The underlying chain complexes below are generalizations of those. We will write $\ZZ^\ast(K)$ as short for $\AA(\ZZ)^\ast(K)$ and $\ZZ_\ast(K)$ as short for $\AA(\ZZ)_\ast(K)$.

\begin{construction} \label{con:sym-construction-over-cplxs-upper-star-cplx}
Consider a topological $k$-ad $(X,(\del_\sigma X)_{\sigma \in \Delta^k})$ and the subcomplex of the singular chain complex $C (X)$ consisting of simplices which respect the $k$-ad structure in a sense that each singular simplex is contained in $\del_\sigma X$ for some $\sigma \in \Delta^k$. By a Mayer-Vietoris type argument this chain complex is chain homotopy equivalent to $C(X)$ and by abuse of notation we still denote it $C(X)$. It becomes a chain complex over $\ZZ^\ast (\Delta^k)$ by $C(X) (\sigma) = C(\del_\sigma X,\del (\del_\sigma X))$. Its dual is a chain complex $T^\ast C(X) $ given by $(T^\ast C(X) )(\sigma) = C^{|\sigma|-\ast} (\del_\sigma X)$ for $\sigma \in \Delta^k$. A generalization of the relative symmetric construction \ref{con:rel-sym} gives a chain map
\[
 \varphi_{\Delta^k} \co \Sigma^{-k} C(X,\del X) \ra \Ws{C(X)} \; \textup{over} \; \ZZ^\ast (\Delta^k)
\]
called the \emph{symmetric construction over} $\Delta^k$ which evaluated on a cycle $[X] \in C_{n+k} (X, \del X)$ gives an $n$-dimensional symmetric algebraic complex $(C (X),\varphi_{\Delta^k} [X])$ in $\ZZ^\ast (\Delta^k)$ whose component
\[
\varphi_{\Delta^k} ([X]) (\sigma)_0 \co C^{n+|\sigma|-\ast} (\del_\sigma X) \ra C(\del_\sigma X,\del (\del_\sigma X))
\]
is the cap product with the cycle $[\del_\sigma X] \in C_{n+|\sigma|} (\del_\sigma X,\del (\del_\sigma X))$. Here $\del_\sigma \co C (\Delta^k) \ra C(\sigma)$ is the map defined as in \cite[Definition 8.2]{Ranicki(1992)}.
\end{construction}

\begin{construction} \label{con:sym-construction-over-cplxs-upper-star-mfd}
Consider now the special case when we have an $(n+k)$-dimensional manifold $k$-ad $(M,(\del_\sigma M)_{\sigma \in \Delta^k})$. Let $\Lambda (\ZZ)$ be the algebraic bordism category from Example \ref{expl:alg-bord-cat-ring}. Construction \ref{con:sym-construction-over-cplxs-upper-star-cplx} applied to the fundamental class $[M] \in C_{n+k} (M,\del M)$ produces an $n$-dimensional symmetric algebraic complex $(C (M),\varphi_{\Delta^k} ([M]))$ in the category $\Lambda (\ZZ)^\ast (\Delta^k)$ since the maps
\[
\varphi_{\Delta^k} ([M]) (\sigma)_0 \co C^{(n-k)+|\sigma|-\ast} (\del_\sigma M) \ra C(\del_\sigma M,\del (\del_\sigma M))
\]
are the cap products with the fundamental classes $[\del_\sigma M] \in C_{n-k+|\sigma|} (\del_\sigma M,\del (\del_\sigma M))$ and hence chain homotopy equivalences and hence their mapping cones are contractible.
\end{construction}

\begin{construction} \label{con:quad-construction-over-cplxs-upper-star}
Analogously, when we have a degree one normal map of manifold $k$-ads
\[
 ((f,b),(f_\sigma,b_\sigma)) \co (M,\del_\sigma M) \ra (X,\del_\sigma X)
\]
with $\sigma \in \Delta^k$, the stable Umkehr map $F \co \Sigma^p X_+ \ra \Sigma^p M_+$ for some $p$ induces by a generalization of the relative quadratic construction \ref{con:rel-quad-htpy} a chain map
\[
 \psi_{\Delta^k} \co \co \Sigma^{-k} C(X,\del X) \ra \Wq{C(M)} \; \textup{over} \; \ZZ^\ast (\Delta^k)
\]
called the \emph{quadratic construction over} $\Delta^k$. Evaluated on the fundamental class $[X] \in C_{n+k} (X, \del X)$ it produces an $n$-dimensional quadratic algebraic complex in the category $\Lambda(\ZZ)^\ast (\Delta^k)$. The mapping cone $\sC (f^!)$ becomes a complex over $\ZZ^\ast (\Delta^k)$ by $\sC (f^{!}) (\sigma) = \sC (f_\sigma^{!},\del f_\sigma^{!})$. The chain map $e \co C(M) \ra \sC (f^!)$ in $\ZZ^\ast (\Delta^k)$ produces an $n$-dimensional quadratic complex in $\Lambda (\ZZ)^\ast (\Delta^k)$ 
\[
 \big( C(f^!), e_\% \psi_{\Delta^k} [X] \big) 
\]
\end{construction}

Now we move to the constructions in the category $\ZZ_\ast (K)$.

\begin{construction} \label{con:sym-construction-over-cplxs-lower-star-cplx}
Let $r \co X \ra K$ be a map of simplicial complexes. Denoting for $\sigma \in K$
\[
X[\sigma] = r^{-1} (D(\sigma,K)) \subset X' \; \textup{we obtain} \;
X = \bigcup_{\sigma \in K} X[\sigma].
\]
This decomposition is called a $K$-dissection of $X$. Consider the subcomplex of the singular chain complex of $C(X)$ consisting of the singular chains which respect the dissection in the sense that each singular simplex is contained in some $X[\sigma]$. This chain complex is chain homotopy equivalent to $C(X)$ and by abuse of notation we still denote it $C(X)$. It becomes a chain complex in $\BB (\ZZ_\ast (K))$ by 
\[
 C(X)(\sigma) = C(X[\sigma],\del X[\sigma]) \; \textup{for} \; \sigma \in K
\]
with $n$-dual
\[
 \Sigma^n T_\ast C(X) (\sigma) = C^{n-|\sigma|-\ast}(X[\sigma]).
\]
There is a chain map $\del_\sigma \co C(X) \ra S^{|\sigma|}
C(X[\sigma],\del X [\sigma])$, defined in \cite[Definition
8.2]{Ranicki(1992)}, the image of a chain $[X] \in C(X)_n$ is
denoted $[X[\sigma]] \in C(X[\sigma],\del X [\sigma])_{n-|\sigma|}$.
A generalization of the relative symmetric construction \ref{con:rel-sym} gives a chain map 
\[
 \varphi_K \co C(X) \ra \Ws{C(X)} \; \textup{over} \; \ZZ_\ast (K)
\]
called the \emph{symmetric construction over } $K$, which evaluated on a cycle $[X] \in C(X)_n$ produces an $n$-dimensional symmetric complex $(C(X),\varphi_K [X])$ over $\ZZ_\ast (K)$ whose component
\[
\varphi_K ([X]) (\sigma)_0 \co C^{n-|\sigma|-\ast} (X[\sigma]) \ra
C(X[\sigma],\del X[\sigma])
\]
is the cap product with the class $[X[\sigma]]$.
\end{construction}

\begin{construction} \label{con:sym-construction-over-cplxs-lower-star-mfd}
More generally, let $X$ be an $n$-dimensional topological manifold and let $r \co X \ra K$ be a map, transverse to the dual cells $D(\sigma,K)$ for all $\sigma \in K$. Any map can be so deformed by topological transversality. In this situation we obtain an analogous $K$-dissection. The resulting complex $(C (X),\varphi_K [X])$ is now an $n$-dimensional symmetric algebraic complex in $\Lambda (\ZZ)_\ast (K)$ since the maps
\[
\varphi (\sigma)_0 \co C^{(n-|\sigma|-\ast} (X[\sigma]) \ra C(X[\sigma],\del X[\sigma])
\]
are the cap products with the fundamental classes $[X[\sigma]] \in C_{n-|\sigma|} (X[\sigma],\del X[\sigma])$ and hence chain homotopy equivalences and hence their mapping cones are contractible. Here we are using the fact that each $X[\sigma]$ is an $n-|\sigma|$-dimensional manifold with boundary and hence satisfies Poincar\'e duality.
\end{construction}

\begin{construction} \label{con:quad-construction-over-cplxs-lower-star}
Analogously let $(f,b) \co M \ra X$ be a degree one normal map of closed $n$-dimensional topological manifolds. We can make $f$ transverse to the $K$-dissection of $X$ in a sense that
each preimage
\[
 (M[\sigma],\del M[\sigma]) := f^{-1} (X[\sigma],\del X[\sigma])
\]
is an $(n-|\sigma|)$-dimensional manifold with boundary and each restriction
\[
 (f[\sigma],f[\del \sigma]) \co  (M[\sigma],\del M[\sigma]) \ra (X[\sigma],\del X[\sigma])
\]
is a degree one normal map. The stable Umkehr map $F \co \Sigma^p X_+ \ra \Sigma^p M_+$ for some $p$ induces by a generalization of the relative quadratic construction \ref{con:rel-quad-htpy} a chain map
\[
 \psi_{K} \co \Sigma^{-k} C(X) \ra \Wq{C(M)} \; \textup{over} \; \ZZ_\ast (K)
\]
called the \emph{quadratic construction over} $K$. Evaluated on the fundamental class $[X] \in C_{n} (X, \del X)$ produces an $n$-dimensional quadratic algebraic complex in the category $\Lambda(\ZZ)_\ast (K)$. The mapping cone $\sC (f^!)$ becomes a complex over $\ZZ_\ast (K)$ by $\sC (f^{!}) (\sigma) = \sC (f (\sigma)^{!},f (\del \sigma)^{!})$. The chain map $e \co C(M) \ra \sC (f^!)$ in $\ZZ_\ast (K)$ produces an $n$-dimensional quadratic complex in $\Lambda (\ZZ)_\ast (K)$ 
\[
 \big( C(f^!), e_\% \psi_{K} [X] \big). 
\]
\end{construction}


\section{Assembly} \label{sec:assembly}


Assembly is a map that allows us to compare the concepts of the local Poincar\'e duality introduced in section \ref{sec:cat-over-cplxs} and the global Poincar\'e duality in section \ref{sec:algebraic-cplxs}. It is formulated as a functor of algebraic bordism categories. 

\begin{prop}
The functor of additive categories $A:\ZZ_{*}(K)\ra \ZZ[\pi_{1}(K)]$
defined by
\[
 M\mapsto \sum_{\tilde{\sigma}\in\tilde{K}}M(p(\tilde{\sigma}))
\]
defines a functor of algebraic bordism categories.
\[
 A \co \Lambda (\ZZ)_\ast (K) \ra \Lambda (\ZZ[\pi_1 (K)])
\]
and hence homomorphisms
\[
 A \co L^n (\Lambda (\ZZ)_\ast (K)) \ra L^n (\Lambda (\ZZ[\pi_1 (K)])) \qquad A \co L_n (\Lambda (\ZZ)_\ast (K)) \ra L_n (\Lambda (\ZZ[\pi_1 (K)]))
\]

\end{prop}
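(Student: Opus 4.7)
The plan is to verify the three conditions defining a functor of algebraic bordism categories (\cite[Definition~3.7]{Ranicki(1992)}): first, that $A$ is additive on the underlying additive categories; second, that $A$ sends $\BB_\ast(K)$ to $\BB(\ZZ[\pi_1(K)])$ and $\CC_\ast(K)$ to $\CC(\ZZ[\pi_1(K)])$; and third, that there is a natural chain equivalence $T_R \circ A \simeq A \circ T_\ast$, where $R = \ZZ[\pi_1(K)]$, compatible with the double-dual evaluations. Once these three points are in hand, the induced maps on $L$-groups follow automatically: given a symmetric (respectively quadratic) structure $\varphi \in \Ws{C}_n$ (respectively $\psi \in \Wq{C}_n$) on $(C,\varphi) \in \Lambda(\ZZ)_\ast(K)$, one applies $A$ componentwise and transports the Poincar\'e condition via the duality equivalence; cobordisms are handled by the same recipe applied to pairs.

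Additivity is immediate from the formula. For the subcategory condition, boundedness of $A(C)$ follows from boundedness of each $C(\sigma)$ together with local finiteness of $K$, which forces the assembled sum to be bounded in each degree. Preservation of contractibility is a standard filtration argument: if each $C(\sigma)$ is contractible, one filters $A(C)$ by the preimages under the universal covering $p \co \tilde K \to K$ of the skeletal filtration of $K$; the filtration quotients are direct sums of shifted copies of the $C(p(\tilde\sigma))$, all contractible by hypothesis, so $A(C)$ is itself contractible by induction up the filtration.

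The main step is the duality compatibility. The chain duality $T_\ast$ on $\AA_\ast(K)$ is designed exactly so that, for the simplicial chain complex $\Delta(K') \in \BB(\ZZ_\ast(K))$ of Example~\ref{expl:chain-cplxs-over-K}, one has identifications $A(\Delta(K')) \simeq C(\tilde K)$ and $A(T_\ast \Delta(K')) \simeq T_R C(\tilde K)$ that are compatible under the two dualities, reflecting the equivariant Poincar\'e--Lefschetz duality of the dual-cell decomposition of $\tilde K$ (cf.\ Example~\ref{expl:duality-in-chain-cplxs-over-K}). For a general $C \in \BB_\ast(K)$ one unpacks the formula $(T_\ast C)(\sigma) = (T(\bigoplus_{\tilde\tau \geq \sigma} C(\tilde\tau)))_{r+|\sigma|}$ and identifies both $A(T_\ast C)$ and $T_R A(C)$ as $\ZZ[\pi_1(K)]$-equivariant $\Hom$-complexes out of $C(\tilde K)$ into $A(C)$; the two descriptions match on the nose for complexes supported on a single simplex, and the general case follows by additivity and induction on the support. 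Compatibility with the double-dual natural transformations is then a diagram chase from naturality.

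The main obstacle is the duality compatibility, which demands careful bookkeeping with the degree shifts, the sums over $\tilde\tau \geq \sigma$, and the deck-transformation action; the other two conditions are essentially combinatorial. With the equivalence in place, the $L$-group homomorphisms fall out without further work.
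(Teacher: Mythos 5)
The paper states this proposition without proof (it is a background fact imported from \cite[Proposition~9.11]{Ranicki(1992)}), so there is no in-paper argument to compare against; what follows is an assessment of your attempt on its own terms.

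Your overall strategy is the right one: verify additivity, verify that $\BB_\ast(K)$ and $\CC_\ast(K)$ land in $\BB(\ZZ[\pi_1(K)])$ and $\CC(\ZZ[\pi_1(K)])$ respectively, and supply a natural chain equivalence $G \co T_{\ZZ[\pi]}\circ A \simeq A \circ T_\ast$ compatible with the double-dual data. The additivity and subcategory observations are fine, and the filtration-by-simplex-dimension argument for contractibility works (the differentials in $\AA_\ast(K)$ raise $|\sigma|$, so the associated graded of $A(C)$ is a direct sum of assembled $C(\sigma)$'s). However, the heart of the proof --- the duality compatibility --- contains a genuine gap. For a module $M_\sigma$ supported on a single simplex $\sigma$, the two sides do \emph{not} match on the nose. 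On one side $T_{\ZZ[\pi]}A(M_\sigma) \cong \ZZ[\pi]\otimes_\ZZ \Hom_\ZZ(M_\sigma,\ZZ)$ is concentrated in degree $0$. On the other side, unwinding $(T_\ast M_\sigma)_r(\tau) = (T(M_\sigma))_{r+|\tau|}$ (nonzero only when $\tau\leq\sigma$ and $r = -|\tau|$) gives $A(T_\ast M_\sigma) \cong \ZZ[\pi]\otimes_\ZZ\Hom_\ZZ(M_\sigma,\ZZ)\otimes_\ZZ\Delta_{-\ast}(\sigma)$, a chain complex spread over degrees $-|\sigma|,\dots,0$. These are chain equivalent because $\Delta_\ast(\sigma)$ is contractible, but they are not equal, and constructing the equivalence (and checking its naturality in $M$, which is what is needed to transport Poincar\'e conditions and cobordisms) is precisely the nontrivial content being elided. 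Your phrase that both sides are ``$\Hom$-complexes out of $C(\tilde K)$ into $A(C)$'' also conflates the special case $C = \Delta(K')$ with a general $C\in\BB_\ast(K)$; for general $C$ the complex $C(\tilde K)$ plays no role. In short, the outline is correct, but the key single-simplex step must be an explicit natural chain homotopy equivalence (obtained by contracting $\Delta_\ast(\sigma)$ to its degree-zero homology), not an identity, and without that the induction on support has no correct base case.
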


\begin{expl} \label{expl:assembly-of-symmetric-signature-over-K} \cite[Example 9.6]{Ranicki(1992)}
Let $X$ be an $n$-dimensional topological manifold with a map $r \co X \ra K$. In Construction \ref{con:sym-construction-over-cplxs-lower-star-cplx} there is described how to associate to $X$ an $n$-dimensional SAC $(C,\varphi)$ in $\Lambda (\ZZ)_\ast (K)$. The assembly $A(C,\varphi)$ is then the $n$-dimensional SAPC $\ssign (X) = (C(\widetilde{X}),\varphi ([X]))$ in $\Lambda (\ZZ[\pi_1 (K)])$ described in Construction \ref{constrn:symmetric construction}.
\end{expl}

\begin{expl} \label{expl:assembly-of-quadratic-signature-over-K} \cite[Example 9.6]{Ranicki(1992)}
Let $(f,b) \co M \ra X$ be a degree one normal map of closed $n$-dimensional topological manifolds.  In Construction \ref{con:quad-construction-over-cplxs-lower-star} there is described how to associate to $(f,b)$ an $n$-dimensional QAC $(C,\psi)$ in $\Lambda (\ZZ)_\ast (K)$. The assembly $A(C,\varphi)$ is then the $n$-dimensional QAPC $\qsign (f,b) = (C(f^{!}),e_\% \psi [X])$ in $\Lambda (\ZZ[\pi_1 (K)])$ described in Construction \ref{constrn:quadconstrn}.
\end{expl}

It is convenient to factor the assembly map into two maps. The reason is that we have nice localization sequences for a functor of algebraic bordism categories when the underlying category with chain duality is fixed and the functor is an inclusion. Hence we define

\begin{defn} \label{defn:Lambda-K-category}
Let $\Lambda (\ZZ)$ be the algebraic bordism category of Example \ref{expl:alg-bord-cat-ring} and $K$ a locally finite simplicial complex. Then the triple
\[
\Lambda (\ZZ) (K) = (\AA_\ast (K),\BB_\ast (K),\CC (K))
\]
where the subcategory $\CC (K)$ consists of the chain complexes $C \in \BB_\ast (K)$ such that $A(C) \in \CC (\ZZ[\pi_1 (K)])$.
\end{defn}

Hence, for example, an $n$-dimensional symmetric complex $(C,\varphi)$ in $\Lambda (\ZZ) (K)$ will be a complex over $\ZZ_\ast (K)$, which will only be globally Poincar\'e in the sense that $A(C,\varphi)$ will be an $n$-dimensional SAPC over $\ZZ[\pi_1 K]$, but the duality maps $\varphi (\sigma) \co \Sigma^n TC (\sigma) \ra C(\sigma)$ do not have to be chain homotopy equivalences for a particular simplex $\sigma \in K$.  

\begin{prop}
The assembly functor factors as
\[
A \co \Lambda (\ZZ)_\ast (K) \ra \Lambda (\ZZ) (K) \ra \Lambda (\ZZ[\pi_1 (K)])
\]
\end{prop}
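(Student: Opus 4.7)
The plan is to verify the two factor functors separately, since the proposition is essentially an observation about how the definitions line up. Writing $\Lambda(\ZZ)_\ast(K) = (\AA(\ZZ)_\ast(K),\BB_\ast(K),\CC_\ast(K))$ and $\Lambda(\ZZ)(K) = (\AA(\ZZ)_\ast(K),\BB_\ast(K),\CC(K))$, the first functor $\Lambda(\ZZ)_\ast(K) \to \Lambda(\ZZ)(K)$ is induced by the identity on the underlying additive category with chain duality and on the category of chain complexes $\BB_\ast(K)$; only the subcategory of ``acyclic'' objects is enlarged. To see this is indeed a functor of algebraic bordism categories, the one thing to check is $\CC_\ast(K) \subseteq \CC(K)$, i.e.\ that if $C \in \BB_\ast(K)$ has $C(\sigma)$ contractible over $\ZZ$ for every $\sigma \in K$, then $A(C)$ is contractible over $\ZZ[\pi_1(K)]$. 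This I would prove by induction on the skeleta of $K$, using the fact that $A(C)$ carries a skeletal filtration whose successive subquotients are (up to suspension and change of rings to $\ZZ[\pi_1(K)]$ via the universal cover) direct sums of the $C(\sigma)$; an easy spectral sequence / mapping cone argument then yields contractibility of $A(C)$.

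The second functor $\Lambda(\ZZ)(K) \to \Lambda(\ZZ[\pi_1(K)])$ is the actual assembly. Here I would check three items: (i) $A$ sends $\BB_\ast(K)$ into $\BB(\ZZ[\pi_1(K)])$, which holds because the assembly of a bounded chain complex stays bounded and finitely generated; (ii) $A$ sends $\CC(K)$ into $\CC(\ZZ[\pi_1(K)])$, which is immediate from the very definition of $\CC(K)$ in Definition~\ref{defn:Lambda-K-category}; and (iii) $A$ is compatible with chain duality, meaning there is a natural chain equivalence $A \circ T_\ast \simeq T \circ A$ of functors $\BB_\ast(K) \to \BB(\ZZ[\pi_1(K)])$. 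Point (iii) is what makes the whole setup of $\AA_\ast(K)$ work: the shift by $|\sigma|$ and the sum over $\tau \geq \sigma$ built into the definition of $T_\ast$ are precisely arranged so that assembling first and then dualising is chain homotopy equivalent to dualising first and then assembling.

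The main obstacle is the verification of (iii), which is not a mere formality. It uses the observation, already implicit in the description of duals in Example~\ref{expl:duality-in-chain-cplxs-over-K}, that for the universal example $C = \Delta(K')$ one has $A(T_\ast C) \simeq T(A(C))$ via the identification of the dual cell decomposition with Poincar\'e--Lefschetz duality on the universal cover. The general case reduces to this by a Mayer--Vietoris / resolution argument: any $C \in \BB_\ast(K)$ can be built from pieces concentrated at single simplices, for which the duality compatibility is tautological, and both $A$ and $T_\ast$ preserve mapping cones up to natural equivalence.

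Once (i)--(iii) and the inclusion $\CC_\ast(K) \subseteq \CC(K)$ are in place, the two functors are well defined and their composition is, on underlying data, the assembly functor $A$ as originally defined, yielding the claimed factorisation. The induced factorisation of the assembly maps on $L$-groups is then automatic.
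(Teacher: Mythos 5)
Your structure is sound: you correctly identify that the first functor only changes the subcategory $\CC$, so the one thing to verify is $\CC_\ast(K) \subseteq \CC(K)$, and that the second functor is the assembly together with its compatibility with chain duality. However, the paper treats this proposition as essentially definitional, because two of the things you re-prove are already available from the proposition immediately preceding it in section~\ref{sec:assembly}: that $A \co \Lambda(\ZZ)_\ast(K) \ra \Lambda(\ZZ[\pi_1(K)])$ is a functor of algebraic bordism categories. That earlier proposition already grants you both (a) your item (iii), the compatibility $A \circ T_\ast \simeq T \circ A$, which is nontrivial but already established; and (b) the containment $A(\CC_\ast(K)) \subseteq \CC(\ZZ[\pi_1(K)])$. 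Since $\CC(K)$ is defined in Definition~\ref{defn:Lambda-K-category} precisely as $\{C \in \BB_\ast(K) \; | \; A(C) \in \CC(\ZZ[\pi_1(K)])\}$, item (b) literally reads $\CC_\ast(K) \subseteq \CC(K)$, with no further argument needed. Your skeletal filtration argument for $\CC_\ast(K) \subseteq \CC(K)$ is a valid self-contained fallback (with the caveat that the filtration $F^p = \bigoplus_{|\sigma| \geq p} C(\sigma)$ is the one whose stages are subcomplexes, since in $\AA_\ast(K)$ the differential components $d(\sigma,\tau)$ go from $\sigma$ to $\tau$ with $\sigma \leq \tau$), but it reproves what the earlier proposition already hands you. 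Likewise your sketch for (iii), building $C$ from pieces concentrated at single simplices, is morally right but would take genuine work to make rigorous, whereas invoking the earlier proposition makes the factorization immediate.
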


Furthermore Ranicki proves the following algebraic $\pi-\pi$-theorem\footnote{The name is explained at the beginning of \cite[chapter 10]{Ranicki(1992)}}:

\begin{prop} \textup{\cite[chapter 10]{Ranicki(1992)}} \label{prop:algebraic-pi-pi-theorem}
The functor $\Lambda(\ZZ) (K) \ra \Lambda (\ZZ[\pi_1 (K)])$ induces an isomorphism on quadratic $L$-groups
\[
 L_n (\Lambda (\ZZ) (K)) \cong L_n (\ZZ [\pi_1 (K)])
\]
\end{prop}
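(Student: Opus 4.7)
The plan is to apply Proposition \ref{prop:relativ-L-groups} to the assembly functor $A \co \Lambda(\ZZ)(K) \to \Lambda(\ZZ[\pi_1(K)])$, obtaining a long exact sequence
\[
\cdots \to L_n(\Lambda(\ZZ)(K)) \xrightarrow{A_\ast} L_n(\Lambda(\ZZ[\pi_1(K)])) \to L_n(A) \to L_{n-1}(\Lambda(\ZZ)(K)) \to \cdots
\]
so that the statement reduces to showing $L_n(A) = 0$ for all $n$. Recall from the comment following Proposition \ref{prop:relativ-L-groups} that an element of $L_n(A)$ consists of an $(n{-}1)$-dimensional QAC $(C,\psi)$ in $\Lambda(\ZZ)(K)$ together with an $n$-dimensional quadratic Poincar\'e pair $(A(C) \to D, (\delta\psi, A(\psi)))$ over $\ZZ[\pi_1(K)]$. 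I would prove this vanishing in two steps.

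First (realization of complexes): every $n$-dimensional QAPC $(D,\theta)$ over $\ZZ[\pi_1(K)]$ admits a lift to $\Lambda(\ZZ)(K)$ assembling to it. Fix a vertex $v \in K$ serving as basepoint and define a complex $D'$ over $\ZZ_\ast(K)$ by $D'(v) = D$ (interpreted via the identification $\pi_1(K,v) \cong \pi_1(K)$ and the passage from $\ZZ[\pi_1(K)]$-modules to $\ZZ$-chain complexes on the cover) and $D'(\sigma) = 0$ for $\sigma \neq v$. Then $A(D') \cong D$, and a direct check of the formula for $T_\ast$ shows $T_\ast(D')$ is again concentrated at $v$, so $\theta$ lifts uniquely to a quadratic structure $\theta'$ on $D'$. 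The complex $(D',\theta')$ lies in $\Lambda(\ZZ)(K)$ because the global Poincar\'e condition is precisely what we have assumed for $(D,\theta)$.

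Second (realization of cobordisms): given $(C,\psi)$ in $\Lambda(\ZZ)(K)$ together with a null-cobordism $(A(C) \to D, (\delta\psi, A(\psi)))$ over $\ZZ[\pi_1(K)]$, I must produce a null-cobordism in $\Lambda(\ZZ)(K)$. Using the first step, replace $D$ by its lift $D'$ concentrated at $v$. To lift the structure map $A(C) \to D$ to a morphism $C \to D'$ in $\ZZ_\ast(K)$, perform algebraic surgeries on $(C,\psi)$ within $\Lambda(\ZZ)(K)$ to replace $C$ by a homotopy equivalent complex $C'$ also concentrated at $v$; then morphisms $C' \to D'$ correspond bijectively to morphisms $A(C') \to D$ of $\ZZ[\pi_1(K)]$-complexes. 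The resulting pair in $\Lambda(\ZZ)(K)$ is Poincar\'e modulo $\CC(K)$ because its assembly is Poincar\'e, exhibiting the class in $L_n(A)$ as zero.

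The main obstacle is the intermediate reduction in the second step: modifying the given $(C,\psi)$ in $\Lambda(\ZZ)(K)$ to a cobordant complex concentrated at $v$, while preserving the global Poincar\'e structure and compatibility with the putative null-cobordism data. This is the algebraic counterpart of Wall's geometric $\pi$-$\pi$ theorem: since the only Poincar\'e duality one needs to preserve is the one visible after assembly, local pieces of $C$ over simplices $\sigma \neq v$ may be eliminated by a systematic sequence of algebraic surgeries below the middle dimension, pushing all the homological content onto $v$. This argument, together with the verification that these surgeries can be made compatible with the cobordism data, is the content of \cite[Chapter~10]{Ranicki(1992)}.
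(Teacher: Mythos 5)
Your reduction via the long exact sequence of Proposition \ref{prop:relativ-L-groups} to showing $L_n(A)=0$ is a reasonable starting point, and your Step 1 (lifting a QAPC over $\ZZ[\pi_1(K)]$ to a complex concentrated at a vertex $v$, whose dual under $T_\ast$ is again concentrated at $v$) is correct.

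However, Step 2 has a genuine gap in the claim that ``morphisms $C' \to D'$ correspond bijectively to morphisms $A(C') \to D$ of $\ZZ[\pi_1(K)]$-complexes'' once both $C'$ and $D'$ are concentrated at $v$. The assembly of an object $M$ concentrated at the vertex $v$ is the induced module $A(M) \cong \ZZ[\pi_1(K)] \otimes_{\ZZ} M(v)$, so
\[
\Hom_{\ZZ[\pi_1(K)]}\bigl(A(C'),A(D')\bigr) \;\cong\; \Hom_{\ZZ}\bigl(C'(v),\,\ZZ[\pi_1(K)] \otimes_{\ZZ} D'(v)\bigr),
\]
which strictly contains the image $\Hom_{\ZZ}(C'(v),D'(v))$ of the assembly on morphisms (these are the maps of the form $1 \otimes f$) whenever $\pi_1(K) \neq 1$. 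In other words, the assembly functor restricted to at-$v$ complexes is faithful but far from full, so the $\ZZ[\pi_1(K)]$-linear structure map $A(C') \to D$ coming from your null-cobordism has no reason to lift along the assembly, and your argument stalls precisely at the step that matters. Separately, the assertion that any $(C,\psi)$ in $\Lambda(\ZZ)(K)$ is cobordant within $\Lambda(\ZZ)(K)$ to one concentrated at $v$ is itself a substantive claim (it is essentially where the $\pi$-$\pi$ phenomenon is encoded) and cannot be dismissed as ``surgery below the middle dimension''; here surgery must concentrate support on a vertex rather than clear homology in a range of degrees, and one must check at each stage that the surgery data lies in $\Lambda(\ZZ)(K)$ (globally Poincar\'e) and remains compatible with the ambient cobordism. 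The paper itself gives no proof of this proposition and simply cites Ranicki's Chapter 10, whose argument is arranged differently precisely to avoid the non-fullness obstruction you run into.
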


It follows that when we want to compare local and global Poincar\'e duality it is enough to study the map
\begin{equation}
A \co L_n (\Lambda (\ZZ)_\ast (K)) \ra L_n (\Lambda (\ZZ) (K)). 
\end{equation}


\section{$\bL$-Spectra}  \label{sec:spectra}


The technology of the previous sections also allows us to construct
$L$-theory spectra whose homotopy groups are the already defined
$L$-groups. Spectra give rise to generalized (co-)homology theories
via the standard technology of stable homotopy theory. That is also
the main reason for their introduction in $L$-theory.

These spectra are constructed as spectra of $\Delta$-sets, alias
simplicial sets without degeneracies. We refer the reader to
\cite[chapter 11]{Ranicki(1992)} for the detailed definition as well
as for the notions of Kan $\Delta$-sets, the geometric product $K
\otimes L$, the smash product $K \wedge L$, the function
$\Delta$-sets $L^K$, the fiber and the cofiber of a map of
$\Delta$-sets, the loop $\Delta$-set $\Omega K$ and the suspension
$\Sigma K$ as well as the notion of an $\Omega$-spectrum of
$\Delta$-sets.

Below, $\Delta^n$ is the standard $n$-simplex, $\Lambda$ is an
algebraic bordism category and $K$ is a finite $\Delta$-set.

\begin{defn} 
Let $\bL_{n}(\Lambda)$, $\bL^{n}(\Lambda)$ and $\bNL^{n}(\Lambda)$
be pointed $\Delta$-sets defined by
    \begin{align*}
        \bL^{n}(\Lambda)^{(k)}   & = \{n\text{-dim. symmetric complexes in }\Lambda^{*}(\Delta^{k})\}, \\
        \bL_{n}(\Lambda)^{(k)}   & = \{n\text{-dim. quadratic complexes in }\Lambda^{*}(\Delta^{k})\, \\
        \bNL^{n}(\Lambda)^{(k)}  &= \{n\text{-dim. normal complexes in }\Lambda^{*}(\Delta^{k+n})\}.
    \end{align*}
    The face maps are induced by the face inclusions $\partial_{i}:\Delta^{k-1}\ra\Delta^{k}$
    and the base point is the 0-chain complex.
\end{defn}


\begin{prop}
We have $\Omega$-spectra of pointed Kan $\Delta$-sets
\[
\bL^{\bullet}(\Lambda) \!\coloneqq \{\bL^{n}(\Lambda)\;|\;n\in\ZZ\}
\quad \! \bL_{\bullet}(\Lambda) \!\coloneqq
\{\bL_{n}(\Lambda)\;|\;n\in\ZZ\} \quad \! \bNL^{\bullet}(\Lambda)
\!\coloneqq \{\bNL^{n}(\Lambda)\;|\;n\in\ZZ\}
\]
with homotopy groups
\[
\pi_{n}(\bL^{\bullet}(\Lambda)) \cong L^{n}(\Lambda) \quad
\pi_{n}(\bL_{\bullet}(\Lambda)) \cong L_{n}(\Lambda) \quad
\pi_{n}(\bNL^{\bullet}(\Lambda)) \cong NL^{n}(\Lambda)
\]
\end{prop}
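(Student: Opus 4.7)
The plan is to prove the three assertions in parallel, since the arguments for symmetric, quadratic and normal $L$-spectra are structurally the same, with the normal case requiring an additional input from the stability of hyperquadratic $Q$-groups (Proposition~\ref{prop:sups-Q-groups}) which accounts for the different indexing convention $\Delta^{k+n}$ in the definition of $\bNL^{n}(\Lambda)^{(k)}$. First, I would verify the Kan property. A horn in $\bL^{n}(\Lambda)$ is a compatible family of $n$-dimensional symmetric complexes over all but one face of $\Delta^{k}$; to fill it, I would invoke the algebraic union construction for cobordisms in $\Lambda^{*}(\Delta^{k})$ (the $\Lambda^{*}(\Delta^{k})$-version of the union already used in Section~\ref{sec:algebraic-cplxs}), producing an $n$-dimensional symmetric complex over the full $\Delta^{k}$ whose restrictions reproduce the given horn. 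The same gluing argument works verbatim for quadratic and normal complexes.

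Next I would identify $\pi_{0}(\bL^{n}(\Lambda)) \cong L^{n}(\Lambda)$ directly from the definitions: $0$-simplices of $\bL^{n}(\Lambda)$ are $n$-dimensional SAPCs in $\Lambda^{*}(\Delta^{0})=\Lambda$, and $1$-simplices are $n$-dimensional SAPCs over $\Lambda^{*}(\Delta^{1})$, which unfold as $(n+1)$-dimensional symmetric Poincar\'e pairs in $\Lambda$ with two boundary complexes, i.e.\ cobordisms. Thus $\pi_{0}$ is precisely the cobordism group $L^{n}(\Lambda)$ from Definition~\ref{defn:L-groups-over-Lambda}; the quadratic and normal cases are identical.

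The core of the argument is the $\Omega$-spectrum property, i.e.\ the claim that the natural structure map
\[
\bL^{n}(\Lambda) \longrightarrow \Omega\,\bL^{n+1}(\Lambda)
\]
is a homotopy equivalence, which will simultaneously give $\pi_{n}(\bL^{\bullet}(\Lambda))=\pi_{0}(\bL^{n}(\Lambda))=L^{n}(\Lambda)$. A $k$-simplex of $\Omega\,\bL^{n+1}(\Lambda)$ is a $(k+1)$-simplex of $\bL^{n+1}(\Lambda)$ whose restriction to $\partial\Delta^{k+1}$ minus one open face is the zero complex; this is an $(n+1)$-dimensional symmetric complex over $\Delta^{k+1}$ concentrated, up to chain equivalence, on a single face. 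Using the chain-level suspension $\Sigma$ of Notation~\ref{notn:suspension} together with the dimension-shift formula $(\Sigma C)^{n+1-*}=\Sigma(C^{n-*})$, such a complex desuspends canonically to an $n$-dimensional symmetric complex over $\Delta^{k}$, i.e.\ a $k$-simplex of $\bL^{n}(\Lambda)$. I would make this into a simplicial map that is inverse up to simplicial homotopy to the structure map, obtained by formally suspending a complex over $\Delta^{k}$ and extending by zero over the extra face. Iterating gives $\pi_{n}(\bL^{\bullet}(\Lambda))\cong L^{n}(\Lambda)$, and the analogous argument for quadratic structures gives the $\bL_{\bullet}(\Lambda)$ statement.

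The hardest point, and the one I would expect to require the most care, is the $\Omega$-spectrum property in the normal case. Here the indexing $\Delta^{k+n}$ reflects the fact that normal structures have a hyperquadratic component $\chi\in\Wh{C}_{n+1}$ whose behaviour under suspension is governed by the periodic complex $\widehat W$ rather than $W$. The relevant desuspension statement is precisely Proposition~\ref{prop:sups-Q-groups}: the suspension $S\co \Qh{n}{C}\xrightarrow{\cong}\Qh{n+1}{\Sigma C}$ is an isomorphism, so normal structures are suspension-invariant in a way that symmetric structures are not. Using this I would show that the structure maps in $\bNL^{\bullet}(\Lambda)$ are weak equivalences and that $\pi_{n}(\bNL^{\bullet}(\Lambda))\cong NL^{n}(\Lambda)$; the shifted index $k+n$ is exactly what makes the bookkeeping consistent with this invariance. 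The detailed construction of the structure maps and the simplicial homotopies is carried out in \cite[chapter 11]{Ranicki(1992)}, and I would essentially follow that blueprint.
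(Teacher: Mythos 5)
The paper does not give its own proof of this proposition; it refers the reader to \cite[chapter 11]{Ranicki(1992)} for the $\Delta$-set formalism and relies on the Remark following the corresponding proposition for the geometric bordism spectra for the essential mechanism of the $\Omega$-spectrum property. That Remark states what the mechanism is, and it is not the one you propose.

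Your Kan-filling argument via algebraic gluing and your identification of $\pi_0$ with the cobordism group are both sound. The problem is the $\Omega$-spectrum step. You assert that $\bL^{n}(\Lambda)\to\Omega\bL^{n+1}(\Lambda)$ is a weak equivalence, but the spectra here use the opposite indexing (the paper's own remark: $\bE_{n+1}\simeq\Omega\bE_{n}$), so the correct relation is $\bL^{n+1}(\Lambda)\simeq\Omega\bL^{n}(\Lambda)$; your version is off by two, and indeed $\pi_{0}\bigl(\Omega\bL^{n+1}(\Lambda)\bigr)=\pi_{1}\bigl(\bL^{n+1}(\Lambda)\bigr)=L^{n+2}(\Lambda)$, not $L^{n}(\Lambda)$. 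More fundamentally, the identification of simplices of $\Omega\bL^{m}(\Lambda)$ with simplices of $\bL^{m+1}(\Lambda)$ is not achieved by applying the chain-level desuspension of Notation~\ref{notn:suspension}. It is a purely combinatorial consequence of how $T^{*}$ is defined on $\AA^{*}(\Delta^{k})$: the duality already carries the grading shift $(T^{*}M)_{r}(\tau)=(T(\oplus_{\tau\geq\tilde\tau}M_{\tilde\tau}))_{r-|\tau|}$, so an $m$-dimensional symmetric complex over $\Delta^{k+1}$ with the requisite faces trivial \emph{is}, upon unwinding the definitions, an $(m+1)$-dimensional symmetric complex over $\Delta^{k}$ --- no further suspension is applied or needed. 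This is the algebraic counterpart of the observation the paper makes for $\bOmega^{\STOP}_{\bullet}$, that an $(n+1+k-1)$-dimensional $(k-1+2)$-ad is the same as an $(n+k)$-dimensional $(k+2)$-ad with certain faces empty, where likewise no manifold suspension enters; the paper's remark explicitly says the same observation is used algebraically. Inserting a chain-level desuspension on top of the built-in $T^{*}$-shift both double-counts the dimension change and reverses its direction, which is why your bookkeeping does not close up. Your remarks on the normal case and Proposition~\ref{prop:sups-Q-groups} are reasonable intuition but are left entirely unverified and would need to be checked against Ranicki's Chapter 13 rather than asserted.
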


\begin{rem}
The indexing of the $L$-spectra above is the opposite of the usual
indexing in stable homotopy theory. Namely, if $\bE$ is any of the
spectra above we have $\bE_{n+1} \simeq \Omega \bE_n$.
\end{rem}

\begin{notation}
To save space we will abbreviate
\[
\bL^{\bullet} = \bL^{n}(\Lambda (\ZZ)) \quad \bL_{\bullet} =
\bL_{n}(\Lambda (\ZZ)) \quad \bNL^{\bullet} = \bNL^{n}(\widehat \Lambda
(\ZZ)).
\]
\end{notation}

We note that the exact sequences from Propositions \ref{propn:LESL},
\ref{prop:relativ-L-groups}, and \ref{prop:inclusion-les} can be
seen as the long exact sequences of the homotopy groups of fibration
sequences of spectra. We are mostly interested in the following
special case.

\begin{prop}  \label{prop:fib-seq-of-quad-sym-norm}
Let $R$ be a ring with involution. Then we have a fibration sequence
of spectra
\[
\bL_\bullet (\Lambda(R)) \ra \bL^\bullet (\Lambda(R)) \ra \bNL^\bullet (\Lambda(R)).
\]
\end{prop}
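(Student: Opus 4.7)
The plan is to realize the exact sequence of Proposition \ref{propn:LESL} at the spectrum level by constructing maps of $\Delta$-set spectra directly from the natural operations $1+T$ and $J$, and then to identify the homotopy fiber of the second map with the first spectrum via a $5$-lemma comparison of long exact sequences of homotopy groups.

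First I would construct the two maps of spectra. At simplex level $k$, an $n$-dimensional quadratic Poincar\'e complex $(C,\psi)$ in $\Lambda(R)^{*}(\Delta^k)$ is sent by symmetrization to the $n$-dimensional symmetric Poincar\'e complex $(C,(1+T)\psi)$ in the same category; this is Poincar\'e because $((1+T)\psi)_0 = (1+T)\psi_0$. Naturality with respect to the face inclusions $\partial_i \co \Delta^{k-1} \to \Delta^k$ is automatic from the definitions, so this gives a map of $\Omega$-spectra
\[
1+T \co \bL_\bullet(\Lambda(R)) \lra \bL^\bullet(\Lambda(R)).
\]
Similarly, Lemma \ref{lem:symmetric-poincare-means-unique-normal} associates to a symmetric Poincar\'e complex $(C,\varphi)$ a canonical normal complex $(C,\varphi,\gamma,\chi)$, with $\gamma$ and $\chi$ produced by the formulas in the proof of that lemma (applied to a chain-level inverse of $\widehat{\varphi_0}^{\%}$). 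Regarded as an object of $\widehat\Lambda(R)^{*}(\Delta^k)$ (where no Poincar\'e duality on the underlying symmetric complex is required), this defines a map
\[
J \co \bL^\bullet(\Lambda(R)) \lra \bNL^\bullet(\widehat\Lambda(R)).
\]

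Second, I would produce a canonical null-homotopy of the composite $J \circ (1+T)$. On underlying structures, $J((1+T)\psi)$ vanishes in $\Qh{n}{C}$ by the exactness of Proposition \ref{propn:LESQ}, and this vanishing is realized chain-theoretically by the short exact sequence
\[
0 \lra \Ws{C} \lra \Wh{C} \lra \Sigma \Wq{C} \lra 0
\]
together with the cycle $\psi$ itself, which provides a preferred bounding chain for $J((1+T)\psi)$ in $\Wh{C}$. Thus the natural recipe $(C,\psi) \mapsto \big((C,(1+T)\psi,\gamma,\chi),\; \text{preferred bounding}\big)$ exhibits a canonical null-cobordism of normal complexes, compatible with face maps and hence extending to a null-homotopy of $J \circ (1+T)$ as a map of $\Omega$-spectra.

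Third, the null-homotopy yields a map from $\bL_\bullet(\Lambda(R))$ to the homotopy fiber of $J$. By the preceding proposition the homotopy groups of the three spectra are $L_n(R)$, $L^n(R)$ and $NL^n(R)$, and by construction the induced maps on homotopy groups are precisely the maps $1+T$ and $J$ of Proposition \ref{propn:LESL}. Hence the long exact sequence of homotopy groups of our candidate fibration coincides with the exact sequence of that proposition, and the $5$-lemma gives that the induced map from $\bL_\bullet(\Lambda(R))$ to the homotopy fiber of $J$ is a $\pi_*$-isomorphism. Since all the spectra are $\Omega$-spectra of Kan $\Delta$-sets, a $\pi_*$-isomorphism is a weak equivalence, which establishes the fibration sequence.

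The main technical obstacle is step two: producing the null-homotopy of $J \circ (1+T)$ functorially at the level of $\Delta$-sets rather than only at the level of $L$-groups. Concretely, one must verify that the chain-level choices defining $\chi$ in $J$, together with the preferred filling provided by $\psi$, can be made compatibly for all $k$-ads so that the resulting null-cobordisms glue across face inclusions. Once this coherence is established the argument is formal, since exactness on homotopy groups is already provided by Proposition \ref{propn:LESL}.
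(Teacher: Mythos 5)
Your proposal is correct in outline but takes a genuinely different route from the paper. The paper proves the fibration statement directly: it observes that the simplices of the homotopy fiber of $J \co \bL^\bullet(\Lambda(R)) \to \bNL^\bullet(\widehat\Lambda(R))$ are, up to indexing, $n$-dimensional (normal, symmetric Poincar\'e) pairs, and uses the algebraic surgery of Lemma \ref{lem:normal-sym-pair-gives-quad-poincare-cplx} to build an explicit levelwise equivalence with $\bL_\bullet(\Lambda(R))$ --- no appeal to the long exact sequence of Proposition \ref{propn:LESL} or to the $5$-lemma is needed. You instead construct $1+T$ as a map of spectra, null-homotope $J\circ(1+T)$, obtain a lift $\bL_\bullet(\Lambda(R)) \to \textup{Fiber}(J)$, and close with the $5$-lemma against Proposition \ref{propn:LESL}. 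Both strategies bottom out in the same correspondence; your version carries extra overhead, since it must also verify that the spectrum-level $1+T$ and $J$ induce the algebraic maps on $\pi_*$, and it must produce the null-homotopy coherently. The gain is that you never need to unwind what a simplex of the fiber spectrum is.

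There is, however, a gap in your step two that precedes the coherence issue you flag. You observe that $J((1+T)\psi)$ has a preferred bounding chain $\tilde\psi \in \Wh{C}_{n+1}$ from the short exact sequence $0 \to \Ws{C} \to \Wh{C} \to \Sigma\Wq{C} \to 0$, and you assert that this chain ``exhibits a canonical null-cobordism of normal complexes.'' But a normal null-cobordism of $(C,(1+T)\psi,\gamma,\chi)$ is not a chain in $\Wh{C}$: it is a normal pair $(f\co C\to D,(\delta\theta,\theta))$ in the sense of section \ref{sec:normal-cplxs}, which requires choices of $D$, $f$, $\delta\varphi$, $\delta\gamma$, $b$, $\delta\chi$ subject to two displayed equations. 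The missing link is the observation that $\tilde\psi$ exhibits the chain bundle $\gamma$ itself as a boundary: since $J((1+T)\psi) = d\tilde\psi$ and $\gamma = S^{-n}(\widehat{\varphi_0}^\%)^{-1}J(\varphi)$ (from Lemma \ref{lem:symmetric-poincare-means-unique-normal}), one gets $\gamma = d\bigl(S^{-n}(\widehat{\varphi_0}^\%)^{-1}\tilde\psi\bigr)$, so $b := -S^{-n}(\widehat{\varphi_0}^\%)^{-1}\tilde\psi \in \Wh{TC}_1$ satisfies $db = -\gamma$. Only with this $b$ in hand does the trivial data $D=0$, $f=0$, $\delta\varphi=0$, $\delta\gamma=0$, $\delta\chi=0$ become a legitimate normal null-cobordism (the second normal-pair equation is then vacuous, living in $\Wh{D}_{n+1}=0$). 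Once this is spelled out, the coherence of the choices across face inclusions is indeed the remaining and crucial point, as you say.
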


\begin{proof}
 Consider the fiber of the map of spectra $\bL^\bullet (\Lambda(R)) \ra \bNL^\bullet (\Lambda(R))$. Use algebraic surgery to identify it with $\bL_\bullet (\Lambda(R))$ just as in the proof of Proposition \ref{propn:LESL}.
\end{proof}

In fact the $L$-theory spectra are modeled on some geometric spectra. We will use the notion of a $(k+2)$-ad (of spaces) and manifold $(k+2)$-ads as defined in \cite[\S 0]{Wall(1999)}.

\begin{defn}
Let $n\in\ZZ$ and $\bbOmega[n]{STOP}$ and $\bbOmega[n]{N}$ be pointed
$\Delta$-sets defined by
\begin{align*}
    (\bbOmega[n]{STOP})^{(k)} = & \{ (M,\partial_0M,\ldots,\partial_kM) \; | \; (n+k)\textup{-dimensional manifold} \\ 
    & \textup{$(k+2)$-ad such that } \partial_0 M \cap \ldots \cap \partial_k M = \emptyset\} \\
    (\bbOmega[n]{N})^{(k)} = & \{ (X,\nu,\rho) \; | \; (n+k)\textup{-dimensional normal space $(k+2)$-ad} \\
    & X = (X, \partial_0 X,\ldots,\partial_k X)  \; \textup{such that} \; \partial_0 X \cap \ldots\cap \partial_k X = \emptyset, \\ & \nu \co X \ra \BSG (r) \textup{ and } \rho:\Delta^{n+k+r}\ra \Th(\nu_X) \; \textup{such that} \\ & \rho (\partial_i\Delta^{n+k+r}) \subset \Th(\nu_{\partial_{i} X}) \}
\end{align*}
Face maps $\partial_i:(\bbOmega[n]{?})^{(k)} \ra (\bbOmega[n]{?})^{(k-1)}$, $0\leq i\leq k$ are given in both cases by
\[
    \partial_i(X) = (\partial_iX, \partial_0X\cap \partial_iX,\ldots,  \partial_{i-1}X\cap \partial_iX,\partial_{i+1}X\cap \partial_iX,\ldots, \partial_nX\cap \partial_iX, ).
\]
Here a convention is used that an empty space is a manifold (normal space) of any dimension $n \in \ZZ$ and it is a base point in all the dimensions.
\end{defn}

\begin{prop}
We have $\Omega$-spectra of pointed Kan $\Delta$-sets
\[
\bbOmega[\bullet]{STOP} \coloneqq \{\bbOmega[n]{STOP}\;|\;n\in\ZZ\}
\quad \bbOmega[\bullet]{N} \coloneqq\{\bbOmega[n]{N}\;|\;n\in\ZZ\}
\]
with homotopy groups
\[
\pi_n(\bbOmega{STOP}) = \Omega^{\STOP}_n \quad \pi_n(\bbOmega{N}) =
\Omega^N_n.
\]
\end{prop}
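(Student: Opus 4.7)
The statement has two parts: verifying the pointed Kan and $\Omega$-spectrum properties, and identifying $\pi_n$ with the stated cobordism groups.

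For the Kan condition, a horn $\Lambda^k_i \hookrightarrow \Delta^k$ in $\bbOmega[n]{?}$ amounts to a compatible system of manifold (resp.\ normal space) $(k+1)$-ads that glue along common faces to form all but the $i$-th face of a would-be $(k+2)$-ad. The filling is obtained by taking the union $M_0 = \bigcup_{j\neq i} \partial_j M$, attaching a collar $M_0\times[0,1]$ (existence of collars for topological manifolds being \cite{Kirby-Siebenmann(1977)}), and using the outer boundary of the collar as the missing face. For $\bbOmega[\bullet]{N}$ the underlying space is constructed identically, after which the spherical fibration and Thom collapse map must be extended across the new face: since $\Lambda^k_i\hookrightarrow\Delta^k$ is a cofibration and homotopy equivalence, both extensions exist by standard obstruction theory. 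The $\Omega$-spectrum structure $\bbOmega[n+1]{?}\simeq\Omega\,\bbOmega[n]{?}$ comes from a direct identification on simplices: a $k$-simplex on the left is an $(n+k+1)$-dimensional $(k+2)$-ad, while a $k$-simplex of $\Omega\,\bbOmega[n]{?}$, under the loop $\Delta$-set convention used in \cite[chapter 11]{Ranicki(1992)}, is a $(k+1)$-simplex of $\bbOmega[n]{?}$ (that is, an $(n+k+1)$-dimensional $(k+3)$-ad) with the two ``endpoint'' faces at the basepoint (empty). Deleting these empty faces gives the bijection, and compatibility with face maps is a direct verification.

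For the homotopy groups, the $\Omega$-spectrum relation reduces the computation to $\pi_n(\bE)=\pi_0(\bE_n)$. Since each $\bbOmega[n]{?}$ is Kan, $\pi_0(\bbOmega[n]{?})$ is the set of $0$-simplices modulo the relation of being connected by a $1$-simplex. A $0$-simplex of $\bbOmega[n]{STOP}$ is a closed $n$-dimensional topological manifold (the $2$-ad $(M,\emptyset)$ with $\partial_0 M=\emptyset$), and a $1$-simplex is an $(n+1)$-dimensional manifold $3$-ad $(W;\partial_0 W,\partial_1 W)$ with $\partial_0 W\cap\partial_1 W=\emptyset$ — precisely a topological cobordism between the two boundary components. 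Hence $\pi_0(\bbOmega[n]{STOP})=\Omega^{\STOP}_n$, so $\pi_n(\bbOmega[\bullet]{STOP})=\Omega^{\STOP}_n$. The identical argument using geometric normal $3$-ads, recalling the notion of geometric normal cobordism from Definition \ref{defn:GNP}, gives $\pi_n(\bbOmega[\bullet]{N})=\Omega^N_n$.

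The main technical obstacle is the Kan condition for $\bbOmega[\bullet]{N}$: extending the partially specified spherical fibration and Thom collapse map across the filled face, consistently with the $(k+2)$-ad structure, requires some care, though no essential difficulty since the horn is contractible and the extension problems reduce to filling diagrams of fibrations over a contractible base. The remaining verifications — Kan for $\bbOmega[\bullet]{STOP}$, compatibility of face maps under the simplex identification for $\Omega$, and the identification of $1$-simplices with cobordisms — are routine bookkeeping.
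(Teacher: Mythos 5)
The paper does not actually give a proof of this proposition: it is stated without one, and the only accompanying content is the remark (placed after the next definition) noting that an $(n+1+k-1)$-dimensional $(k-1+2)$-ad is the same as an $(n+k)$-dimensional $(k+2)$-ad with certain faces empty. Your proposal therefore supplies more argument than the source does, and the overall route --- collaring for Kan fillings, re-indexing of simplices for the $\Omega$-spectrum property, and reading off $\pi_0$ of each level as a geometric bordism group --- is the standard one and consistent with the paper's remark.

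Two points deserve care, though. First, in the loop $\Delta$-set identification the exact faces required to be empty depend on Ranicki's conventions for $\Omega$ on $\Delta$-sets; the paper's remark phrases the condition in terms of $\partial_0$ together with the \emph{iterated} face $\partial_1\cdots\partial_k$ (a vertex), which is not the same as ``the two endpoint faces,'' so your counting should be checked against \cite[chapter 11]{Ranicki(1992)}. Second, for the Kan condition on $\bbOmega[\bullet]{N}$ the nontrivial point is not merely that a spherical fibration and collapse map extend across a contractible inclusion, but that they extend \emph{compatibly with the $(k+2)$-ad constraints}: the extension $\rho\co\Delta^{n+k+r}\to\Th(\nu_X)$ must in particular carry $\partial_i\Delta^{n+k+r}$ into $\Th(\nu_{\partial_i X})$ for the newly created face, and this requires arranging the retraction of $\Delta^{n+k+r}$ onto the subcomplex carrying the given data so that it is compatible with the collar structure on the filled-in normal space; ``standard obstruction theory'' does not give this for free. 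Similarly, the sentence defining $M_0=\bigcup_{j\neq i}\partial_jM$ refers to $M$ before it exists --- clearly the intended object is the manifold glued from the given horn faces --- and the description of the $(k+2)$-ad structure on $M_0\times[0,1]$ (which pieces become $\partial_j M$ for $j\neq i$ and which become $\partial_i M$) needs to be spelled out to verify the ad compatibilities. None of this is a conceptual gap; these are exactly the routine verifications the paper elects not to write out.
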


\begin{defn}
For $n \in \ZZ$ let $\Sigma^{-1} \bbOmega[n]{N,\STOP}$ be the
pointed $\Delta$-set defined as the fiber of the map of
$\Delta$-sets
\[
 \Sigma^{-1} \bbOmega[n]{N,\STOP} = \textup{Fiber} (\bbOmega[n]{STOP} \ra \bbOmega[n]{N} )
\]
The collection $\Sigma^{-1} \bbOmega[n]{N,\STOP}$ becomes an
$\Omega$-spectrum of $\Delta$-sets.
\end{defn}

\begin{rem}
Again, the indexing of the above spectra is the opposite of the
usual indexing in stable homotopy theory. To see that the spectra
are indeed $\Omega$-spectra observe that an $(n+1+k-1)$-dimensional
$(k-1+2)$-ad is the same as an $(n+k)$-dimensional $(k+2)$-ad whose faces
$\del_0$ and $\del_1 \ldots \del_k$ are empty. Similar observation
is used in the algebraic situation.
\end{rem}

Hence we have a homotopy fibration sequence of spectra
\begin{equation} \label{eqn:htpy-fib-seq-of-bordism-spectra}
 \Sigma^{-1} \bbOmega[\bullet]{N,\STOP} \ra \bbOmega[\bullet]{STOP} \ra \bbOmega[\bullet]{N}
\end{equation}

The fibration sequences from Proposition
\ref{prop:fib-seq-of-quad-sym-norm} and of
(\ref{eqn:htpy-fib-seq-of-bordism-spectra}) are related by the
signature maps as follows.

\begin{prop} \label{prop:signatures-on-spectra-level}
The relative symmetric construction produces
\[
\textup{(1)}\quad \ssign \co \bbOmega[n]{STOP} \ra \bL^n(\Lambda(\ZZ)) \; \leadsto \; \ssign \co
\bbOmega[\bullet]{STOP} \ra \bL^\bullet
\]
The relative normal construction produces
\[
\textup{(2)}\quad \nsign \co \bbOmega[n]{N} \ra \bNL^n(\widehat \Lambda(\ZZ))
\;  \leadsto \; \nsign \co \bbOmega[\bullet]{N} \ra \bNL^\bullet.
\]
The relative normal construction together with the fibration
sequence from Proposition \ref{prop:fib-seq-of-quad-sym-norm}
produces
\[
\textup{(3)}\quad \qsign \co \Sigma^{-1} \bbOmega[n]{N,\STOP} \ra
\bL_n(\Lambda(\ZZ)) \;  \leadsto \; \qsign \co \Sigma^{-1}
\bbOmega[\bullet]{N,\STOP} \ra \bL_\bullet.
\]
\end{prop}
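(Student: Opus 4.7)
The plan is to construct each signature map one simplicial level at a time using the relative constructions from Sections \ref{sec:algebraic-cplxs}, \ref{sec:normal-cplxs} and \ref{sec:cat-over-cplxs}, then verify that the resulting assignments are $\Delta$-set maps, and finally promote them to maps of $\Omega$-spectra by checking compatibility with the delooping structure.

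For (1), let $(M,\partial_0 M,\ldots,\partial_k M)$ be a $k$-simplex of $\bbOmega[n]{\STOP}$, i.e.\ an $(n+k)$-dimensional manifold $(k+2)$-ad. Construction \ref{con:sym-construction-over-cplxs-upper-star-mfd} applied to the fundamental class $[M] \in C_{n+k}(M,\partial M)$ yields an $n$-dimensional symmetric algebraic complex $(C(M),\varphi_{\Delta^k}([M]))$ in $\Lambda(\ZZ)^{\ast}(\Delta^k)$, which by definition is a $k$-simplex of $\bL^n(\Lambda(\ZZ))$. The face operators match because the $(k+2)$-ad face operation on the left corresponds to the restriction functor $\partial_i^{\ast}\co \ZZ^{\ast}(\Delta^k)\to\ZZ^{\ast}(\Delta^{k-1})$ on the right under the assignment $C(M)(\sigma)=C(\partial_\sigma M,\partial(\partial_\sigma M))$, and the symmetric construction is natural with respect to such restrictions; the empty-ad convention collapses to the basepoint. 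So $\ssign$ is a pointed $\Delta$-set map. For the spectrum structure note that an $(n+k)$-manifold $(k+2)$-ad whose $\partial_0$ and $\partial_1\cup\cdots\cup\partial_k$ faces are empty is the same data as an $(n+1+(k-1))$-manifold $(k+1)$-ad, and under the symmetric construction this corresponds exactly to the degree shift relating $\bL^n(\Lambda(\ZZ))^{(k)}$ with $\Omega\bL^{n+1}(\Lambda(\ZZ))^{(k-1)}$; one reads off that the adjoint of $\ssign$ commutes with the structure maps.

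For (2) the argument is formally identical, replacing the relative symmetric construction by the relative normal construction from Section \ref{sec:normal-cplxs}: given a normal $(k+2)$-ad $(X,\nu,\rho)$, Construction \ref{con:normal-construction} (and its relative version referenced to \cite{Weiss-II(1985)}) produces, for a choice of Thom class $u(\nu)$, an $n$-dimensional normal algebraic complex $(C(X),\theta(u(\nu)))$ in $\widehat{\Lambda}(\ZZ)^{\ast}(\Delta^k)$. Naturality of the normal construction with respect to restriction to faces gives a $\Delta$-set map $\nsign\co\bbOmega[n]{N}\to \bNL^n(\widehat{\Lambda}(\ZZ))$, and the same dimension-shift argument as in (1) promotes this to a map of $\Omega$-spectra.

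For (3) the idea is to use the fibration sequence from Proposition \ref{prop:fib-seq-of-quad-sym-norm}, together with the fact that $\ssign$ and $\nsign$ fit into the obvious commutative square
\[
\xymatrix{
\bbOmega[\bullet]{\STOP} \ar[r]^{\ssign} \ar[d] & \bL^{\bullet} \ar[d]^{J} \\
\bbOmega[\bullet]{N} \ar[r]^{\nsign} & \bNL^{\bullet}
}
\]
because a manifold $(k+2)$-ad has a canonical underlying normal structure and the symmetric complex of a Poincar\'e space carries the preferred normal refinement of Lemma \ref{lem:symmetric-poincare-means-unique-normal}. Taking homotopy fibers of the vertical maps then produces the map $\qsign\co \Sigma^{-1}\bbOmega[\bullet]{N,\STOP}\to \bL_{\bullet}$ on the spectrum level. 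To see that this map is given geometrically by the relative normal construction followed by the quadratic boundary, one unwraps the fiber on the left as the $\Delta$-set of normal $(k+2)$-ads whose $\partial_0$-face is a manifold $(k+2)$-ad, and on the right as the $\Delta$-set of $n$-dimensional (normal, symmetric Poincar\'e) pairs in $\Lambda(\ZZ)^{\ast}(\Delta^k)$; under Lemma \ref{lem:normal-sym-pair-gives-quad-poincare-cplx} the latter corresponds to $(n-1)$-dimensional QAPCs, compatibly with the fibration.

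The main obstacle is the bookkeeping in (3): one must check that the geometric fibration $\Sigma^{-1}\bbOmega[\bullet]{N,\STOP}\to\bbOmega[\bullet]{\STOP}\to\bbOmega[\bullet]{N}$ maps to the algebraic fibration $\bL_{\bullet}\to\bL^{\bullet}\to\bNL^{\bullet}$ not merely up to homotopy but by a genuine strictly commuting square of pointed $\Delta$-set maps (so that the induced map on strict fibers is well-defined) and that under the identification of the algebraic fiber with $\bL_{\bullet}$ via the surgery construction in Lemma \ref{lem:normal-sym-pair-gives-quad-poincare-cplx} one recovers the quadratic signature of Example \ref{expl:normal-symm-poincare-pair-gives-quadratic}. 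Both steps are by now mechanical given the $\Delta$-set machinery, but both rely on the functoriality of the normal construction established in \cite{Weiss-II(1985)}, which is what carries most of the technical weight.
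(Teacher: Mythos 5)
Your proof takes essentially the same approach as the paper's, which is itself very terse: apply the relative symmetric/normal constructions over $\Delta^k$ simplex by simplex, observe naturality under face maps, and for (3) note that the relative normal construction lands in the fiber of $\bL^n\to\bNL^n$, which Proposition \ref{prop:fib-seq-of-quad-sym-norm} identifies with $\bL_n$. Your added detail on the $\Omega$-spectrum structure and your explicit flagging of the strict-versus-homotopy commutativity issue in part (3) — which you rightly offload to the functoriality results of \cite{Weiss-II(1985)}, exactly as the paper does — fills in what the original deliberately leaves to the reader.
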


\begin{proof}
For (1) use Construction
\ref{con:sym-construction-over-cplxs-upper-star-cplx} which is just
a generalization of the relative symmetric construction. For (2) the
relative normal construction can be used. The full details are
complicated, they can be found in \cite[section 7]{Weiss-II(1985)}.
For (3) observe that the relative normal construction provides us
with a map to the fiber of the map $\bL^{n} (\Lambda(\ZZ)) \ra
\bNL^{n} (\Lambda(\ZZ))$. The identification of this fiber from Proposition
\ref{prop:fib-seq-of-quad-sym-norm} produces the desired map.
\end{proof}


\section{Generalized homology theories} \label{sec:gen-hlgy-thies}


Now we come to the use of the spectra just defined to produce (co-)homology. Definition \ref{defn:co-hlgy-via-spectra} below contains the formulas. In addition the $S$-duality gives an opportunity to express homology as cohomology and vice versa. In our application it turns out that the input we obtain is of cohomological nature, but we would like to think of it in terms of homology. Therefore the strategy is adopted which comes under the slogan: ``homology is the cohomology of the $S$-dual''. Here in fact a simplicial model for the $S$-duality will be useful when we work with particular cycles. For $L$-theory spectra a relation to the $L$-groups of algebraic bordism categories from section \ref{sec:cat-over-cplxs} will be established.

The following definitions are standard.

\begin{defn} \label{defn:co-hlgy-via-spectra}
Let $\bE$ be an $\Omega$-spectrum of Kan $\Delta$-sets and let $K$
be locally finite $\Delta$-set.

(1) The \emph{cohomology with $\bE$-coefficients} is defined by
\[
    H^n(K;\bE) = \pi_{-n}(\bE^{K_{+}}) = [K_{+}, \bE_{-n}]
\]
where $\bE^{K_+}$ is the mapping $\Delta$-set given by
\[
    (\bE^{K_{+}}_{-n})^{(p)} =  \{ K_{+}\otimes \Delta^p\ra \bE_{-n} \}
\]

(2) The \emph{homology with $\bE$-coefficients} is defined by
\[
    H_n(K;\bE) = \pi_{n}(K_+ \wedge \bE) = \textup{colim} \; \pi_{n+j} (K_+ \wedge \bE_{-j} )
\]
where $K_+ \wedge \bE$ is the $\Omega$-spectrum of $\Delta$-sets
given by
\[
    (K_+ \wedge \bE) =  \{ \textup{colim} \; \Omega^j (K_+ \wedge \bE_{n-j}) \; | \; n \in \ZZ \}.
\]
\end{defn}

What follows is a combinatorial description of $S$-duality from \cite{Whitehead(1962)} and \cite{Ranicki(1992)}.

\begin{defn}
Let $K \subset L$ be an inclusion of a simplicial subcomplex. The
\emph{supplement of} $K$ \emph{in} $L$ is the subcomplex of the
barycentric subdivision $L'$ defined by
\[
\overline K = \{ \sigma' \in L' \; | \; \textup{no face of} \;
\sigma' \; \textup{is in} \; K' \} = \bigcup_{\sigma \in L, \sigma
\notin K} D(\sigma,L) \subset L'
\]
\end{defn}

Next we come to the special case when $L = \del \Delta^{m+1}$. In
this case the dual cell decomposition of $\del \Delta^{m+1}$ can in
fact be considered as a simplicial complex, which turns out to be
convenient. First a definition.

\begin{defn}\label{defn:sigma-m}
Define the simplicial complex $\Sigma^m$ by
\begin{align*}
 (\Sigma^m)^{(k)} &= \{ \sigma^\ast \; | \; \sigma \in (\del \Delta^{m+1})^{(m-k)} \} \\
 \del_i \co (\Sigma^m)^{(k)} & \ra (\Sigma^m)^{(k-1)} \; \textup{for} \; 0 \leq i \leq k \; \textup{is} \; \del_i \co \sigma^\ast \mapsto (\delta_i \sigma)^\ast
\end{align*}
with $\delta_i \co (\del \Delta^{m+1})^{(m-k)} \ra (\del
\Delta^{m+1})^{(m-k+1)}$ given by
\[
\delta_i \co \sigma = \{0,\ldots,m+1\} \setminus \{j_0,\ldots,j_k\}
\mapsto \sigma \cup \{j_i\}, \quad (j_0 < j_1 < \cdots < j_k).
\]
\end{defn}

So $\Sigma^m$ has one $k$-simplex $\sigma^\ast$ for each
$(m-k)$-simplex $\sigma$ of $\del \Delta^{m+1}$ and $\sigma^\ast
\leq \tau^\ast$ if and only if $\sigma \geq \tau$.
\[
\deltapicture
\]

The usefulness of this definition is apparent form the following
proposition, namely that each dual cell in $\partial\Delta^{m+1}$
appears as a simplex in $\Sigma^m$.

\begin{prop}
There is an isomorphism of simplicial complexes
\[
\Phi \co (\Sigma^m)' \xra{\cong} (\del \Delta^{m+1})'
\]
such that for each $\sigma \in K \subset \del \Delta^{m+1}$ we have
\[
\Phi(\sigma^\ast) = D(\sigma,\partial\Delta^{m+1}) \quad
\textup{and} \quad \Phi(\sigma^\ast) \cap K' = D(\sigma,K)
\]
\end{prop}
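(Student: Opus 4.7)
The plan is to build $\Phi$ from the order-reversing bijection between simplices of $\Sigma^m$ and non-empty simplices of $\partial\Delta^{m+1}$, then pass to barycentric subdivisions where this bijection becomes a simplicial isomorphism.

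First I would unwind the face-order relation in $\Sigma^m$. From Definition \ref{defn:sigma-m} one checks that the face $\partial_i\sigma^\ast = (\delta_i\sigma)^\ast$ corresponds to enlarging $\sigma$ by one vertex. By induction on codimension this gives $\sigma^\ast \leq \tau^\ast$ in $\Sigma^m$ if and only if $\tau \geq \sigma$ in $\partial\Delta^{m+1}$, as asserted in the paragraph following Definition \ref{defn:sigma-m}. In particular the assignment $\sigma \mapsto \sigma^\ast$ is an order-reversing bijection between non-empty simplices of $\partial\Delta^{m+1}$ and simplices of $\Sigma^m$.

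Next I would define $\Phi$ on vertices of $(\Sigma^m)'$ by
\[
 \Phi(\hat{\sigma^\ast}) = \hat\sigma,
\]
where the hats denote barycenters, and extend to simplices. A $p$-simplex of $(\Sigma^m)'$ is a chain $\sigma_0^\ast < \sigma_1^\ast < \cdots < \sigma_p^\ast$ in $\Sigma^m$, which by the previous paragraph corresponds to a chain $\sigma_0 > \sigma_1 > \cdots > \sigma_p$ in $\partial\Delta^{m+1}$, i.e.\ a $p$-simplex of $(\partial\Delta^{m+1})'$. The correspondence is a bijection on simplices in each dimension and is compatible with the face relation in both barycentric subdivisions (a face of a chain is a subchain on either side), so $\Phi$ is a simplicial isomorphism.

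Finally I would verify the two displayed identities. The subcomplex $\sigma^\ast \subset (\Sigma^m)'$ consists precisely of the chains $\tau_p^\ast < \cdots < \tau_0^\ast \leq \sigma^\ast$ in $\Sigma^m$; under $\Phi$ this translates to the chains $\sigma \leq \tau_p < \cdots < \tau_0$ in $\partial\Delta^{m+1}$, which by definition is $D(\sigma,\partial\Delta^{m+1})$. Intersecting with $K' \subset (\partial\Delta^{m+1})'$ simply restricts the chains to lie in $K$, yielding exactly $D(\sigma,K)$. No serious obstacle is anticipated; the only point that needs care is keeping the two opposite orderings straight, which is the whole content of the construction.
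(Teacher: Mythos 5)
Your argument is correct and is essentially the same as the paper's: the paper's entire proof is the single display exhibiting $\Phi$ as the map induced on barycentric subdivisions by the order-reversing bijection $\sigma \mapsto \sigma^\ast$, exactly the map you construct and verify. (One tiny bookkeeping slip: under $\Phi$ the chain $\tau_p^\ast < \cdots < \tau_0^\ast \leq \sigma^\ast$ goes to $\sigma \leq \tau_0 < \cdots < \tau_p$, not to $\sigma \leq \tau_p < \cdots < \tau_0$ as written -- but this is a relabeling of indices and does not affect the set described.)
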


Notice that since $\del \Delta^{m+1}$ is an $m$-dimensional manifold
the dual cell $D(\sigma,\del \Delta^{m+1})$ is a submanifold with
boundary of dimension $(m-|\sigma|)$ which coincides with the
dimension of $\sigma^\ast$.

\begin{proof}
The isomorphism $\Phi$ is given by the formula
\begin{align*}
\xymatrix{
    (\sigma^\ast)'
     =
    \{ \hat{\sigma}^\ast_0\hat{\sigma}^\ast_1\ldots\hat{\sigma}^\ast_p \;|\; \sigma^\ast_p < \ldots <\sigma^\ast_1< \sigma^\ast_0\leq\sigma^\ast\} \ar[d]^{\Phi},
    &
    \hat{\sigma}^\ast_0\hat{\sigma}^\ast_1\ldots\hat{\sigma}^\ast_p \ar@{|->}[d]
    \\
    D(\sigma,\partial\Delta^{m+1})
    =
     \{ \hat{\sigma}_0\hat{\sigma}_1\ldots\hat{\sigma}_p \;|\; \sigma \leq  \sigma_0< \sigma_1<\ldots<\sigma_p \},
    &
    \hat{\sigma}_0\hat{\sigma}_1\ldots\hat{\sigma}_p
    }
\end{align*}
\end{proof}
The isomorphism of course induces a homeomorphism of geometric
realizations. For $m=2$ it looks like this:
\[
    \flippedsigmapicture
\]

\begin{prop} \textup{\cite[Prop. 12.4]{Ranicki(1992)}} \label{prop:hom=cohom+S}
Let $\bE$ be a $\Omega$-spectrum of Kan $\Delta$-sets and $K$ a
finite simplicial complex. Then for $m\in\NN$ large enough we have
\[
    H_n(K;\bE) \cong H^{m-n}(\Sigma^m,\splK; \bE)
\]
\end{prop}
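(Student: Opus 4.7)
The plan is to realize this isomorphism as a case of Spanier--Whitehead $S$-duality in $\bE$-cohomology. Take $m$ large enough that $K$ embeds as a subcomplex of $\partial\Delta^{m+1}$, so that the supplement $\overline K \subset (\partial\Delta^{m+1})'$ is defined. The isomorphism $\Phi \co (\Sigma^m)' \xrightarrow{\cong} (\partial\Delta^{m+1})'$ of the preceding proposition carries the subcomplex of $\Sigma^m$ consisting of those $\sigma^\ast$ with $\sigma \notin K$ (which is what the right hand side denotes) onto the supplement $\overline K \subset (\partial\Delta^{m+1})'$. Hence the right hand side rewrites as the reduced cohomology $\tilde H^{m-n}(S^m/|\overline K|\,;\,\bE)$, where $S^m = |\partial\Delta^{m+1}|$.

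The key step is then to show that $K_+$ and $S^m/|\overline K|$ are $m$-dimensional $S$-duals in the sense of Definition \ref{defn:Sduality}. Granting this, Proposition \ref{S-duality-property}(3) applied with $Z = \bE_{-(m-n)}$ yields
\[
H^{m-n}(\Sigma^m, \overline K;\, \bE) \;=\; [S^m/|\overline K|,\, \bE_{-(m-n)}] \;\cong\; [S^m,\, K_+ \wedge \bE_{-(m-n)}] \;=\; \pi_m(K_+ \wedge \bE_{-(m-n)}),
\]
and for $m$ in the stable range the last group coincides with $H_n(K;\bE) = \pi_n(K_+ \wedge \bE)$ by the $\Omega$-spectrum structure of $\bE$, namely $\pi_n(K_+ \wedge \bE) = \mathrm{colim}_j\, \pi_{n+j}(K_+ \wedge \bE_{-j})$ computed at $j = m-n$.

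To construct the $S$-duality map $\alpha \co S^m \to K_+ \wedge (S^m/|\overline K|)$, I would use the canonical combinatorial splitting of simplices of $(\partial\Delta^{m+1})'$: every simplex $\hat\sigma_0 \hat\sigma_1 \cdots \hat\sigma_p$ with $\sigma_0 < \sigma_1 < \cdots < \sigma_p$ has a unique maximal index $i_0 \in \{-1, 0, \ldots, p\}$ with $\sigma_{i_0} \in K$ (using the convention $i_0 = -1$ if no such index exists), expressing the simplex as the join $(\hat\sigma_0 \cdots \hat\sigma_{i_0}) \ast (\hat\sigma_{i_0+1} \cdots \hat\sigma_p)$ of a simplex in $K'$ with a simplex in $\overline K$ (an empty first factor meaning the whole simplex lies in $\overline K$). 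Assigning to a point of $|\partial\Delta^{m+1}|$ the pair of its join-coordinates in $(|K|, |\overline K|)$, with the convention that a vanishing join-coordinate collapses the corresponding factor to its basepoint, and composing the second factor with the quotient $S^m \to S^m/|\overline K|$, defines $\alpha$.

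The principal technical obstacle is to verify that $\alpha$ is indeed an $S$-duality map, that is, that the slant products
\[
\alpha_\ast([S^m]) \backslash - \co \tilde C(K_+)^{m-\ast} \to \tilde C(S^m/|\overline K|), \qquad \alpha_\ast([S^m]) \backslash - \co \tilde C(S^m/|\overline K|)^{m-\ast} \to \tilde C(K_+)
\]
are chain equivalences. On the chain level the above splitting induces a bi-simplicial decomposition of the simplicial chain complex of $(\partial\Delta^{m+1})'$, and the slant products reduce to the classical Poincar\'e--Lefschetz duality between the simplicial chain complex of $K$ and the dual cell complex inside $S^m$, which is verified by a direct combinatorial inspection of the boundary maps under the join decomposition.
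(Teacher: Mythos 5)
Your strategy coincides with the paper's: rewrite $H^{m-n}(\Sigma^m,\overline K;\bE)$ as reduced $\bE$-cohomology of the quotient $\Sigma^m/\overline K$, recognize $K_+$ and $\Sigma^m/\overline K$ as $m$-dimensional $S$-duals (this is Whitehead's construction, which the paper simply cites), and transfer cohomology to homology via Proposition~\ref{S-duality-property}(3) together with the definition $H_n(K;\bE)=\textup{colim}_j\,\pi_{n+j}(K_+\wedge\bE_{-j})$ evaluated at $j=m-n$. The index bookkeeping you carry out is correct, and the reduction to ``$K_+$ and $\Sigma^m/\overline K$ are $S$-dual'' is exactly the content of the paper's reference to Whitehead.

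The one place where you go beyond the paper is in trying to write down the duality map $\alpha$ explicitly, and there is a slip there. For a point $p=tx+(1-t)y$ with $x\in|K'|$ and $y\in|\overline K|$, you declare the second coordinate of $\alpha(p)$ to be the image of the join-coordinate $y$ under the quotient $S^m\to S^m/|\overline K|$. But $y$ lies in $|\overline K|$, so that image is the basepoint for every $p$; as literally written, $\alpha$ lands in $K_+\wedge\{\ast\}$ and cannot be a duality map. The correct Whitehead map sends $p$ with $t\geq 1/2$ to the pair $(r(p),[p])$, where $r$ is the deformation retraction of the half $\{t\geq 1/2\}$ onto $|K|$ (in join-coordinates essentially your $x$) and $[p]$ is the image of $p$ \emph{itself} under $S^m\to S^m/\{t\leq 1/2\}\simeq S^m/|\overline K|$, while points with $t<1/2$ go to the basepoint. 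The $S^m/|\overline K|$-coordinate has to record where $p$ sits inside the regular neighbourhood of $K$, not its $\overline K$-join-coordinate. With this corrected, the residual verification that the slant products are chain equivalences is, as in the paper, delegated to Whitehead's detailed computation rather than something you prove from scratch.
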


\begin{proof}
The above proposition allows us to think of $K$ as being embedded in
$\Sigma^m$ and the complex $\Sigma^m/\splK$ is the quotient of
$\Sigma^m$ by the complement of a neighborhood of $K$. This is a
well known construction of an $m$-dimensional $S$-dual of $K$, which
is proved in detail for example in \cite[p. 265]{Whitehead(1962)}.
The construction there provides an explicit simplicial construction
of a map $\Delta' \co \Sigma^m\ra K_{+}\wedge
(\Sigma^m/\overline{K})$ which turns out to be such an  $S$-duality.
\end{proof}

We remark that if $K$ is an $n$-dimensional Poincar\'e complex with
the SNF $\nu_K \co K \ra \BSG (m-n)$ then $\Sigma^m / \splK \simeq
Th(\nu_K)$.

Now we come to the promised alternative definition of the homology
of $K$.

\begin{defn} \label{defn:E-cycles}
Let $\bE$ be an $\Omega$-spectrum of $\Delta$-sets. An
$n$-dimensional $\bE$-cycle in $K$ is a collection
\[
 x = \{x(\sigma) \in \bE_{n-m}^{(m-|\sigma|)} \; | \; \sigma\in K \}
\]
such that $\partial_ix(\sigma) =
\begin{cases}
 x(\delta_i\sigma) & \text{if } \delta_i\sigma\in K\\
 \emptyset & \text{if }\delta_i\sigma\notin K
\end{cases} (0\leq i\leq m-|\sigma|)$

A cobordism of $n$-dimensional $\bE$-cycles $x_0$, $x_1$ in $K$ is a
$\Delta$-map
\[
 y: (\Sigma^m,\overline{K})\otimes \Delta^1 \ra (\bE_{n-m},\emptyset)
\]
such that $y(\sigma\otimes i)= x_i(\sigma)\in \bE_{n-m}^{m-|\sigma}$
for $\sigma\in K$ and $i=0,1$.

\end{defn}


\begin{prop}[{\!\cite[Prop. 2.8]{Ranicki(1992)}}]
There is a bijection between the set of cobordism equivalence classes of $n$-dimensional $\bE$-cycles in $K$ and the $n$-dimensional $\bE$-homology group $H_n(K,\bE)$. 
\end{prop}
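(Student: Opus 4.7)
The plan is to prove this by unwinding the definitions and combining them with the $S$-duality identification from Proposition \ref{prop:hom=cohom+S}. The point is that an $n$-dimensional $\bE$-cycle in $K$ is essentially a notational repackaging of a pointed $\Delta$-map $(\Sigma^m,\overline K) \to (\bE_{n-m},\emptyset)$, and likewise for cobordisms.

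First I would apply Proposition \ref{prop:hom=cohom+S} to reduce the claim to constructing a bijection between cobordism classes of $n$-dimensional $\bE$-cycles in $K$ and $H^{m-n}(\Sigma^m,\overline K;\bE)$ for $m$ sufficiently large. By Definition \ref{defn:co-hlgy-via-spectra} the latter is the set of pointed homotopy classes of $\Delta$-maps $\Sigma^m/\overline K \to \bE_{n-m}$, or equivalently, pointed $\Delta$-maps of pairs $(\Sigma^m,\overline K) \to (\bE_{n-m},\emptyset)$.

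Next I would describe the bijection on the level of representatives. Given an $\bE$-cycle $x = \{x(\sigma)\}_{\sigma \in K}$, define $\hat x \co \Sigma^m \to \bE_{n-m}$ by $\hat x(\sigma^\ast) = x(\sigma)$ for $\sigma \in K$ and $\hat x(\tau^\ast) = \emptyset$ for $\tau^\ast \in \overline K$ (recalling from Definition \ref{defn:sigma-m} that a $k$-simplex of $\Sigma^m$ is precisely the dual $\sigma^\ast$ of an $(m-k)$-simplex $\sigma$ of $\partial\Delta^{m+1}$, and $\sigma^\ast \in \overline K$ iff $\sigma \notin K$). The condition on $x$ that $\partial_i x(\sigma) = x(\delta_i \sigma)$ if $\delta_i\sigma \in K$ and $\partial_i x(\sigma) = \emptyset$ otherwise matches exactly the face-compatibility requirement for $\hat x$ to be a pointed $\Delta$-map: the faces of $\sigma^\ast$ in $\Sigma^m$ are the simplices $(\delta_i\sigma)^\ast$, and those with $\delta_i \sigma \notin K$ lie in $\overline K$. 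Conversely, any pointed $\Delta$-map $(\Sigma^m,\overline K) \to (\bE_{n-m},\emptyset)$ produces an $\bE$-cycle by reading off its values on the simplices $\sigma^\ast$ with $\sigma \in K$. These two operations are clearly mutually inverse.

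Finally I would check that this bijection respects the two equivalence relations. By Definition \ref{defn:E-cycles}, a cobordism between $n$-dimensional $\bE$-cycles $x_0,x_1$ is a $\Delta$-map $y \co (\Sigma^m,\overline K) \otimes \Delta^1 \to (\bE_{n-m},\emptyset)$ restricting to $\hat x_i$ on $(\Sigma^m,\overline K) \otimes \{i\}$. This is precisely a pointed $\Delta$-homotopy between $\hat x_0$ and $\hat x_1$, which is in turn the equivalence relation computing $[\Sigma^m/\overline K,\bE_{n-m}]$. Combining the bijection with Proposition \ref{prop:hom=cohom+S} finishes the proof. The only genuinely delicate point is ensuring $m$ is chosen large enough: one needs $\bE_{n-m}$ to be sufficiently highly connected relative to the dimension of $\Sigma^m$ so that $[\Sigma^m/\overline K,\bE_{n-m}]$ stabilizes to $\pi_{-(m-n)}(\bE^{\Sigma^m/\overline K})$, which is automatic for $m$ large because $\bE$ is an $\Omega$-spectrum.
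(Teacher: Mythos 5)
Your proposal is correct and follows essentially the same route as the paper: identify an $\bE$-cycle with the $\Delta$-map $(\Sigma^m,\overline K)\to(\bE_{n-m},\emptyset)$ sending $\sigma^\ast\mapsto x(\sigma)$, observe that cobordism corresponds to $\Delta$-homotopy, and then invoke Proposition \ref{prop:hom=cohom+S}. You have simply spelled out the details that the paper's one-sentence proof leaves implicit.
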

\begin{proof}
A $n$-dimensional $\bE$-cycle $x$ defines a $\Delta$-map
\[
(\Sigma^m,\splK)\ra \bE_{n-m}, \sigma^\ast\mapsto \begin{cases}
x(\sigma) &  \sigma\in K \\ \emptyset & \sigma\notin K\end{cases}
\]
and cobordism relation of cycles corresponds to the homotopy
relation of $\Delta$-maps.
\end{proof}

\begin{prop}[{\!\cite[Prop. 13.7]{Ranicki(1992)},\cite[Remark 14.2]{Laures-McClure2009}}]  \label{prop:L-thy-of-star-cat-is-co-hlgy}
\renewcommand{\labelenumi}{(\roman{enumi})}
Let $K$ be a finite simplicial complex and $\Lambda$ an algebraic
bordism category. Then
    \begin{enumerate}
        \item $\bL_{\bullet}(\Lambda)^{K_{+}}\simeq\bL_{\bullet}(\Lambda^{*}(K))$ and $\bL^{\bullet}(\Lambda)^{K_{+}}\simeq\bL^{\bullet}(\Lambda^{*}(K))$
        \item $K_{+}\wedge\bL_{\bullet}(\Lambda)\simeq\bL_{\bullet}(\Lambda_{*}(K))$ and $K_{+}\wedge\bL^{\bullet}(\Lambda)\simeq\bL^{\bullet}(\Lambda_{*}(K))$
    \end{enumerate}
\end{prop}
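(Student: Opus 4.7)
The plan is to prove both equivalences by an explicit tautological bijection on simplices, which reduces the identification to a recognition that the underlying combinatorial data on the two sides coincide.

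For part (i), unpack the definitions on both sides. A $p$-simplex of $(\bL^{\bullet}(\Lambda)^{K_{+}})_{-n}$ is a $\Delta$-map $K_{+}\otimes\Delta^{p}\to\bL^{n}(\Lambda)$, hence a face-compatible assignment, to each pair $(\sigma,\tau)$ with $\sigma\in K$ and $\tau\in\Delta^{p}$, of an $n$-dimensional symmetric complex in $\Lambda^{\ast}(\Delta^{|\sigma|+|\tau|})$ (and vanishing on the basepoint). On the other side, a $p$-simplex of $\bL^{\bullet}(\Lambda^{\ast}(K))_{n}$ is an $n$-dimensional symmetric complex in $\Lambda^{\ast}(K)^{\ast}(\Delta^{p})$. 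Unwinding the definition of the doubly-indexed category shows that $\AA^{\ast}(K)^{\ast}(\Delta^{p})$ agrees canonically with $\AA^{\ast}(K\otimes\Delta^{p})$, and the chain dualities coincide up to the obvious reindexing. A chain complex over the latter is precisely the face-compatible family on the former side; checking that face maps, Kan condition and base point correspond is routine. The quadratic case is formally identical.

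For part (ii), the starting point is Proposition \ref{prop:hom=cohom+S}, which for $m$ large gives
\[
\pi_{n}(K_{+}\wedge\bL_{\bullet}(\Lambda))\;\cong\; H^{m-n}(\Sigma^{m},\overline{K};\bL_{\bullet}(\Lambda)),
\]
and the right-hand side is represented by $n$-dimensional $\bL_{\bullet}(\Lambda)$-cycles in $K$ (Definition \ref{defn:E-cycles}). Such a cycle is a family $\{x(\sigma)\in\bL_{n-m}(\Lambda)^{(m-|\sigma|)}\}_{\sigma\in K}$ with the face compatibility dictated by the correspondence $\sigma\leftrightarrow\sigma^{\ast}$ in $\Sigma^{m}$, where each $x(\sigma)$ is an $(n-m)$-dimensional quadratic complex in $\Lambda^{\ast}(\Delta^{m-|\sigma|})$. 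We turn this into an $n$-dimensional quadratic complex in $\Lambda_{\ast}(K)$ by setting $C(\sigma)$ to be the underlying complex of $x(\sigma)$ with the degree shift $|\sigma|$ built in; the face-compatibility of the cycle supplies the boundary components $d(\sigma,\tau)\co C(\sigma)\to C(\tau)$ for $\sigma\leq\tau$, and the local quadratic structures $\psi(\sigma)$ assemble into a global $\psi$ over $\Lambda_{\ast}(K)$ because the chain duality $T_{\ast}$ on $\AA_{\ast}(K)$ and the duality $T^{\ast}$ on $\AA^{\ast}(\Delta^{m-|\sigma|})$ are compatibly related by the $|\sigma|$-shift. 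The inverse assignment (read off the pieces $C(\sigma)$ of a complex over $\Lambda_{\ast}(K)$ as simplices of the target spectrum) is visibly inverse to this.

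The main obstacle is the dimension bookkeeping: one must check that the Poincar\'e-duality shift encoded in $T_{\ast}$ on $\AA_{\ast}(K)$ (which sends an object supported at $\sigma$ to a complex concentrated in a range shifted by $|\sigma|$) exactly cancels the shift $m-|\sigma|$ coming from the simplex of $\Sigma^{m}$, so that an $(n-m)$-dimensional quadratic structure on $x(\sigma)$ contributes correctly to an $n$-dimensional quadratic structure on the assembled complex $C$ over $\Lambda_{\ast}(K)$. Once this matching is in place, the passage from $\bE$-cycle-cobordisms to cobordisms in $\bL_{\bullet}(\Lambda_{\ast}(K))$ follows from the same argument applied to $(\Sigma^{m},\overline{K})\otimes\Delta^{1}$, and naturality in $K$ is immediate from the simplicial construction. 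The simplicial bookkeeping underlying this identification is carried out in detail in \cite[Proposition 13.7]{Ranicki(1992)} and \cite{Laures-McClure2009}.
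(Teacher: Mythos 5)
For part (i) your argument matches the paper's: you unwind both sides, recognize the identification $\AA^{\ast}(K)^{\ast}(\Delta^{p})\cong\AA^{\ast}(K\otimes\Delta^{p})$, and observe that the compatibility conditions for a $\Delta$-map from $K_+$ to the spectrum space are exactly the face conditions for a chain complex over $\Lambda^{\ast}(K)$.

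For part (ii) you reach the right conclusion but by a structurally different, and less tidy, route. The paper first isolates the key combinatorial fact as an \emph{isomorphism of algebraic bordism categories}
\[
\Lambda_{\ast}(K)\;\cong\;\Lambda^{\ast}(\Sigma^{m},\overline{K}),
\]
coming from the bijection $\sigma\leftrightarrow\sigma^{\ast}$ and the observation that the order-reversal $\sigma\leq\tau\iff\sigma^{\ast}\geq\tau^{\ast}$ together with the $|\sigma|$-shifts exchange $T_{\ast}$ and $T^{\ast}$. Once this categorical isomorphism is in hand, the spectrum-level equivalence falls out immediately by applying part (i) (giving $\bL_{\bullet}(\Lambda^{\ast}(\Sigma^{m},\overline{K}))\simeq\bL_{\bullet}(\Lambda)^{(\Sigma^{m},\overline{K})}$) and then the spectrum version of Proposition \ref{prop:hom=cohom+S}. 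Your proposal works directly with $\bL_{\bullet}(\Lambda)$-cycles and cycle-cobordisms, which is essentially the same data, but framed at the level of $\pi_{n}$: you show a bijection between cycles and complexes in $\Lambda_{\ast}(K)$, and between cobordisms, and claim that this suffices. To really establish the stated $\simeq$ of spectra you have to do this for $p$-simplices of the mapping $\Delta$-set for all $p$ (not just $p=0,1$), and compatibly with faces; what you wrote only handles the cycle and cobordism cases and appeals to ``naturality in $K$,'' which doesn't by itself supply a spectrum map inducing the isomorphisms. The paper's factoring through the categorical isomorphism handles all simplicial degrees at once and converts the bookkeeping you flag as the main obstacle into a one-line observation about the chain dualities. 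I would suggest you extract the isomorphism $\Lambda_{\ast}(K)\cong\Lambda^{\ast}(\Sigma^{m},\overline{K})$ as a standalone lemma and then deduce (ii) from (i) and Proposition \ref{prop:hom=cohom+S}; the content of your ``dimension bookkeeping'' paragraph becomes exactly the verification that this functor is a chain-duality-preserving equivalence.
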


\begin{cor}
For the algebraic bordism category $\Lambda = \Lambda (\ZZ)$ we have
    \[
    L_{n}(\Lambda (\ZZ)_{*}(K)) \cong H_{n}(K, \bL_{\bullet}) \textup{  and  } \\
 L^{n}(\Lambda (\ZZ)_{*}(K)) \cong H_{n}(K, \bL^{\bullet}).
    \]
\end{cor}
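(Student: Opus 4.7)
The plan is to derive the corollary by simply taking homotopy groups of the spectrum-level equivalence in Proposition \ref{prop:L-thy-of-star-cat-is-co-hlgy}(ii) and then specializing the algebraic bordism category to $\Lambda = \Lambda(\ZZ)$.

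First I would recall that by the proposition identifying the homotopy groups of the $L$-spectra, for any algebraic bordism category $\Lambda'$ we have $\pi_n(\bL_\bullet(\Lambda')) \cong L_n(\Lambda')$ and $\pi_n(\bL^\bullet(\Lambda')) \cong L^n(\Lambda')$. Applying this to $\Lambda' = \Lambda_*(K)$ gives
\[
\pi_n\bigl(\bL_\bullet(\Lambda_*(K))\bigr) \cong L_n(\Lambda_*(K)), \qquad \pi_n\bigl(\bL^\bullet(\Lambda_*(K))\bigr) \cong L^n(\Lambda_*(K)).
\]
On the other hand, by Definition \ref{defn:co-hlgy-via-spectra}(2), the homology of $K$ with coefficients in an $\Omega$-spectrum $\bE$ is defined by $H_n(K; \bE) = \pi_n(K_+ \wedge \bE)$.

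The core of the argument is then the homotopy equivalence of spectra provided by Proposition \ref{prop:L-thy-of-star-cat-is-co-hlgy}(ii):
\[
K_+ \wedge \bL_\bullet(\Lambda) \simeq \bL_\bullet(\Lambda_*(K)), \qquad K_+ \wedge \bL^\bullet(\Lambda) \simeq \bL^\bullet(\Lambda_*(K)).
\]
Taking $\pi_n$ of both sides and specializing to $\Lambda = \Lambda(\ZZ)$, so that $\bL_\bullet(\Lambda) = \bL_\bullet$ and $\bL^\bullet(\Lambda) = \bL^\bullet$ by our notational convention, yields
\[
H_n(K; \bL_\bullet) = \pi_n(K_+ \wedge \bL_\bullet) \cong \pi_n\bigl(\bL_\bullet(\Lambda(\ZZ)_*(K))\bigr) \cong L_n(\Lambda(\ZZ)_*(K)),
\]
and analogously in the symmetric case. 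This is exactly the claim.

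There is really no obstacle here beyond invoking the correct statements; all the work has been done in Proposition \ref{prop:L-thy-of-star-cat-is-co-hlgy}, whose proof is the genuinely substantive step (and is referenced to \cite[Prop.\ 13.7]{Ranicki(1992)} and \cite[Remark 14.2]{Laures-McClure2009}). The corollary is then obtained by the formal steps described above.
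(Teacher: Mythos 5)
Your proposal is correct and matches the paper's own proof essentially step for step: both chain the identifications $L_n(\Lambda_*(K)) \cong \pi_n(\bL_\bullet(\Lambda_*(K))) \cong \pi_n(K_+\wedge\bL_\bullet(\Lambda)) \cong H_n(K;\bL_\bullet(\Lambda))$ and then specialize to $\Lambda = \Lambda(\ZZ)$.
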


\begin{proof}[Proof of Corollary]
For any $\Lambda$ we have
\[
    L_{n}(\Lambda_\ast (K))  \cong \pi_{n}(\bL_{\bullet} (\Lambda_{*}(K))) \cong \pi_{n}(K_{+}\wedge \bL_{\bullet}(\Lambda)) \cong H_{n} (K,\bL_{\bullet}(\Lambda))
\]
and similarly in the symmetric case.  
\end{proof}

\begin{proof}[Proof of (i)]
Since the morphisms in the category $\Lambda^\ast (K)$ only go from bigger to smaller simplices we can split an $n$-dimensional QAC $(C,\varphi)\in\Lambda^{*}(K)$ over $K$ into a collection of $n$-dimensional QAC $\{(C_{\sigma},\varphi_{\sigma}) \in \Lambda^{*}(\Delta^{|\sigma|})\}$ over standard simplices such that the $(C_{\sigma},\varphi_{\sigma})$ are related to each other in the same way the corresponding simplices are related to each other in $K$, i.e. $C_\sigma(\partial_{i}\sigma)=C_{\partial_{i}\sigma}(\partial_{i}\sigma)$ for all $\sigma\in K$. The complex $(C_{\sigma},\varphi_{\sigma})$ is a $|\sigma|$-simplex in $\bL_{n}(\Lambda)$ and the compatibility conditions are contained in the notion of $\Delta$-maps.

Hence we get
   \begin{eqnarray*}
            (C,\varphi) & = & \{n\text{-dim. QAC }(C_\sigma,\varphi_\sigma)\in\Lambda^{*}(\Delta^{|\sigma|})\;|\;
            \\
            & &\qquad\qquad \sigma\in K\text{ and } C_\sigma(\partial_{i}\sigma)=C_{\partial_{i}\sigma}(\partial_{i}\sigma)\}\\
            & = & \Delta\text{-map } f_{C}: K_{+}\ra \bL_{n}(\Lambda)\text{ with } f(\sigma) = (C_\sigma,\varphi_{\sigma}) \text{ for } \sigma\in K_{+}
    \end{eqnarray*}
Thus
    \begin{eqnarray*}
            \bL_{n}(\Lambda^{*}(K))^{(k)} & = & \{n\text{-dim. QAC } (C,\varphi)\in \Lambda^{*}(K)^{*}(\Delta^{k}) \simeq \Lambda^{*}(K\otimes \Delta^{k})\}\\
            &   = & \{ f:(K\otimes\Delta^{k})_{+}\ra\bL_{n}(\Lambda)\;|\; f \text{ is a pointed } \Delta\text{-map}\}\\
            & = &(\bL_{n}(\Lambda)^{K_{+}})^{(k)}
    \end{eqnarray*}
\end{proof}

\begin{proof}[Proof of (ii)]
For $m\in\NN$ large enough consider an embedding
$i:K\ra\partial\Delta^{m+1}$, the complex $\Sigma^{m}$ and the
supplement $\overline{K}$ in $\Sigma^m$ as in
Definition~\ref{defn:sigma-m}. The first observation is that there
is an isomorphism of algebraic bordism categories
\[
\Lambda_{*}(K) \cong \Lambda^{*}(\Sigma^m,\bar{K})
\]
This follows from the existence of the one-to-one correspondence
$\sigma \leftrightarrow \sigma^\ast$ between $k$-simplices of $K$
and $(m-k)$-simplices of $\Sigma^m \smallsetminus \overline{K}$ which have
the property $\sigma \leq \tau$ if and only if $\sigma^\ast \geq
\tau^\ast$ and the symmetry in the definition of the dualities
$T_\ast$ and $T^\ast$.

The observation leads to
\[
\bL_\bullet (\Lambda_\ast (K)) \cong \bL_{\bullet}(\Lambda^{*}
(\Sigma^m,\overline{K})) \simeq
\bL_{\bullet}(\Lambda)^{(\Sigma^{m},\overline{K})} \simeq K_{+} \wedge
\bL_{\bullet}(\Lambda).
\]
where the last homotopy equivalence is a spectrum version of the
isomorphism in Proposition \ref{prop:hom=cohom+S}.
\end{proof}

\begin{remark}
Recall that in section \ref{sec:cat-over-cplxs} we have defined various structured algebraic complexes over $X$. By theorems of this section some of them represent homology classes with coefficients in the $L$-theory spectra. Alternatively to the explicit construction above a different approach in \cite{Weiss(1992)} proves that these homology groups $H_n(K,\bE)$ are induced by homotopy invariant and excisive functors $K\ra\bE(\Lambda_*(K))$ and hence this construction is natural in $K$.
\end{remark}

\begin{definition} \label{defn:sym-sign-over-X}
Let $X$ be an $n$-dimensional closed topological manifold with a map $r \co X \ra K$ to a simplicial complex. The cobordism class of the $n$-dimensional SAC in $\Lambda (\ZZ)_\ast (K)$ obtained from any choice of the fundamental class $[X] \in C_n (X)$ in Construction \ref{con:sym-construction-over-cplxs-lower-star-mfd} does not depend on the choice of $[X]$ and hence defines an element
\[
 \ssign_{K} (X) = (C(X),\varphi_K ([X])) \in H_n (K;\bL^\bullet)
\]
called the \emph{symmetric signature} of $X$ over $K$.
\end{definition}

\begin{definition} \label{defn:stop-sign}
Let $X$ be an $n$-dimensional closed topological manifold with a map $r \co X \ra K$ to a simplicial complex. Recall the spectrum $\bOmega^{\STOP}_\bullet$ from section \ref{sec:spectra}. Note that the $K$-dissection of $X$ obtained by making $r$ transverse to the dual cells gives a compatible collection of manifolds with boundary so that the assignment $\sigma \ra X[\sigma]$ is precisely an $n$-dimensional $\bOmega^{\STOP}_\bullet$-cycle. We call it the $\STOP$-\emph{signature} of $X$ over $K$ and denote
\[
 \stopsign_{K} (X) \in H_n (K;\bOmega^{\STOP}_\bullet).
\]
\end{definition}

\begin{rem}
The symmetric signature $\ssign_K (X)$ can be seen as obtained from the STOP-signature $\stopsign_K (X)$ by applying the symmetric signature map on the level of spectra, that means the map $\ssign$ from Proposition \ref{prop:signatures-on-spectra-level}. In fact the $\STOP$-signature and hence the symmetric signature only depend on the oriented cobordism class of $X$, and so we obtain a homomorphism
\[
 \ssign_{K} \co \Omega^{\STOP}_n (K) \ra H_n (K;\bL^\bullet).
\]
\end{rem}

\begin{definition} \label{defn:quad-sign-over-X}
Let $(f,b) \co M \ra X$ be a degree one normal map of $n$-dimensional closed topological manifolds and let $r \co X \ra K$ be a map to a simplicial complex. The cobordism class of the $n$-dimensional QAC in $\Lambda (\ZZ)_\ast (K)$ obtained from any choice of the fundamental class $[X] \in C_n (X)$ in Construction \ref{con:quad-construction-over-cplxs-lower-star} does not depend on the choice of $[X]$ and hence defines an element
\[
\qsign_{K} (f,b) = (\sC(f^{!}),e_\% \psi_K ([X]) \in H_n (K;\bL_\bullet)
\]
called the \emph{quadratic signature} of the degree one normal map $(f,b)$ over $K$. In fact the quadratic signature only depends on the normal cobordism class of $(f,b)$ in the set of normal invariants $\sN (X)$ and provides us with a function
\[
 \qsign_{K} \co \sN(X) \ra H_n (K;\bL_\bullet).
\]
\end{definition}

In order to obtain an analogue of Proposition \ref{prop:L-thy-of-star-cat-is-co-hlgy} for $\bNL^\bullet$ spectra we need to introduce yet another algebraic bordism category associated to $\Lambda$ and $K$.

\begin{defn} \label{defn:Lambda-hat-category}
Let $\Lambda=(\AA,\BB,\CC)$ be an algebraic bordism category and $K$ a locally finite simplicial complex. Define the algebraic bordism category
\[
\widehat \Lambda (K) = (\AA_\ast (K),\BB_\ast (K),\BB_\ast (K))
\]
where $\AA_\ast (K)$ and $\BB_\ast (K)$ are as in section \ref{sec:cat-over-cplxs}.
\end{defn}

\begin{prop} \textup{\cite[Proposition 14.5]{Ranicki(1992)}}
Let $K$ be a finite simplicial complex and $\Lambda$ an algebraic bordism category. Then
\[
 K_{+}\wedge\bNL^{\bullet}(\Lambda) \simeq \bNL^{\bullet}(\widehat \Lambda (K)).
\]
\end{prop}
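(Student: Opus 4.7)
The plan is to mimic the proof of part (ii) of Proposition \ref{prop:L-thy-of-star-cat-is-co-hlgy} with the normal spectrum replacing the symmetric/quadratic one, taking care of the fact that in $\widehat{\Lambda}(K)$ the ``Poincar\'e'' subcategory is the whole $\BB_\ast(K)$ (no local duality is imposed), which matches precisely the fact that normal complexes carry no Poincar\'e requirement.

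First I would fix an embedding $i\colon K \hookrightarrow \partial\Delta^{m+1}$ for $m$ large and exploit the one-to-one correspondence $\sigma \leftrightarrow \sigma^\ast$ between simplices of $K$ and simplices of $\Sigma^m \smallsetminus \overline{K}$, together with the symmetry in the definitions of the dualities $T_\ast$ and $T^\ast$, to promote the additive-category isomorphism $\AA_\ast(K) \cong \AA^\ast(\Sigma^m,\overline{K})$ recalled in the proof of Proposition \ref{prop:L-thy-of-star-cat-is-co-hlgy}(ii) to an isomorphism of algebraic bordism categories
\[
\widehat{\Lambda}(K) \;\cong\; \widehat{\Lambda}{}^{\ast}(\Sigma^m,\overline{K}),
\]
where the right hand side denotes the ``no local Poincar\'e'' variant $(\AA^\ast(\Sigma^m,\overline{K}),\,\BB^\ast(\Sigma^m,\overline{K}),\,\BB^\ast(\Sigma^m,\overline{K}))$; the matching $\BB = \CC$ on both sides is what makes this work, and it is also why one uses $\widehat{\Lambda}(K)$ rather than $\Lambda_\ast(K)$.

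Next I would establish the normal analogue of Proposition \ref{prop:L-thy-of-star-cat-is-co-hlgy}(i) for pairs: for any finite simplicial pair $(L,L_0)$ there is a homotopy equivalence
\[
\bNL^\bullet(\widehat{\Lambda}{}^{\ast}(L,L_0)) \;\simeq\; \bNL^\bullet(\Lambda)^{(L,L_0)}.
\]
The proof is the same splitting argument as in part (i): since morphisms in $\AA^\ast(L)$ only go from larger to smaller simplices, an $n$-dimensional normal complex in $\widehat{\Lambda}{}^{\ast}(L \otimes \Delta^k)$ decomposes into a compatible family of normal complexes over the simplices of $L \otimes \Delta^k$, i.e.\ into a $\Delta$-map $(L \otimes \Delta^k)_+ \to \bNL^n(\Lambda)$, and the vanishing condition on $L_0$ matches the basepoint condition. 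The slight bookkeeping issue is the shift in the indexing convention $\bNL^n(\Lambda)^{(k)} = \{n\text{-dim.\ normal cxs in } \Lambda^\ast(\Delta^{k+n})\}$, but this shift is uniform in $k$ and is absorbed into the definition of the function $\Delta$-set of pairs, so it does not affect the argument.

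Finally, I would apply the spectrum-level form of Proposition \ref{prop:hom=cohom+S}, namely the homotopy equivalence of spectra
\[
\bE^{(\Sigma^m,\overline{K})} \;\simeq\; \Sigma^{-m}(K_+ \wedge \bE),
\]
valid for any $\Omega$-spectrum $\bE$ of Kan $\Delta$-sets, where the $S$-duality map $\Sigma^m \to K_+ \wedge (\Sigma^m / \overline{K})$ of Whitehead is used. Taking $\bE = \bNL^\bullet(\Lambda)$, combining with the previous step, and keeping track of the shift from the normal indexing convention yields
\[
\bNL^\bullet(\widehat{\Lambda}(K)) \;\simeq\; \bNL^\bullet(\Lambda)^{(\Sigma^m,\overline{K})} \;\simeq\; K_+ \wedge \bNL^\bullet(\Lambda).
\]
The hardest step is verifying that the $(n+k)$-indexing in $\bNL^n(\Lambda)^{(k)}$ interacts correctly with the $S$-duality shift by $m$ so that the equivalence is natural in degree; this is a purely bookkeeping check, but it is the one place where the normal case genuinely differs from the symmetric/quadratic case and it needs to be done once.
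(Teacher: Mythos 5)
The paper states this proposition without proof, citing Ranicki's book, so there is no internal argument to compare against; your proposal is precisely the natural transposition to the normal setting of the paper's own proof of Proposition \ref{prop:L-thy-of-star-cat-is-co-hlgy}(ii). You correctly identify the two normal-specific modifications — using $\widehat\Lambda(K)$ (with $\BB=\CC$) to reflect the absence of any Poincar\'e condition on a normal complex, and tracking the shift $\Delta^k\mapsto\Delta^{k+n}$ in the $\bNL$-spectrum indexing — and both are handled correctly at the level of a sketch, the shift being uniform in the simplicial degree $k$ and hence compatible with both the splitting argument and the $S$-duality step.
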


To complete the picture we present the following proposition which follows from Lemma \ref{lem:symmetric-poincare-means-unique-normal} and Proposition \ref{prop:L-thy-of-star-cat-is-co-hlgy}

\begin{prop} \label{prop:NL-spectrum-of-lower-star-K}
We have
\[
 \bNL^{\bullet} (\Lambda_\ast (K)) \simeq \bL^{\bullet} (\Lambda_\ast (K)) \simeq K_{+}\wedge\bL^{\bullet}(\Lambda)
\]
\end{prop}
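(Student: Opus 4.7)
The plan is to handle the two homotopy equivalences separately, with the second being immediate and the first requiring a spectrum-level version of Lemma \ref{lem:symmetric-poincare-means-unique-normal}.

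First I would observe that the second equivalence $\bL^{\bullet}(\Lambda_\ast(K)) \simeq K_+ \wedge \bL^{\bullet}(\Lambda)$ is just Proposition \ref{prop:L-thy-of-star-cat-is-co-hlgy}(ii) applied to the symmetric $L$-theory spectrum, so nothing new is required for it. The work is in the first equivalence $\bNL^{\bullet}(\Lambda_\ast(K)) \simeq \bL^{\bullet}(\Lambda_\ast(K))$.

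The key geometric/algebraic observation driving the first equivalence is that in the algebraic bordism category $\Lambda_\ast(K) = (\AA_\ast(K),\BB_\ast(K),\CC_\ast(K))$ the condition $\partial C \in \CC_\ast(K)$ imposed on any structured complex is exactly the simplex-wise chain equivalence condition: the duality map $\varphi_0(\sigma) \co \Sigma^n TC(\sigma) \to C(\sigma)$ is a chain equivalence for every $\sigma \in K$. Hence every normal complex in $\Lambda_\ast(K)$ has its underlying symmetric structure locally Poincaré, which is exactly the situation addressed by Lemma \ref{lem:symmetric-poincare-means-unique-normal}. I would then define a map of $\Delta$-set spectra
\[
J \co \bL^{\bullet}(\Lambda_\ast(K)) \;\longrightarrow\; \bNL^{\bullet}(\Lambda_\ast(K))
\]
by sending an SAPC $(C,\varphi)$, realized as a $k$-simplex in the appropriate $\bL^n$-space, to the NAC $(C,(\varphi,\gamma,\chi))$ where $\gamma$ is the image of $\varphi$ under the composition
\[
\Ws{C}_n \xrightarrow{J} \Wh{C}_n \xrightarrow{(\widehat{\varphi_0}^{\%})^{-1}} \Wh{\Sigma^n TC}_n \xrightarrow{\;S^{-n}\;} \Wh{TC}_0
\]
exactly as in the proof of Lemma \ref{lem:symmetric-poincare-means-unique-normal}, and $\chi$ comes from the preferred chain homotopy $\widehat{\varphi_0}^{\%}\circ(\widehat{\varphi_0}^{\%})^{-1}\simeq 1$. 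To see $J$ is a levelwise weak equivalence it suffices to check it induces an isomorphism on each $\pi_n$, i.e.\ that $J \co L^n(\Lambda_\ast(K)) \to NL^n(\Lambda_\ast(K))$ is a bijection; surjectivity comes from Lemma \ref{lem:symmetric-poincare-means-unique-normal} applied to any given NAC (the locally Poincaré property recovers an SAPC), and injectivity comes from the analogous correspondence for pairs, namely that any cobordism of NACs in $\Lambda_\ast(K)$ is a normal pair whose boundary symmetric structure is locally Poincaré, hence arises from a symmetric Poincaré cobordism by the pair version of the same argument.

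The main obstacle will be that the construction of $(\gamma,\chi)$ from $(C,\varphi)$ involves a choice of chain homotopy inverse of $\varphi_0$ and an explicit null-homotopy, and different choices must fit together coherently over products with $\Delta^p$ and under face maps so that $J$ actually lifts to a $\Delta$-map and not just a map on $\pi_0$ of spectrum levels. The standard way around this is to note that the space of such choices is contractible, so the constructions can be made functorial up to a preferred equivalence; alternatively one can replicate the argument of Example \ref{expl:alg-bord-cat-ring}, where the isomorphism $L^n(R) \cong NL^n(\Lambda(R))$ is obtained from the same lemma, and observe that the proof there adapts verbatim to $\Lambda_\ast(K)$ because the Poincaré condition $\partial C \in \CC_\ast(K)$ is used only simplex-by-simplex, which is precisely the local setting in which Lemma \ref{lem:symmetric-poincare-means-unique-normal} applies.
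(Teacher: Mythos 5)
Your proposal is correct and follows the same route the paper gestures at: the second equivalence is Proposition \ref{prop:L-thy-of-star-cat-is-co-hlgy}(ii) for $\bL^\bullet$, and the first comes from Lemma \ref{lem:symmetric-poincare-means-unique-normal} applied over $\AA_\ast(K)$, using that the condition $\partial C \in \CC_\ast(K)$ is precisely the hypothesis of that lemma. You go somewhat beyond the paper's one-sentence reference by flagging and addressing the coherence issue needed to upgrade the $L$-group bijection to a $\Delta$-set-level equivalence, which is a worthwhile point to make explicit.
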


\begin{rem} \label{rem:hlgical-assembly}
Recall the idea of the assembly map from section \ref{sec:assembly}. Via Proposition \ref{prop:L-thy-of-star-cat-is-co-hlgy} it induces a map
\[
 A \co H_n (K;\bL_\bullet) = \pi_n (K_+ \wedge \bL_\bullet) \ra L_n (\ZZ[\pi_1 (K)])) = \pi_n (\bL_\bullet (\Lambda (\ZZ[\pi_1 (K)])))
\]
If $\pi_1 (K) = 0$ then this map can be thought of as an induced map on homology by the collapse map $K \mapsto \ast$. Similarly for spectra $\bL^\bullet$ and $\bNL^\bullet$. However, this is not a phenomenon special to these spectra. In fact in \cite[chapter 12]{Ranicki(1992)} an assembly map
\[
 A \co H_n (K;\bE) \ra \pi_n (\bE)
\]
is discussed for any spectrum $\bE$, hence any homology theory. On the level of chains this map can be described via certain ``gluing'' procedure. For the spectra $\bOmega^N_\bullet$ and $\bOmega^{\STOP}_\bullet$ this procedure coincides with the geometric gluing. 
\end{rem}


\section{Connective versions} \label{sec:conn-versions}


An important technical aspect of the theory is the use of connective
versions of the $L$-theory spectra. This is related to the
difference between topological manifolds and ANR-homology manifolds.
In principle there are two ways how to impose connectivity
restrictions. One is to fix the algebraic bordism category and
modify the definition of the $L$-groups and $L$-spectra. The other
is to modify the algebraic bordism category and keep the definition
of the $L$-groups and $L$-spectra. Both ways are convenient at some
stages.

\begin{prop}
Let $\Lambda$ be an algebraic bordism category and let $q \in \ZZ$.
Then there are $\Omega$-spectra of Kan $\Delta$-sets
$\bL_\bullet \langle q \rangle (\Lambda)$, $\bL^\bullet \langle q
\rangle (\Lambda)$, $\bNL^\bullet \langle q \rangle (\Lambda)$ with
homotopy groups
\begin{align*}
\pi_n \bL_\bullet \langle q \rangle (\Lambda) & = \bL_n (\Lambda) \; \textup{for} \; n \geq q, \; 0 \; \textup{for} \; n < q  \\
\pi_n \bL^\bullet \langle q \rangle (\Lambda) & = \bL^n (\Lambda) \; \textup{for} \; n \geq q, \; 0 \; \textup{for} \; n < q\\
\pi_n \bNL^\bullet \langle q \rangle (\Lambda) & = \bNL^n (\Lambda)
\; \textup{for} \; n \geq q, \; 0 \; \textup{for} \; n < q.
\end{align*}
\end{prop}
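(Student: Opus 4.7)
The plan is to construct each $\bE\langle q\rangle(\Lambda)$, for $\bE = \bL_\bullet,\bL^\bullet,\bNL^\bullet$, as a sub-$\Omega$-spectrum of $\bE(\Lambda)$ obtained by imposing a $q$-connectivity condition on the underlying chain complexes, then to verify the advertised homotopy groups. Concretely, for the quadratic case I would define
\[
 \bL_\bullet\langle q\rangle(\Lambda)^{(k)}_n = \{ (C,\psi) \in \bL_n(\Lambda)^{(k)} \mid C(\sigma) \simeq 0 \text{ in degrees } < q-|\sigma|,\ \forall \sigma\in\Delta^k \},
\]
and similarly for $\bL^\bullet\langle q\rangle$ and $\bNL^\bullet\langle q\rangle$. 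The shift by $|\sigma|$ is forced by the requirement that face maps $\partial_i$ (which restrict to smaller simplices and shift total dimension appropriately) preserve the condition, so that one indeed obtains sub-$\Delta$-sets.

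The first technical step is to verify the Kan property. A horn filling problem in $\bE(\Lambda)$ can always be solved, and one needs to show that it can be solved while staying inside the connective subcomplex. The key is that algebraic surgery in the sense of Section~\ref{sec:algebraic-cplxs} can be used to replace any symmetric or quadratic complex by a homotopy equivalent (hence cobordant) one satisfying the required connectivity, in analogy with ``surgery below the middle dimension'' from classical surgery theory. The $\Omega$-spectrum structure map is inherited from that of $\bE(\Lambda)$; the fact that it remains a weak equivalence follows because the loop correspondence between $n$-dimensional complexes at level $k$ and $(n-1)$-dimensional complexes at level $k+1$ shifts both dimension and simplex index by one, preserving the connectivity condition as stated.

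The homotopy group computation then splits into two halves. For $n \geq q$, the inclusion $\bE\langle q\rangle(\Lambda)\hookrightarrow \bE(\Lambda)$ induces a surjection on $\pi_n$ because any cobordism class is represented by a complex which can be made $q$-connected by surgery; injectivity follows by applying the same argument to a cobordism. For $n < q$, the connectivity restriction combined with the dimension constraint forces any representative to be contractible, giving $\pi_n\bE\langle q\rangle(\Lambda) = 0$.

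The main obstacle I anticipate is the normal case $\bNL^\bullet\langle q\rangle(\Lambda)$. Unlike the symmetric and quadratic cases, algebraic surgery does not preserve the normal structure, so the direct ``surgery to improve connectivity'' argument is not available. My plan is to sidestep this by defining $\bNL^\bullet\langle q\rangle(\Lambda)$ indirectly: either as the sub-$\Delta$-set of normal complexes whose underlying symmetric complex is $q$-connective (and then checking that Proposition~\ref{prop:fib-seq-of-quad-sym-norm} descends to a fibration sequence $\bL_\bullet\langle q\rangle\to\bL^\bullet\langle q\rangle\to\bNL^\bullet\langle q\rangle$ on the connective versions), or by directly truncating $\bNL^\bullet(\Lambda)$ as a sub-$\Delta$-set using the underlying complex. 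Making either route rigorous, in particular showing that the connective truncation is compatible with the relationship between the symmetric and normal structures enforced in Definition~\ref{defn:nNAC}, is where the real work lies.
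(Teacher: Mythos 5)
Your approach conflates two constructions that the paper deliberately keeps separate. The Proposition you are trying to prove asserts the existence of the $q$-connective cover $\bE\langle q\rangle(\Lambda)$ of $\bE(\Lambda)$ for $\bE=\bL_\bullet,\bL^\bullet,\bNL^\bullet$; this is a purely formal construction from stable homotopy theory that works for any $\Omega$-spectrum of pointed Kan $\Delta$-sets and needs no $L$-theoretic content whatsoever. Using the paper's indexing convention ($\bE_{n+1}\simeq\Omega\bE_n$, $\pi_n\bE=\pi_0\bE_n$), one sets $\bE\langle q\rangle_n$ to be the sub-$\Delta$-set of $\bE_n$ consisting of simplices all of whose iterated faces of dimension less than $q-n$ are the base point. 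A routine check shows this is still Kan, kills $\pi_j$ for $j<q-n$, leaves $\pi_j$ unchanged for $j\geq q-n$, and is compatible with the $\Omega$-spectrum structure maps; the stated homotopy groups follow immediately, uniformly in $\bE$.

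What you define instead — a sub-$\Delta$-set cut out by a connectivity condition on the underlying chain complexes — is essentially the \emph{other} construction, $\bE(\Lambda\langle q\rangle)$ from Definition~\ref{defn:abc-connective-modification}, and the paper explicitly warns right after the Proposition that in general $\pi_n\bL^\bullet(\Lambda\langle q\rangle(R))$ need \emph{not} agree with $\pi_n\bL^\bullet\langle q\rangle(\Lambda(R))$; only in special cases (\cite[Example 15.8]{Ranicki(1992)}) do the two coincide, and establishing those coincidences is where surgery-below-the-middle-dimension arguments actually belong. So your plan would at best prove a harder and in general false statement. You correctly sense trouble in the normal case, but the real issue is upstream: no algebraic surgery is needed for the Proposition as stated, and the surgery arguments you sketch would be ingredients in a comparison theorem between the two constructions, not in the proof of existence of the connective cover.
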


\begin{defn} \label{defn:abc-connective-modification}
Let $\Lambda = (\AA,\BB,\CC)$ be an algebraic bordism category, and let $q \in \ZZ$. Denote $\BB
\langle q \rangle \subset \BB$ the category of chain complexes in $\BB$ which are homotopy equivalent to $q$-connected chain complexes and $\CC \langle q \rangle = \BB \langle q \rangle \cap \CC$. The algebraic bordism categories $\Lambda \langle q \rangle$ and $\Lambda \langle 1/2 \rangle$ are defined by 
\[
\Lambda \langle q \rangle = (\AA,\BB \langle q \rangle,\CC \langle q \rangle) \quad \textup{and} \quad \Lambda \langle 1/2 \rangle = (\AA,\BB\langle 0 \rangle,\CC \langle 1 \rangle)
\]
\end{defn}

\begin{notation}
 In accordance with \cite{Ranicki(1992)} we will now use the notation $\Lambda \langle q \rangle (R)$ for $\Lambda (R) \langle q \rangle$ where $R$ is a ring with involution, $q \in \ZZ$ or $q = 1/2$ and $\Lambda (R)$ is the algebraic bordism category from Example \ref{expl:alg-bord-cat-ring}. Similarly $\widehat \Lambda \langle q \rangle (R)$ stands for $\widehat \Lambda (R) \langle q \rangle$.
\end{notation}
                                                                                                           In general the groups $L^n (\Lambda \langle q \rangle (R)) = \pi_n \bL^\bullet (\Lambda \langle q \rangle (R))$ need not be isomorphic to $\pi_n \bL^\bullet \langle q \rangle (\Lambda (R))$, likewise for quadratic and normal $L$-groups. However, in certain special cases this holds, for example we have (\cite[Example 15.8]{Ranicki(1992)}):
\begin{align*}
\pi_n \bL_\bullet \langle 0 \rangle (\Lambda (\ZZ)) & \cong L_n (\Lambda \langle 0 \rangle (\ZZ)) \\
\pi_n \bL^\bullet \langle 0 \rangle (\Lambda (\ZZ)) & \cong L^n
(\Lambda \langle 0 \rangle (\ZZ))
\end{align*}

\begin{notation}
Again, to save space we abbreviate for $q \in \ZZ$ or $q = 1/2$:
\[
\bL^{\bullet} \langle q \rangle = \bL^{n}(\Lambda \langle q \rangle (\ZZ)) \quad \bL_{\bullet} \langle q \rangle = \bL_{n}(\Lambda \langle q \rangle (\ZZ)) \quad \bNL^{\bullet} \langle q \rangle = \bNL^{n}(\widehat \Lambda \langle q \rangle (\ZZ)).
\]
\end{notation}

We also obtain a connective version of Proposition \ref{prop:fib-seq-of-quad-sym-norm}.

\begin{prop} \textup{\cite[Proposition 15.16]{Ranicki(1992)}} \label{prop:fib-seq-of-quad-sym-norm-connective-version}
We have a homotopy fibration sequence
\begin{equation} 
\bL_\bullet \langle 1 \rangle \ra \bL^\bullet \langle 0 \rangle \ra
\bNL^\bullet \langle 1/2 \rangle.
\end{equation}
As a consequence we have
\[
\pi_0 \bL^\bullet \langle 0 \rangle \cong \pi_0 \bNL^\bullet \langle
1/2 \rangle \cong \ZZ.
\]
\end{prop}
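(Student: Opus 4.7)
The plan is to derive this connective fibration sequence from the non-connective analog in Proposition~\ref{prop:fib-seq-of-quad-sym-norm} by carefully tracking the connectivity hypotheses encoded in the three spectra, and then to read off the consequence from the induced long exact sequence of homotopy groups.

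First I would construct the map $\bL^\bullet \langle 0 \rangle \to \bNL^\bullet \langle 1/2 \rangle$ at the spectrum level. On $k$-simplices it should take an $n$-dimensional SAPC $(C,\varphi)$ with $C \in \BB\langle 0 \rangle$ to the associated NAC $(C,\varphi,\gamma,\chi)$ furnished by Lemma~\ref{lem:symmetric-poincare-means-unique-normal}. Since $C \in \BB\langle 0 \rangle$, the resulting normal complex automatically lies in $\widehat\Lambda\langle 1/2 \rangle(\ZZ) = (\AA(\ZZ), \BB\langle 0 \rangle, \BB\langle 1 \rangle)$, so the $J$-map from the proof of Proposition~\ref{propn:LESL} restricts to a map of the connective spectra.

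Next I would identify the homotopy fiber of this map by re-running the argument of Proposition~\ref{propn:LESL} in the connective setting. An $n$-simplex of the fiber corresponds to an $n$-dimensional (normal, symmetric Poincar\'e) pair $f \colon C \to D$ in these connective categories: the chain complexes $C, D$ lie in $\BB\langle 0 \rangle$, the boundary $(C,\varphi)$ is symmetric Poincar\'e, and the Poincar\'e property of $(f,(\delta\theta,\theta))$ is imposed only modulo $\BB\langle 1 \rangle$. Performing algebraic surgery on $(C,\varphi)$ using this pair, as in the proof of Lemma~\ref{lem:normal-sym-pair-gives-quad-poincare-cplx}, produces an $(n-1)$-dimensional QAPC $(C',\psi')$. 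The effect complex $C'$ is a desuspension of an algebraic mapping cone whose cofibers are forced by hypothesis to lie in $\BB\langle 1 \rangle$, so $(C',\psi')$ represents a class in $\pi_{n-1} \bL_\bullet \langle 1 \rangle$. The resulting assignment gives the desired equivalence of the fiber with $\bL_\bullet \langle 1 \rangle$.

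The main obstacle will be the careful verification that the $\langle 1/2 \rangle$ convention $(\BB\langle 0 \rangle, \BB\langle 1 \rangle)$ is exactly the asymmetric choice needed so that algebraic surgery lands in $\bL_\bullet \langle 1 \rangle$ rather than $\bL_\bullet \langle 0 \rangle$: one must confirm that the quadratic refinement produced by Lemma~\ref{lem:normal-gives-quadratic-boundary} is indeed $1$-connective under these hypotheses, and that the cobordism relation respects the connectivity bounds. The ``$1/2$'' label precisely reflects this asymmetric choice of connectivities for the underlying chain complexes versus for the cones, positioning this spectrum between $\bNL^\bullet \langle 0 \rangle$ and $\bNL^\bullet \langle 1 \rangle$; the detailed bookkeeping is carried out in \cite[Ch.~15]{Ranicki(1992)}.

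For the consequence, the long exact sequence of homotopy groups
\[
\cdots \to \pi_n \bL_\bullet \langle 1 \rangle \to \pi_n \bL^\bullet \langle 0 \rangle \to \pi_n \bNL^\bullet \langle 1/2 \rangle \to \pi_{n-1} \bL_\bullet \langle 1 \rangle \to \cdots
\]
combined with the vanishing $\pi_{-1} \bL_\bullet \langle 1 \rangle = \pi_0 \bL_\bullet \langle 1 \rangle = 0$ yields the isomorphism $\pi_0 \bL^\bullet \langle 0 \rangle \xra{\cong} \pi_0 \bNL^\bullet \langle 1/2 \rangle$. Finally, $\pi_0 \bL^\bullet \langle 0 \rangle \cong L^0(\Lambda \langle 0 \rangle(\ZZ)) \cong L^0(\ZZ) \cong \ZZ$, the classical symmetric Witt group of non-singular symmetric bilinear forms over $\ZZ$, detected by the signature and generated by the rank-one form $\langle 1 \rangle$.
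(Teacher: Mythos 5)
Your proposal is essentially correct and reconstructs the argument that the paper implicitly defers to Ranicki's Proposition 15.16: the map is the connective restriction of the $J$-map of Lemma~\ref{lem:symmetric-poincare-means-unique-normal}, and the fiber is identified via algebraic surgery as in the proof of Proposition~\ref{propn:LESL} / Lemma~\ref{lem:normal-sym-pair-gives-quad-poincare-cplx}, with the asymmetric pair $(\BB\langle 0\rangle, \BB\langle 1\rangle)$ in $\widehat\Lambda\langle 1/2\rangle(\ZZ)$ precisely forcing the effect complex into $\BB\langle 1\rangle$. You correctly flag the one genuinely delicate point (verifying that the surgery output and the cobordism relation land in the $1$-connective quadratic category rather than the $0$-connective one, which is what the half-integer index is bookkeeping for), and the derivation of $\pi_0 \bL^\bullet\langle 0\rangle \cong \pi_0 \bNL^\bullet\langle 1/2\rangle \cong L^0(\ZZ) \cong \ZZ$ from the vanishing $\pi_0\bL_\bullet\langle 1\rangle = \pi_{-1}\bL_\bullet\langle 1\rangle = 0$ is exactly right.
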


Let $K$ be a simplicial complex. Now we consider the connective versions of the $L$-theory groups/spectra of algebraic bordism categories associated to $K$ in sections \ref{sec:cat-over-cplxs} and \ref{sec:gen-hlgy-thies}. Specifically we are interested in the algebraic bordism category $\Lambda \langle q \rangle (\ZZ)_\ast (K)$ from Definition \ref{defn:Lambda-star-categories} and $\widehat \Lambda \langle q \rangle (\ZZ) (K)$ from Definition \ref{defn:Lambda-hat-category}. The following proposition is an improvement on Propositions \ref{prop:L-thy-of-star-cat-is-co-hlgy}, \ref{prop:NL-spectrum-of-lower-star-K} and \ref{prop:algebraic-pi-pi-theorem}.

\begin{prop} \textup{\cite[Proposition 15.9,15.11]{Ranicki(1992)}} \label{prop:L-theory-over-K-are-homology-theories-conn-versions}
There are isomorphisms
\begin{align*}
\pi_n \bL^\bullet (\Lambda \langle q \rangle (\ZZ)_\ast (K)) & \cong H_n (K;\bL^\bullet  \langle q \rangle (\ZZ)) \\
\pi_n \bL_\bullet (\Lambda \langle q \rangle (\ZZ)_\ast (K)) & \cong H_n (K;\bL_\bullet \langle q \rangle (\ZZ)) \\
\pi_n \bNL^\bullet (\widehat \Lambda \langle q \rangle (\ZZ) (K)) & \cong H_n
(K;\bNL^\bullet \langle q \rangle (\ZZ)) \\
\pi_n \bL_\bullet (\Lambda \langle q \rangle (K)) & \cong L_n (\ZZ [\pi_1 [K]) \quad \textup{for} \; n \geq 2q.
\end{align*}
\end{prop}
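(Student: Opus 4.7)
The plan is to deduce these identifications from the non-connective analogues (Propositions \ref{prop:L-thy-of-star-cat-is-co-hlgy}, \ref{prop:NL-spectrum-of-lower-star-K}, and \ref{prop:algebraic-pi-pi-theorem}) by inserting the connectivity restrictions at the appropriate stage. The key compatibility I would verify first is that forming the connective modification commutes with passing to the $K$-based category: a chain complex $C \in \BB_\ast(K)$ has each $C(\sigma)$ homotopy equivalent to a $q$-connected complex if and only if $C$ itself satisfies this condition locally. With this in hand, $\Lambda\langle q\rangle(\ZZ)_\ast(K)$ is the category of $K$-based complexes that are locally $q$-connected and locally Poincar\'e, and similarly for $\widehat{\Lambda}\langle q\rangle(\ZZ)(K)$ and $\Lambda\langle q\rangle(K)$.

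For the first three isomorphisms I would rerun the proof of Proposition \ref{prop:L-thy-of-star-cat-is-co-hlgy}(ii) with $\Lambda$ replaced throughout by the connective algebraic bordism category $\Lambda\langle q\rangle(\ZZ)$ (respectively $\widehat{\Lambda}\langle q\rangle(\ZZ)$ for the normal case). The splitting of an $n$-dimensional complex over $K$ into a compatible collection of complexes over the standard simplices $\Delta^{|\sigma|}$ preserves the local $q$-connectivity condition simplex by simplex, so the resulting compatibility data is precisely a $\Delta$-map $K_+ \to \bL^\bullet\langle q\rangle_n$, $\bL_\bullet\langle q\rangle_n$, or $\bNL^\bullet\langle q\rangle_n$. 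The simplicial $S$-duality argument (Proposition \ref{prop:hom=cohom+S}) then identifies the cobordism groups of such $\Delta$-maps with the corresponding homology groups $H_n(K;\bE\langle q\rangle)$. The normal case additionally uses that each piece may be arbitrary in $\BB_\ast(K)$ (no local Poincar\'e condition), matching the definition of $\widehat{\Lambda}\langle q\rangle(\ZZ)(K)$.

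The fourth isomorphism is the connective refinement of the algebraic $\pi$-$\pi$ theorem (Proposition \ref{prop:algebraic-pi-pi-theorem}). The forgetful functor induces a natural map
\[
L_n(\Lambda\langle q\rangle(K)) \longrightarrow L_n(\Lambda(K)) \cong L_n(\ZZ[\pi_1 K]),
\]
and I would show it is an isomorphism for $n \geq 2q$. Surjectivity: given an $n$-dimensional quadratic globally Poincar\'e complex in $\Lambda(K)$, perform algebraic surgery (as recalled in section \ref{sec:algebraic-cplxs}) on each piece $C(\sigma)$ to kill the homology in dimensions below $q$, choosing the surgery data to be supported over single simplices so that the assembled complex over $\ZZ[\pi_1 K]$ is unchanged up to cobordism. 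Since the simplex-wise complex has effective dimension $n - |\sigma|$ and the surgery is below the middle dimension of the global complex, this procedure terminates precisely when $n \geq 2q$. Injectivity is an identical argument applied to a given globally Poincar\'e cobordism.

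The main obstacle will be the fourth statement: namely, implementing the iterative algebraic surgery below the middle dimension over the category $\Lambda(K)$ without disturbing either the global Poincar\'e duality type after assembly or the local properties of the surgery data. One must be careful that the data $(f \colon C \to D, (\delta\psi,\psi))$ used at each step lies in the correct subcategory and that the effect of the surgery remains a valid object of $\Lambda\langle q\rangle(K)$; this is where the hypothesis $n \geq 2q$ is used in full force, echoing the classical surgery-below-the-middle-dimension principle.
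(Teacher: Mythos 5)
Your derivation of the first three isomorphisms is correct and is exactly the route the cited source takes: since the categories $\Lambda\langle q\rangle(\ZZ)_\ast(K)$ and $\widehat\Lambda\langle q\rangle(\ZZ)(K)$ are, by Definition \ref{defn:abc-connective-modification} together with Proposition \ref{defn:Lambda-star-categories} and Definition \ref{defn:Lambda-hat-category}, literally what you get by substituting the connective bordism category into the $(-)_\ast(K)$ and $\widehat{(-)}(K)$ constructions, the isomorphisms follow by plugging $\Lambda\langle q\rangle(\ZZ)$ (resp.\ $\widehat\Lambda\langle q\rangle(\ZZ)$) into Proposition \ref{prop:L-thy-of-star-cat-is-co-hlgy}(ii) and its normal analogue. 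The "compatibility check" you flag is a tautology rather than something to verify.

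Your fourth argument is where I see a real gap. You propose to make $(C,\psi)$ locally $q$-connective by "performing algebraic surgery on each piece $C(\sigma)$" with data "supported over single simplices," and you justify termination by invoking "the middle dimension of the global complex." But the individual pieces $C(\sigma)$ carry no Poincar\'e structure of their own (the duality is only global, after assembly), so there is no well-defined notion of surgery on a single piece, nor an intrinsic middle-dimension bound for $C(\sigma)$; meanwhile the quantity $n-|\sigma|$ that you invoke varies with $\sigma$ and can be far smaller than $2q$, which makes the claim that everything "terminates precisely when $n\geq 2q$" unsupported as written. The cleaner route, and the one consistent with the strategy behind Proposition \ref{prop:algebraic-pi-pi-theorem}, is to work entirely at the assembled level: use classical surgery below the middle dimension over $\ZZ[\pi_1 K]$ (valid for $n\geq 2q$) to replace the assembly $A(C,\psi)$ by a $(q-1)$-connected representative, and then observe that the explicit inverse of the $\pi$-$\pi$-isomorphism --- which regards a complex over $\ZZ[\pi_1 K]$ as a complex over $\ZZ_\ast(K)$ concentrated at a single vertex --- automatically lands in $\Lambda\langle q\rangle(K)$ when its input is $(q-1)$-connected. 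Injectivity is the relative form of the same statement applied to null-cobordisms. Your high-level framework (forgetful map composed with the non-connective $\pi$-$\pi$-theorem, bounded by $n\geq 2q$) is the right one, but the surgery should be executed over $\ZZ[\pi_1 K]$ and transported back via concentration, not carried out piecewise over $K$.
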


In addition there is an improved version of Proposition \ref{prop:signatures-on-spectra-level} as follows:

\begin{prop} \label{prop:connective-signatures-on-spectra-level}
The relative symmetric construction produces
\[
\textup{(1)}\quad \ssign \co \bbOmega[n]{STOP} \ra
\bL^n(\Lambda \langle 0 \rangle (\ZZ)) \; \leadsto \; \ssign \co
\bbOmega[\bullet]{STOP} \ra \bL^\bullet \langle 0 \rangle
\]
The relative normal construction produces
\[
\textup{(2)}\quad \nsign \co \bbOmega[n]{N} \ra
\bNL^n(\widehat \Lambda \langle 1/2 \rangle (\ZZ)) \;  \leadsto \; \nsign \co
\bbOmega[\bullet]{N} \ra \bNL^\bullet \langle 1/2 \rangle.
\]
The relative normal construction produces
\[
\textup{(3)}\quad \qsign \co \Sigma\invers\bbOmega[n]{N,\STOP} \ra
\bL_n(\Lambda \langle 1 \rangle (\ZZ)) \;  \leadsto \; \qsign \co
\Sigma\invers\bbOmega[\bullet]{N,\STOP} \ra \bL_\bullet \langle 1 \rangle.
\]
\end{prop}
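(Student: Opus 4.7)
The plan is to begin with the non-connective signature maps provided by Proposition \ref{prop:signatures-on-spectra-level} and verify that their images automatically satisfy the additional connectivity constraints imposed by the connective algebraic bordism categories $\Lambda\langle 0\rangle(\ZZ)$, $\widehat\Lambda\langle 1/2\rangle(\ZZ)$, and $\Lambda\langle 1\rangle(\ZZ)$. Once this is checked at the level of an individual simplex of each target $\Omega$-spectrum, the face maps respect the connectivity conditions, so the refined assignments assemble into maps of spectra of $\Delta$-sets. In each case the verification splits into two points: the underlying chain complex $C$ lies in $\BB\langle q\rangle$, and the acyclicity class $\CC\langle q\rangle$ of the algebraic bordism category is met.

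For (1), the symmetric signature of a closed manifold $(k+2)$-ad $M$ has underlying chain complex $C(\tilde M)$ expressed over $\ZZ^\ast(\Delta^k)$ via the face decomposition; each piece is concentrated in non-negative degrees, so lies in $\BB\langle 0\rangle$. Poincar\'e--Lefschetz duality makes each $\varphi_0(\sigma)$ a chain equivalence, so $\del C(\sigma)$ is contractible and hence lies in $\CC\langle 0\rangle$. For (2), the underlying chain complex of the normal signature is $C(\tilde Y)$, again in $\BB\langle 0\rangle$. The acyclicity class in $\widehat\Lambda\langle 1/2\rangle$ reduces to $\BB\langle 1\rangle$ (since $\widehat\Lambda$ has $\CC=\BB$), and the required 1-connectivity of $\del C$ is a built-in feature of geometric normal complexes: the fundamental class produces a $\varphi_0$ that is a chain equivalence at the level of $H_0$, so $\sC(\varphi_0)$ is 1-connective and $\del C=\Sigma^{-1}\sC(\varphi_0)$ has the connectivity required by the definition of $\widehat\Lambda\langle 1/2\rangle$.

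The main obstacle is (3). Here the underlying chain complex is $\sC(f^!)$ associated to a degree one normal map $(f,b)\co M\ra X$ of manifold $(k+2)$-ads, and we must show $\sC(f^!)\in\BB\langle 1\rangle$. The key input is that the Umkehr map $f^!$ satisfies $f_\ast\circ f^!\simeq\id_{C(\tilde X)}$ (the defining property coming from $\pi_1(X)$-equivariant $S$-duality), so $f^!$ is a right homotopy inverse of $f_\ast$ on chain complexes. Because $(f,b)$ has degree one, the induced map on $H_0$ is the identity, hence $H_0(\sC(f^!))=0$ and $\sC(f^!)\in\BB\langle 1\rangle$. The Poincar\'e-modulo-$\CC\langle 1\rangle$ condition is automatic since $\sC(f^!)$ is already Poincar\'e over $\ZZ$ by Construction \ref{con:quad-construction-on-degree-one-normal-map}. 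The same argument applied to each face $(f(\sigma),b(\sigma))$, which remains a degree one normal map of manifolds with boundary, yields the connectivity simplex-wise, and the passage to a map of $\Omega$-spectra is then formal from face-compatibility of all constructions involved.
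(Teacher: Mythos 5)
Your treatment of part (1) agrees with the paper, which notes it is immediate because a geometric $(k+2)$-ad only produces chain complexes concentrated in non-negative degrees. But the paper handles (2) and (3) quite differently, and your direct argument for (2) has a genuine gap.

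For (2), the paper does not verify the connectivity condition directly; it cites \cite[page 178]{Ranicki(1992)}, where a nontrivial normalization is carried out. Your claim that the fundamental class produces a $\varphi_0$ that is a chain equivalence on $H_0$ is not automatic for a geometric normal complex which is not Poincar\'e: the cap product $-\cap[Y]$ with $[Y]=u(\nu)\cap h(\rho)$ need not induce an isomorphism on $H_0$, and $[Y]$ can even vanish (take $\rho$ null-homotopic). Moreover, even granting that $H_0(\varphi_0)$ is an isomorphism, you only obtain $H_0(\sC(\varphi_0))=0$, so $\del C=\Sigma^{-1}\sC(\varphi_0)$ is merely $0$-connective. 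The category $\widehat\Lambda\langle 1/2\rangle(\ZZ)=(\AA(\ZZ),\BB\langle 0\rangle,\CC\langle 1\rangle)$ with $\CC=\BB$ demands $\del C\in\BB\langle 1\rangle$, i.e.\ $H_0(\del C)=H_1(\sC(\varphi_0))=0$, which your argument does not reach. You are off by one in connectivity, and that extra step is precisely what the cited reference supplies.

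For (3), the paper does not analyze $\sC(f^!)$ at all. It instead uses the homotopy fibration sequence $\bL_\bullet\langle 1\rangle\ra\bL^\bullet\langle 0\rangle\ra\bNL^\bullet\langle 1/2\rangle$ of Proposition \ref{prop:fib-seq-of-quad-sym-norm-connective-version}: parts (1) and (2) give the vertical maps in a commutative square whose top row is $\bOmega^{\STOP}_\bullet\ra\bOmega^{N}_\bullet$, and the connective $\qsign$ is the induced map on horizontal fibers. This formal deduction sidesteps the connectivity check entirely. Your direct observation for the absolute case (degree one plus $f_\ast\circ f^!\simeq\id$ gives $H_0(\sC(f^!))=0$) is sound, but the extension to manifold and normal $(k+2)$-ads over $\Delta^k$ is only asserted, not proved, and it is exactly this relative case that the paper's fibration argument handles uniformly.
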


Part (1) is obvious, since a geometric situation provides only chain complexes concentrated in non-negative dimensions. Part (2) is shown in \cite[page 178]{Ranicki(1992)}. Part (3) follows from part (2) and the fibration sequence from Proposition \ref{prop:fib-seq-of-quad-sym-norm-connective-version}.

\section{Surgery sequences and the structure groups $\SS_n (X)$}  \label{sec:surgery-sequences}


Now we have assembled all the tools needed to define, for a finite simplicial complex $X$, the group $\SS_n (X)$, which is the home of the total surgery obstruction if $X$ is an $n$-dimensional Poincar\'e complex. It is important and useful to define not only the group $\SS_n (X)$ itself, but also to relate it to other groups which we might understand better. So the group $\SS_n (X)$ is placed into a commutative braid, which is obtained from the braid in section \ref{sec:alg-bord-cat} by plugging in suitable algebraic bordism categories. We recall these now.

In fact the categories are, as indicated below, various connective versions of the following categories. The underlying additive category with chain duality is
\begin{itemize}
 \item $\AA=\ZZ_{*}(X)$ is the additive category of finitely generated free $\ZZ$-modules over $X$..
\end{itemize}
Now we specify the subcategories of $\BB (\AA)$ needed to construct the braid. 
\begin{itemize}
\item $\BB = \BB(\ZZ_{*}(X))$ are the bounded chain complexes in $\AA$, 
\item $\CC = \{C\in\BB\;|\;A(C)\simeq *\}$ are the \emph{globally contractible} chain complexes in $\BB(\AA)$,
\item $\DD = \{C\in\BB\;|\;C(\sigma) \simeq * \; \forall \sigma\in K\}$ are the \emph{locally contractible} chain complexes in $\BB(\AA)$.
\end{itemize}

The precise connective versions used are indicated in the braid diagram below, which is taken from \cite[Proposition 15.18]{Ranicki(1992)}. Due to the lack of space we have omitted the underlying category $\AA$ from the notation as it is the same everywhere. We also note that obviously $\DD \langle 0 \rangle = \DD \langle 1 \rangle$: 

\vspace{1cm}

{\footnotesize
\[
\xymatrix@C=0.05em{ NL^n (\BB \langle 0 \rangle ,\DD \langle 0
\rangle) \ar^{(2)}[dr] \ar@/^2.5pc/_{(4)}[rr] & & NL^n (\BB \langle 0
\rangle,\BB \langle 1 \rangle) \ar[dr] \ar@/^2.5pc/[rr] & &
L_{n-1}\ (\BB \langle 1 \rangle,\CC \langle 1 \rangle) \\
& NL^n (\BB \langle 0 \rangle,\CC \langle 1\rangle) \ar[ur] \ar[dr]
& & L_{n-1} (\BB \langle 1 \rangle,\DD \langle 1 \rangle) \ar[ur] \ar[dr] \\
L_n (\BB \langle 1 \rangle,\CC \langle 1 \rangle) \ar_{(3)}[ur]
\ar@/_2.5pc/^{(1)}[rr] & & L_{n-1} (\CC \langle 1 \rangle,\DD \langle 1
\rangle) \ar[ur] \ar@/_2.5pc/[rr] & & NL^{n-1}\ (\BB \langle 0
\rangle,\DD \langle 0 \rangle) }
\]
}

\vspace{1cm}

Notice that the exact sequence labeled (1) is induced by the assembly functor $A \co \Lambda (\ZZ) \langle 1 \rangle_\ast (X) \ra \Lambda (\ZZ) \langle 1 \rangle (X)$ from section \ref{sec:assembly}. The other sequences are induced by analogous functors, the precise statements are left to the reader. It is more interesting at this stage that we have already identified various groups in the braid. We recapitulate using Propositions \ref{prop:L-theory-over-K-are-homology-theories-conn-versions} and \ref{prop:NL-spectrum-of-lower-star-K}:
\begin{align*}
L_n (\BB \langle 1 \rangle,\CC \langle 1 \rangle) & = L_n (\ZZ[\pi_1
(X)])  \\
L_n (\BB \langle 1 \rangle,\DD \langle 1 \rangle) & = H_n
(X;\bL_\bullet \langle 1 \rangle)  \\
NL^n (\BB \langle 0 \rangle,\DD \langle 0 \rangle) & = H_n
(X;\bL^\bullet \langle 0 \rangle))  \\
NL^n (\BB \langle 0 \rangle,\BB \langle 1 \rangle) & = H_n
(X;\bNL^\bullet \langle 1/2 \rangle)) \\
\end{align*}

The sequence containing only homology theories can be thought of as induced by the cofibration sequence from Proposition \ref{prop:fib-seq-of-quad-sym-norm-connective-version}. In addition we have new terms as follows. We keep the notation from the beginning of this section. 

\begin{defn} \cite[chapter 17]{Ranicki(1992)}
Let $X$ be a finite simplicial complex. Define the $n$-dimensional \emph{structure group} of $X$ to be 
\[
\SS_n (X) = L_{n-1} (\AA,\CC \langle 1 \rangle,\DD \langle 1 \rangle) 
\]
\end{defn}

So an element in $\SS_n (X)$ is represented by an $(n-1)$-dimensional $1$-connective QAC in $\ZZ_\ast (X)$ which is globally contractible and locally Poincar\'e. We will see in the following section how to obtain such a chain complex from a geometric situation. 

\begin{defn} \cite[chapter 17]{Ranicki(1992)}
Let $X$ be a finite simplicial complex. Define the $n$-dimensional \emph{visible symmetric $L$-group} of $X$ to be 
\[
 VL^n(X) = NL^n (\AA,\BB \langle 0 \rangle, \CC \langle 1 \rangle) 
\]
\end{defn}

The visible symmetric $L$-groups were defined by Weiss in \cite{Weiss(1992)} to clarify certain relations between the symmetric and quadratic $L$-groups. We will not need this aspect, what is important is that an element in $VL^n (X)$ is represented by an $n$-dimensional $0$-connective NAC in $\ZZ_\ast (X)$ whose underlying symmetric structure is locally $0$-connective and globally Poincar\'e. We will see in the following section a geometric situation which yields such a chain complex.

The sequence labeled (1) in the braid is known as the \emph{algebraic surgery exact sequence}:
\begin{equation}
\cdots \ra H_n (X,\bL_\bullet \langle 1 \rangle) \xra{A} L_n (\ZZ[\pi_1
(X)]) \xra{\del} \SS_n (X) \ra H_{n-1} (X;\bL_\bullet \langle 1 \rangle)
\ra \cdots
\end{equation}

Summarizing the above identification we obtain the commutative braid which will be our playground in the rest of the paper:    

\vspace{1cm}

\[
\xymatrix@C=0.5em{ H_n (X;\bL^\bullet \langle 0 \rangle) \ar[dr]
\ar@/^2.5pc/[rr] & & H_n (X;\bNL^\bullet \langle 1/2 \rangle)
\ar[dr] \ar@/^2.5pc/[rr] & &
L_{n-1}\ (\ZZ[\pi]) \\
& VL^n (X) \ar[ur] \ar[dr]^{\del} & & H_{n-1} (X;\bL_\bullet \langle 1 \rangle) \ar[ur] \ar[dr] \\
L_n (\ZZ[\pi]) \ar[ur] \ar@/_2.5pc/[rr]^{\del} & & \SS_{n} (X) \ar[ur]
\ar@/_2.5pc/[rr] & & H_{n-1}\ (X;\bL^\bullet \langle 0 \rangle) }
\]

\vspace{1cm}

\section{Normal signatures over $X$} \label{sec:normal-signatures-over-X}

As indicated in the introduction in order to define the total surgery obstruction of $X$ we need to discuss the normal and the visible signature of $X$. In this section we define the visible signature $\vsign_X (X) \in VL^n (X)$ as a refinement of the normal signature $\nsign_X (X) \in H_n (X;\bNL^\bullet \langle 1/2 \rangle)$. In fact, it should be expected that any $n$-dimensional GNC $(X,\nu,\rho)$ has an associated normal signature over $X$ which assembles to the normal signature over $\ZZ[\pi_1 X]$ defined in section \ref{sec:normal-cplxs}. However, we are not able to show such a general statement. We need to assume that the normal complex comes from a Poincar\'e complex with its SNF.\footnote{In \cite[Errata for page 103]{Errata-Ranicki(1992)} a construction for the normal signature over $X$ for a normal complex $X$ is actually given. However, since in the proof of the subsequent sections we directly use the specific properties of the construction presented in this section in the case when $X$ is Poincar\'e, we only discuss this special case.} Before we start we still need some technical preliminaries.

\begin{construction}
Recall some more ideas from \cite{Whitehead(1962)} surrounding the concept of supplement described in section
\ref{sec:gen-hlgy-thies}. Let $K \subseteq L$ be a simplicial subcomplex. The supplement is a subcomplex $\overline{K} \subseteq
L'$. As explained in \cite{Whitehead(1962)} there is an embedding
$|L| \subset |K'| \ast |\overline K|$ into the join of the two
realizations. A point in $|K'| \ast |\overline K|$ can be described
as $t \cdot x + (1-t) \cdot y$ for $x \in |K'|$, $y \in |\overline
K|$, and $t \in [0,1]$. The space $|L|$ can be decomposed as the
union of two subspaces
\begin{align*}
 N = N(K') = & \{ t \cdot x + (1-t) \cdot y \; | \; t \geq 1/2 \} \cap L \\ 
 \overline N = N(\overline K) = & \{ t \cdot x + (1-t) \cdot y \; | \; t \leq 1/2 \} \cap L.
\end{align*}
These come with obvious deformation retractions $r \co N \ra |K|$
and $\overline r \co \overline N \ra |\overline K|$.

Next denote $N(\sigma) = N \cap (|D(\sigma,K)| \ast |\overline K|)$
for $\sigma \in K$. Then we have the dissection $N = \cup_{\sigma
\in K} N(\sigma)$ and the retraction $r$ respects the dissections
of $N$ and $|K|$
\[
  r|_{N(\sigma)} = r(\sigma) \co N(\sigma) \ra |D(\sigma,K)|.
\]

\[
    \jointriangle
\]
\end{construction}

\begin{construction} \label{con:simpl-model-for-the-thom-space-of-snf}
Consider now the case when $X$ is a finite simplicial Poincar\'e
complex of dimension $n$ which we embed into $\del \Delta^{m+1}$, that means $K = X$ and $L = \del \Delta^{m+1}$ in the above notation. For $m$ large enough the homotopy fiber of the projection map $\del r \co \del N = N \cap \overline N \ra X$ is homotopy equivalent to
$S^{m-n-1}$ and the associated spherical fibration is the SNF
$\nu_X$. In more detail, there is a $(D^{m-n},S^{m-n-1})$-fibration
$p \co (D(\nu_X),S(\nu_X)) \ra X$ and a homotopy equivalence of
pairs $i \co (N,\del N) \ra (D(\nu_X),S(\nu_X))$ such that the
following diagram commutes
\[
 \xymatrix{
(N,\del N) \ar[rr]^{i} \ar[dr]_{r} & & (D(\nu_X),S(\nu_X)) \ar[dl]^{p} \\
& X & }
\]
The map $p$ is now an honest fibration. Recall from Definition \ref{defn:sigma-m} the complex $\Sigma^m$ and that we have an embedding $\overline X \subset \Sigma^m$. It follows that
\[
 |\Sigma^m / \overline X| \simeq N/ \del N \simeq \Th(\nu_X)
\]
\end{construction}

\begin{construction} \label{con:geom-normal-signature-over-X}
Now we would like to present an analogue of Construction \ref{con:sym-construction-over-cplxs-lower-star-cplx} for normal complexes. What we are aiming for is an assignment
\[
 \gnsign_X (X) \co \sigma \; \mapsto \; (X(\sigma),\nu(\sigma),\rho(\sigma)) \in \big(\bOmega^N_{n-m}\big)^{(m-|\sigma|)} \; \textup{for each} \; \sigma \in X.
\]

The first two entries are defined as follows:
\[
 X(\sigma) = |D(\sigma,X)| \quad \nu(\sigma) = \nu_X \circ \textup{incl} \co X(\sigma) \subset X \ra \BSG (m-n-1)
\]

To define $\rho (\sigma)$ consider the following commutative diagram
\[
 \xymatrix{
 N(\sigma) \ar[r] \ar[ddr] \ar@{-->}[dr]^{\rho(N(\sigma))} & N \ar[dr]^{i} & \\
 & D(\nu(\sigma)) \ar[r] \ar[d] & D(\nu_X) \ar[d] \\
 & |D(\sigma,X)| \ar[r] & |X|
}
\]

In general the homotopy fibers of the projections $r(\sigma) \co
\del N(\sigma) \ra |D(\sigma,X)|$ are not $S^{k-1}$. On the other
hand the pullback of $\nu_X$ along the inclusion $D(\sigma,X)
\subset X$ yields an $S^{m-n-1}$-fibration $\nu(\sigma)$. The
associated disc fibration is a pullback as indicated by the diagram.
Since the two compositions $N(\sigma) \ra |X|$ commute we obtain the
dashed map.

Recall the $(m-|\sigma|)$-dimensional simplex $\sigma^\ast \in
\Sigma^m$. Observe that we have
\[
 \Delta^{m-|\sigma|} \cong |\sigma^\ast| = N(\sigma) \cup \overline N(\sigma)
\]
where $\overline N (\sigma) = \overline N \cap (|D(\sigma,X)| \ast |\overline X|)$.
Define the map
\[
\rho (\sigma) = \rho(N(\sigma)) \cup \rho (\overline{N} (\sigma))
\co \Delta^{m-|\sigma|} \cong N(\sigma) \cup \overline N(\sigma) \ra
D(\nu(\sigma)) \cup \{ \ast \} \cong \thom{\nu(\sigma)}
\]
where the map $\rho (\overline{N} (\sigma))$ is the collapse map.
\end{construction}


\begin{defn} \label{defn:geom-normal-signature-over-X}
Let $X$ be an $n$-dimensional finite Poincar\'e simplicial complex with the associated $n$-dimensional GNC $(X,\nu_X,\rho_X)$. Then the assignment from Construction \ref{con:geom-normal-signature-over-X} defines an element
\[
 \gnsign_X (X) \in H_n (X;\bOmega^N_\bullet)
\]
and is called the \emph{geometric normal signature of $X$ over $X$}.
\end{defn}

\begin{rem}
The assembly of Remark \ref{rem:hlgical-assembly} satisfies
\[
 A(\gnsign_X (X)) = (X,\nu_X,\rho_X) \in \Omega^N_n.
\]
\end{rem}

\begin{defn} \label{lem:normal-signature-over-X}
Let $X$ be an $n$-dimensional finite Poincar\'e simplicial complex with the associated $n$-dimensional GNC $(X,\nu_X,\rho_X)$. The composition of the geometric normal signature over $X$ from Definition \ref{defn:geom-normal-signature-over-X} with the normal signature map on the level of spectra from Proposition \ref{prop:connective-signatures-on-spectra-level} produces a well-defined element
\[
 \nsign_X (X) = \nsign \circ \gnsign_X (X) \in H_n (X;\bNL^\bullet \langle 1/2 \rangle) \quad
\]
called the \emph{normal signature of $X$ over $X$}.
\end{defn}

\begin{rem}
We have
\[
 A(\nsign_X (X)) = \nsign_{\Zpi} (X) \in NL^n (\Zpi)
\]
Recall that an $n$-dimensional NAC has an underlying symmetric complex. In the case of $\nsign_X (X)$ this is the complex obtained in Construction \ref{con:sym-construction-over-cplxs-lower-star-cplx}. It is not locally Poincar\'e, that means, it does not give a symmetric complex in $\Lambda (\ZZ)_\ast (X)$. Its assembly is $\ssign_{\Zpi} (X) \in L^n (\Zpi)$.
\end{rem}

\begin{defn} \label{prop:visible-signature-over-X}
Let $X$ be an $n$-dimensional finite Poincar\'e simplicial complex with the associated $n$-dimensional GNC $(X,\nu_X,\rho_X)$. The assembly of the normal complex over $X$ which defines the normal signature over $X$ is Poincar\'e and hence produces an $n$-dimensional NAC in the algebraic bordism category $\Lambda (\ZZ)(X)$ from Definition \ref{defn:Lambda-K-category} and as such a well-defined element
\[
 \vsign_X (X) \in VL^n (X)
\]
called the \emph{visible symmetric signature of $X$ over $X$}.
\end{defn}


Now we present another related construction. It will not be needed for the definition of the total surgery obstruction, but it will be used in the proof of the main theorem. (See Theorem \ref{thm:lifts-vs-orientations}.)

Let $(f,b) \co M \ra X$ be a degree one normal map of $n$-dimensional topological manifolds such that $X$ is triangulated. As discussed in Example \ref{expl:normal-symm-poincare-pair-gives-quadratic} the pair $(W,M \sqcup X)$, where $W$ is the mapping cylinder of $f$ possesses a structure of an $(n+1)$-dimensional geometric (normal, topological manifold) pair: the spherical fibration denoted by $\nu (b)$ is obtained as the mapping cylinder of $b \co \nu_M \ra \nu_X$ and the required map as the composition $\rho (b) \co D^{n+k+1} \ra S^{n+k} \times [0,1] \ra \Th (\nu(b))$. Another way of looking at the pair $(W,M \sqcup X)$ is to say that it is an $n$-simplex in the space $\Sigma^{-1} \bOmega^{N,\STOP}_0$. Hence via the relative normal construction we can associate to $(f,b)$ an $(n+1)$-dimensional (normal,symmetric Poincar\'e) algebraic pair
\[
 \nssign (f,b) = (\nsign (W),\ssign (M) - \ssign (X)) \in \pi_n (\bF)
\]
where $\bF := \textup{Fiber} \; \bL^\bullet \langle 0 \rangle \ra \bNL^\bullet \langle 1/2 \rangle$.

We would like to associate to $(f,b)$, respectively $(W,M \sqcup X)$ an $(n+1)$-dimensional (normal,symmetric Poincar\'e) algebraic pair over $\ZZ_\ast (X)$. This is not exactly a relative version of the previous definitions since the pair is not Poincar\'e. Nevertheless, in this special case we are able to obtain what we want.

\begin{con} \label{con:normal-symmetric-signature-over-X-for-a-deg-one-normal-map}
Let $(f,b) \co M \ra X$ be a degree one normal map of $n$-dimensional topological manifolds such that $X$ is triangulated. We can assume that $f$ is transverse to the dual cell decomposition of $X$. Consider the dissection
\[
 X = \bigcup_{\sigma \in X} X(\sigma) \quad (f,b) = \bigcup_{\sigma \in X} (f(\sigma),b(\sigma)) \co M (\sigma) \ra X (\sigma)
\]
where each $(f(\sigma),b(\sigma))$ is a degree one normal map of $(n-|\sigma|)$-dimensional manifolds $(m-|\sigma|)$-ads. We obtain an assignment which to each $\sigma \in X$ associates an $(n+1-|\sigma|)$-dimensional pair of normal $(m-|\sigma|)$-ads
\[
 \sigma \mapsto ((W(\sigma),\nu(b(\sigma)),\rho(b(\sigma))),M(\sigma) \sqcup X(\sigma)).
\]
These fit together to produce an $\bOmega^N_\bullet$-cobordism of $\bOmega^{\STOP}_\bullet$-cycles in the sense of Definition \ref{defn:E-cycles}, or equivalently a $\Sigma^{-1} \bOmega^{N,\STOP}_\bullet$-cycle, providing us with an element
\[
\sign_X^{\G/\TOP} (f,b) \in H_n (X ; \Sigma^{-1} \bOmega^{N,\STOP}_\bullet) 
\]
Composing with the normal signature map $\nsign \co \bOmega^N_\bullet \ra \bNL^\bullet \langle 1/2 \rangle$ then produces a $\bNL^\bullet \langle 1/2 \rangle$-cobordism, which can be seen as an $(n+1)$-dimensional (normal,symmetric Poincar\'e) pair over $\ZZ_\ast (X)$
\[
 \nssign_X (f,b) = \nssign (\sign_X^{\G/\TOP} (f,b)) \in H_n (X ; \bF).
\]
By applying the homological assembly of Remark \ref{rem:hlgical-assembly} we obtain the $(n+1)$-dimensional $($normal,symmetric Poincar\'e$)$ pair
\[
 \nssign (f,b) \in \pi_n (\bF).
\]
\end{con}

\begin{rem}
Recall from Example \ref{expl:normal-symm-poincare-pair-gives-quadratic} the correspondence
\[
 (\nsign (W),\ssign (M) - \ssign (X)) \longleftrightarrow \qsign (f,b)
\]
where $\qsign (f,b) \in L_n (\ZZ)$ is the quadratic signature (=surgery obstruction) of the degree one normal map $(f,b)$. Using the relative version of Example \ref{expl:normal-symm-poincare-pair-gives-quadratic} we obtain in this situation an identification of $\nssign_X (f,b)$ with the a quadratic signature of Construction \ref{con:quad-construction-over-cplxs-lower-star}
\[
 \nssign _X (f,b) = \qsign_X (f,b) \in H_n (X ; \bL_\bullet \langle 1 \rangle).
\]
\end{rem}

\section{Definition of $s(X)$} \label{sec:defn-of-s(X)}

\begin{defn} \cite[17.1]{Ranicki(1992)} Define
\begin{equation}
 s(X) : = \partial \big( \vsign_X (X) \big) \in \SS_n (X).
\end{equation}
\end{defn}

Let us have a close look at the $(n-1)$-dimensional QAC $(C,\psi)$ in the category $\Lambda (\ZZ) \langle 1 \rangle_\ast (X)$ representing $s(X)$. By definition of $\vsign_X (X)$ and of the map $\del \co VL^n (X) \ra \SS_n (X)$ the subcomplex $C(\sigma)$ is the mapping cone of the duality map
\[
\varphi [X(\sigma)] \co \Sigma^n TC(\sigma) =
C^{n-|\sigma|} (D(\sigma)) \ra C(\sigma) = C(D(\sigma),\del
D(\sigma)).
\]
The quadratic structure $\psi (\sigma)$ is more subtle to describe. It corresponds to the normal structure on $X(\sigma)$ via Lemma \ref{lem:normal-gives-quadratic-boundary}.

We clearly see that if $X$ is a manifold then the mapping cones of the maps $\varphi [X(\sigma)]$ above are contractible and the total surgery obstruction equals $0$. If $X$ is homotopy equivalent to a manifold then the mapping cylinder of the homotopy equivalence provides via the constructions in Construction \ref{con:normal-symmetric-signature-over-X-for-a-deg-one-normal-map} a cobordism from $(C,\psi)$ to $0$.

\section{Proof of the Main Technical Theorem (I)} \label{sec:proof-part-1}


Recall the statement. For an $n$-dimensional finite Poincar\'e complex $X$ with $n \geq 5$ let $t(X)$ be the image of $s(X)$ under the map $\SS_n (X) \ra H_{n-1} (X;\bL_\bullet \langle 1 \rangle)$. Then $t(x) = 0$ if and only if there exists a topological block bundle reduction of the SNF $\nu_X$. The main idea of the proof is to translate the statement about the reduction of $\nu_X$ into a statement about orientations with respect to $L$-theory spectra. The principal references for this section are \cite[pages 280-292]{Ranicki(1979)} and \cite[section 16]{Ranicki(1992)}.

\subsection{Topological surgery theory} \label{subsec:top-surgery}

\

Before we start we offer some comments about the topological surgery and about the bundle theories used. The topological surgery is a modification of the surgery in the smooth and PL-category, due to Browder-Novikov-Sullivan-Wall as presented in \cite{Browder(1971)} and \cite{Wall(1999)}, by the work of Kirby and Siebenmann as presented in \cite{Kirby-Siebenmann(1977)}. This book also discusses various bundle theories and transversality theorems for topological manifolds. From our point of view the notion of a ``stable normal bundle'' for topological manifolds is of prominent importance. As explained in Essay III, \S 1, the notion of a stable microbundle is appropriate and there exists a corresponding transversality theorem, whose dimension and codimension restrictions are removed by \cite[Chapter 9]{Freedman-Quinn(1990)}. It is also explained that when enough triangulations are in sight, one can use block bundles and the stable microbundle transversality can be replaced by block transversality. This is thanks to the fact that for the classifying spaces we have $\BSTOP \simeq \BSbTOP$. Since for our problem we can suppose that the Poincar\'e complex $X$ is in fact a simplicial complex we can ask about the reduction of the SNF to a stable topological block bundle. When we talk about the degree one normal maps $(f,b) \co M \ra X$ we mean the stable microbundle normal data, since we need to work in full generality.

\subsection{Orientations} \label{subsec:orientations}

\

Let $\bE$ be a ring spectrum. An \emph{$\bE$-orientation} of a $\ZZ$-oriented spherical fibration $\nu \co X \ra \BSG(k)$ is an element of $u_{\bE} (\nu) \in H^k (\Th (\nu) ; \bE)$ that means a homotopy class of maps $u_{\bE} (\nu) \co \bT(\nu) \ra \bE$, where $\bT(\nu)$ denotes the Thom spectrum of $\nu$, such that for each $x \in X$, the restriction $u_{\bE} (\nu)_x \co \bT(\nu_x) \ra \bE$
to the fiber $\nu_x$ of $\nu$ over $x$ represents a generator of $\bE^\ast (\bT(\nu_x)) \cong \bE^\ast (S^k)$ which under the Hurewicz homomorphism $\bE^\ast (\bT(\nu_x)) \ra H^\ast (\bT(\nu_X);\ZZ)$ maps to the chosen $\ZZ$-orientation.


\subsection{Canonical orientations} \label{subsec:canonical-orientations}

\

Denote by $\bMSG$ the Thom spectrum of the universal stable
$\ZZ$-oriented spherical fibrations over the classifying space
$\BSG$. Its $k$-th space is the Thom space $\bMSG (k) =
\thom{\gamma_{\SG} (k)}$ of the canonical $k$-dimensional spherical
fibration $\gamma_{\SG} (k)$ over $\BSG (k)$. Similarly denote by
$\bMSTOP$ the Thom spectrum of the universal stable $\ZZ$-oriented
topological block bundles over the classifying space $\BSTOP \simeq
\BSbTOP$. Its $k$-th space is the Thom space $\bMSTOP (k) =
\thom{\gamma_{\SbTOP} (k)}$ of the canonical $k$-dimensional block
bundle $\gamma_{\SbTOP} (k)$ over $\BSbTOP (k)$. There is a map $J \co \bMSTOP \ra \bMSG$ defined by viewing the canonical block bundle $\gamma_{\SbTOP} (k)$ as a spherical fibration. 

Both $\bMSG$ and $\bMSTOP$ are ring spectra. The multiplication on $\bMSTOP$ is given by the Cartesian product of block bundles. The multiplication on $\bMSG$ is given by the sequence of the operations: take the associated disk fibrations, form the product disk fibration and take the associated spherical fibration. Upon precomposition with the diagonal map the multiplication on $\bMSTOP$ becomes the Whitney sum and the multiplication on $\bMSG$ becomes fiberwise join. The map $J \co \bMSTOP \ra \bMSG$ is a map of ring spectra.


\begin{prop} \textup{\cite[pages 280-283]{Ranicki(1979)}} \label{canonical-geom-orientations} \
\begin{enumerate}
 \item Any $k$-dimensional $\ZZ$-oriented spherical fibration $\alpha \co X \ra \BSG (k)$ has a canonical orientation $u_{\bMSG} (\alpha) \in H^k (\Th(\alpha);\bMSG)$.
 \item Any $k$-dimensional  $\ZZ$-oriented topological block bundle $\beta \co X \ra \BSbTOP (k)$ has a canonical orientation $u_{\bMSTOP} (\beta) \in H^k (\Th(\beta);\bMSTOP)$.
\end{enumerate}
Moreover $J (u_{\bMSTOP} (\beta)) = u_{\bMSG} (J (\beta))$.
\end{prop}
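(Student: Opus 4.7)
The plan is to construct both orientations directly from the classifying maps, then verify the compatibility with $J$ by naturality. For part (1), I would observe that the classifying map $\alpha \co X \ra \BSG(k)$ comes (up to homotopy) with a bundle map $\tilde\alpha$ from $\alpha$ to the canonical spherical fibration $\gamma_{\SG}(k)$ covering it. Applying the Thom space construction gives a map
\[
 u_{\bMSG}(\alpha) \co \Th(\alpha) \ra \Th(\gamma_{\SG}(k)) = \bMSG(k),
\]
which represents an element of $H^k(\Th(\alpha);\bMSG)$. Canonicality follows from the fact that the bundle map $\tilde\alpha$ is unique up to homotopy over $\alpha$, and the $\ZZ$-orientation of $\alpha$ is assumed pulled back from the universal $\ZZ$-orientation on $\gamma_{\SG}(k)$. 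Part (2) is entirely analogous, applied to $\beta \co X \ra \BSbTOP(k)$ and $\gamma_{\SbTOP}(k)$.

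Next I would verify the orientation condition. For $x \in X$, the restriction of $u_{\bMSG}(\alpha)$ to the fiber $\Th(\alpha_x) \simeq S^k$ factors as the inclusion of a fiber of the universal fibration into its total Thom space, i.e.\ the canonical inclusion $S^k \hookrightarrow \bMSG(k)$. This represents a generator of $\pi_k(\bMSG) \cong \bMSG^0(S^k)$ (namely the image of $1 \in \pi_0^S$ under the unit map of the ring spectrum, since $\bMSG(k)$ is $(k-1)$-connected with $\pi_k(\bMSG(k)) = \ZZ$ generated by the fiber inclusion). Under the Hurewicz homomorphism to $H^k(S^k;\ZZ)$ this goes to the standard generator, which by hypothesis matches the chosen $\ZZ$-orientation on $\alpha_x$. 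The same reasoning applies verbatim in the $\bMSTOP$ case.

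Finally, for the compatibility $J(u_{\bMSTOP}(\beta)) = u_{\bMSG}(J(\beta))$, I would invoke naturality of the classifying map construction. By definition, the ring spectrum map $J \co \bMSTOP \ra \bMSG$ is obtained at the level of $k$-th spaces from the map $\Th(\gamma_{\SbTOP}(k)) \ra \Th(\gamma_{\SG}(k))$ induced by viewing the canonical block bundle as a spherical fibration, i.e.\ by the canonical map $\BSbTOP(k) \ra \BSG(k)$. Since $J(\beta)$ is classified by the composite $X \xra{\beta} \BSbTOP(k) \ra \BSG(k)$, the associated bundle maps fit into a commutative diagram whose Thom-ification yields precisely the desired identity of orientations.

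The main subtlety is pinning down the bundle map $\tilde\alpha$ uniquely enough for the word \emph{canonical} to be justified; this is standard once one works with the universal fibrations over classifying spaces $\BSG(k)$, $\BSbTOP(k)$ of CW-type and uses that any two lifts of the classifying map to a bundle map are fiberwise homotopic, so the induced maps of Thom spaces are canonically homotopic and the cohomology classes coincide.
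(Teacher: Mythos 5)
Your proposal is correct and follows essentially the same route as the paper: the paper dispenses with the proof in a single sentence, noting only that a spherical fibration (or block bundle) is the pullback of the universal one along its classifying map, and you have simply spelled out the details — Thom-ification of the bundle map, fiber restriction to check the orientation condition, and naturality for the $J$-compatibility.
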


This follows since any spherical fibration (or a topological block
bundle) is a pullback of the universal via the classifying map.

\subsection{Transversality} \label{subsec:transversality}

\

By transversality one often describes statements which assert that a
map from a manifold to some space with a closed subspace can be
deformed by a small homotopy to a map such that the inverse image of
the closed subspace is a submanifold. Such notion of transversality
can then be used to prove various versions of the Pontrjagin-Thom
isomorphism. For example topological transversality of
Kirby-Siebenmann \cite[Essay III]{Kirby-Siebenmann(1977)} and Freedman-Quinn \cite[chapter 9]{Freedman-Quinn(1990)} implies that the classifying map induces
\begin{equation} \label{htpy-eq-top-transversality}
 c \co \bOmega_\bullet^{STOP} \simeq \bMSTOP.
\end{equation}

On the other hand normal transversality used here has a different
meaning, no statement invoking preimages is
required.\footnote{Although there are some such statements
\cite{Hausmann-Vogel(1993)}, we will not need them.} It just means
that there is the homotopy equivalence
(\ref{htpy-eq-normal-transversality}) below inducing a
Pontrjagin-Thom isomorphism. To arrive at it one can use the ideas
described in \cite[Errata]{Ranicki(1992)}. Recall the spectrum
$\bOmega_\bullet^N$ from section \ref{sec:spectra}. Further recall
for a space $X$ with a $k$-dimensional spherical fibration $\nu \co
X \ra \BSG (k)$ the space $\bOmega_n^N (X,\nu)$ of normal spaces
with a degree one normal map to $(X,\nu)$. The normal transversality
described in \cite[Errata]{Ranicki(1992)} says that the classifying map induces
\[
 c \co \bOmega_0^N (X,\nu) \simeq \thom{\nu}
\]
We have the classifying space $\BSG (k)$ with the canonical
$k$-dimensional spherical fibration $\gamma_\SG (k)$. The spectrum
$\bOmega_\bullet^N$ can be seen as the colimit of spectra
$\bOmega_\bullet^N (\BSG(k),\gamma_\SG (k))$. The normal
transversality from \cite[Errata]{Ranicki(1992)} translates into
homotopy equivalence
\begin{equation} \label{htpy-eq-normal-transversality}
 \bOmega_\bullet^N \simeq \bMSG.
\end{equation}

There are multiplication operations on the spectra $\bOmega^\STOP_\bullet$ and $\bOmega^N_\bullet$, which make the above Pontrjagin-Thom maps to ring spectra homotopy equivalences. These operations are given by Cartesian products. However, we will not use this point of view later.

To complete the picture we denote
\begin{equation} \label{eqn:defn-of-MSGTOP}
 \bMSGTOP := \textup{Fiber} \; (\bMSTOP \ra \bMSG)
\end{equation}
and observe that the above classifying maps induce yet another Pontrjagin-Thom isomorphism 
\begin{equation} \label{eqn:normal-topological-transversality}
 \Sigma^{-1} \bOmega^{N,\STOP}_\bullet \simeq \bMSGTOP. 
\end{equation}
Furthermore we have that $\bMSGTOP$ is a module spectrum over $\bMSTOP$ and similarly $\Sigma^{-1} \bOmega^{N,\STOP}_\bullet$ is a module spectrum over $\bOmega^{\STOP}_\bullet$.


\subsection{$L$-theory orientations} \label{subsec:L-theory-orientations}


Here we use the signature maps between the spectra from section \ref{sec:conn-versions} to construct orientations with respect to the $L$-theory spectra. We recall that $\bNL^\bullet \langle 1/2\rangle $ and $\bL^\bullet \langle 0 \rangle$ are ring spectra with the multiplication given by the products of \cite[section 8]{Ranicki-I-(1980)} and \cite[Appendix B]{Ranicki(1992)}. The spectrum $\bL_\bullet \langle 1 \rangle$ is a module over $\bL^\bullet \langle 0 \rangle$ again by the products of \cite[section 8]{Ranicki-I-(1980)} and \cite[Appendix B]{Ranicki(1992)}.

\begin{prop} \textup{\cite[pages 284-289]{Ranicki(1979)}} \label{canonical-L-orientations} \
\begin{enumerate}
\item Any $k$-dimensional $\ZZ$-oriented spherical fibration $\alpha \co X \ra \BSG (k)$ has a canonical orientation $u_{\bNL^\bullet}(\alpha) \in H^k (\Th(\alpha);\bNL^\bullet \langle 1/2 \rangle)$.
\item Any $k$-dimensional $\ZZ$-oriented topological block bundle $\beta \co X \ra \textup{BS}\bTOP (k)$ has a canonical orientation $u_{\bL^\bullet}( \beta) \in H^k (\Th(\beta);\bL^\bullet \langle 0 \rangle)$.
\end{enumerate}
Moreover $J (u_{\bL^\bullet} (\beta)) = u_{\bNL^\bullet} (J
(\beta))$.
\end{prop}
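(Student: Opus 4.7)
The plan is to deduce Proposition \ref{canonical-L-orientations} from the canonical $\bMSG$- and $\bMSTOP$-orientations of Proposition \ref{canonical-geom-orientations} by composing with appropriate signature maps on the level of ring spectra. More precisely, I would combine the Pontrjagin--Thom type identifications from \S\ref{subsec:transversality}, namely $\bOmega^{N}_\bullet \simeq \bMSG$ and $\bOmega^{\STOP}_\bullet \simeq \bMSTOP$, with the signature maps of Proposition \ref{prop:connective-signatures-on-spectra-level}:
\[
  \nsign \co \bOmega^{N}_\bullet \ra \bNL^\bullet \langle 1/2 \rangle, \qquad
  \ssign \co \bOmega^{\STOP}_\bullet \ra \bL^\bullet \langle 0 \rangle.
\]
Writing $\sigma_{\bNL} \co \bMSG \ra \bNL^\bullet \langle 1/2 \rangle$ and $\sigma_{\bL} \co \bMSTOP \ra \bL^\bullet \langle 0 \rangle$ for the resulting composites, I would define
\[
  u_{\bNL^\bullet}(\alpha) := (\sigma_{\bNL})_{*}\,u_{\bMSG}(\alpha), \qquad
  u_{\bL^\bullet}(\beta)  := (\sigma_{\bL})_{*}\,u_{\bMSTOP}(\beta),
\]
where the pushforward is applied to the homotopy class of maps of Thom spectra representing the given $\bMSG$- and $\bMSTOP$-orientations.

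The first step is to verify that $\sigma_{\bNL}$ and $\sigma_{\bL}$ are maps of ring spectra (up to homotopy), so that they send Thom classes of the universal bundles to Thom classes of the associated $L$-theory spectra. In $\bMSTOP$ the multiplication comes from Cartesian products of block bundles, which on the level of manifold $(k+2)$-ads corresponds to the Cartesian product, and this is compatible with the product of symmetric $L$-theory from \cite[\S 8]{Ranicki-I-(1980)} and \cite[Appendix~B]{Ranicki(1992)}: the symmetric signature of a product is the product of symmetric signatures. The analogous statement for $\nsign$ is more delicate because one has to compare the fiberwise join of spherical fibrations (the multiplication on $\bMSG$) with the product of normal complexes and then with the $\bNL^\bullet$-product; this will be the main obstacle and is precisely what is carried out in \cite[pp.~284--289]{Ranicki(1979)}.

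Given that $\sigma_{\bNL}$ and $\sigma_{\bL}$ are maps of ring spectra, the orientation property is automatic: the restriction to a fiber $S^k$ corresponds to computing the signature of a trivial fibration/bundle over a point in dimension $k$, which is a generator of $\pi_0 \bNL^\bullet \langle 1/2 \rangle \cong \ZZ$ respectively $\pi_0 \bL^\bullet \langle 0 \rangle \cong \ZZ$ (Proposition \ref{prop:fib-seq-of-quad-sym-norm-connective-version}), and whose Hurewicz image equals the chosen generator of $H^0(S^0;\ZZ)$. This step amounts to a direct identification on the level of fibers, using the explicit description of the low-dimensional homotopy groups of the $L$-theory spectra.

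For the last statement, the compatibility $J(u_{\bL^\bullet}(\beta)) = u_{\bNL^\bullet}(J(\beta))$ follows by naturality once we observe that the square
\[
  \xymatrix{
    \bMSTOP \ar[r]^-{J} \ar[d]_{\sigma_{\bL}} & \bMSG \ar[d]^{\sigma_{\bNL}} \\
    \bL^\bullet \langle 0 \rangle \ar[r]^-{J} & \bNL^\bullet \langle 1/2 \rangle
  }
\]
commutes up to homotopy. Both composites send a topological block bundle to its underlying spherical fibration first and then take the normal signature, which by Lemma \ref{lem:symmetric-poincare-means-unique-normal} (via the forgetful functor from $\Lambda(\ZZ)$ to $\widehat\Lambda(\ZZ)$) agrees with applying the map $J \co \bL^\bullet \langle 0 \rangle \ra \bNL^\bullet \langle 1/2 \rangle$ to the symmetric signature. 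In other words, any topological manifold $(k+2)$-ad has the same associated NAC whether we first forget to a normal space and take $\nsign$ or first take $\ssign$ and then forget via $J$; this is precisely the content of the identification of $L^n$ with $NL^n$ in the Poincar\'e case used throughout \S\ref{sec:normal-cplxs}. This naturality argument closes the loop and yields the claimed compatibility.
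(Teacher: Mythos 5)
Your proposal is correct and follows essentially the same route as the paper: push forward the canonical $\bMSG$- and $\bMSTOP$-orientations of Proposition \ref{canonical-geom-orientations} along the composites $\bMSTOP\simeq\bOmega^{\STOP}_\bullet\ra\bL^\bullet\langle 0\rangle$ and $\bMSG\simeq\bOmega^N_\bullet\ra\bNL^\bullet\langle 1/2\rangle$, and obtain the compatibility from the homotopy-commutative square relating these maps. The paper's proof is stated more tersely as a single diagram; your additional remarks (ring-map property, fiberwise check of the orientation condition, Lemma \ref{lem:symmetric-poincare-means-unique-normal} for the commutativity of the square) are the implicit content of that diagram.
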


\begin{proof}
These orientations are obtained from maps between spectra using the following up to homotopy commutative diagram of spectra:
\[
 \xymatrix{
\bMSTOP \ar[r] \ar[d] & \bOmega_\bullet^{STOP} \ar[r]^-{\ssign} \ar[d] & \bL^\bullet \langle 0 \rangle \ar[d] \\
\bMSG \ar[r] & \bOmega_\bullet^N \ar[r]_-{\nsign} & \bNL^\bullet
\langle 1/2 \rangle }
\]
where the maps in the left hand part of the diagram are the homotopy
inverses of the transversality homotopy equivalences.
\end{proof}


\subsection{$S$-duality} \label{subsec:S-duality}


\

If $X$ is a Poincar\'e complex with the SNF $\nu_X \co X \ra \BSG
(k)$ then we have the $S$-duality $\thom{\nu_X}^\ast \simeq X_+$
producing isomorphisms
\begin{align*}
 S \co H^k (\Th(\nu_X);\bNL^\bullet \langle 1/2 \rangle) & \cong H_n (X;\bNL^\bullet \langle 1/2 \rangle) \\
 S \co H^k (\Th(\nu_X);\bL^\bullet \langle 0 \rangle) & \cong H_n (X;\bL^\bullet \langle 0 \rangle).
\end{align*}

The following proposition describes a relation between the signatures over $X$ in homology, obtained in sections \ref{sec:gen-hlgy-thies} and \ref{sec:normal-signatures-over-X} and orientations in cohomology from this section.

\begin{prop} \textup{\cite[Proposition 16.1.]{Ranicki(1992)}} \label{prop:S-duals-of-orientations-are-signatures}
If $X$ is an $n$-dimensional geometric Poincar\'e complex with the
Spivak normal fibration $\nu_X \co X \ra \BSG(k)$ then we have
\[
S (u_{\bNL^\bullet} (\nu_X)) = \nsign_X (X) \in H_n (X;\bNL^\bullet
\langle 1/2 \rangle).
\]
If $\bar \nu_X$ is a topological block bundle reduction of the SNF
of $X$ and $(f,b) \co M \ra X$ is the associated degree one normal
map, then we have
\[
S (u_{\bL^\bullet} (\bar \nu_X)) = \ssign_X (M) \in H_n (X;\bL^\bullet
\langle 0 \rangle).
\]
\end{prop}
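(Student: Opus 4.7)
The plan is to factor both identities through the bordism spectra, exploiting the factorizations
\[
u_{\bNL^\bullet}(\nu_X) \;=\; \nsign_\ast\bigl(u_{\bMSG}(\nu_X)\bigr),
\qquad
u_{\bL^\bullet}(\bar\nu_X) \;=\; \ssign_\ast\bigl(u_{\bMSTOP}(\bar\nu_X)\bigr)
\]
built into the definition of the canonical $L$-theory orientations in Proposition~\ref{canonical-L-orientations}, and the analogous factorizations
$\nsign_X(X) = \nsign_\ast(\gnsign_X(X))$ (Definition~\ref{lem:normal-signature-over-X}) and $\ssign_X(M) = \ssign_\ast(\stopsign_X(M))$ (the remark following Definition~\ref{defn:stop-sign}). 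Since $S$-duality is natural in the coefficient spectrum, it suffices to prove the bordism-level statements
\begin{equation}\label{eq:plan-bordism}
S\bigl(u_{\bMSG}(\nu_X)\bigr)=\gnsign_X(X)\in H_n(X;\bOmega^N_\bullet),
\qquad
S\bigl(u_{\bMSTOP}(\bar\nu_X)\bigr)=\stopsign_X(M)\in H_n(X;\bOmega^{\STOP}_\bullet),
\end{equation}
and then apply the spectrum-level signature maps of Proposition~\ref{prop:connective-signatures-on-spectra-level}.

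To prove the first identity in \eqref{eq:plan-bordism}, I would use the simplicial model of $S$-duality from Construction~\ref{con:simpl-model-for-the-thom-space-of-snf}: for $m\gg 0$ with $X\hookrightarrow\partial\Delta^{m+1}$, there is a homotopy equivalence $\Sigma^m/\overline{X}\simeq \Th(\nu_X)$ coming from the decomposition $|L|=N\cup\overline N$ along the mid-join $\partial N$. Under the identification $H^k(\Th(\nu_X);\bMSG)\cong H_n(X;\bMSG)\cong H_n(X;\bOmega^N_\bullet)$ given by Proposition~\ref{prop:hom=cohom+S} (applied with $\bE=\bMSG\simeq\bOmega^N_\bullet$), the $\bE$-cycle representation from Definition~\ref{defn:E-cycles} of the image of a class $\Th(\nu_X)\to \bMSG(k)$ is obtained by restricting to the cells $\sigma^\ast\subset\Sigma^m$ for $\sigma\in X$. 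I would then verify that the cell $|\sigma^\ast|$, via its decomposition $N(\sigma)\cup\overline N(\sigma)$, maps under the canonical classifying map precisely to $\rho(\sigma)\co\Delta^{m-|\sigma|}\to\Th(\nu(\sigma))$ constructed in Construction~\ref{con:geom-normal-signature-over-X}. In other words, the explicit construction of $\gnsign_X(X)$ is engineered so that it is the $\bOmega^N_\bullet$-cycle representative of the $S$-dual of the classifying map $\Th(\nu_X)\to\bMSG$; this is precisely the content of the equivalence~\eqref{htpy-eq-normal-transversality} applied cell-wise to the $S$-dual decomposition. Feeding this through $\nsign$ yields the first formula.

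For the second identity in \eqref{eq:plan-bordism} the argument is formally the same. Given a reduction $\bar\nu_X\co X\to\BSbTOP(k)$ and the associated degree one normal map $(f,b)\co M\to X$, topological transversality (\ref{htpy-eq-top-transversality}) lets me make $f$ transverse to the dual cells $D(\sigma,X)\subset X$ so that $M(\sigma)=f^{-1}(X(\sigma))$ defines $\stopsign_X(M)$ as in Definition~\ref{defn:stop-sign}. Under the same simplicial $S$-duality $\Sigma^m/\overline X\simeq \Th(\nu_X)\simeq\Th(\bar\nu_X)$, the restriction of the classifying map $\Th(\bar\nu_X)\to\bMSTOP(k)$ to the cell $|\sigma^\ast|$ agrees, after transversality, with the manifold $(m-|\sigma|)$-ad $M(\sigma)$ thought of as an element of $(\bOmega^{\STOP}_{n-m})^{(m-|\sigma|)}$; this is the Pontrjagin--Thom construction applied in parallel across the dissection. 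Applying $\ssign$ finishes the argument.

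The main obstacle will be step one: carrying out a careful cell-by-cell identification between the simplicial $S$-duality equivalence $\Sigma^m/\overline X\simeq\Th(\nu_X)$ and the cellular gluing used to define $\gnsign_X(X)$ and $\stopsign_X(M)$. The delicate point is that $\rho(\sigma)$ is assembled from the two halves $N(\sigma)$ and $\overline N(\sigma)$, with $\overline N(\sigma)$ collapsed to the basepoint, and one must check that this matches the cellular collapse $\Sigma^m\to \Sigma^m/\overline X\simeq\Th(\nu_X)$ up to coherent homotopy; once this geometric bookkeeping is done, the passage through $\nsign$ and $\ssign$ is formal by naturality of $S$-duality in the coefficient spectrum, and by construction of the canonical $L$-theory orientations in Proposition~\ref{canonical-L-orientations}.
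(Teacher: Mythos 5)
Your proposal is correct and follows essentially the same route as the paper's own proof: reduce to the bordism level via the factorizations $u_{\bNL^\bullet}=\nsign_\ast u_{\bMSG}$, $u_{\bL^\bullet}=\ssign_\ast u_{\bMSTOP}$ (Proposition~\ref{canonical-L-orientations}) and $\nsign_X(X)=\nsign_\ast\gnsign_X(X)$, $\ssign_X(M)=\ssign_\ast\stopsign_X(M)$, then identify the $S$-dual of the Pontrjagin--Thom classifying map with the geometric cycle $\gnsign_X(X)$ (resp.\ $\stopsign_X(M)$) cell-by-cell over $\Sigma^m/\overline X\simeq\Th(\nu_X)$, which is exactly the content of the two commutative squares the paper writes down. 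The "main obstacle" you flag is indeed where the paper's proof concentrates its effort, phrased there as the assertion that both sides of each square are characterized by the pullback of the universal fibration (resp.\ the Pontrjagin--Thom preimage after transversality).
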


\begin{proof}
The identification of the normal signature $\nsign_X (X)$ as the canonical orientation follows from the commutative diagram of simplicial sets
\[
\xymatrix{
\Sigma^m/\overline X \ar[d]_{i} \ar[rr]^{\gnsign_X (X)} & & \Omega_{-k}^N \ar[d]^{c} \\
\textup{Sing} \; \Th(\nu_X) \ar[rr]_-{u_{\bMSG}(\nu_X)} & & \textup{Sing} \; \bMSG (k) }
\]
This in turn is seen by inspecting the definitions of the maps in the diagram. The upper horizontal map comes from \ref{con:geom-normal-signature-over-X}, the map $i$ from \ref{con:simpl-model-for-the-thom-space-of-snf} and the other two maps were defined in this section. Note that the classifying map $c$ is characterized by the property that the classified spherical fibration is obtained as the pullback of the canonical $\gamma_{\SG}$ along $c$. But this is also the characterization of the canonical orientation $u_{\bMSG} (\nu_X)$. The desired statement is obtained by composing with $\nsign \co \bOmega^N_\bullet \ra \bNL^\bullet \langle 1/2 \rangle$.

For the second part recall how the degree one normal map $(f,b)$ associated to $\bar \nu_X$ is constructed. Consider the composition $\Sigma^m \ra \Sigma^m / \overline X \ra \Th (\bar \nu_X)$. Since $\bar \nu_X$ is a stable topological block bundle this map can be made transverse to $X$ and $M$ is the preimage, $f$ is the restriction of the map to $M$ and it is covered by a map of stable microbundles $\nu_M \ra \bar \nu_X$, where $\nu_M$ is the stable normal microbundle of $M$. In addition this can be made in such a way that $f$ is transverse to the dual cells of $X$. Hence we obtain a dissection of $M$ which gives rise to the symmetric signature $\ssign_X (M)$ over $X$ as in Construction \ref{con:sym-construction-over-cplxs-lower-star-mfd}. It fits into the following diagram
\[
\xymatrix{
\Sigma^m/\overline X \ar[d]_{i} \ar[rr]^{\stopsign_X (M)} & & \Omega_{-k}^{\STOP} \ar[d]^{c} \\
\textup{Sing} \; \Th(\bar \nu_X) \ar[rr]_-{u_{\bMSTOP}(\bar \nu_X)} & & \textup{Sing} \; \bMSTOP (k) }
\]
The desired statement is obtained by composing with $\ssign \co \bOmega^{\STOP}_\bullet \ra \bL^\bullet \langle 0 \rangle$.
\end{proof}

Suppose now that we are given a degree one normal map $(f,b) \co M \ra X$ between $n$-dimensional topological manifolds with $X$ triangulated. In Construction \ref{con:normal-symmetric-signature-over-X-for-a-deg-one-normal-map} we defined the (normal,symmetric Poincar\'e) signature $\nssign_X (f,b)$ over $X$ associated to $(f,b)$. In analogy with the previous proposition we would like to interpret this signature as an orientation via the $S$-duality. For this recall first that specifying the degree one normal map $(f,b)$ is equivalent to specifying a pair $(\nu,h)$, with $\nu \co X \ra \BSTOP$ and $h \co J(\nu) \simeq \nu_X$, where in our situation the SNF $\nu_X$ has a preferred topological block bundle lift, also denoted $\nu_X$, coming from the stable normal bundle of $X$ (see subsection \ref{subsec:normal-invariants-revisited} if needed). The homotopy $h$ gives us a spherical fibration over $X \times I$ with the canonical orientation $u^{\bMSG} (h)$ which we view as a homotopy between the orientations $J(u^{\bMSTOP} (\nu))$ and $J(u^{\bMSTOP} (\nu_X))$. In this way we obtain an element
\[ 
u^{\G/\TOP} (\nu,h) \in H^k (\Th (\nu_X); \bMSGTOP) 
\]
given by 
\[
 u^{\G/\TOP} (\nu,h) = (u^{\bMSG} (h) , u^{\bMSTOP} (\nu) - u^{\bMSTOP} (\nu_X)).
\]
The Pontrjagin-Thom isomorphism (\ref{eqn:normal-topological-transversality}) together with the normal and symmetric signature $\nssign \co \Sigma^{-1} \bOmega^{N,\STOP}_\bullet \ra \bF$ provide us with the pair 
\[
u^{\bNL^\bullet,\bL^\bullet} (\nu,h) = (u^{\bNL^\bullet} (h),u^{\bL^\bullet} (\nu) - u^{\bL^\bullet} (\nu_X)) \in H^k (\Th (\nu_X) ; \bF). 
\]

\begin{prop} \label{prop:S-duals-of-orientations-are-signatures-relative-case}
Let $(f,b) \co M \ra X$ be a degree one normal map of $n$-dimensional simply-connected topological manifolds with $X$ triangulated, corresponding to the pair $(\nu,h)$, where $\nu \co X \ra \BSTOP$ and $h \co J(\nu) \simeq \nu_X$. Then we have
\[
 S (u^{\bNL^\bullet,\bL^\bullet} (\nu,h)) = \nssign_X (f,b) \in H_n (X ; \bF).
\]
\end{prop}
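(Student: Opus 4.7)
The plan is to prove this by paralleling the argument for the absolute case in Proposition \ref{prop:S-duals-of-orientations-are-signatures}, first establishing a bordism-level identity and then composing with the signature map of spectra.

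First I would prove the bordism-level identity
\[
S\bigl(u^{\G/\TOP}(\nu,h)\bigr) = \sign_X^{\G/\TOP}(f,b) \in H_n\bigl(X;\Sigma^{-1}\bOmega^{N,\STOP}_\bullet\bigr)
\]
by constructing a commutative diagram of pointed simplicial sets
\[
\xymatrix{
\Sigma^m/\overline{X} \ar[d]_{i} \ar[rr]^-{\sign_X^{\G/\TOP}(f,b)} & & \Sigma^{-1}\bOmega^{N,\STOP}_{-k} \ar[d]^{c} \\
\textup{Sing}\,\Th(\nu_X) \ar[rr]_-{u^{\G/\TOP}(\nu,h)} & & \textup{Sing}\,\bMSGTOP(k)
}
\]
where $i$ is the simplicial model for $\Th(\nu_X)$ from Construction \ref{con:simpl-model-for-the-thom-space-of-snf} and $c$ is the Pontrjagin-Thom equivalence from (\ref{eqn:normal-topological-transversality}). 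The verification is simplex by simplex: for each $\sigma \in X$, transversality of $f$ to the dual cell $D(\sigma,X)$ produces exactly the normal-topological ad $(W(\sigma), M(\sigma) \sqcup X(\sigma))$ whose bundle data $\nu(b)|_{W(\sigma)}$ is classified, via the canonical character of the $\bMSG$-orientation on spherical fibrations and of the $\bMSTOP$-orientation on block bundles, by the restrictions of $\nu$, $\nu_X$ and the homotopy $h$ to $D(\sigma,X)$.

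The proposition will then follow by applying to both sides the map of spectra $\nssign \co \Sigma^{-1}\bOmega^{N,\STOP}_\bullet \to \bF$ from Proposition \ref{prop:connective-signatures-on-spectra-level}. On the left, by definition in Construction \ref{con:normal-symmetric-signature-over-X-for-a-deg-one-normal-map}, the image of $\sign_X^{\G/\TOP}(f,b)$ is $\nssign_X(f,b)$. On the right, writing the orientation componentwise as $u^{\G/\TOP}(\nu,h) = (u^{\bMSG}(h),\, u^{\bMSTOP}(\nu) - u^{\bMSTOP}(\nu_X))$, Proposition \ref{canonical-L-orientations} together with the definition of $u^{\bNL^\bullet,\bL^\bullet}(\nu,h)$ given just before the proposition identify the image as $u^{\bNL^\bullet,\bL^\bullet}(\nu,h)$. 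Naturality of $S$-duality in the coefficient spectrum then yields the claimed equality.

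The main obstacle will be the simplex-by-simplex verification of the commutative diagram above, in particular the matching of the classifying data for $(\nu|_{D(\sigma)}, \nu_X|_{D(\sigma)}, h|_{D(\sigma)})$ with the pair $(W(\sigma), M(\sigma) \sqcup X(\sigma))$ obtained from transversality. The simply connected hypothesis enters at this point to legitimize the use of the non-equivariant $S$-duality throughout and to avoid the covering-space bookkeeping that would otherwise be required (cf.\ Example \ref{expl:normal-symm-poincare-pair-gives-quadratic}). Once each component is identified, the two parts of the absolute case (Proposition \ref{prop:S-duals-of-orientations-are-signatures}) handle the two boundary manifolds $M$ and $X$, while the homotopy $h$ precisely records the relative data over $W$, so the relative compatibility reduces to naturality of the canonical orientations under restriction to boundary components.
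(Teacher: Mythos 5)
Your proposal is correct and follows essentially the same two-step approach as the paper's proof: first establish the commutative square relating the $\G/\TOP$-signature to the orientation $u^{\G/\TOP}(\nu,h)$ under $S$-duality, then compose with the signature map $\nssign \co \Sigma^{-1}\bOmega^{N,\STOP}_\bullet \to \bF$. The only cosmetic deviation is that the paper writes the lower-left corner of the square as $\textup{Sing}\,F(\nu,\nu_X)$ where $F(\nu,\nu_X) = \textup{Pullback}(\Th(\nu)\to\Th(\nu_X)\leftarrow\Th(\nu_X))$ rather than $\textup{Sing}\,\Th(\nu_X)$, a homotopy-equivalent model chosen to make the two components of the $\bMSGTOP(k)$-valued map manifestly compatible.
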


\begin{proof}
The proof is analogous to the proof of Proposition \ref{prop:S-duals-of-orientations-are-signatures}. Recall that the signature $\sign^{\G/\TOP}_X (f,b)$  is constructed using a dissection of the degree one normal map $(f,b)$. Using this dissection we inspect that we have a commutative diagram
\[
\xymatrix{
\Sigma^m/\overline X \ar[d]_{i} \ar[rr]^{\sign^{\G/\TOP}_X (f,b)} & & \Sigma^{-1} \Omega_{-k}^{N,\STOP} \ar[d]^{c} \\
\textup{Sing} \; F(\nu,\nu_X) \ar[rr]_-{u^{\G/\TOP} (\nu,h)} & & \textup{Sing} \; \bMSGTOP (k) }
\]
where we use the notation $\bMSGTOP (k) := \textup{Fiber} \; (\bMSTOP (k) \ra \bMSG (k))$ and $F(\nu,\nu_X) : = \textup{Pullback} \; (\Th (\nu) \ra \Th (\nu_X) \leftarrow \Th (\nu_X))$. Composing with the signature map $\nssign \co \Sigma^{-1} \Omega_{-k}^{N,\STOP} \ra \bF$ proves the claim. 
\end{proof}

\subsection{Assembly}  \label{subsec:assembly}

\

Keep $X$ a Poincar\'e complex with the SNF $\nu_X$ and suppose there
exists a topological block bundle reduction $\bar \nu_X$. Recall
that orientations with respect to ring spectra induce Thom
isomorphisms in corresponding cohomology theories. Hence we have Thom
isomorphisms induced by $u_{\bNL^\bullet} (\nu_X)$ and
$u_{\bL^\bullet} (\bar \nu_X)$ and these are compatible. Also recall
that although the spectrum $\bL_\bullet \langle  1 \rangle$ is not a
ring spectrum, it is a module spectrum over $\bL^\bullet \langle 0
\rangle$, see \cite[Appendix B]{Ranicki(1992)}. Therefore
$u_{\bL^\bullet} (\bar \nu_X)$ also induces a compatible Thom
isomorphism in $\bL_\bullet \langle  1 \rangle$-cohomology. In fact
we have a commutative diagram relating these Thom isomorphisms, the
S-duality and the assembly maps:

{\footnotesize
\[
\xymatrix{
H^0 (X;\bL_\bullet \langle 1 \rangle) \ar[r]^-{\cong} \ar[d] & H^k (\Th(\nu_X);\bL_\bullet \langle 1 \rangle) \ar[r]^-{\cong} \ar[d] & H_n (X;\bL_\bullet \langle 1 \rangle) \ar[r]^-{A} \ar[d] & L_n (\ZZ) \ar[d] \\
H^0 (X;\bL^\bullet \langle 0 \rangle) \ar[r]^-{\cong} \ar[d] & H^k (\Th(\nu_X);\bL^\bullet \langle 0 \rangle) \ar[r]^-{\cong} \ar[d] & H_n (X;\bL^\bullet \langle 0 \rangle) \ar[r]^-{A} \ar[d] & L^n (\ZZ) \ar[d] \\
H^0 (X;\bNL^\bullet \langle 1/2 \rangle) \ar[r]^-{\cong} & H^k (\Th(\nu_X);\bNL^\bullet \langle 1/2 \rangle) \ar[r]^-{\cong} & H_n (X;\bNL^\bullet \langle 1/2 \rangle) \ar[r]^-{A} & NL^n (\ZZ) \\
}
\]
}

If $X = S^n$ then the map $A \co H_n (S^n;\bL_\bullet \langle 1 \rangle) \ra L_n (\ZZ)$ is an isomorphism. This follows from the identification of the assembly map with the surgery obstruction map, which is presented in Proposition \ref{prop:identification} and the fact that the surgery obstruction map for $S^n$ is an isomorphism due to Kirby and Siebenmann \cite[Essay V, Theorem C.1]{Kirby-Siebenmann(1977)}. We note that Proposition \ref{prop:identification} is presented in a greater generality than needed here. We just need the case when $X = S^n$ and so it is a manifold and hence the degree one normal map $(f_0,b_0)$ in the statement of Proposition \ref{prop:identification} can be taken to be the identity on $S^n$, which is the version we need at this place. 

Further observe that all the homomorphisms in the diagram are induced homomorphisms on homotopy groups by maps of spaces, see definitions in sections \ref{sec:spectra}, \ref{sec:gen-hlgy-thies} for the underlying spaces.


\subsection{Classifying spaces for spherical fibrations with an orientation} \label{subsec:classifying-spaces}


\

Let $\bE$ be an Omega ring spectrum with $\pi_0 (\bE) = \ZZ$ and
recall the notion of an $\bE$-orientation of a $\ZZ$-oriented
spherical fibration $\alpha \co X \ra \BSG (k)$. In \cite{May(1977)}
a construction of a classifying space $\BEG$ for spherical
fibrations with such a structure was given. The construction is not
so important for us. Of more significance is a description of what
it means to have a map from a space to one of these classifying
spaces. If $X$ is a finite complex then there is a one-to-one
correspondence between homotopy classes of maps $\alpha_\bE \co X
\ra \textup{B}\bE\textup{G}$ and homotopy classes of pairs
$(\alpha,u_\bE(\alpha))$ where $\alpha \co X \ra \BSG (k)$ and
$u_\bE (\alpha) \co \Th(\alpha) \ra \bE_{k}$ is an $\bE$-orientation
of $\alpha$.

\begin{prop} \label{prop:canonical-L-orientations-on-class-spaces}
There is a commutative diagram
\[
\xymatrix{
\BSTOP \ar[r]^-{\ssign} \ar[d]_{J} & \textup{B} \bL^\bullet \langle 0 \rangle \textup{G} \ar[d]^{J} \\
\BSG \ar[r]_-{\nsign} & \textup{B} \bNL^\bullet \langle 1/2 \rangle
\textup{G} }
\]
\end{prop}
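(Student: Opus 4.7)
The plan is to construct the horizontal maps as classifying maps of the canonical $L$-theory orientations and then to observe that the commutativity of the square is exactly the statement $J(u_{\bL^\bullet}(\beta)) = u_{\bNL^\bullet}(J(\beta))$ recorded at the end of Proposition \ref{canonical-L-orientations}.

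First I would recall that, by the recognition principle for $\textup{B}\bE\textup{G}$ recalled just above the proposition, a homotopy class of maps $X \to \textup{B}\bE\textup{G}$ is the same datum as a $\ZZ$-oriented stable spherical fibration $\alpha \co X \to \BSG$ together with an $\bE$-orientation $u_\bE(\alpha) \in H^0(\Th(\alpha);\bE)$. Applying this to $X = \BSG$ and to the universal stable $\ZZ$-oriented spherical fibration $\gamma_{\SG}$ carrying the canonical orientation $u_{\bNL^\bullet}(\gamma_{\SG})$ of Proposition \ref{canonical-L-orientations}(1) produces the classifying map
\[
\nsign \co \BSG \longrightarrow \textup{B}\bNL^\bullet\langle 1/2\rangle\textup{G}.
\]
Similarly, applying the recognition principle to $\BSTOP$ and to $J(\gamma_{\SbTOP})$ with its canonical orientation $u_{\bL^\bullet}(\gamma_{\SbTOP})$ of Proposition \ref{canonical-L-orientations}(2) produces
\[
\ssign \co \BSTOP \longrightarrow \textup{B}\bL^\bullet\langle 0\rangle\textup{G}.
\]

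Next I would compute both ways around the square as classifying pairs. Going right and then down along $J$ sends the universal block bundle $\gamma_{\SbTOP}$ to the pair $(J(\gamma_{\SbTOP}),u_{\bL^\bullet}(\gamma_{\SbTOP}))$ and then, by postcomposition with the map of ring spectra $J \co \bL^\bullet\langle 0\rangle \to \bNL^\bullet\langle 1/2\rangle$ induced by the composition in Proposition \ref{canonical-L-orientations}, to the pair $(J(\gamma_{\SbTOP}),J(u_{\bL^\bullet}(\gamma_{\SbTOP})))$. Going down and then right sends $\gamma_{\SbTOP}$ first to $J(\gamma_{\SbTOP})$ and then, via the universal property, to the pair $(J(\gamma_{\SbTOP}),u_{\bNL^\bullet}(J(\gamma_{\SbTOP})))$. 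Both compositions therefore agree if and only if
\[
J(u_{\bL^\bullet}(\gamma_{\SbTOP})) = u_{\bNL^\bullet}(J(\gamma_{\SbTOP})) \in H^0(\Th(J(\gamma_{\SbTOP}));\bNL^\bullet\langle 1/2\rangle),
\]
which is precisely the compatibility statement at the end of Proposition \ref{canonical-L-orientations}.

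Finally I would make sure the square commutes in the unpointed classifying space category and not merely up to a pointwise equality of pairs, by observing that the identification in Proposition \ref{canonical-L-orientations} is natural in pullbacks of block bundles and of spherical fibrations, so the canonical orientation on the universal object determines the orientations on all pullbacks. The one subtlety worth noting is that we have implicitly passed from the level $k$ classifying spaces $\BSbTOP(k)$, $\BSG(k)$ to the stable versions; the signature maps of Proposition \ref{prop:connective-signatures-on-spectra-level} and the construction of the canonical orientations are stable (they arise from maps of spectra $\bMSTOP \to \bL^\bullet\langle 0\rangle$ and $\bMSG \to \bNL^\bullet\langle 1/2\rangle$ via the transversality identifications of subsection \ref{subsec:transversality}), so the stabilization is harmless. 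The main obstacle in writing this out carefully is not conceptual but bookkeeping: one must check that the two ring spectrum maps $\bMSTOP \to \bL^\bullet\langle 0\rangle \to \bNL^\bullet\langle 1/2\rangle$ and $\bMSTOP \to \bMSG \to \bNL^\bullet\langle 1/2\rangle$ used to define the two canonical orientations are indeed homotopic as maps of ring spectra, which is exactly the content of the homotopy commutative diagram appearing in the proof of Proposition \ref{canonical-L-orientations}.
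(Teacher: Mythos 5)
Your proposal is correct and takes the same route as the paper, which simply asserts that the statement follows from Proposition \ref{canonical-L-orientations}; you have spelled out the details the paper leaves implicit (the universal-property interpretation of maps into $\textup{B}\bE\textup{G}$, the comparison of the two compositions on the universal block bundle, and the reduction to the compatibility $J(u_{\bL^\bullet}(\beta))=u_{\bNL^\bullet}(J(\beta))$).
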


\begin{proof}
This follows from Proposition \ref{canonical-L-orientations}.
\end{proof}

We need to study what orientations do there exist for a fixed
spherical fibration $\alpha \co X \ra \BSG (k)$. Denote by
$\bE_\otimes$ the component of $1 \in \ZZ$ in any Omega ring
spectrum $\bE$ with $\pi_0 (\bE) = \ZZ$. Then there is a homotopy
fibration sequence \cite[section III.2]{May(1977)}
\begin{equation} \label{eqn:fibration-sequence-for-E-orientations}
 \bE_\otimes \xra{i} \textup{B}\bE\textup{G} \ra \BSG
\end{equation}

The map $i$ can be interpreted via the Thom isomorphism. Let $c \co
X \ra \bE_\otimes$ be a map. Then $i(c) \co X \ra \BEG$ is the map
given by the trivial fibration $\varepsilon \co X \ra \BSG (k)$ with
an $\bE$-orientation given by the composition
\[
u_\bE (i(c)) \co \Th(\varepsilon) \xra{\; \tilde \Delta \;} X_+ \wedge
\Th(\varepsilon) \xra{c \wedge \Sigma^k (1)} \bE_\otimes \wedge \bE_k
\ra \bE_k
\]

We will use the spectra $\bL^\bullet \langle 0 \rangle$ and
$\bNL^\bullet \langle 1/2 \rangle$, which are both ring spectra with
$\pi_0 \cong \ZZ$. We will need the following proposition.

\begin{prop} \label{prop:fibration-sequence-of-classifying-spaces}
There is the following homotopy fibration sequence of spaces
\[
\bL_0 \langle 1 \rangle \ra \textup{B} \bL^\bullet \langle 0 \rangle
\textup{G} \ra \textup{B} \bNL^\bullet \langle 1/2 \rangle
\textup{G}
\]
\end{prop}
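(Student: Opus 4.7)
The plan is to compare the two May-type fibration sequences (\ref{eqn:fibration-sequence-for-E-orientations}) for the ring spectra $\bL^\bullet \langle 0 \rangle$ and $\bNL^\bullet \langle 1/2 \rangle$ and exploit that the ring map $J \co \bL^\bullet \langle 0 \rangle \ra \bNL^\bullet \langle 1/2 \rangle$ from Proposition \ref{prop:fib-seq-of-quad-sym-norm-connective-version} produces a map of these fibrations over a common base $\BSG$.

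First, I would write down the induced commutative diagram
\[
\xymatrix{
\bL^\bullet \langle 0 \rangle_\otimes \ar[r]^-{i} \ar[d]_{J_\otimes} & \textup{B}\bL^\bullet \langle 0 \rangle \textup{G} \ar[r] \ar[d]^{\textup{B}J\textup{G}} & \BSG \ar@{=}[d] \\
\bNL^\bullet \langle 1/2 \rangle_\otimes \ar[r]_-{i} & \textup{B}\bNL^\bullet \langle 1/2 \rangle \textup{G} \ar[r] & \BSG
}
\]
in which both rows are the homotopy fibration sequences (\ref{eqn:fibration-sequence-for-E-orientations}). Since the right-hand vertical map is the identity, a standard long-exact-sequence (or five-lemma / comparison-of-fibrations) argument shows that the homotopy fiber of the middle vertical map $\textup{B}J\textup{G}$ is canonically identified with the homotopy fiber of the left vertical map $J_\otimes$.

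Next, I would identify that fiber. By Proposition \ref{prop:fib-seq-of-quad-sym-norm-connective-version} the map of spectra $J$ sits in a homotopy fibration $\bL_\bullet \langle 1 \rangle \ra \bL^\bullet \langle 0 \rangle \ra \bNL^\bullet \langle 1/2 \rangle$; evaluating at the $0$-th space yields a homotopy fibration of based spaces
\[
\bL_\bullet \langle 1 \rangle_0 \ra \bL^\bullet \langle 0 \rangle_0 \ra \bNL^\bullet \langle 1/2 \rangle_0.
\]
The same proposition asserts that $\pi_0 J$ is the isomorphism $\ZZ \cong \ZZ$, so $J_0$ sends the component of $1$ to the component of $1$, and $\pi_0 \bL_\bullet \langle 1 \rangle = 0$ guarantees that the connected fiber $\bL_\bullet \langle 1 \rangle_0$ sits inside the $1$-component. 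Restricting the fibration to the component of $1$ on either side therefore gives the homotopy fibration
\[
\bL_\bullet \langle 1 \rangle_0 \ra \bL^\bullet \langle 0 \rangle_\otimes \xra{J_\otimes} \bNL^\bullet \langle 1/2 \rangle_\otimes,
\]
and under the paper's notational convention $\bL_\bullet \langle 1 \rangle_0 = \bL_0 \langle 1 \rangle$.

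Combining the two steps yields the claimed homotopy fibration sequence. The only point requiring any real care is the identification of the fiber of $J_\otimes$, where one must keep track of components and use that $\bL_\bullet \langle 1 \rangle$ is $1$-connective so that its $0$-space is already the relevant fiber component; everything else is a formal consequence of having a map of fibrations that is the identity on the base.
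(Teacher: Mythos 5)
Your overall strategy --- comparing the two May fibration sequences (\ref{eqn:fibration-sequence-for-E-orientations}) over the common base $\BSG$ and reducing to identifying the homotopy fiber of $J_\otimes$ --- is the same as the paper's, and the comparison-of-fibrations step is fine. The weak spot is the claim that ``$\pi_0 \bL_\bullet \langle 1 \rangle = 0$ guarantees that the connected fiber $\bL_\bullet \langle 1 \rangle_0$ sits inside the $1$-component.'' It does not: the fiber inclusion furnished by Proposition \ref{prop:fib-seq-of-quad-sym-norm-connective-version} is the symmetrization $(1+T) \co \bL_0 \langle 1 \rangle \ra \bL^0 \langle 0 \rangle$, and since $(1+T)$ takes the zero complex to the zero complex, the image of $\bL_0 \langle 1 \rangle$ lies in the \emph{zero} component of $\bL^0 \langle 0 \rangle$ and maps to the zero component of $\bNL^0 \langle 1/2 \rangle$ --- not the unit component. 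Restricting $J_0$ to the unit components does give a fibration whose fiber $F_\otimes$ is homotopy equivalent to $\bL_0 \langle 1 \rangle$, but this is a separate fact: since the $0$-th spaces are infinite loop spaces, translation by a chosen lift of the unit gives an equivalence of the total space covering translation of the base, carrying the fiber over $0$ to the fiber over $1$; one must say this, because the natural map $(1+T)$ simply does not land where you claim. The paper handles this explicitly by replacing $(1+T)$ with the translated map $(1+T)^\otimes \co (C,\psi) \mapsto (1+T)(C,\psi) + (C(\Delta^l),\varphi([\Delta^l]))$ on $l$-simplices, which shifts the image into the unit component; your argument reaches the right conclusion but omits exactly this necessary step.
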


\begin{proof}
Consider the sequences
(\ref{eqn:fibration-sequence-for-E-orientations}) for the spectra
$\bL^\bullet \langle 0 \rangle$ and $\bNL^\bullet \langle 1/2
\rangle$ and the map between them. The induced map between the
homotopy fibers fits into the fibration sequence
\begin{equation} \label{fib-seq:quad-sym-norm-on-component-of-1}
\bL_0 \langle 1 \rangle \ra \bL^\otimes \langle 0 \rangle \ra \bNL^\otimes \langle 1/2 \rangle
\end{equation}
which is obtained from the fibration sequence of Proposition \ref{prop:fib-seq-of-quad-sym-norm-connective-version} (more precisely from the space-level version of it on the $0$-th spaces) by replacing the symmetrization map $(1 + T) \co \bL_0 \langle 1 \rangle \ra \bL^0 \langle 0 \rangle$ by the map given on the $l$-simplices as
\begin{align*}
 (1 + T)^\otimes \co  \bL_0 \langle 1 \rangle & \ra \bL^\otimes \langle 0 \rangle \\
 (C,\psi) & \mapsto (1+T) (C,\psi) + (C (\Delta^l),\varphi([\Delta^l])).
\end{align*}
Its effect is to map the component of $0$ (which is the only component of $\bL_0 \langle 1 \rangle$) to the component of $1$ in $\bL^0 \langle 0 \rangle$ instead of the component of $0$. The proposition follows.
\end{proof}


\subsection{$L$-theory orientations versus reductions}   \label{subsec:main-thm}


\

The following theorem is a crucial result.

\begin{thm} \textup{\cite[pages 290-292]{Ranicki(1979)}}   \label{thm:lifts-vs-orientations}
There is a one-to-one correspondence between the isomorphism classes of
\begin{enumerate}
 \item stable oriented topological block bundles over $X$, and
 \item stable oriented spherical fibrations over $X$ with an $\bL^\bullet \langle 0 \rangle$-lift of the canonical $\bNL^\bullet \langle 1/2 \rangle$-orientation
\end{enumerate}
\end{thm}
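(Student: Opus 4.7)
The approach is to show that the commutative square in Proposition \ref{prop:canonical-L-orientations-on-class-spaces} is a homotopy pullback. Granted this, a homotopy class of maps $X\to\BSTOP$ (i.e.\ an isomorphism class of stable oriented topological block bundles on $X$) corresponds, via the universal property of the pullback, to a homotopy class of triples $(\alpha,\beta,h)$ with $\alpha\co X\to\BSG$, $\beta\co X\to\textup{B}\bL^\bullet\langle 0\rangle\textup{G}$, and a homotopy $h\co \nsign\circ\alpha\simeq J\circ\beta$. This data is precisely a stable oriented spherical fibration together with an $\bL^\bullet\langle 0\rangle$-lift of its canonical $\bNL^\bullet\langle 1/2\rangle$-orientation, yielding the asserted bijection.

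To verify the homotopy pullback property, I would compare the homotopy fibers of the two vertical maps. The fiber of $J\co\BSTOP\to\BSG$ is $\G/\TOP$ by definition, while the fiber of $J\co\textup{B}\bL^\bullet\langle 0\rangle\textup{G}\to\textup{B}\bNL^\bullet\langle 1/2\rangle\textup{G}$ is $\bL_0\langle 1\rangle$ by Proposition \ref{prop:fibration-sequence-of-classifying-spaces}. The square is therefore a homotopy pullback precisely when the induced map on fibers $\G/\TOP\to\bL_0\langle 1\rangle$ is a weak homotopy equivalence.

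The main obstacle is to identify this induced map on fibers with the classical surgery obstruction map. A class in $\pi_n(\G/\TOP)$ is represented by a degree one normal map $(f,b)\co M^n\to S^n$ together with a trivialisation of the target bundle, i.e.\ by the pair $(\nu,h)$ of subsection \ref{subsec:S-duality}. Unwrapping the splitting $i$ in \eqref{eqn:fibration-sequence-for-E-orientations} together with the construction of the canonical $L$-theory orientations in Proposition \ref{canonical-L-orientations}, the image of this class in $\pi_n(\bL_0\langle 1\rangle)$ is built from the difference $u_{\bL^\bullet}(\nu)-u_{\bL^\bullet}(\nu_{S^n})$ equipped with the null-homotopy of its image under $J$ supplied by $h$; this is exactly the relative orientation $u^{\bNL^\bullet,\bL^\bullet}(\nu,h)\in H^k(\Th(\nu_{S^n});\bF)$. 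By Proposition \ref{prop:S-duals-of-orientations-are-signatures-relative-case} this orientation is the $S$-dual of the $(\textup{normal},\textup{symmetric Poincar\'e})$ signature $\nssign_{S^n}(f,b)$, and by Example \ref{expl:normal-symm-poincare-pair-gives-quadratic} the desuspension of $\nssign_{S^n}(f,b)$ agrees with the quadratic signature $\qsign_\ZZ(f,b)$, i.e.\ with the classical surgery obstruction.

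Once the fiber map is thus identified, the Kirby-Siebenmann isomorphism $\qsign_\ZZ\co\pi_n(\G/\TOP)\xra{\cong} L_n(\ZZ)$ for $n\geq 1$, combined with the fact that both $\G/\TOP$ and $\bL_0\langle 1\rangle$ are path-connected (for the latter because $\bL_\bullet\langle 1\rangle$ is $1$-connective), shows that $\G/\TOP\to\bL_0\langle 1\rangle$ is a weak equivalence. Hence the square in Proposition \ref{prop:canonical-L-orientations-on-class-spaces} is a homotopy pullback and the theorem follows.
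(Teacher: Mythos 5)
Your proposal takes essentially the same route as the paper's proof: reduce the bijection to showing the square in Proposition \ref{prop:canonical-L-orientations-on-class-spaces} is a homotopy pullback; identify the two vertical fibers as $\G/\TOP$ and $\bL_0\langle 1\rangle$ via Proposition \ref{prop:fibration-sequence-of-classifying-spaces}; identify the induced map on fibers with the surgery obstruction via the chain involving the relative orientation, Proposition \ref{prop:S-duals-of-orientations-are-signatures-relative-case}, and Example \ref{expl:normal-symm-poincare-pair-gives-quadratic}; and conclude with the Kirby--Siebenmann isomorphism. The only notable differences are expository: the paper works directly with the lift $\bar u_{\bL_\bullet}(\alpha,H)$ built from the null-homotopy and then chases $A(S(i(-)))$ through the Thom-isomorphism/$S$-duality/assembly diagram of subsection \ref{subsec:assembly}, whereas you package the same data up front as the relative orientation $u^{\bNL^\bullet,\bL^\bullet}(\nu,h)$; also your phrase ``desuspension of $\nssign_{S^n}(f,b)$'' is loose shorthand for the identification of Proposition \ref{propn:LESL} combined with Example \ref{expl:normal-symm-poincare-pair-gives-quadratic}, which is what the paper spells out. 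These are presentational, not substantive, differences.
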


\begin{proof}
In Proposition \ref{prop:canonical-L-orientations-on-class-spaces} a map from (1) to (2) was described. To prove that it gives a one-to-one correspondence is equivalent to showing that the square in Proposition \ref{prop:canonical-L-orientations-on-class-spaces} is a homotopy pullback square. This is done by showing that the induced
map between the homotopy fibers of the vertical maps in the square, which is indicated by the dashed arrow in the diagram below, is a homotopy equivalence.
\[
\xymatrix{
\G/\TOP \ar@{-->}[r]^-{\nssign} \ar[d] & \bL_0 \langle 1 \rangle \ar[d] \\
\BSTOP \ar[r]^-{\ssign} \ar[d]_{J} & \textup{B} \bL^\bullet \langle 0 \rangle \textup{G} \ar[d]^{J} \\
\BSG \ar[r]_-{\nsign} & \textup{B} \bNL^\bullet \langle 1/2 \rangle
\textup{G} }
\]
For this it is enough to show that it induces an isomorphism on the
homotopy groups, that means to show
\[
 \nssign \; \co \; [S^n ; \G/\TOP] \; \xrightarrow{\; \cong \;} \; [S^n ; \bL_0 \langle 1 \rangle].
\]
Recall that since $S^n$ is a topological manifold with the trivial
SNF we have a canonical identification of the normal invariants
\[
 [S^n ; \G/\TOP] \cong \sN (S^n) \quad ((\alpha,H) \co S^n \ra \G/\TOP) \mapsto( (f,b) \co M \ra S^n)
\]
where $\alpha \co S^n \ra \BSTOP$ and $H = J(\alpha) \simeq
\varepsilon \co S^n \times [0,1] \ra \BSG$ is a homotopy to a
constant map and $(f,b)$ is the associated degree one normal map.

On the other side we have
\[
A(S(i(-))) \co [S^n ; \bL_0 \langle 1 \rangle] \cong H^0
(S^n;\bL_\bullet \langle 1 \rangle) \cong H_n (S^n ; \bL_\bullet
\langle 1 \rangle) \cong L_n (\ZZ).
\]

It is well known that the surgery obstruction map $\qsign_\ZZ \co \sN (S^n) \ra L_n (\ZZ)$ from Definition \ref{defn:quad-sign} is an isomorphism for $n \geq 1$ \cite[Essay V, Theorem C.1]{Kirby-Siebenmann(1977)}. Therefore it is enough to show that
\[
 A(S(i(\nssign_\ZZ (\alpha,H)))) = \qsign_\ZZ (f,b).
\]
Denote
\[
 (J(\alpha),u_{\bL^\bullet} (\alpha)) = \ssign (\alpha) \quad (H,u_{\bNL^\bullet} (H)) = \nsign (H).
\]
Now we need to describe in more detail what the identification of
the homotopy fiber of the right hand column map means. That means to
produce a map
\[
\bar u_{\bL_\bullet} (\alpha,H) \co S^n \ra \bL_0 \langle 1 \rangle.
\]
from  $(J(\alpha),u_{\bL^\bullet} (\alpha))$ and
$(H,u_{\bNL^\bullet}(H))$. The spherical fibration $J(\alpha)$ is trivial because of the null-homotopy $H$
and therefore we obtain a map $\bar u_{\bL^\bullet} (\alpha) \co S^n
\ra \bL^{\otimes} \langle 0 \rangle$ such that $i (\bar u_{\bL^\bullet} (\alpha)) =
(J(\alpha),u_{\bL^\bullet} (\alpha))$.

Similarly the homotopy $(H,u_{\bNL^\bullet}(H)) \co S^n \times [0,1]
\ra \BNLG$ yields a homotopy $\bar u_{\bNL^\bullet} (H) \co S^n \times
[0,1] \ra  \bNL^\otimes \langle 1/2 \rangle$ between $J(\bar u_{\bL^\bullet} (\alpha))$ and the constant map.

The pair $(\bar u_{\bNL^\bullet} (H),\bar u_{\bL^\bullet} (\alpha))$ produces via the homotopy fibration sequence (\ref{fib-seq:quad-sym-norm-on-component-of-1}) a lift, which is the desired $\bar u_{\bL_\bullet} (\alpha,H)$. So we have
\[
 [\nssign (\alpha,H)] = [\bar u_{\bL_\bullet} (\alpha,H)]  \in [S^n;\bL_0 \langle 1 \rangle]
\]
and we want to investigate $A(S(i(\bar u_{\bL_\bullet}
(\alpha,H))))$. Recall now the commutative diagram from subsection
\ref{subsec:assembly}. It shows that $A(S(i(\bar u_{\bL_\bullet}
(\alpha,H))))$ can be chased via the lower right part of the
diagram. Here we consider maps from $S^n$ and $S^n \times [0,1]$ to the underlying spaces in this diagram rather than just elements in the homotopy groups.

Observe first, using Definition \ref{defn:sym-sign-over-X}, Example \ref{expl:assembly-of-symmetric-signature-over-K} and Proposition \ref{prop:S-duals-of-orientations-are-signatures}, that the assembly of the $S$-dual of the class $u_{\bL^\bullet} (\alpha)$ is  an $n$-dimensional SAPC $\ssign (M)$ over $\ZZ$.

Secondly, by Construction \ref{con:normal-symmetric-signature-over-X-for-a-deg-one-normal-map} and Proposition \ref{prop:S-duals-of-orientations-are-signatures-relative-case}, the assembly of the $S$-dual of the class $u_{\bNL^\bullet} (H)$ is an $(n+1)$-dimensional (normal, symmetric Poincar\'e) pair 
\begin{equation} \label{eqn:normal-sym-pair-of-deg-one-normal-map-to-sphere}
(\nsign (W),\ssign (M) - \ssign (S^n))
\end{equation}
over $\ZZ$, with $W = \cyl (f)$. We consider this as an element in the $n$-th homotopy group of the relative term in the long exact sequence of the homotopy groups associated to the map $\bL^\otimes \langle 0 \rangle \ra \bNL^\otimes \langle 1/2 \rangle$. This group is isomorphic to $L_n (\ZZ)$ by the isomorphism of Proposition \ref{propn:LESL}. The effect of this isomorphism on an element as in (\ref{eqn:normal-sym-pair-of-deg-one-normal-map-to-sphere}) is then described in detail in Example \ref{expl:normal-symm-poincare-pair-gives-quadratic}. It tells us that the $n$-dimensional QAPC corresponding to (\ref{eqn:normal-sym-pair-of-deg-one-normal-map-to-sphere}) is the surgery obstruction $\qsign (f,b)$. Finally we obtain
\[
 A(S(i(\bar u_{\bL_\bullet} (\alpha,H)))) = \qsign (f,b) \in L_n (\ZZ)
\]
which is what we wanted to show.
\end{proof}



Recall the exact sequence:
\[
\xymatrix{
 \cdots \ar[r] & H_n (X;\bL^\bullet \langle 0 \rangle) \ar[r] & H_n (X;\bNL^\bullet \langle 1/2 \rangle) \ar[r] & H_{n-1} (X; \bL_\bullet \langle 1 \rangle) \ar[r] & \cdots
}
\]
Putting all together we obtain
\begin{cor}
 Let $X$ be an $n$-dimensional geometric Poincar\'e complex with the Spivak normal fibration $\nu_X \co X \ra \BSG$. Then the following are equivalent
\begin{enumerate}
 \item There exists a lift $\bar \nu_X \co X \ra \BSTOP$ of $\nu_X$
 \item There exists a lift of the normal signature $\nsign_X (X) \in H_n (X;\bNL^\bullet \langle 1/2 \rangle)$ in the group $H_{n} (X;\bL^\bullet \langle 0 \rangle)$.
 \item $0 = t(X) \in H_{n-1} (X;\bL_\bullet \langle 1 \rangle)$.
\end{enumerate}
\end{cor}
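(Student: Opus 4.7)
The plan is to extract each of the three equivalences from ingredients already established in the paper, chiefly Theorem \ref{thm:lifts-vs-orientations}, Proposition \ref{prop:S-duals-of-orientations-are-signatures}, and the long exact sequence in homology associated to the fibration sequence of Proposition \ref{prop:fib-seq-of-quad-sym-norm-connective-version}.

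First I would dispose of (2) $\Leftrightarrow$ (3), which is the formal part. The fibration sequence $\bL_\bullet \langle 1 \rangle \to \bL^\bullet \langle 0 \rangle \to \bNL^\bullet \langle 1/2 \rangle$ induces, upon smashing with $X_+$, the displayed long exact sequence. By construction (see equation (\ref{eqn:reduction-obstruction}) in the introduction) $t(X)$ is precisely the image of $\nsign_X(X)$ under the connecting map $H_n(X;\bNL^\bullet \langle 1/2 \rangle) \to H_{n-1}(X;\bL_\bullet \langle 1 \rangle)$. Hence exactness gives: $t(X) = 0$ if and only if $\nsign_X(X)$ lies in the image of the map $H_n(X;\bL^\bullet \langle 0 \rangle) \to H_n(X;\bNL^\bullet \langle 1/2 \rangle)$, that is, admits an $\bL^\bullet \langle 0 \rangle$-lift.

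For (1) $\Leftrightarrow$ (2) the plan is to use $S$-duality to transport the statement from homology of $X$ to cohomology of $\Th(\nu_X)$, where Theorem \ref{thm:lifts-vs-orientations} applies directly. The $S$-duality isomorphisms
\[
S \co H^k(\Th(\nu_X);\bE) \xra{\cong} H_n(X;\bE)
\]
for $\bE = \bNL^\bullet \langle 1/2 \rangle$ and $\bE = \bL^\bullet \langle 0 \rangle$ are compatible with the maps induced by $\bL^\bullet \langle 0 \rangle \to \bNL^\bullet \langle 1/2 \rangle$. By Proposition \ref{prop:S-duals-of-orientations-are-signatures}, $S$ identifies the canonical $\bNL^\bullet \langle 1/2 \rangle$-orientation $u_{\bNL^\bullet}(\nu_X)$ with $\nsign_X(X)$, and $S$ identifies the canonical $\bL^\bullet \langle 0 \rangle$-orientation $u_{\bL^\bullet}(\bar \nu_X)$ of any topological block bundle reduction $\bar \nu_X$ with $\ssign_X(M)$ for the associated degree one normal map. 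So $\nsign_X(X)$ admits an $\bL^\bullet \langle 0 \rangle$-lift in $H_n(X;\bL^\bullet \langle 0 \rangle)$ if and only if $u_{\bNL^\bullet}(\nu_X)$ admits an $\bL^\bullet \langle 0 \rangle$-lift in $H^k(\Th(\nu_X);\bL^\bullet \langle 0 \rangle)$, which by Theorem \ref{thm:lifts-vs-orientations} is equivalent to the existence of a topological block bundle reduction $\bar \nu_X$ of $\nu_X$.

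The only point requiring care is that the $\bL^\bullet \langle 0 \rangle$-lift provided by the exact sequence in (2) is a priori an abstract homology class, whereas the correspondence of Theorem \ref{thm:lifts-vs-orientations} is phrased in terms of an orientation which is a cohomology class of the Thom space of a specified reduction. The resolution is that one is not asserting that every lift of $\nsign_X(X)$ arises as $\ssign_X(M)$ for some $(f,b)$; only that the existence of some lift forces, via the one-to-one correspondence of Theorem \ref{thm:lifts-vs-orientations} applied to $\nu_X$, the existence of a reduction whose canonical $\bL^\bullet \langle 0 \rangle$-orientation lifts $u_{\bNL^\bullet}(\nu_X)$. I expect no further obstacle: both implications of (1) $\Leftrightarrow$ (2) are formal translations through $S$-duality once Theorem \ref{thm:lifts-vs-orientations} and Proposition \ref{prop:S-duals-of-orientations-are-signatures} are in hand.
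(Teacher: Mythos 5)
Your proof is correct and takes essentially the same route the paper intends: (2) $\Leftrightarrow$ (3) from the long exact sequence in $\bL$-homology together with the identity $t(X) = \del\,\nsign_X(X)$, and (1) $\Leftrightarrow$ (2) by transporting through $S$-duality to Thom-space cohomology via Proposition \ref{prop:S-duals-of-orientations-are-signatures} and then invoking Theorem \ref{thm:lifts-vs-orientations}. Your closing caveat is resolved correctly, and in fact any $\bL^\bullet\langle 0\rangle$-lift of $u_{\bNL^\bullet}(\nu_X)$ is automatically an orientation because $\pi_0\bL^\bullet\langle 0\rangle \to \pi_0\bNL^\bullet\langle 1/2\rangle$ is an isomorphism, so the correspondence of Theorem \ref{thm:lifts-vs-orientations} applies directly.
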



\section{Proof of the Main Technical Theorem (II)} \label{sec:proof-part-2}


Let $X$ be a finite $n$-dimensional GPC and suppose that $t(X) = 0$ so that the SNF $\nu_X$ has a topological block bundle reduction and hence there exists a degree one normal map $(f,b) \co M \ra X$ from some $n$-dimensional topological manifold $M$. We want to show that the subset of $L_n (\ZZ [\pi_1 (X)])$ consisting of the inverses of the quadratic signatures of all such degree one normal maps is equal to the preimage of the total surgery obstruction $s (X) \in \SS_n (X)$ under the boundary map $\del \co L_n (\ZZ [\pi_1 (X)]) \ra \SS_n (X)$. 

Let us first look at this map. Inspecting the first of the two commutative braids in section \ref{sec:surgery-sequences} we see that it is in fact obtained from the boundary map $\partial \co L_n (\Lambda (\ZZ)(X)) \ra \SS_n (X)$ using the algebraic $\pi$-$\pi$-theorem of Proposition \ref{prop:algebraic-pi-pi-theorem}. This map is more suitable for investigation since both the source and the target are the $L$-groups of algebraic bordism categories over the same underlying additive category with chain duality, which is $\ZZ_\ast (X)$.

On the other hand there is a price to pay for this point of view. Namely, in the present situation we only have the quadratic signatures $\qsign_{\ZZ[\pi_1(X)]} (f,b)$ as $n$-dimensional QAPCs over the category $\ZZ[\pi_1 (X)]$, but we need a quadratic signature $\qsign_X (f,b)$ over the category $\ZZ_\ast (X)$.\footnote{As shown in Construction \ref{con:quad-construction-over-cplxs-lower-star} in case $X$ is a triangulated manifold we have such a signature but here $X$ is only a Poincar\'e complex with $t(X) = 0$.} A large part of this section will be devoted to constructing such a quadratic signature, it will finally be achieved in Definition \ref{defn:quad-signature-over-X-deg-one-normal-map-to-poincare}. More precisely, we define the quadratic signature 
\[
\qsign_X (f,b) \in L_n (\Lambda (\ZZ)(X)) 
\]
represented by an $n$-dimensional QAC in the algebraic bordism category $\Lambda (\ZZ)(X)$ from Definition \ref{defn:Lambda-K-category}, that means an $n$-dimensional quadratic complex over $\ZZ_\ast (X)$ which is globally Poincar\'e, such that it maps to $\qsign_{\ZZ[\pi_1(X)]} (f,b)$ under the isomorphism of the algebraic $\pi$-$\pi$-theorem. We emphasize that in general the quadratic signature  $\qsign _X (f,b)$ does not produce an element in $H_n (X,\bL_\bullet \langle 1 \rangle)$ since it is not locally Poincar\'e. 

Granting the definition of $\qsign_X (f,b)$ the proof of the desired statement starts with the obvious observation that the preimage $\del^{-1} s(X)$ is a coset of $\ker (\del) = \im (A)$, where $A \co H_n (X;\bL_\bullet \langle 1 \rangle) \ra L_n (\ZZ [\pi_1 (X)])$ is the assembly map. Then the proof proceeds in two steps as follows. 

\begin{enumerate}
\item[(1)] Show that the set of the inverses of the quadratic signatures $\qsign_X (f,b)$ of degree one normal maps with target $X$ is a subset of $\del^{-1} s(X)$ and hence the two sets have non-empty intersection.
\item[(2)] Show that the set of the inverses of the quadratic signatures $\qsign_X (f,b)$ of degree one normal maps with target $X$ is a coset of $\ker (\del) = \im (A)$. Hence we have two cosets of the same subgroup with a non-empty intersection and so they are equal.
\end{enumerate}

The definition of $\qsign_X (f,b)$ and Step (1) of the proof are concentrated in subsections \ref{subsec:general-discussion-of-signatures-over-X} to \ref{subsec:quad-sign-over-X}. The main technical proposition is Proposition \ref{prop:degree-one-normal-map-mfd-to-poincare-over-X} which says that the boundary of the quadratic signature of any degree one normal map from a manifold to $X$ is $s(X)$.

Step (2) of the proof is concentrated in subsections \ref{subsec:identification-of-quad-sign-with-assembly} to \ref{subsec:signatures-versus-orientations}. It starts with an easy corollary of Proposition \ref{prop:degree-one-normal-map-mfd-to-poincare-over-X} which says that although, as noted above, $\qsign_X (f,b)$ does not produce an element in $H_n (X,\bL_\bullet \langle 1 \rangle)$, the difference $\qsign _X (f,b) - \qsign _X (f_0,b_0)$ for two degree one normal maps does produce such an element. Therefore, fixing some $(f_0,b_0)$ and letting $(f,b)$ vary provides us with a map from the normal invariants $\sN (X)$ to $H_n (X,\bL_\bullet \langle 1 \rangle)$. The main technical proposition is then Proposition \ref{prop:identification}. Via the just mentioned difference map it identifies the set of the quadratic signatures of degree one normal maps with the coset of the image of the assembly map containing $\qsign_{\ZZ[\pi_1 (X)])} (f_0,b_0)$.

The principal references are: for Step (1) \cite[sections 7.3, 7.4]{Ranicki(1981)} and for Step (2) \cite[pages 293-298]{Ranicki(1979)} and \cite[section 17]{Ranicki(1992)}.


\subsection{A general discussion of quadratic signatures over $X$} \label{subsec:general-discussion-of-signatures-over-X}

\


As noted above, in case $X$ is a triangulated manifold, we have Construction
\ref{con:quad-construction-over-cplxs-lower-star} which produces from a degree one normal map $(f,b) \co M \ \ra X$ a quadratic signature $\qsign _X (f,b) \in H_n (X,\bL_\bullet \langle 1 \rangle)$. Let us first look at why it is not obvious how to generalize this to our setting. The idea in \ref{con:quad-construction-over-cplxs-lower-star} was to make $f$ transverse to the dual cells of $X$ and to consider the restrictions 
\begin{equation} \label{fragmented-deg-one-map}
(f(\sigma),b(\sigma)) \co (M(\sigma),\partial M(\sigma)) \ra
(D(\sigma),\partial D(\sigma)).
\end{equation}
These are degree one normal maps, but the target $(D(\sigma),\partial D(\sigma))$ is only a normal pair which can be non-Poincar\'e. Consequently we cannot define the Umkehr maps
$f(\sigma)^!$ as in
\ref{con:quad-construction-over-cplxs-lower-star}.

We need an alternative way to define the Umkehr maps. Such a
construction is a relative version of an absolute construction whose
starting point is a degree one normal map $(g,c) \co N \ra Y$ from
an $n$-dimensional manifold $N$ to an $n$-dimensional normal space
$Y$. In this case there is the normal signature $\nsign (Y) \in NL^n
(\ZZ)$ with boundary $\partial \nsign (Y) \in L_{n-1} (\ZZ)$. In
Definition \ref{QuadraticSignatureOfDegree1NormalMapNotPoincare}
below, we recall the definition of a quadratic signature $\qsign
(g,c)$ in this setting, which is an $n$-dimensional QAC over $\ZZ$, not
necessarily Poincar\'e.\footnote{The terminology
``signature'' is perhaps not the most suitable, since we do not
obtain an element in an $L$-group. It is used because this
``signature'' is defined analogously to the signatures of section
\ref{sec:algebraic-cplxs}.} As such it has a boundary, which is an
$(n-1)$-dimensional QAPC over $\ZZ$, and hence defines an element
$\del \qsign (g,c) \in L_{n-1} (\ZZ)$. The following proposition
describes the relationship between these signatures.

\begin{prop} \textup{\cite[Proposition 7.3.4]{Ranicki(1981)}} \label{prop:degree-one-normal-map-mfd-to-normal-cplx}
Let $(g,c) \co N \ra Y$ be a degree one normal map from an
$n$-dimensional manifold to an $n$-dimensional normal space. Then
there are homotopy equivalences of symmetric complexes
\[
h \co \partial \ssign (g,c) \xra{\simeq} - (1+T) \partial
\nsign (Y) 
\]
and a homotopy equivalence of quadratic refinements
\[
h \co \partial \qsign (g,c) \xra{\simeq} - \partial \nsign (Y).
\]
\end{prop}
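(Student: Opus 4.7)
The strategy is to adapt the spectral quadratic construction of Construction \ref{con:SpectralQuadraticConstruction}, which was used in Construction \ref{con:normal-con-via-spectral-quadratic-con} to compute the quadratic boundary of a GNC, to the present relative setting. Since $N$ is a manifold, $S$-duality gives a genuine stable identification $\Th(\nu_N)^\ast \simeq \Sigma^p N_+$, whereas for the normal space $Y$ we only have the semi-stable map $\Gamma_Y \co \Th(\nu_Y)^\ast \ra \Sigma^p Y_+$. The $S$-dual of the Thom map $T(c) \co \Th(\nu_N) \ra \Th(\nu_Y)$ produces a semi-stable map $G \co \Th(\nu_Y)^\ast \ra \Sigma^p N_+$, whose induced chain map
\[
g^! \co C(\Th(\nu_Y)^\ast)_{\ast+p} \ra C(N)
\]
plays the role of the Umkehr. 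Under Thom isomorphism and $S$-duality the source is identified with $C(Y)^{n-\ast}$; its failure to coincide with $C(Y)$ under $\varphi_0([Y])$ is measured by $\partial C(Y) = \Sigma^{-1}\sC(\varphi_0([Y]))$.

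The quadratic signature $\qsign(g,c) = (\sC(g^!), \psi(g,c))$ is then defined by applying the spectral quadratic construction on $G$ to the dual Thom class $u(\nu_Y)^\ast$ and pushing the result forward along the inclusion $e \co C(N) \ra \sC(g^!)$. To compare its boundary with $\partial \nsign(Y)$, I would insert $\Gamma_Y$ into the source of $G$ and observe that $G$ factors, up to semi-stable homotopy, as $\Gamma_Y$ followed by a stable map $\Sigma^p Y_+ \ra \Sigma^p N_+$ $S$-dual to $T(c)$; this yields a commutative square relating $g^!$ with the chain map $\gamma_Y \co C(\Th(\nu_Y)^\ast)_{\ast+p} \ra C(Y)$ of Construction \ref{con:normal-con-via-spectral-quadratic-con}. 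The induced map of mapping cones identifies $\partial \sC(g^!)$ with $\sC(\gamma_Y) \simeq \partial C(Y)$ after an equivalence absorbing the contribution of the manifold $N$ (which, being Poincar\'e, has contractible boundary). This gives the symmetric identification $\partial \ssign(g,c) \simeq -(1+T)\partial \nsign(Y)$, the sign coming from the orientation convention for boundaries (Definition \ref{defn:symbdy}).

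For the quadratic refinement, naturality of the spectral quadratic construction shows that both $\psi(g,c)$ and the class $\psi(Y)$ of Construction \ref{con:normal-con-via-spectral-quadratic-con} arise as preferred desuspensions of $\Psi(u(\nu_Y)^\ast)$ applied to related semi-stable maps, and the chain-level square above transports one to the other. Finally, matching the desuspended refinements on the nose, rather than only up to cobordism, gives the stronger statement $\partial \qsign(g,c) \simeq -\partial \nsign(Y)$. The main obstacle I anticipate is precisely this uniqueness-of-desuspension step: the spectral quadratic construction determines $\psi(g,c)$ only up to equivalence in $\Wq{\sC(g^!)}$, so aligning it with $-\partial \psi(Y)$ requires a careful relative analogue of the argument from the cofibration sequence at the end of Construction \ref{con:normal-con-via-spectral-quadratic-con}, together with bookkeeping of the Thom isomorphisms and $S$-dualities through the diagram produced by $(g,c)$. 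Once this symmetric-level comparison is in place, the quadratic claim follows by the correspondence of Lemma \ref{lem:normal-gives-quadratic-boundary}.
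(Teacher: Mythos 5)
Your overall strategy — define $\qsign(g,c)$ via the spectral quadratic construction on a semi-stable map $G$ obtained from $S$-duality, and then compare its boundary with the quadratic boundary of $\nsign(Y)$ — is the right one and is the approach the paper takes. But there is a concrete error in the comparison step, and the crucial technical ingredient is missing.

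You claim that $G$ factors, up to semi-stable homotopy, as $\Gamma_Y$ followed by a stable map $\Sigma^p Y_+ \ra \Sigma^p N_+$ ``$S$-dual to $T(c)$''. No such map exists: since $Y$ is only a normal complex and not Poincar\'e, its Thom space $S$-dual is \emph{not} homotopy equivalent to $\Sigma^p Y_+$ --- that is precisely what the semi-stable map $\Gamma_Y$ fails to be. The correct relation (Diagram (\ref{dgrm:umkehr-for-mfd-to-normal-cplx-on-level-of-spaces})) goes the other way: $\Gamma_Y = (\Sigma^p g_+) \circ G$, where $\Sigma^p g_+$ is the suspension of the geometric map $g \co N \ra Y$, not any kind of Umkehr or $S$-dual. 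This reversal is not cosmetic. The argument that identifies the quadratic refinements hinges on Proposition \ref{PropertiesOfSpectralQuadraticConstruction}, which relates the spectral quadratic constructions on $G$ and $\Gamma_Y$ by an error term involving $\psi(\Sigma^p g_+)$, and that error term vanishes \emph{because} $\Sigma^p g_+$ is induced by a map of spaces. Under your factorization the connecting map would be a wrong-way map, for which the quadratic construction does not vanish, and the argument collapses.

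Relatedly, your final step --- ``once the symmetric-level comparison is in place, the quadratic claim follows by the correspondence of Lemma \ref{lem:normal-gives-quadratic-boundary}'' --- skips the real content. That lemma gives a bijection between normal extensions of a symmetric complex and quadratic refinements of its boundary, but it does not by itself tell you that the particular refinement $\partial\psi(g^!)$ produced by the spectral quadratic construction on $G$ corresponds to the refinement $\partial\psi(Y)$ produced from $\Gamma_Y$. Matching those two requires tracking the structures through the commutative square and invoking the vanishing $\psi(\Sigma^p g_+) = 0$, plus a careful accounting of signs via the relation $h' = -\Sigma(h)$ between the equivalence on boundaries and the equivalence on suspensions. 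Your identification of ``uniqueness of desuspension'' as a delicate point is accurate, but the resolution is the specific error-term formula of Proposition \ref{PropertiesOfSpectralQuadraticConstruction}, which you do not invoke.
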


\begin{rem}
Recall the situation in the case $Y$ is Poincar\'e. Then there is defined the algebraic Umkehr map $g^{!} \co C_\ast (\widetilde Y) \ra C_\ast (\widetilde N)$ and one obtains the symmetric signature $\ssign (g,c)$ with the underlying chain complex the algebraic mapping cone $\sC (g^{!})$. This can be further refined to a quadratic structure $\qsign (g,c)$. In addition one has (see Remark \ref{rem:symmetrization-of-surgery-obstruction})
\[
 \sC(g^{!}) \oplus C_\ast (\widetilde Y) \simeq  C_\ast (\widetilde
 N) \qquad \textup{and} \qquad \ssign (g,c) \oplus \ssign (Y) =
 \ssign (N).
\]
In the situation of Proposition \ref{prop:degree-one-normal-map-mfd-to-normal-cplx} one obtains instead the formula
\[
 \ssign (N) \simeq \ssign (g,c) \cup_h \ssign (Y).
\]
where $\cup_h$ denotes the algebraic gluing of symmetric pairs from \cite[section 3]{Ranicki-I-(1980)}.
\end{rem}

Before going into the proof of Proposition
\ref{prop:degree-one-normal-map-mfd-to-normal-cplx} remember that we
still need its relative version. For that again some preparation is
needed. In particular we need the concept of a boundary of a
symmetric pair. Let $(f \co C \ra D, (\delta \varphi, \varphi))$ be
an $(n+1)$-dimensional symmetric pair which is not necessarily
Poincar\'e. Its boundary is the $n$-dimensional symmetric pair
$(\del f \co \del C \ra \del_+ D,\del_+ \delta \varphi,\del
\varphi))$ with the chain complex
\[
\del_+ D = \sC \big( \smallpairmap \co D^{n+1-\ast} \ra \sC (f)
\big).
\]
defined in \cite{Milgram-Ranicki(1990)}. It is Poincar\'e. Similarly
one can define the boundary of a quadratic pair, which is again a
quadratic pair and also Poincar\'e. Finally the boundary of a normal
pair is a quadratic Poincar\'e pair.

Given $((g,c),(f,b)) \co (N,A) \ra (Y,B)$ a degree one normal map
from an $n$-dimensional manifold with boundary to an $n$-dimensional
normal pair, there are relative versions of the signatures appearing
in Proposition \ref{prop:degree-one-normal-map-mfd-to-normal-cplx}
that are defined in Definition
\ref{QuadraticSignatureOfDegree1NormalMapNotPoincare-relative-version}
and Construction \ref{con:quad-boundary-of-pair} below. Their
relationship is described by the promised relative version of the
previous proposition:

\begin{prop} \label{prop:degree-one-normal-map-mfd-with-boundary-to-normal-pair}
Let $((g,c),(f,b)) \co (N,A) \ra (Y,B)$ be a degree one normal map
from an $n$-dimensional manifold with boundary to an $n$-dimensional
normal pair. Then there are homotopy equivalences of symmetric pairs
\[
h \co \partial \ssign ((g,c),(f,b)) \xra{\simeq} - (1+T) \partial
\nsign (Y,B) 
\]
and a homotopy equivalence of quadratic refinements
\[
h \co \partial \qsign ((g,c),(f,b)) \xra{\simeq} - \partial \nsign
(Y,B).
\]
\end{prop}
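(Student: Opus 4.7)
The plan is to reduce the relative statement to the absolute case of Proposition \ref{prop:degree-one-normal-map-mfd-to-normal-cplx} by extending the spectral quadratic construction to pairs. The key insight is that both sides of the claimed homotopy equivalences arise from a single spectral quadratic construction applied to an $S$-dual of a relative diagonal, once we set things up properly.

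First I would construct a relative version of the map $\Gamma_Y$ from Construction \ref{con:S-duality-and-Thom-is-Poincare}. Given the normal pair $((Y,B),\nu,\rho)$, the generalized diagonal gives a map of pairs
\[
\tilde{\Delta} \co (\Th(\nu),\Th(\nu|_B)) \ra \bigl( \Th(\nu) \wedge Y_+,\; \Th(\nu|_B) \wedge B_+ \bigr),
\]
and composing with $\rho$ followed by relative $S$-duality produces a semi-stable map of pairs $\Gamma_{(Y,B)} \co (\Th(\nu)^\ast,\Th(\nu|_B)^\ast) \ra \Sigma^p(Y_+, B_+)$. Applying the spectral quadratic construction of Construction \ref{con:SpectralQuadraticConstruction} relatively to $\Gamma_{(Y,B)}$ evaluated on the $S$-dual of the Thom class yields the quadratic boundary $\partial \nsign(Y,B)$ of the normal pair, exactly as in Construction \ref{con:normal-con-via-spectral-quadratic-con} (this identification being essentially the content of the relative analogue of that construction and also forms the basis of Example \ref{expl:normal-symm-poincare-pair-gives-quadratic}).

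Next I would introduce the degree one normal map side. The pair $(N,A)$ provides a Pontrjagin--Thom collapse $(\Th(\nu),\Th(\nu|_B)) \ra (\Sigma^k N/A,\Sigma^k A/\partial A)$, and chasing through $S$-duality (as in the diagram preceding Example \ref{expl:normal-symm-poincare-pair-gives-quadratic}) exhibits the Umkehr chain maps $g^!$ and $f^!$ as the duals of the Thom--collapse map. The spectral quadratic construction on the composite semi-stable map
\[
(\Th(\nu)^\ast,\Th(\nu|_B)^\ast) \ra \Sigma^p(N_+,A_+) \ra \Sigma^p(Y_+,B_+)
\]
produces $\partial \qsign((g,c),(f,b))$. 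The commutative diagram of pair maps, together with the fact that the spectral quadratic construction on a composition $F' \circ F$ factors through $F'$ (by the very definition via the symmetric construction $\varphi$ and functoriality), identifies the two resulting quadratic structures up to a sign.

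The main obstacle will be the careful bookkeeping of the boundary: the spectral quadratic construction for pairs requires simultaneous control of the ambient structure on $Y$ and the boundary structure on $B$, and ensuring that the chain homotopies identifying the two sides restrict correctly to the boundary is the delicate point. Once this is established, the symmetrization statement $\partial \ssign((g,c),(f,b)) \simeq -(1+T)\partial \nsign(Y,B)$ follows by applying $(1+T)$ to the quadratic equivalence and invoking the defining property of the spectral quadratic construction, namely $(1+T)\Psi \simeq e^\% \varphi f$. The absolute case (Proposition \ref{prop:degree-one-normal-map-mfd-to-normal-cplx}) appears both as the fiber of the statement over the boundary piece $B \subset Y$ and as a consistency check, which can be used to pin down the sign.
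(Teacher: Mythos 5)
Your overall strategy matches the paper's: relativize the spectral quadratic construction, the $\Gamma$-maps, and the diagrams used in the absolute case, then run the same argument. However, there is a concrete confusion in the middle of your sketch. You state that the spectral quadratic construction on the composite semi-stable map $(\Th(\nu)^\ast,\Th(\nu|_B)^\ast) \ra \Sigma^p(N_+,A_+) \ra \Sigma^p(Y_+,B_+)$ produces $\partial \qsign((g,c),(f,b))$. This is backwards: that composite is precisely the relative $\Gamma$-map $\Gamma_{(Y,B)}$ attached to the normal pair $(Y,B)$, and its spectral quadratic construction yields the data for $\partial \nsign(Y,B)$. The quadratic signature $\qsign((g,c),(f,b))$ is obtained, as in Definition \ref{QuadraticSignatureOfDegree1NormalMapNotPoincare-relative-version}, by applying the relative spectral quadratic construction only to the first factor (the pair of maps landing in $\Sigma^p(N_+,A_+)$), and one then takes the boundary of the resulting non-Poincar\'e quadratic pair. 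As written, your sentence silently identifies the two sides of the proposition, which is what has to be proved.

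A second, related gap is that ``the spectral quadratic construction on a composition $F' \circ F$ factors through $F'$'' is not quite the right invocation. The needed tool is the relative analogue of Proposition \ref{PropertiesOfSpectralQuadraticConstruction}, whose formula carries an extra correction term of the shape $(e')_\% \circ \psi(G_Y) \circ f$. The argument goes through only because this term vanishes, and it vanishes for a specific reason: $G_Y = \Sigma^p g_+$ is induced by an honest map of spaces, so its quadratic construction $\psi(G_Y)$ is null-homotopic. You gesture at ``functoriality'' but do not isolate this key vanishing; without it, the identification of the two quadratic structures would acquire an unaccounted-for contribution. Pinning down the relative version of the proposition, including that vanishing and the compatibility of the resulting chain homotopy equivalence with the boundary inclusion, is exactly the ``careful bookkeeping'' you correctly flag as the delicate point.
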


Finally recall that our aim is to prove a certain statement about
quadratic chain complexes in the category $\Lambda(\ZZ)\langle 1
\rangle (X)$. Generalizing the definitions above one obtains for a
degree one normal map $(f,b) \co M \ra X$ from an $n$-dimensional
manifold to an $n$-dimensional Poincar\'e complex the desired
quadratic signature $\qsign_X (f,b)$ in Definition
\ref{defn:quad-signature-over-X-deg-one-normal-map-to-poincare}. Its
relationship to the normal signature of $X$ over $X$, which was
already discussed in section \ref{sec:normal-signatures-over-X}, is
described in the following proposition, which can be seen as a
global version of Proposition
\ref{prop:degree-one-normal-map-mfd-with-boundary-to-normal-pair}:

\begin{prop}  \textup{\cite[page 192]{Ranicki(1992)}} \label{prop:degree-one-normal-map-mfd-to-poincare-over-X}
Let $(f,b) \co M \ra X$ be a degree one normal map from an
$n$-dimensional manifold to an $n$-dimensional Poincar\'e complex.
Then there is a homotopy equivalence of symmetric complexes over
$\ZZ_\ast (X)$
\[
h \co \partial \ssign_X (f,b) \xra{\simeq} - (1+T) \partial \nsign_X (X),  
\]
a homotopy equivalence of quadratic refinements over $\ZZ_\ast
(X)$
\[
h \co \partial \qsign_X (f,b) \xra{\simeq} - \partial \nsign_X (X)
\]
and consequently a homotopy equivalence
\[
h \co \partial \qsign_X (f,b) \xra{\simeq} - \partial \vsign_X (X) = -s(X).
\]
\end{prop}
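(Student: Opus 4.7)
The plan is to prove the proposition simplex-by-simplex, by applying the relative Proposition \ref{prop:degree-one-normal-map-mfd-with-boundary-to-normal-pair} to each fragment of the degree one normal map obtained via transversality, and then checking that the resulting homotopy equivalences are compatible under the face maps so as to assemble into a homotopy equivalence over the category $\ZZ_\ast(X)$. First I would make $f$ transverse to the dual cells, so that for each $\sigma \in X$ the restriction
\[
\big((f(\sigma),b(\sigma)),(\partial f(\sigma),\partial b(\sigma))\big) \co (M(\sigma),\partial M(\sigma)) \ra (D(\sigma),\partial D(\sigma))
\]
is a degree one normal map from an $(n-|\sigma|)$-dimensional manifold with boundary to the $(n-|\sigma|)$-dimensional normal pair obtained by restricting $(\nu_X,\rho_X(\sigma))$ (the latter being constructed in Section \ref{sec:normal-signatures-over-X}). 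Note that the target $(D(\sigma),\partial D(\sigma))$ is in general only normal, not Poincar\'e, which is precisely why the machinery developed for Proposition \ref{prop:degree-one-normal-map-mfd-with-boundary-to-normal-pair} (rather than the Poincar\'e Umkehr construction from Section \ref{sec:algebraic-cplxs}) is needed.

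Next I would use the simplex-wise construction to define the symmetric and quadratic signatures $\ssign_X(f,b)$ and $\qsign_X(f,b)$ in $\Lambda(\ZZ)(X)$: assign to each $\sigma$ the complex underlying $\ssign((f(\sigma),b(\sigma)),(\partial f(\sigma),\partial b(\sigma)))$, respectively $\qsign((f(\sigma),b(\sigma)),(\partial f(\sigma),\partial b(\sigma)))$, equipped with the spectral-quadratic-construction structure of Construction \ref{con:SpectralQuadraticConstruction}. Compatibility with the face maps $\partial_i$ follows from the naturality of the Umkehr map and of the spectral quadratic construction applied to the diagonals on the Thom spaces $\Th(\nu_X(\sigma))/\Th(\nu_X(\partial\sigma))$. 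Global Poincar\'e-ness of the assembled complex follows from the $\pi$--$\pi$-theorem and the fact that the assembly recovers the ordinary quadratic signature $\qsign_{\ZZ[\pi_1(X)]}(f,b) \in L_n(\ZZ[\pi_1(X)])$.

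For each $\sigma$, Proposition \ref{prop:degree-one-normal-map-mfd-with-boundary-to-normal-pair} furnishes homotopy equivalences
\[
h(\sigma) \co \partial \ssign\big((f(\sigma),b(\sigma)),(\partial f(\sigma),\partial b(\sigma))\big) \xra{\simeq} -(1+T)\partial \nsign(D(\sigma),\partial D(\sigma))
\]
and a quadratic refinement on the right-hand side identifying $\partial \qsign$ with $-\partial \nsign$. The main step is then to verify that the $h(\sigma)$ commute with the face maps induced by $\partial_i \co D(\delta_i\sigma) \hookrightarrow D(\sigma)$, and hence define a morphism in $\BB(\ZZ_\ast(X))$. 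This compatibility comes from the naturality statement in Proposition \ref{prop:degree-one-normal-map-mfd-with-boundary-to-normal-pair}: both sides are built using the same spectral quadratic construction on $\Gamma_{W(\sigma)}$, and restriction of the Thom-space diagonal along an inclusion of dual cells is compatible with restriction of the spherical fibration, which is exactly what is used in Construction \ref{con:geom-normal-signature-over-X} to define $\nsign_X(X)$. Assembling yields the two claimed equivalences $h$ over $\ZZ_\ast(X)$.

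The final statement is then immediate: by construction in Section \ref{sec:normal-signatures-over-X}, $\vsign_X(X)$ is an $n$-dimensional NAC in $\Lambda(\ZZ)(X)$ whose image under the forgetful functor to $\widehat\Lambda(\ZZ)\langle 1/2\rangle(X)$ is $\nsign_X(X)$, and the quadratic boundary of a normal complex is natural with respect to this forgetful functor (Construction \ref{con:normal-con-via-spectral-quadratic-con}). Therefore $\partial \nsign_X(X) = \partial \vsign_X(X) = s(X)$ in $\SS_n(X)$, and composing with the previous equivalence gives the desired $\partial \qsign_X(f,b) \simeq -s(X)$. I expect the main obstacle to be the compatibility check in the third paragraph: one needs to track the $S$-duality and Thom isomorphism used in the spectral quadratic construction carefully across dual cell inclusions to see that the simplex-wise $h(\sigma)$ really do assemble into a chain map over $\ZZ_\ast(X)$, rather than just a collection of individually defined equivalences.
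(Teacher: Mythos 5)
Your proposal is essentially the same argument as the paper's: make $f$ transverse to the dual cells, build $\qsign_X(f,b)$ simplex-by-simplex from the relative quadratic signatures, apply Proposition \ref{prop:degree-one-normal-map-mfd-with-boundary-to-normal-pair} to each $\sigma$, verify the simplex-wise equivalences assemble over $\ZZ_\ast(X)$, and conclude the final identification using that the visible signature and the normal signature are represented by the same underlying normal complex (so their quadratic boundaries coincide). You also correctly flag, at the end, exactly where the real work is: the compatibility of the $h(\sigma)$ across face maps. The one place where your resolution deviates from the paper's is that you attribute this compatibility to a \emph{naturality} of Proposition \ref{prop:degree-one-normal-map-mfd-with-boundary-to-normal-pair}, but that proposition is stated only as an existence statement, not a natural one; the (spectral) quadratic construction involves choices of $S$-duals and null-homotopies that are not made coherently across a dissection by default. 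The paper handles this instead by constructing the $h(\sigma)$ \emph{inductively}, working down from the top-dimensional dual cells and at each stage extending the equivalence already built on the boundary, which is a cleaner way to sidestep the need for an unproved naturality statement. If you want to make the naturality route rigorous you would have to verify it explicitly along the lines of Proposition \ref{PropertiesOfSpectralQuadraticConstruction}, which is essentially equivalent in effort to the induction.
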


In the following subsections we will define the concepts used above and provide the proofs.



\subsection{Quadratic signature of a degree one normal map to a normal space}

\

Recall that the quadratic construction \ref{constrn:quadconstrn} was
needed in order to obtain the quadratic signature $\qsign (f,b)$ of
a degree one normal map $(f,b) \co M \ra X$ from an $n$-dimensional
manifold to an $n$-dimensional Poincar\'e space. To define $\qsign
(g,c)$ for a degree one normal map $(g,c) \co N \ra Y$ from an
$n$-dimensional manifold to an $n$-dimensional normal space we need
the spectral quadratic construction
\ref{con:SpectralQuadraticConstruction}.

\begin{defn} \cite[Proposition 7.3.4]{Ranicki(1981)} \label{QuadraticSignatureOfDegree1NormalMapNotPoincare}
Let~$(g,c) \co N \ra Y$ be a degree one normal map from a Poincar\'e
complex~$N$ to a normal complex~$Y$. The \emph{quadratic signature}
of~$(g,c)$ is the $n$-dimensional QAC
\[
 \qsign (g,c) = (C,\psi)
\]
which is not necessarily Poincar\'e, obtained from a choice of the
Thom class $u (\nu_Y) \in \tilde{C}^k (\Th (\nu_Y))$ as follows.

Consider the commutative diagrams
\[
\xymatrix{ C^{n-\ast} (N) \ar[d]_{(\varphi(N))_0}^{\simeq} &
C^{n-\ast} (Y) \ar[l]_-{g^\ast} \ar[d]^{(\varphi(Y))_0}
& & \Th(\nu_N)^\ast \ar[d]_{\Gamma_N}^{\simeq} & \Th (\nu_Y)^{\ast} \ar[l]_{\Th (c)^{\ast}} \ar[d]^{\Gamma_Y} \\
C (N) \ar[r]_{g_\ast} & C(Y) & & \Sigma^p N_+ \ar[r]_{\Sigma^p g_+}
& \Sigma^p Y_+ }
\]
The maps $\Gamma_{\_}$ in the right diagram are obtained using the
$S$-duality as in Construction
\ref{con:S-duality-and-Thom-is-Poincare}. In fact using the
properties of the $S$-duality explained in Construction
\ref{con:S-duality-and-Thom-is-Poincare} we see that the left
diagram can be considered as induced from the right diagram by
applying the chain complex functor $C_{\ast+p} (-)$.

Set $g^{!} = (\varphi(N))_0 \circ g^\ast$ and define~$C = \sC
(g^{!})$ and $\psi = \Psi (u (\nu_Y)^\ast)$, where $\Psi$ denotes
the spectral quadratic construction on the map $\Gamma_N \circ \Th
(c)^\ast$.

Also note that by the properties of the spectral quadratic
construction we have
\[
(1+T) \psi \equiv e_{g^!}^{\%} (\varphi(N))
\]
where $e_{g^!} \co C(N) \ra \sC (g^!)$ is the inclusion.
\end{defn}

For the proof of Proposition \ref{prop:degree-one-normal-map-mfd-to-normal-cplx} we also need an
additional property of the spectral quadratic construction.

\begin{prop} \textup{\cite[Proposition 7.3.1. (v)]{Ranicki(1981)}} \label{PropertiesOfSpectralQuadraticConstruction}
Let~$F \co X \lra \Sigma^{p} Y$ and $F' \co X' \lra \Sigma^{p} Y'$
be semi-stable maps fitting into the following commutative diagram
 \[
  \xymatrix{
     X       \ar[d]_{G_X}  \ar[r]^-F
   & \Sigma^{p} Y   \ar[d]^{G_Y}
   \\ 
     X'      \ar[r]^-{F'}
   & \Sigma^{p} Y'
  }
 \]
 inducing the commutative diagram of chain complexes
 \[
  \xymatrix{
     \Sigma^{-p}\tilde{C}(X)       \ar[d]^{g_X}  \ar[r]^-{f}
   & \tilde{C}(Y)                  \ar[d]^{g_Y}  \ar[r]^-{e}
   & \cone(f)                    \ar[d]^{\smalltwoxtwomatrix{g_Y}{0}{0}{g_X}}
   \\ 
     \Sigma^{-p}\tilde{C}(X')      \ar[r]^-{f'}
   & \tilde{C}(Y')                 \ar[r]^-{e'}
   & \cone(f')
  }
 \]
 Then the spectral quadratic constructions of~$F$ and $F'$ are related
 by
 \[
  \Psi (F') \circ g_X \equiv \smalltwoxtwomatrix{g_Y}{0}{0}{g_X}_\% \circ \Psi (F) + (e')_\% \circ \psi (G_Y) \circ f
 \]
 where $\Psi (-)$ and $\psi(-)$ denote the (spectral) quadratic constructions on the respective maps.
\end{prop}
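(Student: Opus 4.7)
The plan is to verify the identity first after symmetrization, and then to show that the preferred null-homotopies that define $\Psi(F)$ and $\Psi(F')$ are compatible with the given commutative square, thereby lifting the symmetric identity uniquely to the quadratic level.

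Applying $(1+T)$ to both sides reduces the claim to an equality of chain maps $\Sigma^{-p}\tilde C(X)\to W^\%\,\sC(f')$. Using the defining property $(1+T)\,\Psi(F')\equiv (e')^\%\,\varphi_{Y'}\,f'$ of the spectral quadratic construction (Construction~\ref{con:SpectralQuadraticConstruction}) together with the chain-level commutativity $f'\circ g_X=g_Y\circ f$, the left-hand side becomes $(e')^\%\,\varphi_{Y'}\,g_Y\,f$. On the right-hand side, naturality of $1+T$ with respect to $(-)_\%$ and $(-)^\%$, combined with the relation $\smalltwoxtwomatrix{g_Y}{0}{0}{g_X}\circ e = e'\circ g_Y$ coming from the commutative diagram, rewrites the first summand as $(e')^\%\,g_Y^\%\,\varphi_Y\,f$. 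The defining property $(1+T)\,\psi(G_Y)\equiv g_Y^\%\,\varphi_Y-\varphi_{Y'}\,g_Y$ of the ordinary quadratic construction (Construction~\ref{constrn:quadconstrn}) rewrites the second summand so that the contribution $(e')^\%\,g_Y^\%\,\varphi_Y\,f$ cancels against the first one, leaving precisely $(e')^\%\,\varphi_{Y'}\,g_Y\,f$; the relative sign is exactly the one built into the definition of the spectral quadratic construction.

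To upgrade this equality to a chain homotopy in $W_\%\,\sC(f')$, I would exploit the fact that $\Psi(F)$ is constructed, in Construction~\ref{con:SpectralQuadraticConstruction}, as the preferred lift through the exact $Q$-group sequence of Proposition~\ref{propn:LESQ} determined by the preferred null-homotopy of $f^\%\varphi-\varphi f_*$ supplied, via the stabilization isomorphism of Proposition~\ref{prop:sups-Q-groups}, by the honest geometric map $F$ (and similarly for $F'$). The commutative diagram of semi-stable maps induces a corresponding compatibility between the stabilizations of these preferred null-homotopies, through the naturality of the symmetric construction applied to the strictly commutative square of chain maps of suspensions; tracing this compatibility through the exact $Q$-group sequence and Proposition~\ref{prop:sups-Q-groups} upgrades the symmetric identity to a chain homotopy at the quadratic level.

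The main obstacle will be the bookkeeping of these preferred null-homotopies and of the signs inherent in the stabilization map. Specifically, one has to verify that the chosen null-homotopies for $F$, $F'$ and the quadratic construction $\psi(G_Y)$ for the vertical map $G_Y$ fit together so as to produce the formula \emph{on the nose} and not merely up to a further correction term in $W_\%\,\sC(f')$. Once consistent chain-level representatives are fixed, as in the explicit formulas of~\cite[Proposition~7.3.1]{Ranicki(1981)}, the verification reduces to a direct computation which does not introduce any further obstructions beyond the symmetric check above.
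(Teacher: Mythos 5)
The paper itself contains no proof of this proposition --- it is quoted from \cite[Proposition~7.3.1~(v)]{Ranicki(1981)}, where the argument is carried out by explicit chain-level formulas --- so there is nothing in-paper to compare against and I can only assess your proposal on its own terms.

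Your symmetrization check does not close up under the conventions the paper adopts. With $(1+T)\Psi(F')\equiv(e')^{\%}\varphi_{Y'}f'$ (Construction~\ref{con:SpectralQuadraticConstruction}) and $(1+T)\psi(G_Y)\equiv g_Y^{\%}\varphi_Y-\varphi_{Y'}g_Y$ (Construction~\ref{constrn:quadconstrn}), the two contributions $(e')^{\%}g_Y^{\%}\varphi_Y f$ on the right-hand side carry the \emph{same} sign and therefore add rather than cancel: the right-hand side symmetrizes to $2(e')^{\%}g_Y^{\%}\varphi_Y f-(e')^{\%}\varphi_{Y'}g_Y f$, not to the left-hand side $(e')^{\%}\varphi_{Y'}g_Y f$. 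The cancellation you describe requires a minus sign in front of $(e')_{\%}\psi(G_Y)f$, and the appeal to ``the relative sign built into the definition'' does not supply it --- that sign is precisely what has to be checked, not asserted. (In the paper's actual uses of this proposition $\psi(G_Y)$ vanishes identically, because there $G_Y$ comes from a genuine map of spaces, so a sign slip is harmless for the paper; in your argument it is the whole content of the step.) Your second step is the right idea in outline but remains a plan: agreement after $(1+T)$ only determines the difference up to a hyperquadratic class, while the claim is a chain homotopy in $\Wq{\sC(f')}$. You also mis-describe the lifting mechanism --- $\Psi(F)$ is not a lift over a null-homotopy of $f^{\%}\varphi-\varphi f_{\ast}$ (that is the \emph{ordinary} quadratic construction); it is the preferred $W_{\%}$-lift of $e^{\%}\varphi f$, chosen via the vanishing of its image in $\Qh{n}{\sC(f)}$. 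Comparing those preferred hyperquadratic null-homotopies for $F$, $F'$ and $G_Y$ under the given commutative square is exactly the remaining work, and it is where Ranicki's explicit formulas earn their keep; acknowledging it as ``the main obstacle'' does not discharge it.
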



\begin{proof}[Proof of Proposition \ref{prop:degree-one-normal-map-mfd-to-normal-cplx}]
For ease of notation, let
\begin{itemize}
\item $\ssign (g,c) = (\cone(g^!),\varphi (g^{!}))$,
\item $\nsign (Y) = (C(Y),\psi(Y))$,
\end{itemize}
and set
\begin{itemize}
\item $\del \ssign (g,c) = (\del\cone(g^!),\del\varphi(g^!))$,
\item $\del \nsign (Y) = (\del C(Y),\del \psi (Y))$.
\end{itemize}

Consider the following commutative diagram where all rows and
columns are cofibration sequences (the diagram also sets the
notation $\mu$, $q_g$ and $e_{g^!}$)
\begin{equation} \label{dgrm:umkehr-for-mfd-to-normal-cplx}
\begin{split}
  \xymatrix{
     0 \ar[r] \ar[d]
     & C(Y)^{n-\ast} \ar[r]^{\id} \ar[d]^{g^!}
     & C(Y)^{n-\ast} \ar[d]^{\varphi(Y)_0}
    \\
    \dsc(g)           \ar[r]^{q_g}      \ar[d]^\id
      & C(N)          \ar[r]^g          \ar[d]_{e_{g^!}}
      & C(Y)
    \\
    \dsc(g)        \ar[r]^\mu
      & \cone(g^!)
      &
  }
\end{split}
\end{equation}
We obtain a homotopy equivalence, say $h' \co \sC (\mu) \raeq \sC (\varphi(Y)_0) \simeq \Sigma \del C(Y)$. Inspection shows that $\sC (g^!)^{n-\ast} \simeq \dsc (g)$ and that under this homotopy equivalence the duality map $\varphi (g^{!})_0$ is identified with the map $\mu$. Hence we have
\begin{equation} \label{eqn:naturality-of-htpy-equiv-on-boundary}
e_Y \circ g = h' \circ e_{\varphi(g^!)_0} \circ e_{g^{!}}
\end{equation}
where $e_{\varphi(g^!)_0} \co \sC (g^{!}) \ra \Sigma \del \sC (g^{!})$ and $e_Y
\co C(Y) \ra \Sigma \del C (Y)$ are the inclusions. We also obtain a commutative braid of chain complexes, which we leave for the reader to draw, with chain homotopy equivalences
\[
h \co \del \cone(g^!) \xra{\simeq} \del C(Y) \quad \textup{and} \quad  h' \co \sC (\varphi (g^!)_0) \simeq \sC (\mu)
\xra{\simeq} \sC (\varphi (Y)_0)
\]
which are related by $h' = - \Sigma(h)$, thanks to the sign conventions used for definitions of mapping cones and suspensions.

Next we consider the symmetric structures. Recall that, by definition we have $S(\del\varphi(g^{!})) = e_{\varphi(g^!)_0}^\% \circ e_{g^{!}}^\% (\varphi (N))$, and $S(\del\varphi(Y))=e_Y^\% (\varphi(Y))$. Further we have $g^\% (\varphi (N)) = \varphi (Y)$ and hence
\[
  (h')^\% S(\del\varphi(g^{!}))) = S(\del\varphi(Y)).
\]
By the injectivity of the suspension we also have~$h^\% (\del\varphi(g)) = - \del\varphi(Y)$.

Finally we study the quadratic structures. Set~$\del\nsign (Y) =
(\del C(Y),\del\psi(Y))$, $\qsign (g,c) =
(\del\cone(g^!),\del\psi(g^{!}))$. Recall that the spectral
quadratic constructions are employed to define the quadratic
structures $\del\psi(Y)$ and $\del\psi(g^{!})$. By properties of the
$S$-duality the semi-stable maps used in these constructions fit
into the commutative diagram
\begin{equation} \label{dgrm:umkehr-for-mfd-to-normal-cplx-on-level-of-spaces}
\begin{split}
 \xymatrix{
  \Th(\nu_Y)^\ast \ar[d]_{\Gamma_N \circ \Th(c)^\ast} \ar[r]_{\id}   & \Th(\nu_Y)^\ast \ar[d]^{\Gamma_Y}  \\
  \Sigma^p N_+ \ar[r]^{\Sigma^p g} & \Sigma^p Y_+
 }
\end{split}
\end{equation}
which in fact induces the upper right part of the Diagram
(\ref{dgrm:umkehr-for-mfd-to-normal-cplx}). By Proposition
\ref{PropertiesOfSpectralQuadraticConstruction} the spectral
quadratic construction applied to the maps~$\Gamma_Y$, $\Gamma_N
\circ \Th(c)^\ast$ and $\Sigma^p g_+$ satisfy the following relation
\[
 \Psi (\Gamma_Y) = \smalltwoxtwomatrix{g}{0}{0}{1}_\% \circ \Psi (\Gamma_N \circ \Th(c)^\ast) + (e_{g_G})_\% \circ \psi (\Sigma^p g_+) \circ g_F
\]
where the symbols in the brackets specify the map to which the
(spectral) quadratic constructions are applied. However, the
map~$\Sigma^p g_+$ comes from the map~$g \co N \ra Y$ and so~$\psi
(\Sigma^p g_+) = 0$. This leads to the commutative diagram
\begin{equation} \label{dgrm:comparing-two-quad-str-on-cones}
\begin{split}
 \xymatrix{
    \tilde{C}_{n+p}(\Th(\nu_Y)^\ast)  \ar[rr]^{\Psi (\Gamma_N \circ \Th(c)^\ast)} \ar[drr]_{\Psi (\Gamma_Y)}
  &
  & (\Wq{\cone(g^!)})_n                                      \ar[d]^{\smalltwoxtwomatrix{g}{0}{0}{1}_\% = (h' \circ e_{\varphi(g^!)_0})_\%}
  \\
  &
  & (\Wq{\cone(\varphi(Y)_0)})_n
 }
\end{split}
\end{equation}
The identification of the vertical map comes from Diagram
(\ref{dgrm:umkehr-for-mfd-to-normal-cplx}) and equation
(\ref{eqn:naturality-of-htpy-equiv-on-boundary}). Hence we obtain
that
\begin{align*}
h'_\% (S \del \psi(g^{!})) & = h'_\% \circ (e_{\varphi(g^!)_0})_\%(S \psi(g^{!})) =
(h' \circ e_{\varphi(g^!)_0})_\% \Psi (\Gamma_N \circ \Th(c)^\ast) (u(\nu_Y)^\ast)
= \\ & = \Psi (\Gamma_Y) (u(\nu_Y)^\ast) = S \del \psi(Y).
\end{align*}
The uniqueness of desuspension as presented in Construction \ref{con:normal-con-via-spectral-quadratic-con} yields the desired
\[
h_\% (\del \psi(g^{!})) = - \del \psi(Y)
\]
thanks again to $h' = - \Sigma (h)$.
\end{proof}


\subsection{Quadratic signature of a degree one normal map to a normal pair}

\

Now we aim at proving Proposition
\ref{prop:degree-one-normal-map-mfd-with-boundary-to-normal-pair},
which is a relative version of the proposition just proved. First we
need a relative version of Definition
\ref{QuadraticSignatureOfDegree1NormalMapNotPoincare}. For that we
need to know how to apply the spectral quadratic construction in the
relative setting and for that, in turn, how to apply $S$-duality in
the relative setting.

\begin{construction} \cite[Proposition 7.3.1]{Ranicki(1981)} \label{con:RelativeSpectralQuadraticConstruction}
Let~$(G,F) \co (X,A) \lra \Sigma^{p} (Y,B)$ be a semi-stable map
between pointed pairs. Consider the following diagram of induced
chain maps
\[
\xymatrix{
 \tilde{C}(A)_{p+\ast} \ar[r]^-{f} \ar[d]^{i} & \tilde{C}(B) \ar[r] \ar[d]^{j} & \sC (f) \ar[d]^{(j,i)} \\
 \tilde{C}(X)_{p+\ast} \ar[r]^-{g} & \tilde{C}(Y) \ar[r] & \sC (g)
 }
\]
The \emph{relative spectral quadratic construction} on~$(G,F)$ is a
chain map
\[
\Psi \co \Sigma^{-p} \tilde{C}(X,A) \lra \sC ((j,i)_{\%})
\]
such that
\[
 (1+T) \circ \Psi \equiv e^\% \circ \varphi \circ (g,f)
\]
where $\varphi \co \tilde{C}(Y,B) \lra \sC (j^{\%})$ is the relative
symmetric construction on $(Y,B)$ (Construction \ref{con:rel-sym})
and $e^{\%} \co \sC (j^{\%}) \ra \sC ((j,i)^{\%})$ is the map induced by the right hand square in the diagram above. The existence of $\Psi$ follows from the naturality of the diagram in Construction
\ref{con:SpectralQuadraticConstruction}.
\end{construction}

The $S$-duality is applied to normal pairs as follows. Recall that
an $(n+1)$-dimensional geometric normal pair $(Y,B)$ comes with the
map of pairs
\[
(\rho_Y,\rho_B) \co (D^{n+k+1},S^{n+k}) \ra (\Th (\nu_Y),\Th
(\nu_B))
\]
In the absolute case the map $\rho$ composed with the diagonal map
gave rise to a map $\Gamma$ which was the input for the spectral
quadratic construction. Now we have three diagonal maps producing
three compositions:
\begin{align*}
S^{n+k} \xra{\rho_B} \Th (\nu_B) & \xra{\Delta} B_+ \wedge \Th (\nu_B) \\
S^{n+k+1} \xra{\rho_Y/\rho_B} \Th (\nu_Y) / \Th (\nu_B) & \xra{\Delta} Y_+ \wedge \Th (\nu_Y) / \Th (\nu_B) \\
S^{n+k+1} \xra{\rho_Y/\rho_B} \Th (\nu_Y) / \Th (\nu_B) &
\xra{\Delta} Y/B \wedge \Th (\nu_Y)
\end{align*}
These induce three duality maps which fit into a commutative diagram
as follows:
\begin{equation} \label{diag:relative-S-duality}
 \begin{split}
  \xymatrix{
  \Sigma^{-1} \Th (\nu_B)^\ast \ar[d]^{\Sigma^{-1} \Gamma_B} \ar[r]^(0.42){i} & (\Th (\nu_Y)/\Th (\nu_B))^\ast \ar[d]^{\Gamma_Y} \ar[r] & \Th (\nu_Y)^\ast \ar[d]^{\Gamma_{(Y,B)}} \ar[r] & \Th (\nu_B)^\ast \ar[d]^{\Gamma_B} \\
  \Sigma^p B \ar[r]^{j} & \Sigma^p Y \ar[r] & \Sigma^p (Y/B) \ar[r] & \Sigma^{p+1} B.
  }
 \end{split}
\end{equation}

\begin{con} \label{con:quad-boundary-of-pair}
The quadratic boundary of the normal pair $(Y,B)$ is an
$n$-dimensional QAPP
\[
\del \nsign (Y,B) = (\del C(B) \ra \del_+ C(Y),(\psi(Y),\psi(B)))
\]
obtained by applying the relative spectral quadratic construction on
the pair of maps $(\Gamma_Y,\Sigma^{-1} \Gamma_B)$ with
$(\psi(Y),\psi(B))$ the image of $u(\nu(Y))^\ast \in \tilde{C} (\Th
(\nu_Y)^\ast)_{n + p}$ under
\[
\Psi \co \Sigma^{-p} \tilde{C} (\Th (\nu_Y)^\ast) \ra \sC ((j,i)_{\%})
\]
where $i$ and $j$ are as in Diagram (\ref{diag:relative-S-duality}).
\end{con}

\begin{defn}\label{QuadraticSignatureOfDegree1NormalMapNotPoincare-relative-version}
Let~$((g,c),(f,b)) \co (N,A) \ra (Y,B)$ be a degree one normal map
from a Poincar\'e pair~$(N,A)$ to a normal pair~$(Y,B)$ of dimension
$(n+1)$. The \emph{quadratic signature} of~$((g,c),(f,b))$ is the
$n$-dimensional quadratic pair
\[
 \qsign ((g,c),(f,b)) = (j \co C \ra D,(\delta \psi,\psi))
\]
which is not necessarily Poincar\'e, obtained from a choice of the
Thom class $u (\nu_Y)$ as follows.

The $S$-duality produces a commutative diagram
\[
\xymatrix{
\Sigma^{p} A_+ \ar[d] & & \Sigma^{-1} \Th (\nu_B)^\ast \ar[ll]_{\Gamma_A \circ \Th(b)^\ast} \ar[d] \\
\Sigma^p N_+ & & (\Th (\nu_Y) / \Th (\nu_B) )^\ast
\ar[ll]^(0.6){\Gamma_N \circ (\Th(c)/\Th(b))^\ast} }
\]
inducing the diagram of chain complexes
\[
\xymatrix{
C(A) \ar[d] & C^{n-\ast} (B) \ar[l]_{f^!} \ar[d] \\
C(N) & C^{n+1-\ast} (Y,B) \ar[l]^(0.6){g^!} }
\]

Define~$C = \sC (f^{!})$, $D = \sC (g^!)$ and $(\delta \psi, \psi) =
\Psi (u (\nu_Y)^\ast)$, where the spectral quadratic construction is
on the pair of maps $(\Gamma_N \circ (\Th(c)/\Th(b))^\ast,\Gamma_A \circ
\Th(b)^\ast)$.
\end{defn}

\begin{proof}[Proof of Proposition
\ref{prop:degree-one-normal-map-mfd-with-boundary-to-normal-pair}]
The proof follows the same pattern as the proof of Proposition
\ref{prop:degree-one-normal-map-mfd-to-normal-cplx}. With the
notation of Construction \ref{con:quad-boundary-of-pair} and
Definition
\ref{QuadraticSignatureOfDegree1NormalMapNotPoincare-relative-version}
one first observes that we have a homotopy equivalence of pairs
\[
(\del h \ra h) \co (\del C(B) \ra \del_+ C(Y)) \xra{\simeq} (j \co C
\ra D)
\]
by studying a map of diagrams of the same shape as Diagram
(\ref{dgrm:umkehr-for-mfd-to-normal-cplx}). For the symmetric
structures observe that the homotopy equivalence $(\del h \ra h)$
satisfies an equation analogous to
(\ref{eqn:naturality-of-htpy-equiv-on-boundary}) and again use the
naturality. Finally to obtain the desired equivalence of quadratic
structures there is a again a map of commutative squares of the form
as in Diagram
(\ref{dgrm:umkehr-for-mfd-to-normal-cplx-on-level-of-spaces}). A
diagram chase shows that the relative spectral construction
satisfies a formula analogous to the one appearing in Proposition
\ref{PropertiesOfSpectralQuadraticConstruction}. This leads to a
diagram analogous to Diagram
(\ref{dgrm:comparing-two-quad-str-on-cones}). As in the absolute
case unraveling what it means and using the desuspension produces
the desired equation.
\end{proof}

\begin{rem} \label{pairs-vs-k-ads}
Just as explained in section \ref{sec:cat-over-cplxs} the relative
version just proved has a generalization for $k$-ads.
\end{rem}


\subsection{Quadratic signature over $X$ of a degree one normal map to $X$} \label{subsec:quad-sign-over-X}

\

Now we want to prove Proposition \ref{prop:degree-one-normal-map-mfd-to-poincare-over-X}. The
preparation starts with discussing $\nsign_X (X)$ for an $n$-dimensional GPC. This was defined in section \ref{sec:normal-signatures-over-X} by first passing to $\gnsign_X
(X)$ and then applying the spectrum map $\nsign \co \Omega^N_\bullet
\ra \bNL^\bullet$ from \cite{Weiss-II(1985)}. By the proof of
\cite[Theorem 7.1]{Weiss-II(1985)} the spectrum map $\nsign$
composed with the boundary fits with the quadratic boundary
construction as described in subsection
\ref{subsec:quadratic-boundary-of-gnc}, so we can think of $\del
\nsign_X (X)$ as a collection of $(n-|\sigma|)$-dimensional
quadratic $(m-|\sigma|)$-ads indexed by simplices of $X$ which fit
together and are obtained by the relative spectral construction
described in Construction \ref{con:quad-boundary-of-pair}.

\begin{defn} \label{defn:quad-signature-over-X-deg-one-normal-map-to-poincare}
Let $(f,b) \co M \ra X$ be a degree one normal map from a closed
$n$-dimensional topological manifold to an $n$-dimensional GPC. Make
$f$ transverse to the dual cell $D(\sigma,X)$ for each $\sigma \in
X$ so that we have a degree one normal map
\[
 (f[\sigma],f[\del \sigma]) \co  (M[\sigma],\del M[\sigma]) \ra (X[\sigma],\del X[\sigma])
\]
from an $(n-|\sigma|)$-dimensional manifold with boundary to an
$(n-|\sigma|)$-dimensional normal pair. Define the \emph{quadratic
signature over $X$} of $(f,b)$ to be the element
\[
 \qsign_X (f,b) \in L_n (\Lambda (\ZZ) (X))
\]
represented by the $n$-dimensional QAC $(C,\psi)$ in $\Lambda (\ZZ)
(X)$ whose component over $\sigma \in X$ is the relative quadratic
signature
\[
\qsign \big( (f(\sigma),b(\sigma)), \del (f(\sigma),b(\sigma)) \big)
\]
obtained as in Definition
\ref{QuadraticSignatureOfDegree1NormalMapNotPoincare-relative-version}.
The resulting element is independent of all the choices.
\end{defn}

\begin{proof}[Proof of Proposition
\ref{prop:degree-one-normal-map-mfd-to-poincare-over-X}] In order to
prove the proposition it is necessary to prove that one has homotopy
equivalences as in the statement but for each simplex and so that
they fit together. However, for each simplex this is exactly the
statement of Proposition
\ref{prop:degree-one-normal-map-mfd-with-boundary-to-normal-pair}.
Since one can proceed inductively from simplices of the top
dimension to smaller simplices the homotopy equivalences can be made
to fit together.

To obtain the last homotopy equivalence recall that $X$ is an $n$-dimensional Poincar\'e complex and hence the same complex that defines the normal signature defines the visible signature over $X$, see section \ref{sec:normal-signatures-over-X} if needed.
\end{proof}



\subsection{Identification of the quadratic signature with the assembly} \label{subsec:identification-of-quad-sign-with-assembly}

\


Now we proceed to Step (2) of the proof of the Main Technical Theorem part (II). We first state the following preparatory proposition which describes what happens when we consider the difference of the quadratic signatures of two degree one normal maps. 

\begin{prop}  \label{prop:difference-of-quad-sign-is-poincare}
Let $(f_i,b_i) \co M_i \ra X$ with $i = 0,1$ be two degree one normal maps from $n$-dimensional topological manifolds to an $n$-dimensional GPC. Then the difference of their quadratic signatures over $\ZZ_\ast (X)$ is an $n$-dimensional QAC in the algebraic bordism category $\Lambda \langle 1 \rangle (\ZZ)_\ast (X)$ and hence represents an element 
\[
\qsign_X (f_1,b_1) - \qsign_X (f_0,b_0) \in H_n (X ; \bL_\bullet \langle 1 \rangle). 
\]
\end{prop}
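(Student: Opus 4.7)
The plan is to invoke the algebraic surgery exact sequence
\[
H_n(X;\bL_\bullet\langle 1\rangle) \xra{A} L_n(\ZZ[\pi_1(X)]) \xra{\partial} \SS_n(X)
\]
together with Proposition~\ref{prop:degree-one-normal-map-mfd-to-poincare-over-X}. Each $\qsign_X(f_i,b_i)$ naturally represents an element of $L_n(\Lambda(\ZZ)(X))$, which is isomorphic to $L_n(\ZZ[\pi_1(X)])$ via the algebraic $\pi$-$\pi$-theorem (Proposition~\ref{prop:algebraic-pi-pi-theorem}); under this identification the boundary map $\partial$ is realized on the chain level by $(C,\psi) \mapsto (\partial C,\partial\psi)$. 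Indeed, for $(C,\psi)$ globally Poincar\'e over $\ZZ_\ast(X)$ the boundary $(\partial C,\partial \psi)$ is automatically locally Poincar\'e (any boundary of a structured chain complex is Poincar\'e, by a formal argument) and globally contractible (since $A(C,\psi)$ is a Poincar\'e complex over $\ZZ[\pi_1 X]$, its boundary is contractible), hence represents an element of $\SS_n(X)$.

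To conclude the proposition it will therefore suffice, by exactness of the surgery sequence, to check that the difference $\qsign_X(f_1,b_1)-\qsign_X(f_0,b_0)$ maps to zero under $\partial$. This is where Proposition~\ref{prop:degree-one-normal-map-mfd-to-poincare-over-X} enters: it supplies, for each $i\in\{0,1\}$, a homotopy equivalence of $(n-1)$-dimensional quadratic complexes over $\ZZ_\ast(X)$
\[
\partial \qsign_X(f_i,b_i) \;\simeq\; -\partial \vsign_X(X) \;=\; -s(X).
\]
Both $\partial\qsign_X(f_0,b_0)$ and $\partial\qsign_X(f_1,b_1)$ therefore represent the same class $-s(X)\in\SS_n(X)$, and the difference vanishes there.

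By exactness the difference then lies in the image of $A$. Invoking the identification $L_n(\Lambda\langle 1\rangle(\ZZ)_\ast(X))\cong H_n(X;\bL_\bullet\langle 1\rangle)$ from Proposition~\ref{prop:L-theory-over-K-are-homology-theories-conn-versions}, together with the fact that $A$ is induced by the inclusion of algebraic bordism categories $\Lambda\langle 1\rangle(\ZZ)_\ast(X)\hookrightarrow\Lambda(\ZZ)(X)$, this is precisely the statement that the difference, viewed in $L_n(\Lambda(\ZZ)(X))$, is cobordant to an $n$-dimensional QAC lying in $\Lambda\langle 1\rangle(\ZZ)_\ast(X)$, i.e.\ one that is locally Poincar\'e and locally $1$-connective. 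The main obstacle is really Proposition~\ref{prop:degree-one-normal-map-mfd-to-poincare-over-X} itself; once that is available the argument is formal bookkeeping, and the only small point requiring care is checking that the homotopy equivalences supplied by that proposition genuinely respect the local $1$-connectivity condition built into $\SS_n(X)$, so that they produce an honest cobordism of $(n-1)$-dimensional globally contractible, locally Poincar\'e, locally $1$-connective QAPCs over $\ZZ_\ast(X)$.
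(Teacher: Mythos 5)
Your proof reaches the right destination by the wrong road, and in doing so it loses exactly the information the proposition is supposed to deliver. The statement is a \emph{chain-level} claim: the specific quadratic complex $\qsign_X(f_1,b_1)-\qsign_X(f_0,b_0)$ over $\ZZ_\ast(X)$ is itself locally Poincar\'e (and locally $1$-connective), hence already lives in the algebraic bordism category $\Lambda\langle 1\rangle(\ZZ)_\ast(X)$. By contrast, your argument descends to the level of cobordism classes in $L_n(\Lambda(\ZZ)(X))\cong L_n(\ZZ[\pi_1(X)])$, applies exactness of the algebraic surgery sequence, and concludes only that the class of the difference lies in $\im(A)$. That is strictly weaker for two reasons. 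First, ``lies in $\im(A)$'' says only that the difference is cobordant in $\Lambda(\ZZ)(X)$ to some locally Poincar\'e complex; it does not say the difference \emph{is} one. Second, and more consequentially, the assembly map $A\co H_n(X;\bL_\bullet\langle 1\rangle)\ra L_n(\ZZ[\pi_1(X)])$ is in general not injective, so knowing the image class does not determine a well-defined preimage in $H_n(X;\bL_\bullet\langle 1\rangle)$. But the whole point of this proposition---and the reason it is stated in this chain-level form---is to produce the well-defined map $\sN(X)\ra H_n(X;\bL_\bullet\langle 1\rangle)$ of (\ref{eqn:quad-sign-rel-to-x-0}) that feeds into Proposition \ref{prop:identification}. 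Your version would not produce a well-defined map.

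The irony is that you already isolate the correct key ingredient: Proposition \ref{prop:degree-one-normal-map-mfd-to-poincare-over-X} gives, for each $i$, a chain homotopy equivalence $\partial\,\qsign_X(f_i,b_i)\simeq -\partial\,\nsign_X(X)$ over $\ZZ_\ast(X)$. The paper's proof simply stays at this level: since both quadratic signatures have homotopy equivalent boundaries, the boundary of their difference is contractible, i.e.\ the difference is locally Poincar\'e, which is exactly what it means to be an $n$-dimensional QAC in $\Lambda\langle 1\rangle(\ZZ)_\ast(X)$. No passage to $\SS_n(X)$, no exactness, and no non-canonical choice of preimage under $A$ is needed. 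Your final caveat about $1$-connectivity is legitimate but should be addressed the same way: the geometric constructions of sections \ref{sec:cat-over-cplxs} and \ref{sec:proof-part-2} produce complexes concentrated in non-negative degrees, and the connectivity requirement is met for the same reasons as in Proposition \ref{prop:connective-signatures-on-spectra-level}.
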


\begin{proof}
A quadratic chain complex in $\ZZ_\ast (X)$ is an $n$-dimensional QAC in the algebraic bordism category $\Lambda \langle 1 \rangle (\ZZ)_\ast (X)$ if and only if it is locally Poincar\'e which is equivalent to saying that its boundary is contractible. So it is enough to prove that the two quadratic complexes representing $\qsign_X (f_i,b_i)$ have homotopy equivalent boundaries. This follows from Proposition \ref{prop:degree-one-normal-map-mfd-to-poincare-over-X} since they are both homotopy equivalent to $-\del \nsign_X (X)$.
\end{proof}

The degree one normal maps with the target $X$ are organized in the normal invariants $\sN(X)$. The above proposition tells us that the quadratic signature over $X$ relative to $(f_0,b_0)$ defines a map
\begin{equation} \label{eqn:quad-sign-rel-to-x-0}
 \qsign_X (-,-) - \qsign_X (f_0,b_0) \co \sN (X) \ra H_n (X ; \bL_\bullet \langle 1 \rangle). 
\end{equation}
The following proposition is the main result in the proof of Step (2). It says that for $X$ an $n$-dimensional Poincar\'e complex such that $t(X) = 0$ the surgery obstruction map $\qsign_{\ZZ [\pi_1 (X)]} \co \sN (X) \ra L_n (\ZZ [\pi_1 (X)])$ can be identified with the assembly map. When $X$ is already a manifold the map $(f_0,b_0)$ can be taken to be the identity. 

\begin{prop} \textup{\cite[pages 293-297]{Ranicki(1979)}, \cite[proof of Theorem 17.4]{Ranicki(1992)}} \  \label{prop:identification} 
\
Let $X$ be an $n$-dimensional GPC with $\pi = \pi_1 (x)$ such that $t(X) = 0$ and let $(f_0,b_0) \co M_0 \ra X$ be any choice of a degree one normal map. Then the diagram 
\[
 \xymatrix{
  \sN (X) \ar[rrr]^(0.43){\qsign_{\ZZ [\pi]} (-,-) - \qsign_{\ZZ [\pi]} (f_0,b_0)} \ar[d]_{\qsign_X (-,-) - \qsign_X (f_0,b_0)}^{\cong} & & & L_n (\ZZ [\pi_1 (X)]) \ar[d]^{=} \\
  H_n (X ; \bL_\bullet \langle 1 \rangle) \ar[rrr]_{A} & & & L_n (\ZZ [\pi_1 (X)])
 }
\]
is commutative and the left vertical map is a bijection.
\end{prop}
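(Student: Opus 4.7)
The plan is to prove the proposition in two stages: commutativity first, then bijectivity of the left vertical map.

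For commutativity, I would first verify that the homological assembly of the quadratic signature over $X$ coincides with the quadratic signature over $\ZZ[\pi]$, that is $A(\qsign_X(f,b)) = \qsign_{\ZZ[\pi]}(f,b) \in L_n(\ZZ[\pi])$. This is a generalization of Example \ref{expl:assembly-of-quadratic-signature-over-K}, which treated the case when the target is a manifold. The argument is that the local quadratic signatures $\qsign((f(\sigma),b(\sigma)), \partial(f(\sigma),b(\sigma)))$ appearing in Definition \ref{defn:quad-signature-over-X-deg-one-normal-map-to-poincare} are built via the (relative) spectral quadratic construction, which glues compatibly on overlaps since the Umkehr maps defined via $S$-duality behave naturally. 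Hence assembling the collection of local pieces reproduces the global Umkehr map and the global quadratic structure from Construction \ref{con:quad-construction-on-degree-one-normal-map}. Once this identity is established, commutativity of the diagram is immediate from the definitions.

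For bijectivity, the strategy is to identify the left vertical map, up to the translation by $\qsign_X(f_0,b_0)$, with the Poincaré duality ($S$-duality) isomorphism in $\bL_\bullet\langle 1\rangle$-theory. The identifications I would chain together are: (i) the standard bijection $\sN(X) \cong [X,\G/\TOP]$ coming from the classification of normal invariants; (ii) the Kirby--Siebenmann homotopy equivalence $\G/\TOP \simeq \bL_\bullet\langle 1\rangle_0$ arising from the isomorphism $\pi_n(\G/\TOP) \xrightarrow{\cong} L_n(\ZZ)$ for $n\geq 1$, giving $[X,\G/\TOP] \cong H^0(X;\bL_\bullet\langle 1\rangle)$; and (iii) the $S$-duality $H^0(X;\bL_\bullet\langle 1\rangle) \cong H^k(\Th(\nu_X);\bL_\bullet\langle 1\rangle) \cong H_n(X;\bL_\bullet\langle 1\rangle)$ from the SNF. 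The composition of these three is a bijection, so it suffices to show that $\qsign_X(-,-) - \qsign_X(f_0,b_0)$ realizes this composition.

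To carry out step (iii), I would use the orientation theory of section \ref{sec:proof-part-1}. A degree one normal map $(f,b)$ with a chosen topological block bundle reduction corresponds to a pair $(\nu,h)$ with $\nu \co X \to \BSTOP$ and a null-homotopy $h$ of the composite with $J$, well-defined relative to a base point $(f_0,b_0)$. By Proposition \ref{canonical-L-orientations} and the fibration sequence of Proposition \ref{prop:fib-seq-of-quad-sym-norm-connective-version}, the pair $(\nu,h)$ determines a cohomology class $u^{\bL_\bullet\langle 1\rangle}(\nu,h) \in H^k(\Th(\nu_X);\bL_\bullet\langle 1\rangle)$, and its $S$-dual is the image in $H_n(X;\bL_\bullet\langle 1\rangle)$. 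Proposition \ref{prop:S-duals-of-orientations-are-signatures-relative-case} (together with its refined version ``Proposition \ref{prop:S-duals-of-orientations-are-signatures-relative-case-non-mfd}'' mentioned in the introduction and Lemma \ref{lem:refined-orientations-vs-cup-product}) identifies this $S$-dual with the relative quadratic signature $\qsign_X(f,b) - \qsign_X(f_0,b_0)$.

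The main obstacle will be carrying out this last identification carefully: threading together the normal-invariant / orientation / signature dictionary in the relative case when the target $X$ is only Poincaré, not a manifold. This requires a Poincaré-complex version of Proposition \ref{prop:S-duals-of-orientations-are-signatures-relative-case} (which was stated for manifold targets), and then one must verify compatibility of the orientation-lift interpretation with the assembly of the various local signature pieces used in Definition \ref{defn:quad-signature-over-X-deg-one-normal-map-to-poincare}. Once this naturality is in place, bijectivity follows because each of (i), (ii), (iii) is a bijection, and commutativity of the square is a by-product of the first part of the proof.
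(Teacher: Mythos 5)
Your proposal matches the paper's proof in both structure and ingredients: commutativity is deduced from $A(\qsign_X(f,b)) = \qsign_{\ZZ[\pi]}(f,b)$ via Remark \ref{rem:hlgical-assembly}, and bijectivity is obtained by factoring $\qsign_X(-,-)-\qsign_X(f_0,b_0)$ through $t(-,x_0)$, the Kirby--Siebenmann identification $[X;\G/\TOP]\cong H^0(X;\bL_\bullet\langle 1\rangle)$, the Thom isomorphism $-\cup u^{\bL_\bullet}(\nu_0)$, and $S$-duality, then checking commutativity of the resulting diagram using the orientation/signature dictionary of Propositions \ref{prop:S-duals-of-orientations-are-signatures-relative-case-non-mfd} and Lemma \ref{lem:refined-orientations-vs-cup-product}. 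The one concrete piece you flagged as "the main obstacle" but did not pin down is Lemma \ref{lem:normal-symmetric-signature-over-X-for-a-difference-of-deg-one-normal-maps}: the difference $\qsign_X(f_1,b_1)-\qsign_X(f_0,b_0)$ must be expressed as the image under the signature spectrum map $\qsign\co\Sigma^{-1}\bOmega^{N,\STOP}_\bullet\ra\bL_\bullet\langle 1\rangle$ of a $\G/\TOP$-signature $\sign^{\G/\TOP}_X(x_1,x_0)$ obtained by geometrically gluing the two dissected mapping cylinders along $X$, which is what lets the bordism-spectrum orientation theory (the right-hand column of the paper's diagram) feed into the $\bL_\bullet$-side.
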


\begin{proof}[Overview of the proof]

\

To see the commutativity consider $(f,b) \co M \ra X$ a degree one normal map from an $n$-dimensional manifold to an $n$-dimensional Poincar\'e complex $X$ and $A \co \Lambda (X) \ra \Lambda (\ZZ[\pi_1 (X)])$  the assembly functor. Then we have
\[
 A(\qsign_X (f,b)) = \qsign_{\ZZ[\pi_1 (X)])} (f,b) \in L_n (\Lambda (\ZZ[\pi_1 (X)]))
\]
since the assembly corresponds to geometric gluing, see Remark \ref{rem:hlgical-assembly}.

We are left with showing that the left hand vertical map is a bijection which will be done by identifying this map with a composition of four maps which are all bijections. In order to save space we will abbreviate (using $x_0$ for $(f_0,b_0)$)
\begin{align*}
 \qsign_X (-;x_0) & := \qsign_X (-,-) - \qsign_X (f_0,b_0)
\end{align*}
The strategy of the proof can be summarized in the following diagram:
\[
  \xymatrix{
   \sN (X) \ar[d]^{t (-,x_0)}_{\cong} \ar@/_8pc/[dddd]^(0.35){\qsign_X (-;x_0)} \ar[r]^{=} & \sN (X) \ar[d]^{t (-,x_0)}_{\cong} \ar@/^9pc/[dddd]_(0.35){\sign^{\G/\TOP}_X (-;x_0)} \\
   [X;\G/\TOP] \ar[d]^{\qsign}_{\cong} \ar[r]^{=} & [X;\G/\TOP] \ar[d]^{\widetilde \Gamma} & \\ 
   H^0 (X ; \bL_\bullet \langle 1 \rangle) \ar[d]^{- \cup u^{\bL_\bullet} (\nu_0)}_{\cong} & H^0 (X ; \Sigma^{-1} \bOmega^{N,\STOP}_\bullet) \ar[d]^{- \cup u^{\STOP}(\nu_0)}_{\cong} \ar[l]_{\qsign} \\
   H^k (\Th (\nu_X) ; \bL_\bullet \langle 1 \rangle) \ar[d]^{S-\textup{dual}}_{\cong} & H^k (\Th (\nu_X) ; \Sigma^{-1} \bOmega^{N,\STOP}_\bullet) \ar[d]^{S-\textup{dual}}_{\cong} \ar[l]_{\qsign} \\
   H_n (X ; \bL_\bullet \langle 1 \rangle) & H_n (X ; \Sigma^{-1} \bOmega^{N,\STOP}_\bullet) \ar[l]_{\qsign}
  }
 \]
Some of the maps in the diagram have been defined already, the remaining ones will be defined shortly. We will show that those marked with $\cong$ are bijections or isomorphisms. Once this is done it is enough to show that the left hand part of the diagram is commutative, because then we have indeed identified $\qsign_X (-;x_0)$ with a composition of four bijections. The commutativity of the left hand part will be shown by proving that:
\begin{enumerate}
 \item[(a)] the outer square commutes in subsection \ref{subsec:quad-sign-versus-normal-sign},
 \item[(b)] the middle part commutes in subsection \ref{subsec:proof-of-2}, 
 \item[(c)] the right hand part commutes in subsection \ref{subsec:signatures-versus-orientations}.
\end{enumerate}
The intermediate subsections contain the necessary definitions.
\end{proof}


\subsection{Proof of (a) - Quadratic signatures versus normal signatures} 
\label{subsec:quad-sign-versus-normal-sign}

\


Recall that for a degree one normal map $(f,b) \co M \ra X$ from an $n$-dimensional manifold to an $n$-dimensional GPC we have two ways how to obtain its quadratic signature over $\ZZ [\pi_1 (X)]$. Namely via the Umkehr map $f^!$ as in Construction \ref{constrn:quadconstrn} or via the normal structure on the mapping cylinder $W$ as in Example \ref{expl:normal-symm-poincare-pair-gives-quadratic}. It was further shown in Example \ref{expl:normal-symm-poincare-pair-gives-quadratic} that these two constructions yield the same result. In subsection \ref{subsec:quad-sign-over-X} we have defined the quadratic signature of $(f,b)$ over $\ZZ_\ast (X)$ using Umkehr maps providing an analogue to Construction \ref{constrn:quadconstrn}. Here we provide an analogue to Example \ref{expl:normal-symm-poincare-pair-gives-quadratic} over $\ZZ_\ast (X)$. 

\begin{expl} \label{expl:normal-pair-gives-quadratic-cplx-not-Poincare}
This is a generalization of the results of subsection \ref{subsec:quadratic-boundary-of-gnc}. Let us consider a degree one normal map $(g,c) \co N \ra Y$ from an $n$-dimensional manifold to an $n$-dimensional GNC. We obtain an $(n+1)$-dimensional normal pair $(W,N \sqcup Y)$ where $W$ is the mapping cylinder of $f$. However, in contrast to the situation in Example \ref{expl:normal-symm-poincare-pair-gives-quadratic}, the disjoint union $N \sqcup Y$ is no longer a Poincar\'e complex. Therefore the associated algebraic normal pair 
\[
 (\nsign (W),\ssign (N) - \nsign (Y))
\]
does not have a Poincar\'e boundary. Nevertheless we can still perform algebraic surgery on this pair, just as in Lemma \ref{lem:normal-sym-pair-gives-quad-poincare-cplx} and thanks to the spectral quadratic construction the result of the surgery is an $n$-dimensional quadratic complex, which however, will not be Poincar\'e. The proof from Example \ref{expl:normal-symm-poincare-pair-gives-quadratic} translates almost word-for-word to an identification of this quadratic complex with $\qsign (g,c)$ from Definition \ref{QuadraticSignatureOfDegree1NormalMapNotPoincare} (the only difference being the fact that the map $\varphi_0|_Y$ is no longer an equivalence). In symbols we have
\begin{equation}
\textup{Lemma } \ref{lem:normal-sym-pair-gives-quad-poincare-cplx}
\co (\nsign (W),\ssign (N) - \nsign (Y)) \mapsto \qsign (g,c).
\end{equation}
Using the relative version of $S$-duality and of the spectral quadratic construction from earlier in this section one also obtains a relative version of this identification.
\end{expl}

\begin{expl} \label{expl:normal-symmetric-pair-gives-quadratic-over-X}
Starting now with a degree one normal map $(f,b) \co M \ra X$ from an $n$-dimensional manifold to an $n$-dimensional GPC consider the dissection 
\[
 X = \bigcup_{\sigma \in X} X(\sigma) \quad (f,b) = \bigcup_{\sigma \in X} (f(\sigma),b(\sigma)) \co M (\sigma) \ra X (\sigma)
\]
where each $(f(\sigma),b(\sigma))$ is a degree one normal map from an $(n-|\sigma|)$-dimensional manifold $(m-|\sigma|)$-ad to an $(n-|\sigma|)$-dimensional normal $(m-|\sigma|)$-ad. As such it gives rise to an $(n+1-|\sigma|)$-dimensional pair of normal $(m-|\sigma|)$-ads
\[
 (W(\sigma),\nu(b(\sigma)),\rho(b(\sigma))).
\]
Applying Example \ref{expl:normal-pair-gives-quadratic-cplx-not-Poincare} shows that the quadratic chain complex over $\ZZ_\ast (X)$ obtained this way coincides with the quadratic signature $\qsign_X (f,b)$ from Definition \ref{defn:quad-signature-over-X-deg-one-normal-map-to-poincare}.
\end{expl}

The following Lemma is a generalization of ideas from Construction \ref{con:normal-symmetric-signature-over-X-for-a-deg-one-normal-map}. In its statement a use is made of the quadratic signature map $\qsign \co \Sigma ^{-1} \bOmega^{N,\STOP}_\bullet \ra \bL_\bullet \langle 1 \rangle$ from Proposition \ref{prop:signatures-on-spectra-level}.

\begin{lem} \label{lem:normal-symmetric-signature-over-X-for-a-difference-of-deg-one-normal-maps}
Let $x_i = (f_i,b_i) \co M_i \ra X$ with $i = 0,1$ be two degree one normal maps from $n$-dimensional topological manifolds to an $n$-dimensional GPC. Then there exists a $\G/\TOP$-signature 
\[
\sign^{\G/\TOP}_X (x_1,x_0) \in H_n (X ; \Sigma ^{-1} \bOmega^{N,\STOP}_\bullet)
\]
such that 
\[
 \qsign_X (f_1,b_1) - \qsign_X (f_0,b_0) = \qsign (\sign^{\G/\TOP}_X (x_1,x_0)) \in H_n (X ; \bL_\bullet \langle 1 \rangle). 
\]
\end{lem}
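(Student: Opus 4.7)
The plan is to construct $\sign^{\G/\TOP}_X(x_1,x_0)$ geometrically by gluing the mapping cylinders of $f_0$ and $f_1$ along $X$, and then to identify its image under $\qsign$ with the algebraic difference by a dissection argument. The key observation is that although Construction \ref{con:normal-symmetric-signature-over-X-for-a-deg-one-normal-map} required the target to be a manifold, one can bypass this restriction by using two degree one normal maps to cancel out the non-manifold target $X$, which will sit in the interior of the resulting space rather than on its boundary.

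First I would form the double mapping cylinder $Z := \cyl(f_0) \cup_X \cyl(f_1)$, which contains $M_0 \sqcup M_1$ as its ``manifold boundary''. The bundle maps $b_0,b_1$ together with $\nu_X$ glue to a spherical fibration $\nu_Z \co Z \ra \BSG (k)$ restricting to $\nu_{M_i}$ on $M_i$, and the collapse maps $\rho_{M_0},\rho_X,\rho_{M_1}$ assemble into a map $\rho_Z \co (D^{n+k+1},S^{n+k}) \ra (\Th(\nu_Z),\Th(\nu_{M_0 \sqcup M_1}))$ exhibiting $(Z,M_0 \sqcup M_1)$ as an $(n+1)$-dimensional geometric normal pair whose boundary is a topological manifold, that is, as a $0$-simplex of $\Sigma^{-1}\bOmega^{N,\STOP}_n$.

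Next, by topological transversality I can arrange both $f_0$ and $f_1$ to be transverse to the dual cell decomposition of $X$ simultaneously. Setting $M_i(\sigma)=f_i^{-1}(X(\sigma))$ and $Z(\sigma)=\cyl(f_0(\sigma))\cup_{X(\sigma)}\cyl(f_1(\sigma))$, the assignment $\sigma \mapsto (Z(\sigma), M_0(\sigma) \sqcup M_1(\sigma))$ yields for each $\sigma \in X$ an $(n+1-|\sigma|)$-dimensional pair of normal $(m-|\sigma|)$-ads with manifold ends, and these fit together as $\sigma$ varies to give a $\Sigma^{-1}\bOmega^{N,\STOP}_\bullet$-cycle in $X$ in the sense of Definition \ref{defn:E-cycles}. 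This defines $\sign^{\G/\TOP}_X(x_1,x_0)$; applying the spectrum map $\qsign$ then yields an element of $H_n(X;\bL_\bullet\langle 1 \rangle)$.

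To identify this image with the algebraic difference I would work simplex by simplex. For each $\sigma$ the image of $(Z(\sigma), M_0(\sigma) \sqcup M_1(\sigma))$ under the relative normal construction is an $(n+1-|\sigma|)$-dimensional (normal, symmetric Poincar\'e) pair; under the correspondence of Lemma \ref{lem:normal-sym-pair-gives-quad-poincare-cplx} in its generalized non-Poincar\'e form (Example \ref{expl:normal-pair-gives-quadratic-cplx-not-Poincare}), this corresponds to an $(n-|\sigma|)$-dimensional quadratic ad. The decomposition $Z(\sigma)=\cyl(f_0(\sigma))\cup_{X(\sigma)}\cyl(f_1(\sigma))$ exhibits the quadratic ad as the algebraic union of the two pieces coming from $\cyl(f_i(\sigma))$, which by Example \ref{expl:normal-symmetric-pair-gives-quadratic-over-X} are precisely the $\sigma$-components of $\qsign_X(f_i,b_i)$, with $(f_0,b_0)$ contributing with the opposite sign due to its reversed orientation in the gluing. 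Assembling these identifications over all $\sigma$ produces the claimed equality.

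The main obstacle in executing this plan is the last step: one must verify that gluing two (normal, symmetric) pairs along a common (non-Poincar\'e) boundary piece, when passed through the generalization of Lemma \ref{lem:normal-sym-pair-gives-quad-poincare-cplx} used in Example \ref{expl:normal-pair-gives-quadratic-cplx-not-Poincare}, corresponds to taking the algebraic union (hence cobordism sum) of the associated quadratic complexes. This amounts to checking that the relative spectral quadratic construction is compatible with gluing along a normal subcomplex; it is a natural extension of the argument already carried out in Example \ref{expl:normal-symmetric-pair-gives-quadratic-over-X}, but requires tracking the spectral quadratic construction through the more elaborate decomposition of $Z(\sigma)$ and its $S$-dual.
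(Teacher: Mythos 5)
Your construction of the $\G/\TOP$-signature is exactly the paper's: glue the mapping cylinders $W_i(\sigma) = \cyl(f_i(\sigma))$ along $X(\sigma)$, observe that the resulting pairs $(W_1(\sigma) \cup_{X(\sigma)} W_0(\sigma),\ M_1(\sigma) \sqcup M_0(\sigma))$ have manifold boundary, and that they assemble to a $\Sigma^{-1}\bOmega^{N,\STOP}_\bullet$-cycle. That part is fine.

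Where you diverge is the identification step. You aim to show, simplex by simplex, that the quadratic complex extracted from the glued pair \emph{decomposes as an algebraic union} of the two pieces coming from $W_0(\sigma)$ and $W_1(\sigma)$, and you correctly flag that this requires a compatibility of the (relative) spectral quadratic construction with gluing along a non-Poincar\'e interface which you have not verified; this is a genuine gap in your plan, not merely a routine omission. The paper sidesteps this entirely by never attempting a chain-level gluing formula: instead it exhibits, for each $\sigma$, a \emph{geometric normal cobordism} of $(n+1-|\sigma|)$-dimensional normal pairs between
\[
\bigl(W_1(\sigma) \cup_{X(\sigma)} W_0(\sigma),\ M_1(\sigma) \sqcup -M_0(\sigma)\bigr)
\]
and
\[
\bigl(W_1(\sigma) \sqcup -W_0(\sigma),\ M_1(\sigma) \sqcup X(\sigma) \sqcup -M_0(\sigma) \sqcup -X(\sigma)\bigr),
\]
and then appeals to the functoriality of the normal construction: a geometric normal cobordism of pairs induces an algebraic normal cobordism, hence the extracted quadratic data are cobordant. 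The first pair is (after applying $\qsign$) the $\sigma$-component of $\qsign(\sign^{\G/\TOP}_X(x_1,x_0))$; the second is the difference $\qsign_X(f_1,b_1) - \qsign_X(f_0,b_0)$ on $\sigma$ via Example \ref{expl:normal-symmetric-pair-gives-quadratic-over-X}. This gives equality in $H_n(X;\bL_\bullet\langle 1\rangle)$ without any Mayer–Vietoris argument for the spectral quadratic construction. If you want to complete your own route, you would need to prove a union formula for Construction \ref{con:RelativeSpectralQuadraticConstruction} (the algebraic gluing of \cite[section 3]{Ranicki-I-(1980)} must be matched under the boundary correspondence); the paper's cobordism argument is both more elementary and robust precisely because it only needs the statement at the level of cobordism classes, which is all the lemma asserts.
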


\begin{proof}
Consider the dissections of $(f_i,b_i) \co M_i \ra X$ as in Example \ref{expl:normal-symmetric-pair-gives-quadratic-over-X}. The assignments
\[
 \sigma \mapsto (W_i(\sigma),\nu(b_i(\sigma)),\rho(b_i(\sigma))).
\]
fit together to produce cobordisms of $\bOmega^N_\bullet$-cycles in the sense of Definition \ref{defn:E-cycles}. However, they do not produce a $\Sigma ^{-1} \bOmega^{N,\STOP}_\bullet$-cycle since the ends of the cobordisms given by $X$ are not topological manifolds. But the two ends for $i=0,1$ are equal and so we can glue the two cobordisms along these ends and we obtain for each $\sigma \in X$ the $(n+1-|\sigma|)$-dimensional pairs of normal $(m-|\sigma|)$-ads
\[
 (W_1(\sigma)) \cup_{X(\sigma)} W_0(\sigma)),\nu(b_1(\sigma)) \cup_{\nu_{X(\sigma)}} \nu(b_0(\sigma)),\rho(b_1(\sigma)) \cup_{\rho(\sigma)} \rho(b_0(\sigma))).
\]
which now fit together to produce a $\Sigma^{-1} \bOmega^{N,\STOP}_\bullet$-cycle in the sense of Definition \ref{defn:E-cycles}. This produces the desired signature 
\[
\sign^{\G/\TOP}_X (x_1,x_0) \in H_n (X ; \Sigma^{-1} \bOmega^{N,\STOP}_\bullet).
\]

To prove the equation recall from Proposition \ref{prop:connective-signatures-on-spectra-level} the quadratic signature map $\qsign \co \Sigma^{-1} \bOmega^{N,\STOP}_\bullet \ra \bL_\bullet \langle 1 \rangle$. We have to investigate the value of the induced map on the just defined $\G/\TOP$-signature. By definition this value is given on each simplex $\sigma \in X$ as the $(n-|\sigma|)$-dimensional quadratic Poincar\'e $(m-|\sigma|)$-ad obtained by the algebraic surgery on the algebraic pair extracted from the (normal,topological manifold) pair $(W_1(\sigma) \cup_{X(\sigma)} W_0(\sigma),M_1 (\sigma) \sqcup M_0 (\sigma))$. 

Consider now the left hand side of the desired equation. By Example \ref{expl:normal-symmetric-pair-gives-quadratic-over-X} the value of each summand on a simplex $\sigma$ is obtained via algebraic surgery on the algebraic pair extracted from the normal pair $(W_i(\sigma),M_i(\sigma) \sqcup X(\sigma))$ (whose boundaries are not Poincar\'e and so the resulting complexes are also not Poincar\'e). Subtracting these corresponds to taking the disjoint union of the normal pairs above and reversing the orientation on the one labeled with $i = 0$. On the other hand there is a geometric normal cobordism between geometric normal pairs 
\[
 (W_1(\sigma) \cup_{X(\sigma)} W_0(\sigma), M_1(\sigma) \sqcup -M_0(\sigma))
\]
and
\[  
(W_1(\sigma) \sqcup - W_0(\sigma),M_1(\sigma) \sqcup X(\sigma)\sqcup -M_1(\sigma) \sqcup -X(\sigma) )
\]
which induces an algebraic cobordism and hence the extracted algebraic data are also cobordant.
\end{proof}


\subsection{Normal invariants revisited} \label{subsec:normal-invariants-revisited}

\


Let $X$ be an $n$-dimensional GPC which admits a topological block bundle reduction of its SNF. For such an $X$ we will now discuss in more detail the bijection $\sN (X) \cong [X ; \G/\TOP]$ which was already used in the proof of Theorem \ref{thm:lifts-vs-orientations}. 

We first set up some notation. An element $x \in \sN (X)$ is represented either by a degree one normal map $(f,b) \co M \ra X$ from an $n$-dimensional topological manifold $M$ to $X$ or by a pair $(\nu,h)$ where $\nu \co X \ra \BSTOP$ is a stable topological block bundle on $X$ and $h \co J(\nu) \simeq \nu_X$ is a homotopy from the underlying spherical fibration to the SNF. The two descriptions of normal invariants are identified via the usual Pontrjagin-Thom construction, see \cite[chapter 10]{Wall(1999)} if needed. 

An element in the set $[X ; \G/\TOP]$ of homotopy classes of maps from $X$ to $\G/\TOP$ can be thought of as represented by a pair $(\bar \nu,\bar h)$, where $\bar \nu \co X \ra \BSTOP$ is a stable topological block bundle on $X$ and $h \co J(\bar \nu) \simeq \ast$ is a homotopy from the underlying spherical fibration to the constant map (which represents the trivial spherical fibration). The set $[X ; \G/\TOP]$ is a group under the Whitney sum operation and it has an action on $\sN (X)$ by
\begin{align*}
 [X ; \G/\TOP] \times \sN (X) & \ra \sN (X) \\
 ((\bar \nu,\bar h) , (\nu,h)) & \mapsto (\bar \nu \oplus \nu, \bar h \oplus h).
\end{align*}
The action is free and transitive \cite[chapter 10]{Wall(1999)} and hence any choice of a point $x_0 = (\nu_0,h_0) = (f_0,b_0) \in \sN (X)$ gives a bijection $[X ; \G/\TOP] \cong \sN (X)$ whose inverse is denoted by 
\begin{equation}
t (-,x_0) \co \sN (X) \rightarrow [X ; \G/\TOP]
\end{equation}
So for $x = (\nu,h)$ and $t (x,x_0) = (\bar \nu,\bar h)$ we have
\begin{equation}
 (\nu,h) = (\bar \nu \oplus \nu_0,\bar h \oplus h_0).   
\end{equation}


\subsection{From normal invariants to cohomology} \label{subsec:normal-invariants-to-cohomology}

\


\begin{con}  \label{con:widetilde-Gamma}
Now we construct the map 
\[
 \widetilde \Gamma \co \G/\TOP \ra \Sigma^{-1} \bOmega^{N,\STOP}_0.
\]
To an $l$-simplex in $\G/\TOP$ alias a degree one normal map $(f,b) \co M \ra \Delta^l$ the map $\widetilde \Gamma$ associates an $l$-simplex of $\Sigma^{-1} \bOmega^{N,\STOP}_0$ alias an $(l+1)$-dimensional $l$-ad of (normal,topological manifold) pairs $(W,M \sqcup -\Delta^l)$ where $W$ is the mapping cylinder of $f$ and the normal structure comes from the bundle map $b$.
\end{con}

The proof of Theorem \ref{thm:lifts-vs-orientations} shows that the surgery obstruction alias the quadratic signature map $\qsign \co \G/\TOP \ra \bL_0 \langle 1 \rangle$ can be thought of as a composition of two maps: 
\[
 \widetilde \Gamma \co \G/\TOP \ra \Sigma^{-1} \bOmega^{N,\STOP}_0 \quad \textup{and} \quad \qsign \co \Sigma^{-1} \bOmega^{N,\STOP}_0 \ra \bL_0 \langle 1 \rangle,
\]
where the second map comes from Proposition \ref{prop:connective-signatures-on-spectra-level}.


\subsection{Products and Thom isomorphism} \label{subsec:products}

\

                                                                                                                                   
Now we briefly review the cup products in ring and module spectra. We only concentrate on the cases which are used in this paper. In fact we only need the cup products realizing the Thom isomorphism. Let $X$ be a CW-complex and let $\xi$ be a $k$-dimensional spherical fibration over $X$. Further suppose that $\bF$ is a module spectrum over the ring spectrum $\bE$. Then there are the cup products
\begin{align*}
 - \cup - \co H^p (X ; \bF) \otimes H^q (\Th (\xi) ; \bE) & \ra H^{p+q} (\Th (\xi) ; \bF) \\
 x \otimes y & \mapsto x \cup y
\end{align*}
given by the composition
\[
 x \cup y \co \Th (\xi) \xra{\Delta} X_+ \wedge \Th (\xi) \xra{x \wedge y} \bF_p \wedge \bE_q \ra \bF_{p+q}
\]
with $\Delta$ the diagonal map already mentioned for example in section \ref{sec:normal-cplxs}.
If we have an $\bE$-orientation $u \in H^k (\Th (\xi) ; \bE)$, then the resulting homomorphism
\begin{equation}
 - \cup u \co H^p (X ; \bF) \ra H^{p+k} (\Th (\xi) ; \bF) 
\end{equation}
is the Thom isomorphism.

Remember now that we are working with simplicial complexes and $\Delta$-sets rather than topological spaces. The above constructions work in this setting as long as we choose a simplicial approximation of the diagonal map $\Delta$. An explicit description of such an approximation in our situation is given in \cite[Remark 12.5]{Ranicki(1992)}. However, it does not help us much since we do not understand its behavior with respect to the signatures we have defined earlier in this section. On the other hand, as we will see, we understand the behavior of the diagonal map $\Delta$ of spaces with respect to the orientations discussed in section \ref{sec:proof-part-1}. Since we also know that the orientations correspond to the signatures via the $S$-duality (Proposition \ref{prop:S-duals-of-orientations-are-signatures}) we can work with them and therefore we can work with the version of the cup product for spaces.


\subsection{Proof of (b) - Naturality of orientations} \label{subsec:proof-of-2}


\

The commutativity of the second square follows from the second paragraph of subsection \ref{subsec:normal-invariants-to-cohomology}. The commutativity of the third square follows from the naturality of the cup product with respect to the coefficient spectra and from the fact that the canonical $\bL^\bullet$-orientation of a stable topological block bundle is the image of the canonical $\bMSTOP$-orientation (Proposition \ref{canonical-L-orientations}). The commutativity of the fourth square follows from the naturality of the $S$-duality with respect to the coefficient spectra.


\subsection{Proof of (c) - Signatures versus orientations revisited} \label{subsec:signatures-versus-orientations}


\

If $x = (f,b) \co M \ra X$ represents an element from $\sN (X)$ then (c) can be expressed by the formula
\begin{equation} \label{eqn:cup-product-on-wgamma-and-orientation-gives-signature}
 \widetilde \Gamma (t(x,x_0)) \cup u^{\STOP} (\nu_0) = S^{-1} (\sign^{\G/\TOP}_X (x,x_0))
\end{equation}
in the group $H^k (\Th (\nu_X) ; \Sigma^{-1} \bOmega^{N,\STOP}_\bullet)$. Here $x_i \in \sN(X)$ are represented either by degree one normal maps $(f_i,b_i) \co M_i \ra X$ or pairs $(\nu_i,h_i)$ as in subsection \ref{subsec:normal-invariants-revisited} and we keep this notation for the rest of this section. For the proof an even better understanding of the relationship between various signatures and  orientations is needed. To put the orientations into the game we use the Pontrjagin-Thom map 
\[
 \Sigma^{-1} \bOmega^{N,\STOP}_\bullet \simeq \bMSGTOP := \textup{Fiber} \; (\bMSTOP \ra \bMSG).
\]
We will show (\ref{eqn:cup-product-on-wgamma-and-orientation-gives-signature}) in two steps, namely we show that both sides are equal to a certain element $u^{\G/\TOP} (\nu,\nu_0) \in H^k (\Th (\nu_X) ; \bMSGTOP)$.

\begin{con}
Recall the canonical $\STOP$-orientations (subsection \ref{subsec:L-theory-orientations})
\[
 u^{\STOP} (\nu) - u^{\STOP} (\nu_0) \in H^k (\Th (\nu_X) ; \bMSTOP)
\]
and also the fact that we have the homotopy $h_0 \cup h \co \Th (\nu_X) \times [-1,1] \ra \bMSG$ between $J (\nu)$ and $J (\nu_0)$. This homotopy can also be viewed as a null-homotopy of the map $J(u^{\STOP} (\nu) - u^{\STOP} (\nu_0))$. Hence we obtain a preferred lift which we denote
\[
 u^{\G/\TOP} (\nu,\nu_0) \in H^k (\Th (\nu_X) ; \bMSGTOP).
\]
\end{con}

\begin{prop} \label{prop:S-duals-of-orientations-are-signatures-relative-case-non-mfd}
Let $X$ be an $n$-dimensional GPC and let $x$, $x_0$ be two topological block bundle reductions of the SNF. Then we have
\[
 S (u^{\G/\TOP} (\nu,\nu_0)) = \sign^{\G/\TOP}_X (x,x_0) \in H_n (X ; \bMSGTOP).
\]
\end{prop}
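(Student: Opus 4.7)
The plan is to mimic the strategy used in Propositions \ref{prop:S-duals-of-orientations-are-signatures} and \ref{prop:S-duals-of-orientations-are-signatures-relative-case}: exhibit both sides as arising from a single $\Delta$-map out of the simplicial model $\Sigma^m/\overline{X} \simeq \Th(\nu_X)$ for the $S$-dual of $X_+$, and match the construction of $\sign^{\G/\TOP}_X(x,x_0)$ via the Pontrjagin-Thom equivalence $\Sigma^{-1}\bOmega^{N,\STOP}_\bullet \simeq \bMSGTOP$ from (\ref{eqn:normal-topological-transversality}) with the classifying-map description of the $\bMSGTOP$-orientation $u^{\G/\TOP}(\nu,\nu_0)$.

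First I would invoke Lemma \ref{lem:normal-symmetric-signature-over-X-for-a-difference-of-deg-one-normal-maps} and Construction \ref{con:geom-normal-signature-over-X} to write $\sign^{\G/\TOP}_X(x,x_0)$ explicitly as the $\Sigma^{-1}\bOmega^{N,\STOP}_\bullet$-cycle
\[
\sigma \;\mapsto\; \bigl(W_1(\sigma)\cup_{X(\sigma)}W_0(\sigma),\, \nu(b_1(\sigma))\cup_{\nu_X(\sigma)}\nu(b_0(\sigma)),\, \rho(b_1(\sigma))\cup_{\rho(\sigma)}\rho(b_0(\sigma))\bigr),
\]
regarded as a $\Delta$-map $\Sigma^m/\overline{X} \to \Sigma^{-1}\bOmega^{N,\STOP}_{-k}$ via the identification $|\Sigma^m/\overline{X}| \simeq \Th(\nu_X)$ of Construction \ref{con:simpl-model-for-the-thom-space-of-snf}. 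Applying the classifying functor on each simplex yields a commutative square
\[
\xymatrix{
\Sigma^m/\overline{X} \ar[d]_{i} \ar[rr]^-{\sign^{\G/\TOP}_X(x,x_0)} & & \Sigma^{-1}\bOmega^{N,\STOP}_{-k} \ar[d]^{c} \\
\mathrm{Sing}\,\Th(\nu_X) \ar[rr]_-{\widetilde{u}} & & \mathrm{Sing}\,\bMSGTOP(k)
}
\]
whose lower horizontal arrow $\widetilde{u}$ classifies, on the $\bMSTOP$-part, the topological block bundle obtained from the $\nu(b_i(\sigma))$ piecewise (which globally assembles to the difference $\nu\oplus(-\nu_0)$ of stable topological block bundles over $X$), and, on the $\bMSG$-part, the spherical fibration given by gluing $\nu(b_1)$ and $\nu(b_0)$ along their common restriction $\nu_X$, i.e.\ the homotopy $h_0\cup h$ between $J(\nu)$ and $J(\nu_0)$.

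Next I would identify $\widetilde{u}$ with $u^{\G/\TOP}(\nu,\nu_0)$. The key point is that the $\bMSTOP$-orientation classifying a stable topological block bundle is uniquely characterized by its being the pullback of the canonical orientation on $\bMSTOP$ along the classifying map, and the same holds for $\bMSG$; hence the $\bMSTOP$-component of $\widetilde{u}$ is $u^{\bMSTOP}(\nu) - u^{\bMSTOP}(\nu_0)$, while the $\bMSG$-null-homotopy of $J$ applied to this difference, realized by $\widetilde{u}$, is precisely the null-homotopy $h_0\cup h$. By the very definition of $u^{\G/\TOP}(\nu,\nu_0)$ as the preferred lift determined by this null-homotopy, we get $\widetilde{u} = u^{\G/\TOP}(\nu,\nu_0)$. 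The desired equation then follows by taking $S$-duals: naturality of $S$-duality with respect to maps of ring/module spectra transforms the commuting square above into the identity $S(u^{\G/\TOP}(\nu,\nu_0)) = \sign^{\G/\TOP}_X(x,x_0)$.

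The main obstacle is checking that the local gluing performed simplex-by-simplex in Lemma \ref{lem:normal-symmetric-signature-over-X-for-a-difference-of-deg-one-normal-maps}, which produces a genuine $\Sigma^{-1}\bOmega^{N,\STOP}_\bullet$-cycle only after the two $\bOmega^N_\bullet$-cobordisms are glued along their common manifold-free end $X$, corresponds precisely to the construction of the preferred lift $u^{\G/\TOP}(\nu,\nu_0)$ in $H^k(\Th(\nu_X);\bMSGTOP)$ from the pair $(u^{\bMSTOP}(\nu)-u^{\bMSTOP}(\nu_0),\, h_0\cup h)$. This is a careful diagram chase involving the module-spectrum structure of $\bMSGTOP$ over $\bMSTOP$ and the compatibility of topological/normal transversality on simplex-ads with the gluing of pairs, and it proceeds inductively over simplices of decreasing dimension, exactly as in the proof of Proposition \ref{prop:degree-one-normal-map-mfd-to-poincare-over-X}.
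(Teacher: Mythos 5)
Your proposal is correct and follows essentially the same route as the paper: both exhibit a commutative square comparing the $\Delta$-map $\Sigma^m/\overline{X}\ra \Sigma^{-1}\bOmega^{N,\STOP}_{-k}$ defined by the dissections of the $(f_i,b_i)$ with the classifying map for the $\bMSGTOP$-orientation, and then read off the desired equality via the simplicial $S$-duality model. The only cosmetic difference is that the paper writes the lower-left corner of the square as $\textup{Sing}\,F(\nu_1;\nu_0)$ where $F(\nu_1;\nu_0)=\textup{Pullback}(\Th(\nu_1)\ra\Th(\nu_X)\leftarrow\Th(\nu_0))$ — a model weakly equivalent to $\Th(\nu_X)$ that displays the two reductions and the homotopy $h_0\cup h$ more explicitly, so that the classifying map $c$ visibly lands in the fiber spectrum $\bMSGTOP(k)$; your $\textup{Sing}\,\Th(\nu_X)$ carries the same information implicitly and gives the same class in $H^k(\Th(\nu_X);\bMSGTOP)$.
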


\begin{proof}
The proof is analogous to the proof of Proposition \ref{prop:S-duals-of-orientations-are-signatures-relative-case}. Recall that the signature $\sign^{\G/\TOP}_X (x,x_0)$  is constructed using the dissections of the degree one normal maps $(f_i,b_i)$. From these dissections we inspect that we have a commutative diagram
\[
\xymatrix{
\Sigma^m/\overline X \ar[d]_{i} \ar[rr]^{\sign^{\G/\TOP}_X (x_1,x_0)} & & \Sigma^{-1} \Omega_{-k}^{N,\STOP} \ar[d]^{c} \\
\textup{Sing} \; F(\nu_1;\nu_0) \ar[rr]_-{u^{\G/\TOP} (\nu,\nu_0)} & & \textup{Sing} \; \bMSGTOP (k) }
\]
where we use the notation $\bMSGTOP (k) := \textup{Fiber} \; (\bMSTOP (k) \ra \bMSG (k))$ and $F(\nu_1;\nu_0) : = \textup{Pullback} \; (\Th (\nu_1) \ra \Th (\nu_X) \leftarrow \Th (\nu_0))$. This proves the claim. 
\end{proof}

Now we turn to the left hand side of the formula (\ref{eqn:cup-product-on-wgamma-and-orientation-gives-signature}). We first need to understand the composition (abusing the notation slightly):
\[
 \widetilde \Gamma \co [X;\G/\TOP] \xra{\widetilde \Gamma} H^0 (X;\Sigma^{-1} \bOmega^{N,\STOP}_\bullet) \ra H^0 (X;\bMSGTOP).
\]
Let $(\bar \nu,\bar h)$ represent an element on $[X;\G/\TOP]$. Recall that $\bar h \co J(\bar \nu) \simeq \varepsilon$ and that we have the canonical orientations $u^{\bMSTOP} (\nu)$ and $ u^{\bMSTOP} (\varepsilon)$ and the homotopy $u^{\bMSG} (\bar h) \co u^{\bMSG} (J(\nu)) \simeq u^{\bMSG} (\varepsilon)$. We obtain
\[
 \widetilde \Gamma (\bar \nu,\bar h)) = (u^{\bMSTOP} (\nu) - u^{\bMSTOP} (\varepsilon),u^{\bMSG} (\bar h) \co u^{\bMSG} (J(\nu)) - u^{\bMSG} (\varepsilon) \simeq \ast)
\]
Hence the element $\widetilde \Gamma (\bar \nu,\bar h)$ is the unique lift of $u^{\bMSTOP} (\nu) - u^{\bMSTOP} (\varepsilon)$ obtained from the homotopy $\bar h$.

Now consider our $x,x_0 \in \sN (X)$ and denote $t := t(x,x_0) = (\bar \nu,\bar h)$. As a warm up before proving the equation (\ref{eqn:cup-product-on-wgamma-and-orientation-gives-signature}) we consider its push-forward in the group $H^k (\Th (\nu_X) ; \bMSTOP)$. Denote the composition
\[
 \Gamma \co [X;\G/\TOP] \xra{\widetilde{\Gamma}} H^0 (X;\bMSGTOP) \xra{\textup{incl}} H^0 (X;\bMSTOP)
\]
This simply forgets the homotopy $u^{\bMSG} (\bar h)$. So we have
\[
 \Gamma (\bar \nu,\bar h) = u^{\bMSTOP} (\bar \nu) - u^{\bMSTOP} (\varepsilon) \co \Th (\bar \nu) \simeq \Sigma^k \Delta^l_+ \simeq \Th (\varepsilon) \ra \bMSTOP.
\]
Define the following two maps
\[
\Phi \co [X;\G/\TOP] \ra H^0 (X ; \bMSTOP) \quad \textup{and} \quad 1 \co [X;\G/\TOP] \ra H^0 (X;\bMSTOP)                                             
\]
by 
\[
\Phi (\bar \nu,\bar h) = u^{\bMSTOP} (\nu) \quad \textup{and} \quad 1 (\bar \nu,\bar h) = u^{\bMSTOP} (\varepsilon)                                             
\]
so that we have $\Gamma = \Phi - 1$ and consider $\Gamma (t) = (\Phi - 1) (t)$. The Thom isomorphism
\begin{equation}
 - \cup u^{\STOP}(\nu_0) \co H^0 (X ; \bMSTOP) \ra H^{k} (\Th (\nu_X) ; \bMSTOP) 
\end{equation}
applied to an element $\Phi (t)\in H^0 (X ; \bMSTOP)$ is given by the composition
\[
 \Th (\nu) \xra{\Delta} \Sigma^l X_+ \wedge \Th (\nu_0) \xra{\Phi(t) \wedge u^{\STOP}(\nu_0)} \bMSTOP \wedge \bMSTOP \xra{\oplus} \bMSTOP.
\]
From the relationship between the Whitney sum and the cross product and the diagonal map we obtain that
\[
 u^{\STOP} (\nu) = \Phi (t (x,x_0)) \cup u^{\STOP} (\nu_0).
\]
Analogously we obtain
\[
 u^{\STOP} (\nu_0) = 1 (t (x,x_0)) \cup u^{\STOP} (\nu_0).
\]

\begin{lem} \label{lem:orientations-vs-cup-product}
Let $X$ be an $n$-dimensional GPC and let $\nu$, $\nu_0 \co X \ra \BSTOP$ be two topological block bundles such that $J (\nu) \simeq \nu_X \simeq J(\nu_0)$. Then the canonical $\STOP$-orientations satisfy
\[
 u^{\STOP} (\nu) - u^{\STOP} (\nu_0) = \Gamma ( \tilde t (x,x_0)) \cup u^{\STOP} (\nu_0).
\]
\end{lem}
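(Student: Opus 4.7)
The statement is essentially a direct consequence of the two identities established in the paragraphs immediately preceding the lemma, namely
\[
 u^{\STOP} (\nu) = \Phi (t(x,x_0)) \cup u^{\STOP} (\nu_0), \qquad u^{\STOP} (\nu_0) = 1(t(x,x_0)) \cup u^{\STOP} (\nu_0),
\]
together with the equality $\Gamma = \Phi - 1$ of maps $[X;\G/\TOP] \ra H^0(X;\bMSTOP)$. The plan is simply to subtract these two identities and invoke the additivity of the cup product in the left-hand variable.

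In more detail, I will first recall that the cup product
\[
- \cup - \co H^0(X;\bMSTOP) \otimes H^k(\Th(\nu_X);\bMSTOP) \ra H^k(\Th(\nu_X);\bMSTOP)
\]
is a homomorphism of abelian groups in each variable, since it is induced on cohomology by a map of ring spectra precomposed with the diagonal (see subsection \ref{subsec:products}). Consequently the assignment $\alpha \mapsto \alpha \cup u^{\STOP}(\nu_0)$ is additive, so
\[
 (\Phi - 1)(t(x,x_0)) \cup u^{\STOP}(\nu_0) = \Phi(t(x,x_0)) \cup u^{\STOP}(\nu_0) - 1(t(x,x_0)) \cup u^{\STOP}(\nu_0).
\]
Substituting the two identities above and using $\Gamma = \Phi - 1$ yields the claimed formula.

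There is no serious obstacle: the content of the lemma is really packaged in the earlier assertion that the canonical $\bMSTOP$-orientation of a Whitney sum $\bar\nu \oplus \nu_0$ factors through the diagonal as $u^{\bMSTOP}(\bar\nu) \cup u^{\bMSTOP}(\nu_0)$, which is just the multiplicative property of the Thom class for a ring spectrum. The only point worth double-checking when writing out the proof is that $\Phi(t)$ and $1(t)$ are indeed defined via the canonical orientations of $\bar\nu$ and the trivial bundle $\varepsilon$ respectively, so that the two displayed equations apply verbatim; this follows from the identification $t(x,x_0) = (\bar\nu,\bar h)$ with $\nu = \bar\nu \oplus \nu_0$ as recalled in subsection \ref{subsec:normal-invariants-revisited}.
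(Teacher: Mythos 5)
Your proof is correct and takes essentially the same route as the paper: the paper's entire argument for this lemma is the one-line remark that the identity follows from $\Gamma = \Phi - 1$, which is precisely the subtraction of the two displayed identities using bilinearity of the cup product, as you spell out. Your additional observation that $\Phi(t)$ must be read as $u^{\bMSTOP}(\bar\nu)$ (not $u^{\bMSTOP}(\nu)$) is a useful clarification of a notational slip in the source.
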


\begin{proof}
The desired equation follows from the definition $\Gamma = \Phi -1$.
\end{proof}


The final step is the following lemma which is a refinement of Lemma \ref{lem:orientations-vs-cup-product}.

\begin{lem} \label{lem:refined-orientations-vs-cup-product}
Let $X$ be an $n$-dimensional GPC and let $\nu$, $\nu_0 \co X \ra \BSTOP$ be two topological block bundles such that $J (\nu) \simeq \nu_X \simeq J(\nu_0)$. Then the canonical $\STOP$-orientations satisfy
\[
 u^{\G/\TOP} (\nu,\nu_0) = \widetilde \Gamma (t (x,x_0)) \cup u^{\STOP} (\nu_0).
\]
\end{lem}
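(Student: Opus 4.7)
The plan is to exploit the defining fibration $\bMSGTOP = \textup{Fiber}(\bMSTOP \to \bMSG)$: an element of $H^k(\Th(\nu_X); \bMSGTOP)$ is specified by a pair consisting of (a) a class in $H^k(\Th(\nu_X); \bMSTOP)$ together with (b) a null-homotopy of its image in $H^k(\Th(\nu_X); \bMSG)$. I would treat the two components in turn. The $\bMSTOP$-component of $u^{\G/\TOP}(\nu,\nu_0)$ is $u^{\STOP}(\nu) - u^{\STOP}(\nu_0)$ by construction, while the $\bMSTOP$-component of $\widetilde{\Gamma}(t(x,x_0)) \cup u^{\STOP}(\nu_0)$ is $\Gamma(t(x,x_0)) \cup u^{\STOP}(\nu_0)$ (obtained by forgetting the null-homotopy in $\widetilde{\Gamma}$). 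These already agree by Lemma~\ref{lem:orientations-vs-cup-product}, so the entire content of the refinement is the matching of the two chosen null-homotopies in $H^k(\Th(\nu_X); \bMSG)$.

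Next I would unpack both null-homotopies explicitly. On the left, the null-homotopy constituting $u^{\G/\TOP}(\nu,\nu_0)$ is the image under the canonical orientation assignment of Proposition~\ref{canonical-geom-orientations} of the concatenated homotopy
\[
h \cdot h_0^{-1} \co J(\nu) \simeq \nu_X \simeq J(\nu_0),
\]
viewed as a null-homotopy of $u^{\bMSG}(J(\nu)) - u^{\bMSG}(J(\nu_0))$. On the right, the element $\widetilde{\Gamma}(t(x,x_0)) = (\bar\nu,\bar h)$ carries the null-homotopy $u^{\bMSG}(\bar h) \co u^{\bMSG}(J(\bar\nu)) \simeq u^{\bMSG}(\varepsilon)$; after cupping with $u^{\STOP}(\nu_0)$ this becomes the null-homotopy $u^{\bMSG}(\bar h) \cup u^{\bMSG}(J(\nu_0))$ of $(u^{\bMSG}(J(\bar\nu)) - u^{\bMSG}(\varepsilon)) \cup u^{\bMSG}(J(\nu_0)) = u^{\bMSG}(J(\nu)) - u^{\bMSG}(J(\nu_0))$, where the last equality uses the Whitney sum identity $\nu = \bar\nu \oplus \nu_0$ together with the multiplicativity of canonical $\bMSG$-orientations from Proposition~\ref{canonical-geom-orientations}.

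Now by the description of the action in subsection~\ref{subsec:normal-invariants-revisited}, the homotopy $h$ is by definition $\bar h \oplus h_0$, i.e., the Whitney sum homotopy obtained by applying $\bar h$ on the first summand and $h_0$ on the second. Consequently the concatenation $h \cdot h_0^{-1}$ is homotopic rel endpoints to $\bar h \oplus \mathrm{id}_{J(\nu_0)}$, since the $h_0$ on the second factor is composed with its own inverse. Invoking once more the multiplicativity of the canonical $\bMSG$-orientation with respect to Whitney sum, the image of $\bar h \oplus \mathrm{id}$ under the canonical orientation agrees with the cup product $u^{\bMSG}(\bar h) \cup u^{\bMSG}(J(\nu_0))$. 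This identifies the two null-homotopies in $H^k(\Th(\nu_X); \bMSG)$ and completes the proof.

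The main obstacle is the bookkeeping of homotopies rather than just homotopy classes: Lemma~\ref{lem:orientations-vs-cup-product} only matches classes in $\bMSTOP$-cohomology, but here the lift to $\bMSGTOP$ requires a specific chosen path, so all the intermediate identifications (the Whitney sum splitting of canonical $\bMSG$-orientations, and the equivalence of $(\bar h \oplus h_0) \cdot h_0^{-1}$ with $\bar h \oplus \mathrm{id}$) must be made at the level of the $\Delta$-set constructions of classifying spaces from subsection~\ref{subsec:classifying-spaces}, where multiplicativity of $u^{\bMSG}$ is encoded on the nose by the ring spectrum structure on $\bMSG$ described in subsection~\ref{subsec:canonical-orientations}.
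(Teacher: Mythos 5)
Your proposal is correct, and it is organized in essentially the same way as the paper up to the final step: you unpack the $\bMSGTOP$-class into a $\bMSTOP$-class (which matches by Lemma~\ref{lem:orientations-vs-cup-product}) plus a null-homotopy of its $\bMSG$-image, you identify the left null-homotopy with $h\cdot h_0^{-1}$ and the right with $\bar h\oplus\mathrm{id}_{J(\nu_0)}$ after cupping, and you use multiplicativity of $u^{\bMSG}$ with respect to Whitney sum. The divergence is in how the two null-homotopies are then matched. The paper closes by appealing to an abstract uniqueness property of the Spivak normal fibration — that any two fiberwise homotopy equivalences between topological block bundle reductions of the SNF are stably fiberwise homotopic — which makes the two candidate null-homotopies agree without ever comparing them explicitly. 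You instead close by unfolding the torsor structure on $\sN(X)$ to write $h = \bar h \oplus h_0$ (subsection~\ref{subsec:normal-invariants-revisited}) and then cancel $h_0$ against $h_0^{-1}$ via $(\bar h \oplus h_0)\cdot(\mathrm{id}_\varepsilon\oplus h_0^{-1}) \simeq (\bar h\cdot\mathrm{id})\oplus(h_0\cdot h_0^{-1}) \simeq \bar h\oplus\mathrm{id}$. Your route is more computational and self-contained, avoiding the external SNF uniqueness input; the paper's route is more robust against the choice of representatives, since it never needs the relation $h = \bar h\oplus h_0$ to hold at the level of chosen homotopies. Both arguments are valid, and you are right to flag that the homotopy-level bookkeeping (Whitney-sum multiplicativity of canonical orientations, and the interchange of concatenation with $\oplus$) must be tracked at the $\Delta$-set level where the ring structure on $\bMSG$ makes these strict.
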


\begin{proof}
The left hand side is obtained from the left hand side of Lemma \ref{lem:orientations-vs-cup-product} using the null-homotopy of $\Gamma (t(x,x_0))$ coming from $h \cup h_0 \co J(\nu)) \simeq J (\nu_0)$. The right hand side is obtained from the right hand side of Lemma \ref{lem:orientations-vs-cup-product} using the null-homotopy of $\Gamma (t (x,x_0)$ coming from $\bar h \co J(t(x,x_0)) \simeq J (t (x_0,x_0)) = \nu_X$. Applying the cup product with $u^{\STOP} (\nu_0)$ to this null-homotopy corresponds to taking the Whitney sum with $\nu_0$ and produces the homotopy $\bar h \oplus \id_{\nu_0} \co J(\nu)) \simeq J (\nu_0)$. The claim now follows from the property of the SNF that any two fiberwise homotopy equivalences between the stable topological block bundle reductions of the SNF are stably fiberwise homotopic.
\end{proof}


\subsection{Proof of the Main Technical Theorem (II)} \label{subsec:proof-of-main-tech-thm-2}


\begin{proof}[Proof of the Main Technical Theorem (II) assuming Propositions \ref{prop:degree-one-normal-map-mfd-to-poincare-over-X} and \ref{prop:identification}] \ \vspace{-0.4cm}

Consider the set
 \begin{align*}
  Q := \{ \, - &\text{sign}^{\mathbf{L}_{\bullet}}_{\mathbb{Z}[\pi_1(X)]} (f,b) \, \in \, L_n (\mathbb{Z}[\pi_1(X)]) \quad | \\ 
                         &(f,b) \colon M \rightarrow X \text{ degree one normal map, } M \text{ manifold} \}.
 \end{align*}
Fix a degree one normal map~$(f_0,b_0) \colon M_0 \rightarrow X$ from a manifold~$M_0$ to our Poincar\'e complex~$X$. Proposition \ref{prop:identification} tells us that
\begin{align*}
 \qsign_{\ZZ [\pi_1(X)]} (f_0,b_0) + Q = & \im \; (A \co H_n
(X;\bL_\bullet \langle 1 \rangle \ra L_n (\ZZ[\pi_1 (X)])) \\ = & \ker \; (\del \co L_n (\ZZ[\pi_1 (X)]) \ra \SS_n (X))
\end{align*}
and it follows that $Q$ is a coset of $\ker (\del)$. The preimage~$\partial^{-1}s(X) \subseteq L_n(\mathbb{Z}[\pi_1(X)])$ is also a coset of $\ker (\del)$. Moreover, from Proposition \ref{prop:degree-one-normal-map-mfd-to-poincare-over-X} we have~$Q \subseteq \partial^{-1}s(X)$. Hence~$Q$ and~$\partial^{-1}s(X)$ are the same coset of~$\ker (\del)$ and thus~$Q = \partial^{-1}s(X)$.
\end{proof}



\section{Concluding remarks} \label{sec:conclusion}


In Part II of the book \cite{Ranicki(1992)} interesting generalizations and applications of the theory can be found.

One important such generalization is the theory when one works with the spectrum $\bL_\bullet \langle 0 \rangle$ rather than with $\bL_\bullet \langle 1 \rangle$. This yields an analogous theory for the ANR-homology manifolds rather than for topological manifolds. The Quinn resolution obstruction also fits nicely into this theory. For details see \cite[chapters 24,25]{Ranicki(1992)} and \cite{Bryant-et-al(1996)}.

We note that, as already mentioned in the introduction, this generalization is especially interesting in view of the recent progress in studying the assembly maps associated to the spectrum $\bL_\bullet$. For example, thanks to the generalization, the results about the assembly maps in \cite{Bartels-Lueck(2009)} can be used to obtain an application in \cite{Bartels-Lueck-Weinberger(2009)}, which discusses when does a torsion-free word-hyperbolic group $G$ have a topological manifold model for its classifying space $BG$.

Another important application is that the total surgery obstruction can be used to identify the geometric structure set of an $n$-dimensional manifold $M$ with $\SS_{n+1} (M)$. This is closely related to subsection \ref{subsec:identification-of-quad-sign-with-assembly} and in fact the geometric surgery exact sequence can be identified with the algebraic surgery exact sequence, with more details to be found in \cite[chapter 18]{Ranicki(1992)}.

Interesting examples of geometric Poincar\'e complexes with non-trivial total surgery obstruction can be found in \cite[chapter 19]{Ranicki(1992)}.

\small
\bibliography{tso-all}  

\begin{thebibliography}{BFMW96}

\bibitem[Ada74]{Adams(1974)}
J.~F. Adams.
\newblock {\em Stable homotopy and generalised homology}.
\newblock University of Chicago Press, Chicago, Ill., 1974.
\newblock Chicago Lectures in Mathematics.

\bibitem[BFMW96]{Bryant-et-al(1996)}
J.~Bryant, S.~Ferry, W.~Mio, and S.~Weinberger.
\newblock Topology of homology manifolds.
\newblock {\em Ann. of Math. (2)}, 143(3):435--467, 1996.

\bibitem[BL09]{Bartels-Lueck(2009)}
A.~Bartels and W.~L\"uck.
\newblock The borel conjecture for hyperbolic and cat(0)-groups.
\newblock 2009.

\bibitem[BLW09]{Bartels-Lueck-Weinberger(2009)}
A.~Bartels, W.~L\"uck, and S.~Weinberger.
\newblock On hyperbolic groups with spheres as boundary.
\newblock 2009.

\bibitem[Bro72]{Browder(1971)}
W.~Browder.
\newblock {\em Surgery on simply-connected manifolds}.
\newblock Springer-Verlag, New York, 1972.
\newblock Ergebnisse der Mathematik und ihrer Grenzgebiete, Band 65.

\bibitem[FQ90]{Freedman-Quinn(1990)}
Michael~H. Freedman and F.~Quinn.
\newblock {\em Topology of 4-manifolds}, volume~39 of {\em Princeton
  Mathematical Series}.
\newblock Princeton University Press, Princeton, NJ, 1990.

\bibitem[HV93]{Hausmann-Vogel(1993)}
J.-C. Hausmann and P.~Vogel.
\newblock {\em Geometry on {P}oincar\'e spaces}, volume~41 of {\em Mathematical
  Notes}.
\newblock Princeton University Press, Princeton, NJ, 1993.

\bibitem[KS77]{Kirby-Siebenmann(1977)}
R.~C. Kirby and L.~C. Siebenmann.
\newblock {\em Foundational essays on topological manifolds, smoothings, and
  triangulations}.
\newblock Princeton University Press, Princeton, N.J., 1977.
\newblock With notes by John Milnor and Michael Atiyah, Annals of Mathematics
  Studies, No. 88.

\bibitem[LM09]{Laures-McClure2009}
G.~Laures and J.E. McClure.
\newblock Multiplicative properties of quinn spectra.
\newblock {\em Arxiv preprint arXiv:0907.2367}, 2009.

\bibitem[LR87]{Levitt-Ranicki(1987)}
N.~Levitt and A.~A. Ranicki.
\newblock Intrinsic transversality structures.
\newblock {\em Pacific J. Math.}, 129(1):85--144, 1987.

\bibitem[May77]{May(1977)}
J.~P. May.
\newblock {\em {$E\sb{\infty }$} ring spaces and {$E\sb{\infty }$} ring
  spectra}.
\newblock Lecture Notes in Mathematics, Vol. 577. Springer-Verlag, Berlin,
  1977.
\newblock With contributions by Frank Quinn, Nigel Ray, and J{\o}rgen
  Tornehave.

\bibitem[MR90]{Milgram-Ranicki(1990)}
R.~J. Milgram and A.~A. Ranicki.
\newblock The {$L$}-theory of {L}aurent extensions and genus {$0$} function
  fields.
\newblock {\em J. Reine Angew. Math.}, 406:121--166, 1990.

\bibitem[Qui72]{Quinn(1972)}
F.~Quinn.
\newblock Surgery on {P}oincar\'e and normal spaces.
\newblock {\em Bull. Amer. Math. Soc.}, 78:262--267, 1972.

\bibitem[Ran79]{Ranicki(1979)}
A.~A. Ranicki.
\newblock The total surgery obstruction.
\newblock In {\em Algebraic topology, {A}arhus 1978 ({P}roc. {S}ympos., {U}niv.
  {A}arhus, {A}arhus, 1978)}, volume 763 of {\em Lecture Notes in Math.}, pages
  275--316. Springer, Berlin, 1979.

\bibitem[Ran80a]{Ranicki-I-(1980)}
A.~A. Ranicki.
\newblock The algebraic theory of surgery. {I}. {F}oundations.
\newblock {\em Proc. London Math. Soc. (3)}, 40(1):87--192, 1980.

\bibitem[Ran80b]{Ranicki-II-(1980)}
A.~A. Ranicki.
\newblock The algebraic theory of surgery. {II}. {A}pplications to topology.
\newblock {\em Proc. London Math. Soc. (3)}, 40(2):193--283, 1980.

\bibitem[Ran81]{Ranicki(1981)}
A.~A. Ranicki.
\newblock {\em Exact sequences in the algebraic theory of surgery}, volume~26
  of {\em Mathematical Notes}.
\newblock Princeton University Press, Princeton, N.J., 1981.

\bibitem[Ran92]{Ranicki(1992)}
A.~A. Ranicki.
\newblock {\em Algebraic {$L$}-theory and topological manifolds}, volume 102 of
  {\em Cambridge Tracts in Mathematics}.
\newblock Cambridge University Press, Cambridge, 1992.

\bibitem[Ran02a]{Ranicki-foundations-(2001)}
A.~A. Ranicki.
\newblock Foundations of algebraic surgery.
\newblock In {\em Topology of high-dimensional manifolds, {N}o. 1, 2
  ({T}rieste, 2001)}, volume~9 of {\em ICTP Lect. Notes}, pages 491--514. Abdus
  Salam Int. Cent. Theoret. Phys., Trieste, 2002.

\bibitem[Ran02b]{Ranicki-structure-set-(2001)}
A.~A. Ranicki.
\newblock The structure set of an arbitrary space, the algebraic surgery exact
  sequence and the total surgery obstruction.
\newblock In {\em Topology of high-dimensional manifolds, {N}o. 1, 2
  ({T}rieste, 2001)}, volume~9 of {\em ICTP Lect. Notes}, pages 515--538. Abdus
  Salam Int. Cent. Theoret. Phys., Trieste, 2002.

\bibitem[Ran11]{Errata-Ranicki(1992)}
A.~A. Ranicki.
\newblock Errata to algebraic {$L$}-theory and topological manifolds.
\newblock Available from \url{http://www.maths.ed.ac.uk/~aar/books/toperr.pdf},
  2011.

\bibitem[Wal99]{Wall(1999)}
C.~T.~C. Wall.
\newblock {\em Surgery on compact manifolds}, volume~69 of {\em Mathematical
  Surveys and Monographs}.
\newblock American Mathematical Society, Providence, RI, second edition, 1999.
\newblock Edited and with a foreword by A. A. Ranicki.

\bibitem[Wei85a]{Weiss-I(1985)}
M.~Weiss.
\newblock Surgery and the generalized {K}ervaire invariant. {I}.
\newblock {\em Proc. London Math. Soc. (3)}, 51(1):146--192, 1985.

\bibitem[Wei85b]{Weiss-II(1985)}
M.~Weiss.
\newblock Surgery and the generalized {K}ervaire invariant. {II}.
\newblock {\em Proc. London Math. Soc. (3)}, 51(2):193--230, 1985.

\bibitem[Wei92]{Weiss(1992)}
M.~Weiss.
\newblock {Visible L-theory}.
\newblock In {\em Forum Mathematicum}, volume~4, pages 465--498. Walter de
  Gruyter, Berlin/New York Berlin, New York, 1992.

\bibitem[Whi62]{Whitehead(1962)}
G.W. Whitehead.
\newblock {Generalized homology theories}.
\newblock {\em Transactions of the American Mathematical Society}, pages
  227--283, 1962.

\end{thebibliography}
\bibliographystyle{alpha}

\end{document}